\newtheorem{thm}{Theorem}[section]
\newtheorem{prop}[thm]{Proposition}
\newtheorem{proof}[thm]{Proof}
\newtheorem{rem}[thm]{Remark}
\newtheorem{example}[thm]{Example}
\newcommand{\beq}{\begin{equation}}
\newcommand{\eeq}{\end{equation}}
\numberwithin{equation}{section} \topmargin=-2cm \oddsidemargin=1cm
\begin{document}
%\linenumbers
%%\baselineskip=2pc
\title{\textbf{A high-order well-balanced positivity-preserving moving mesh DG method for the shallow water equations with non-flat bottom topography}}
\author{Min Zhang\footnote{School of Mathematical Sciences, Xiamen University,
Xiamen, Fujian 361005, China.
E-mail: minzhang2015@stu.xmu.edu.cn.
},
~Weizhang Huang\footnote{Department of Mathematics, University of Kansas, Lawrence, Kansas 66045, USA. E-mail: whuang@ku.edu.
},
~and Jianxian Qiu\footnote{School of Mathematical Sciences and Fujian Provincial Key Laboratory of Mathematical Modeling and High-Performance Scientific Computing, Xiamen University, Xiamen, Fujian 361005, China.
E-mail: jxqiu@xmu.edu.cn.
}
}

\date{}
\maketitle

\noindent\textbf{Abstract:}
A rezoning-type adaptive moving mesh discontinuous Galerkin method is proposed for the numerical solution of the shallow water equations with non-flat bottom topography.
The well-balance property is crucial to the simulation of perturbation waves over the lake-at-rest steady state such as waves on a lake or tsunami waves in the deep ocean.
To ensure the well-balance and positivity-preserving properties, strategies are discussed in the use of slope limiting, positivity-preservation limiting, and data transferring between meshes.
Particularly, it is suggested that a DG-interpolation scheme be used for the interpolation of both the flow variables and bottom topography from the old mesh to the new one and after each application of the positivity-preservation limiting on the water depth, a high-order correction be made to the approximation of the bottom topography according to the modifications in the water depth.
Moreover, mesh adaptation based on the equilibrium variable and water depth is shown to give more desirable results than that based on the commonly used entropy function.
Numerical examples in one and two spatial dimensions are presented to demonstrate the well-balance and positivity-preserving properties of the method and its ability to capture small perturbations of the lake-at-rest steady state.

\vspace{5pt}

\noindent\textbf{The 2020 Mathematics Subject Classification:} 65M50, 65M60, 76B15, 35Q35
\vspace{5pt}

\noindent\textbf{Keywords:}
well-balance, DG-interpolation, high-order, positivity preservation,
moving mesh DG method, shallow water equations

\newcommand{\h}{\hspace{1.cm}}
\newcommand{\hh}{\hspace{2.cm}}
\newtheorem{yl}{\hspace{1.cm}Lemma}
\newtheorem{dl}{\hspace{1.cm}Theorem}
\renewcommand{\sec}{\section*}
\renewcommand{\l}{\langle}
\renewcommand{\r}{\rangle}
\newcommand{\be}{\begin{eqnarray}}
\newcommand{\ee}{\end{eqnarray}}

\normalsize \vskip 0.2in
\newpage

\section{Introduction}
\label{sec:Introduction}

We are interested in the numerical solution of the shallow water equations (SWEs) modeling
the water flow over a surface that plays an important role
in the ocean and hydraulic engineering, such as hydraulic jumps/shocks, open-channel flows,
bore wave propagation, tidal flows in the estuary and coastal zones.
The SWEs can be derived by integrating the Navier-Stokes equations in depth
under the hydrostatic assumption when the depth of the flow is small compared to its horizontal dimensions.
In non-dimensional form they read as
\begin{equation}
\label{swe-2d}
U_t +  \nabla \cdot \mathbf{F}(U)  = S(h,B),
\end{equation}
where
\begin{align*}
&U = (h, hu, hv )^T = (h, m, w)^T,
\\&
\mathbf{F}(U)  = \big(F(U),G(U)\big),
\\ &
F(U) = (hu, h u^2+\frac{1}{2}gh^2, huv)^T
= (m, \frac{m^2}{h}+\frac{1}{2}gh^2, \frac{mw}{h})^T,
\\&G(U) = (hv, huv,h v^2+\frac{1}{2}gh^2)^T
= (w, \frac{mw}{h}, \frac{w^2}{h}+\frac{1}{2}gh^2)^T,
\\&
S(h,B) = \big{(}0,-hgB_x,-hgB_y\big{)}^T.
\end{align*}
Here, $h$ is the depth of water, $(u,v)$ and $(m, w) = (hu, hv)$ are the velocities and discharges
(in $x$ and $y$ directions), respectively,
$g$ is the gravitational acceleration constant, and $B$ is the bottom topography which is a given time-independent function.
%The total energy of the system \cite{Fjordholm-etal-2011JCP} is given by
%\begin{equation}
%\label{E-2d}
%E(U,B)=\frac{1}{2}(hu^2+hv^2)+\frac{1}{2}gh^2+ghB ,
%\end{equation}
%which plays the role of an entropy function.
%The SWEs model admitting the entropy inequality
%\begin{equation}
%\label{entropy-inq}
%E(U,B)_t
%+ \Big(\frac{1}{2}(hu^3+huv^2)+gh^2u+ghuB\Big)_x
%+ \Big(\frac{1}{2}(hu^2v+hv^3)+gh^2v+ghvB\Big)_y\leq0.
%\end{equation}
A distinct feature of the SWEs is that they admit steady-state solutions in which
the flux gradients are balanced by the source term exactly.
An important example is the so-called ``lake at rest" solution,
\begin{equation}
\label{Lake-2d}
u=0,\quad v=0, \quad  h+B= C,
\end{equation}
where $C$ is a constant.
It is crucial that this solution is preserved by numerical methods for the SWEs.
Indeed, many physical phenomena, such as waves on a lake or tsunami waves in the deep ocean,
can be described as small perturbations of this lake-at-rest steady state, and
they are difficult to capture by a numerical method on an unrefined mesh
unless the method can preserve the steady-state solution.
This property is known as the C-property or well-balance.

The ``exact C-property" concept was first introduced in 1994 by Bermudez and Vazquez \cite{Bermudez-Vazquez-1994}. Since then, a number of well-balanced numerical methods have been developed for the SWEs,
for example, (Godunov-type) finite volume methods \cite{Audusse-etal-2004Siam, Bermudez-Vazquez-1994, LeVeque-1998JCP, Zhou-etal-2001JCP},
finite difference/volume WENO methods \cite{Xing-Shu-2005JCP,Xing-Shu-2006JCP, Xing-Shu-2006CiCP},
and discontinuous Galerkin (DG) methods \cite{Eskilsson-Sherwin-2004,Ern-etal-2008,Li-etal-2018JCAM,
Tumolo-2013JCP,Xing-Shu-2006JCP, Xing-Shu-2006CiCP,Xing-Zhang-Shu-2010,Xing-Zhang-2013JSC}.
DG methods have the advantages of high-order accuracy, high parallel efficiency, flexibility for $hp$-adaptivity and arbitrary geometry and meshes, and these make them particularly suited for the SWEs.

It should be pointed out that the SWEs form a nonlinear hyperbolic system and its solution can develop
discontinuities such as hydraulic jumps/shock waves, rarefaction waves, and stationary state transitions.
Small mesh spacings are required in the regions of these structures in order to resolve them,
and mesh adaptation becomes a necessary tool to improve the computational accuracy and efficiency
in the numerical solution of the SWEs. In the past decades, various work has been done in this direction.
For example, Remacle et al. \cite{Remacle-etal-2006} studied an $h$-adaptive meshing procedure
for the transient computation of the SWEs.
Lamby et al. \cite{Lamby-etal-2005} proposed an adaptive
multi-scale finite volume method for the SWEs with source terms, combining a quadtree grid generation
strategy based on B-splines and fully adaptive multi-resolution methods.
Tang \cite{Tang-2004} developed an adaptive moving structured mesh kinetic flux-vector splitting (KFVS) scheme
for the SWEs without treating the bed slope source terms in order to balance the source terms and flux gradients.
Although the well-balance is not considered in the work, the numerical results demonstrate that
the adaptive moving mesh method leads to more accurate solutions than methods based on fixed meshes.
Donat et al. \cite{Donat-etal-2014JCP} presented a well-balanced shock capturing adaptive
mesh refinement (AMR) scheme for shallow water flows.
They showed that the use of well-balanced interpolation operators is essential
in order to maintain the well-balance property in the numerical solution computed with the AMR code.
Zhou et al. \cite{Zhou-etal-2013} proposed a well-balanced adaptive moving mesh generalized Riemann problem (GRP)-based finite volume scheme for the SWEs with irregular bottom topography.
%The GRP scheme extends Godunov¡¯s approach to second-order accuracy by solving Riemann problems with piecewise linear data.
A geometrical conservative interpolation scheme was used to update the solutions from the old mesh to the new one.
Recently, Arpaia and Ricchiuto \cite{Arpaia-Ricchiuto-2018} considered several arbitrary Lagrangian-Eulerian (ALE) formulations of the SWEs on moving meshes and provided a discrete analog in the well-balanced finite volume and residual distribution framework.

In this work we study a moving mesh DG (MM-DG) method
for the numerical solution of SWEs with non-flat bottom topography in one and two spatial dimensions.
Our main objective is to show that the MM-DG method maintains high-order accuracy
of DG discretization while preserving lake-at-rest steady state solutions and nonnegativity of the water depth.
The method is based on the rezoning approach of moving mesh methods
and contains three basic components at each time step,
the adaptive mesh movement, the interpolation of the solution from the old mesh
to the new one, and the numerical solution of the SWEs on the new mesh that is fixed for the time step.
The adaptive mesh movement is based on the moving-mesh-partial-differential-equation (MMPDE) method \cite{Huang-etal-1994JCP,Huang-etal-1994Siam,Huang-2001JCP,Huang-Sun-2003JCP,Huang-2006CiCP,Huang-Russell-2011,Huang-Kamenski-2015JCP}
which is known to produce meshes free of tangling \cite{Huang-Kamenski-2018MC}.
With the MMPDE method, the size, shape, and orientation of the mesh elements are controlled through
a metric tensor, a symmetric and uniformly positive definite matrix-valued function defined on the physical domain
and computed typically using the recovered Hessian of a DG solution. Instead of the commonly used entropy/total energy in the context of adaptive mesh shock wave simulation, we propose to use the equilibrium variable
$\mathcal{E}=\frac{1}{2}(u^2+v^2)+g(h+B) $ and the water depth $h$ to construct the metric tensor.
A motivation for this is that the mesh is adapted to both the perturbations of the lake-at-rest steady state
through $\mathcal{E}$ and the water depth distribution through $h$.

We use a fixed mesh well-balanced Runge-Kutta DG method
\cite{Xing-Shu-2006CiCP, Xing-Zhang-Shu-2010,Xing-Zhang-2013JSC}
for the numerical solution of the SWEs on the new mesh,
a DG-interpolation scheme \cite{Zhang-Huang-Qiu-2019arXiv}
for the interpolation of the solution and bottom topography from the old mesh to the new one,
the TVB slope limiter \cite{DG-series5} to avoid spurious oscillations,
and a linear scaling limiter \cite{Liu-Osher1996,ZhangShu2010,ZhangXiaShu2012}
for positivity preservation. Although these are existing schemes/techniques,
great caution is needed in their use to warrant the well-balance and positivity-preserving (PP)
properties of the overall MM-DG method.

The interpolation of the solution from the old mesh to the new one is a key component
for the MM-DG method to maintain high-order accuracy, preserve the lake-at-rest steady
state solution, and conserve the mass. Several conservative interpolation schemes
between deforming meshes have been investigated; see, e.g. \cite{Li-Tang-2006JSC,TangTang-2003Siam}.
We use the recently developed DG-interpolation scheme
\cite{Zhang-Huang-Qiu-2019arXiv} for the purpose.
This scheme is known to work for arbitrary bounded mesh deformation, have high-order accuracy,
conserve the mass, and preserve constant solution,
and, with a linear scaling PP limiter \cite{Liu-Osher1996,ZhangShu2010,ZhangXiaShu2012}, can preserve
the nonnegativity of the function to be interpolated.
We also use the same scheme to update the bottom topography on the new mesh.
This update is necessary due to the movement of the mesh and must be done
using a same scheme as for $U$ to ensure the well-balance property.

Note that we need to ensure the nonnegativity of the water depth in the interpolation for the dry situation.
We use the PP-DG-interpolation scheme (the DG-interpolation scheme with the linear scaling PP limiter
\cite{Zhang-Huang-Qiu-2019arXiv})
for the water depth for this purpose.
%A DG-interpolation scheme with a scaling PP limiter we called as PP-DG-interpolation.
However, the PP limiter will destroy the well-balance property. In order to restore the property,
we apply the same DG-interpolation scheme to the total surface level and obtain a new approximation
for the bottom topography by subtracting the water depth approximation from the total surface level approximation.
This new approximation of the bottom topography differs from the old one by a high-order correction
that corresponds to the modifications in the water depth due to the PP limiting.

Since spurious oscillations and even nonlinear instability can occur in numerical solutions
for the SWEs, a nonlinear slope limiter is needed after each Runge-Kutta stage.
We apply the TVB limiter \cite{DG-series2,DG-series3,DG-series5}
to the local characteristic variables based on the variables $\big((h+B),m, w\big)$.
Note that $\big((h+B),m, w\big)$, rather the original variables $\big(h,m, w\big)$, is used
to ensure the well-balance property.
Moreover, we need to apply the PP limiter to ensure the nonnegativity of the water depth
every time after we use the TVB limiter.
Since the TVB procedure preserves the cell averages of the water depth,
we can use the linear scaling limiter \cite{Liu-Osher1996,ZhangShu2010,ZhangXiaShu2012}
for this purpose. Once again, the PP limiter destroys the well-balance property.
To recover the property, we propose to make a high-order correction to the approximation of the bottom topography
according to the modifications in the water depth due to the PP limiting.
Numerical examples show that this strategy works out well.

The remainder of the paper is organized as follows.
\S\ref{sec:WB-MMDG} is devoted to the description of the overall procedure
of the well-balanced MM-DG method and its DG and Runge-Kutta discretization for the SWEs.
The DG-interpolation scheme and its properties are described in \S\ref{sec:DG-interp}.
In \S\ref{sec:mmpde}, the MMPDE moving mesh method for the generation of the new mesh is discussed.
Numerical results obtained with the MM-DG method for a selection of one- and two-dimensional
examples are presented in \S\ref{sec:numerical-results}.
Finally, \S\ref{sec:conclusions} contains conclusions and further comments.

\section{The well-balanced MM-DG scheme for the SWEs}
\label{sec:WB-MMDG}

In this section we describe the well-balanced MM-DG method for the numerical solution
of the SWEs on moving simplicial meshes. We use here a rezoning approach where the unknown
variables are interpolated from the old mesh to the new one and the SWEs are solved
on the new mesh. In this section we focus on the overall description of the method in two dimensions
(the method is similar in one dimension), the discretization of the SWEs using DG in space and
a Runge-Kutta scheme in time, and the well-balance property.
A DG-interpolation scheme is described in \S\ref{sec:DG-interp}
and the generation of the new mesh using the MMPDE moving mesh method is discussed in \S\ref{sec:mmpde}.

We assume that the numerical solutions $U^n_h = (h^n_h,m^n_h,w^n_h)^T$ at physical time $t_n$
on the mesh $\mathcal{T}_h^{n}$ are known. (Notice that, in $h_h$, $h$ is the water depth while
the subscript $h$ is used to indicate that $h_h$ is a numerical approximation to the water depth.)
We also assume that a new mesh $\mathcal{T}_h^{n+1}$, which has the same number of vertices and
elements and the same connectivity as $\mathcal{T}_h^{n}$, has been generated
based on $U^n_h $ and $\mathcal{T}_h^{n}$ (cf. \S\ref{sec:mmpde}).
We denote an interplant of $U^n_h =  (h^n_h,m^n_h,w^n_h)^T$ on the new mesh $\mathcal{T}_h^{n+1}$
by $\tilde{U}^n_h=(\tilde{h}^n_h,\tilde{m}^n_h,\tilde{w}^n_h)^T$.
As we will see later in this section, caution is needed in choosing the interpolation scheme
to make the MM-DG method to be conservative, positivity-preserving, and well-balanced.

We now describe the DG discretization of the SWEs on $\mathcal{T}_h^{n+1}$. Let
\begin{equation}\label{MM-Vh}
\mathcal{V}^{k, n+1}_h= \{q\in L^2(\Omega):\; q|_{K}\in \mathbb{P}^{k}(K),
\; \forall K\in \mathcal{T}_h^{n+1}\},
\end{equation}
where
\begin{equation*}
\mathbb{P}^k(K)=\text{span}\{\phi_K^{1},\phi_K^{2},...,\phi_K^{n_b}\}
\end{equation*}
is the set of polynomials of degree at most $k$ $(k\ge 1)$ on element $K$ and
$\phi_{K}^j = \phi_{K}^j(\bm{x}),~ j=1,..., n_b\equiv (k+1)(k+2)/2$, denote the basis functions of $\mathbb{P}^k(K)$.
Multiplying \eqref{swe-2d} by a test function $\phi\in \mathcal{V}^{k, n+1}_h$,
integrating the resulting equation over $K\in \mathcal{T}_h^{n+1}$, and using the divergence theorem,  we get
\begin{equation}\label{div}
\frac{d}{d t}\int_{K}U \phi d\bm{x}
- \int_{K}\mathbf{F}(U)\cdot \nabla \phi d\bm{x}
+ \int_{\partial K} \mathbf{F}(U) \cdot\bm{n}_K \phi  ds
= \int_{K}S(h,B)\phi d\bm{x},
\end{equation}
where $\bm{n}_K=(n_x,n_y)^T$ is the outward unit normal to the boundary $\partial K$.

Recall that  each interior edge is shared by two triangular elements. Thus, on any (interior) edge
of $K$, $U_h$ can be defined using its value in $K$ or in the other element sharing the
common edge with $K$.  These values are denoted by $U_{h,K}^{int}$ and $U_{h,K}^{ext}$, respectively.
Moreover, we define the global Lax-Friedrichs numerical flux to approximate the flux function
$\mathbf{F}(U) \cdot \bm{n}_K $ on the edge $e_K\in \partial K$ as
\begin{align}
&\hat{\mathbf{F}}|_{e_K}=\hat{\mathbf{F}}(U_{h,K}^{int}, U_{h,K}^{ext},\bm{n}^e_K)=
\frac{1}{2}\Big{(}
\big{(}\mathbf{F}(U_{h,K}^{int})+\mathbf{F}(U_{h,K}^{ext}) \big{)} \cdot \bm{n}^e_K
-\alpha (U_{h,K}^{ext}-U_{h,K}^{int})\Big{)},
\label{lf-flux}
\end{align}
where
$\alpha = \max\limits_{K, e}\Big{(} \max\limits_s (|\lambda^{s}(U_{h,K}^{int})|, |\lambda^{s}(U_{h,K}^{ext})|)\Big{)}$.
Here,
\begin{align}
\label{eigen-123}
\lambda^{1}(U)=un_x +vn_y -c,
\quad
\lambda^{2}(U)=un_x +vn_y ,
\quad
\lambda^{3}(U)=un_x +vn_y +c,
\end{align}
are the eigenvalues of the Jacobian matrix
\begin{equation}
\mathbf{F}'(U)\cdot \bm{n}_K=\left(
  \begin{array}{ccc}
    0 & n_x & n_y \\
    (c^2-u^2)n_x-uvn_y & 2un_x+vn_y & un_y\\
    -uvn_x+(c^2-v^2)n_y & vn_x & un_x+2vn_y \\
  \end{array}
\right),
\end{equation}
where $c = \sqrt{gh}$ is the sound speed.
The semi-discrete DG scheme is then to find $U_h \in \mathcal{V}^{k, n+1}_h$ such that,
$\forall \phi \in \mathcal{V}^{k, n+1}_h$,
\begin{align}
\frac{d}{d t}\int_{K}U_h \phi d\bm{x}
+\sum_{e_K \in \partial K}\int_{e_K} \phi \hat{ \mathbf{F}}|_{e_K} ds
&-\int_{K} \mathbf{F}(U_h)\cdot \nabla \phi d\bm{x}
 = \int_{K}S(h_h,B_h)\phi d\bm{x} ,
 \label{semi-DG-2d}
\end{align}
where $\hat{ \mathbf{F}}|_{e_K}$ is the numerical flux defined (\ref{lf-flux}).
Define the residual as
\begin{equation}
\label{wb-residual-2d+2}
\begin{split}
R_{h,K} (U_h, \phi, B_h)&
= \int_{K}S(h_h,B_h)\phi d\bm{x}+\int_{K} \mathbf{F}(U_h)\cdot \nabla \phi d\bm{x} \\
& \qquad -\sum_{e_K \in \partial K}\int_{e_K} \phi \hat{ \mathbf{F}}|_{e_K} ds, \quad \forall  K\in \mathcal{T}^{n+1}_h .
\end{split}
\end{equation}
Generally speaking, the residual does not vanish for the lake-at-rest steady state, i.e., the scheme (\ref{semi-DG-2d})
is not well-balanced. Our objective is to find a special numerical flux $\hat{ \mathbf{F}}^{*}|_{e_K}$ by modifying
$\hat{ \mathbf{F}}|_{e_K}$ so that the residual becomes zero
for the lake-at-rest steady state.
We employ the hydrostatic reconstruction technique \cite{Audusse-etal-2004Siam,Xing-Shu-2006CiCP,Xing-Zhang-Shu-2010,Xing-Zhang-2013JSC} to modify $\hat{ \mathbf{F}}|_{e_K}$.
It is noted that the bottom topography function $B(\bm{x})$ needs to be projected into the finite element space $\mathcal{V}_h^{k,n+1}$. We denote this approximation by $B^{n+1}_h$ or $B_h$ without confusion.
We will discuss the projection/interpolation of $B(\bm{x})$ later in this section and
a DG-interpolation scheme in \S\ref{sec:DG-interp}.
Let
\begin{equation}\label{reconst-h-2d}
\begin{split}
&h_{h,K}^{*,int}|_{e_K} =
\max \Big( 0, h_{h,K}^{int}|_{e_K}+B_{h,K}^{int}|_{e_K}
             -\max\big(B_{h,K}^{int}|_{e_K},B_{h,K}^{ext}|_{e_K}\big)
             \Big),
\\&
h_{h,K}^{*,ext}|_{e_K} =
\max \Big( 0, h_{h,K}^{ext}|_{e_K}+B_{h,K}^{ext}|_{e_K}
             -\max\big(B_{h,K}^{int}|_{e_K},B_{h,K}^{ext}|_{e_K}\big)
             \Big).
\end{split}
\end{equation}
Recall that $B_h$ is the DG approximation of $B$ and is discontinuous on element edges.
In \eqref{reconst-h-2d}, the value of $B_h$ on $e_K$ is taken as
$\max\big(B_{h,K}^{int}|_{e_K},B_{h,K}^{ext}|_{e_K}\big)$, which was first proposed by Audusse et al \cite{Audusse-etal-2004Siam} and is now considered as the standard choice.
Some other researchers suggest to use $\min\big(B_{h,K}^{int}|_{e_K},B_{h,K}^{ext}|_{e_K}\big)$ \cite{Noelle-etal-2007-movingwater,Xing-2014JCP-movingwater}.
The optimal choice of the value of $B_h$ on $e_K$ remains an open problem, which seems to be closely related to the non-uniqueness of the Riemann problem \cite{Chinnayya-etal-2004}.
With \eqref{reconst-h-2d}, we force $h_{h,K}^{*,int}|_{e_K} \ge 0$ and $h_{h,K}^{*,ext}|_{e_K}\ge 0$
while trying to satisfy
\begin{equation*}
\begin{split}
&h_{h,K}^{*,int} + \max\big(B_{h,K}^{int}|_{e_K},B_{h,K}^{ext}|_{e_K}\big) = h_{h,K}^{int}|_{e_K}+B_{h,K}^{int}|_{e_K},
\\&
h_{h,K}^{*,ext} + \max\big(B_{h,K}^{int}|_{e_K},B_{h,K}^{ext}|_{e_K}\big)=
h_{h,K}^{ext}|_{e_K}+B_{h,K}^{ext}|_{e_K}.
\end{split}
\end{equation*}
Then, we redefine the interior and exterior values of $U_h$ as
\begin{equation}
\label{red-U-2d}
U_{h,K}^{*,int}|_{e_K}=\left(
  \begin{array}{c}
     h_{h,K}^{*,int}|_{e_K}    \\
    \frac{ h_{h,K}^{*,int}|_{e_K}}{ h_{h,K}^{int}|_{e_K}} m_{h,K}^{int}|_{e_K}\\
    \frac{ h_{h,K}^{*,int}|_{e_K}}{ h_{h,K}^{int}|_{e_K}} w_{h,K}^{int}|_{e_K}\\
  \end{array}
\right)
\quad \hbox{and}\quad
U_{h,K}^{*,ext}|_{e_K}=\left(
  \begin{array}{c}
     h_{h,K}^{*,ext}|_{e_K}\\
     \frac{ h_{h,K}^{*,ext}|_{e_K}}{ h_{h,K}^{ext}|_{e_K}}m_{h,K}^{ext}|_{e_K}\\
     \frac{ h_{h,K}^{*,ext}|_{e_K}}{ h_{h,K}^{ext}|_{e_K}}w_{h,K}^{ext}|_{e_K}\\
  \end{array}
\right).
\end{equation}
The numerical flux on the edge $e_K \in \partial K$ is modified as
\begin{align}
\label{eK-flux}
&\hat{ \mathbf{F}}^{*}|_{e_K} =
\hat{ \mathbf{F}}(U_{h,K}^{*,int}, U_{h,K}^{*,ext},\bm{n}^e_K)+
\Delta^{*}_{e_K}\cdot \bm{n}_K^e,
\end{align}
where
\begin{equation}
\label{correct-2d}
\Delta^{*}_{e_K}=\left(
  \begin{array}{cc}
 0&
0 \\
   \frac{g}{2} (h_{h,K}^{int}|_{e_K})^2-\frac{g}{2} (h_{h,K}^{*,int}|_{e_K})^2 &0\\
   0&\frac{g}{2} (h_{h,K}^{int}|_{e_K})^2-\frac{g}{2} (h_{h,K}^{*,int}|_{e_K})^2\\
  \end{array}
\right).
\end{equation}
The correction term is added to ensure
\begin{equation}
\label{wb-flux-2}
\hat{ \mathbf{F}}^{*}|_{e_K}
=\mathbf{F}\big( U_{h,K}^{int}\big)\cdot \bm{n}_K^e
\end{equation}
when the lake-at-rest steady state is reached.
Replacing $\hat{ \mathbf{F}}|_{e_K}$ by $\hat{ \mathbf{F}}^{*}|_{e_K}$ in \eqref{semi-DG-2d}, we have
\begin{align}
\frac{d}{d t}\int_{K}U_h \phi d\bm{x}
+\sum_{e_K \in \partial K}\int_{e_K} \phi \hat{ \mathbf{F}}^{*}|_{e_K} ds
&-\int_{K} \mathbf{F}(U_h)\cdot \nabla \phi d\bm{x}
\notag
\\ &
 = \int_{K}S(h_h,B_h)\phi d\bm{x}, \quad \forall \phi \in \mathcal{V}^{k, n+1}_h.
 \label{well-semi-DG-2d}
\end{align}
Denote the residual for this scheme as
\begin{equation}
\label{wb-residual-2d}
\begin{split}
R_{h,K}^{*} (U_h, \phi, B_h)&
= \int_{K}S(h_h,B_h)\phi d\bm{x}+\int_{K} \mathbf{F}(U_h)\cdot \nabla \phi d\bm{x} \\
& \qquad -\sum_{e_K \in \partial K}\int_{e_K} \phi \hat{ \mathbf{F}}^{*} |_{e_K} ds, \quad \forall  K\in \mathcal{T}^{n+1}_h .
\end{split}
\end{equation}
From (\ref{wb-flux-2}), it can be shown that this residual vanishes
for the lake-at-rest steady state if suitable Gaussian quadrature rules are used to calculate all integrals
in the \eqref{wb-residual-2d} exactly. As a result, the scheme \eqref{well-semi-DG-2d} is well-balanced.

To see the convergence order of the scheme, we rewrite \eqref{well-semi-DG-2d} into
\begin{align}
& \frac{d}{d t}\int_{K}U_h \phi d\bm{x}
 +\sum_{e_K \in \partial K}\int_{e_K} \phi \hat{ \mathbf{F}}|_{e_K} ds
-\int_{K} \mathbf{F}(U_h)\cdot \nabla \phi d\bm{x}
\notag
\\
& \qquad = \int_{K}S(h_h,B_h)\phi d\bm{x}
+\sum_{e_K \in \partial K}\int_{e_K} \phi \big(\hat{ \mathbf{F}}^{*}- \hat{ \mathbf{F}}\big)|_{e_K} ds, \quad \forall \phi \in \mathcal{V}^{k, n+1}_h.
 \label{well-semi-DG-2d-2}
\end{align}
This is a standard DG scheme for the SWEs with a correction term (the last term). Notice that
\[
\begin{split}
\big|(h_{h,K}^{int}|_{e_K})^2-(h_{h,K}^{*,int}|_{e_K})^2)\big|
&=\big|(h_{h,K}^{int}+h_{h,K}^{*,int})|_{e_K}\cdot(h_{h,K}^{int}-h_{h,K}^{*,int})|_{e_K}\big|
\\&
\leq\big|(h_{h,K}^{int}+h_{h,K}^{*,int})|_{e_K}\big|\cdot\big|(h_{h,K}^{int}-h_{h,K}^{*,int})|_{e_K}\big|
=\mathcal{O}(a^{k+1}_{max}),
\end{split}
\]
where $a_{max}$ denotes the maximum element diameter of the mesh. This gives
$\hat{ \mathbf{F}}^{*}- \hat{ \mathbf{F}}=\mathcal{O}(a^{k+1}_{max})$.
Thus, the scheme \eqref{well-semi-DG-2d} is $(k+1)$-th-order in space.

We can rewrite \eqref{well-semi-DG-2d} into
\begin{equation}\label{well-semi-ode}
\frac{d}{d t}\int_{K}U_h \phi d\bm{x}=R_{h,K}^{*}(U_h,\phi,B_h), \quad \forall \phi \in \mathcal{V}_h^{k, n+1}.
\end{equation}
A third-order explicit Runge-Kutta scheme is used to integrate \eqref{well-semi-ode} in time. We have,
for any $K \in \mathcal{T}_h^{n+1}$,
\begin{equation}\label{third}
\begin{cases}
\begin{split}
\int_{K} U_h^{(1)}\phi d\bm{x}
& = \int_{K} \tilde{U}_h^n\phi d\bm{x}
+\Delta t_n R_{h,K}^{*}(\tilde{U}_h^n,\phi,B_h^{n+1}),\quad \forall \phi \in \mathcal{V}_h^{k, n+1}
\\
\int_{K} U_h^{(2)}\phi d\bm{x}
& = \frac{3}{4}\int_{K} \tilde{U}_h^n\phi d\bm{x} + \frac{1}{4} \int_{K}U_h^{(1)}\phi d\bm{x}
\\ &
\qquad \qquad +\frac{\Delta t_n}{4}  R_{h,K}^{*}(U_h^{(1)},\phi ,B_h^{n+1}), \quad \forall \phi \in \mathcal{V}_h^{k, n+1}
\\
\int_{K}U_h^{n+1}\phi d\bm{x}
& = \frac{1}{3}\int_{K} \tilde{U}_h^n\phi d\bm{x} +\frac{2}{3}\int_{K} U_h^{(2)}\phi d\bm{x}
\\
&
\qquad \qquad +\frac{2  \Delta t_n}{3} R_{h,K}^{*}(U_h^{(2)},\phi ,B_h^{n+1}), \quad \forall \phi \in \mathcal{V}_h^{k, n+1}
\end{split}
\end{cases}
\end{equation}
where $\Delta t_n= t_{n+1}-t_{n}$ and $B_h^{n+1}$ is a polynomial approximation of the bottom topography
on $\mathcal{T}_h^{n+1}$.
To ensure stability, we choose $\Delta t_n$ according to the CFL condition \cite{DG-review} as
\begin{equation}
\label{MMDG-cfl}
\Delta t_n =\frac{C_{cfl}}{\max\limits_{K, e}\Big{(} \max\limits_s (|\lambda^{s}((\tilde{U}^n_{h,K})^{int})|, |\lambda^{s}((\tilde{U}^n_{h,K})^{ext})|)\Big{)}}\cdot \min\big(a^{n}_{min},a^{n+1}_{min}\big) ,
\end{equation}
where $C_{cfl}$ is a constant and $a^{n}_{min}$ and $a^{n+1}_{min}$ are the minimum element heights
of $\mathcal{T}^{n}_h$ and $\mathcal{T}^{n+1}_h$, respectively.

We are now ready to discuss the interpolation of $U_h^{n} = (h_h^{n}, m_h^{n}, w_h^{n})^T$
from the old mesh $\mathcal{T}_h^n$ to the new one $\mathcal{T}_h^{n+1}$.
$L^2$-projection is inconvenient to use here since it requires finding
the intersection between elements in $\mathcal{T}_h^n$ and $\mathcal{T}_h^{n+1}$
and performing numerical integration thereon, which is known to be a daunting, if not impossible,
task in programming.
Moreover, for the dry situation, we need to ensure the nonnegativity of the water depth in the interpolation procedure.
Thus, we use the PP-DG-interpolation scheme (the DG-interpolation scheme with the linear scaling PP limiter),
see \cite{Zhang-Huang-Qiu-2019arXiv} or \S\ref{sec:DG-interp}) for the water depth
and the DG-interpolation scheme (without PP limiter) for the water discharges, i.e.,
\begin{equation}
\label{DG-interp-1}
\tilde{h}_h^{n} = \hbox{PP-DGInterp}(h_h^{n}), \quad
\tilde{m}_h^{n} = \hbox{DGInterp}(m_h^{n}), \quad
\tilde{w}_h^{n} = \hbox{DGInterp}(w_h^{n}) .
\end{equation}
Note that this interpolation scheme does not maintain the well-balance property in general.
To recover the property, we apply the same DG-interpolation scheme to the total surface
level $h_h^{n} + B_h^{n}$ and define the new approximation to the bottom topography
on the new mesh as
\begin{equation}
\label{B-update-1}
B_h^{n+1} = \hbox{DGInterp}(h_h^{n} + B_h^{n}) -\tilde{h}_h^{n} .
\end{equation}
From the fact that the DG-interpolation scheme preserves constant solutions (cf. \S\ref{sec:DG-interp}),
we know that this procedure
restores the well-balance property. Moreover, from the linearity of the interpolation scheme,
we can rewrite (\ref{B-update-1}) into
\[
B_h^{n+1} = \hbox{DGInterp}(B_h^{n}) - \left (\; \hbox{PP-DGInterp}(h_h^{n})-\hbox{DGInterp}(h_h^{n})\;\right ),
\]
which implies that $B_h^{n+1}$ differs
from $\hbox{DGInterp}(B_h^{n})$ by a high-order correction corresponding to the changes
in the water depth due to the PP limiting.

It is interesting to point out that the well-balance property cannot be achieved
if $B_h^{n+1}$ is computed by $L^2$-projecting $B$ onto the DG approximation space defined on $\mathcal{T}_h^{n+1}$
(cf. numerical results in \S\ref{sec:numerical-results}).

We now show that the fully discrete scheme \eqref{third} is well-balanced. To this end, we assume
that $\tilde{U}_h^n$ satisfies \eqref{Lake-2d} (with $B$ replaced by $B^{n+1}_h$).
Recall that $R^{*}_{h,K}$ vanishes for the lake-at-rest steady state.
Then, scheme \eqref{third} reduces to
\begin{equation}\label{third-lake}
\begin{cases}
\begin{split}
\int_{K} U_h^{(1)}\phi d\bm{x}
& = \int_{K} \tilde{U}_h^n\phi d\bm{x}, \quad \forall \phi \in \mathcal{V}_h^{k, n+1}
\\
\int_{K} U_h^{(2)}\phi d\bm{x}
& = \frac{3}{4}\int_{K} \tilde{U}_h^n\phi d\bm{x} + \frac{1}{4} \int_{K}U_h^{(1)}\phi d\bm{x},
\quad \forall \phi \in \mathcal{V}_h^{k, n+1}
\\
\int_{K}U_h^{n+1}\phi d\bm{x}
& = \frac{1}{3}\int_{K} \tilde{U}_h^n\phi d\bm{x} +\frac{2}{3}\int_{K} U_h^{(2)}\phi d\bm{x},
\quad \forall \phi \in \mathcal{V}_h^{k, n+1}.
\end{split}
\end{cases}
\end{equation}
This implies that $U_h^{n+1} \equiv U_h^{(2)} \equiv U_h^{(1)} \equiv \tilde{U}_h^n$.
Thus, $U_h^{n+1}$ also satisfies \eqref{Lake-2d} (with $B$ replaced by $B^{n+1}_h$)
and \eqref{third} is well-balanced.

Since spurious oscillations and even nonlinear instability can occur in numerical solutions,
we need to apply
a nonlinear limiter after each Runge-Kutta stage.
%For the system, we need perform the limiting in the local characteristic variables.
However, caution must be taken since this limiting procedure can destroy the well-balance property
and the nonnegativity of the water depth.
Following \cite{Audusse-etal-2004Siam,Xing-Shu-2006CiCP,Xing-Zhang-Shu-2010,Xing-Zhang-2013JSC,Zhou-etal-2001JCP},
we use the TVB limiter \cite{DG-series5} for the local characteristic variables based on
the variables $\big((h_h+B_h),m_h, w_h\big)^T$ (instead of $\big(h_h,m_h, w_h\big)^T$) to obtain $\big((h_h+B_h)^{mod},m^{mod}_h, w^{mod}_h\big)^T$.
Define
%$\hat{h}_h = \mbox{TVB}(h_h+B_h)-B_h$.
$h^{mod}_h = (h_h+B_h)^{mod}-B_h$.
It is known that this limiting maintains the well-balance property
and preserves the cell averages of $h_h$ but does not necessarily preserve
the nonnegativity of $h_h$.
Thus, after each application of the TVB limiter, we apply the linear scaling PP limiter
of \cite{Liu-Osher1996,ZhangShu2010,ZhangXiaShu2012} to $h^{mod}_h$ and denote
the result by $\mbox{PP}(h^{mod}_h)$.
Once again, this PP limiter destroys the well-balance property.
To restore the property, we make a high-order correction to the approximation of $B$ according to
the changes in the water depth due to the PP limiting, i.e.,
\begin{equation}
\label{B-update-2}
\hat{B}_h = B_h  - (\mbox{PP}(h^{mod}_h) - h^{mod}_h).
\end{equation}
Notice that $\hat{B}_h$ has the same cell averages as $B_h$.

\begin{rem}
{\em In practical implementation when the water depth is close to zero,
large velocities $u = (hu)/h$ and $v=(hv)/h$) can result in due to
small numerical error in $hu$ and $hv$, which in turn can lead to
very small time steps with the CFL condition.
Following \cite{Ricchiuto-Bollermann-2009JCP,Xing-Zhang-Shu-2010},
we set $u=0$ and $v=0$ when $h<10^{-6}$ in our computation.
}\end{rem}

To conclude this section, we summarize the procedure of the MM-DG method in Algorithm~\ref{MM-DG}.
From the above discussion,
we have seen that the interpolation and the fully discrete scheme \eqref{third}
are well-balanced. Hence, the MM-DG method is well-balanced.

\begin{algorithm}[htbp]
\caption{The MM-DG method for the SWEs on moving meshes.}\label{MM-DG}
\begin{itemize}
\item[0.] {\bf Initialization.}
 Project the initial physical variables and bottom topography into the DG space $\mathcal{V}_h^{k,0}$ to obtain $U^0_h =  (h^0_h,m^0_h,w^0_h)^T$ and $B_h^0$. For $n = 0, 1, ...$, do

\item[1.] {\bf Mesh adaptation.}
Generate the new mesh $\mathcal{T}_h^{n+1}$ using an MMPDE-based moving mesh method (cf. \S\ref{sec:mmpde}).

\item[2.] {\bf Solution interpolation.}
Use the DG-interpolation procedure to update $U^n_h = (h^n_h,m^n_h,w^n_h)^T$ and $B^n_h$ from $\mathcal{T}^n_h$ to $\mathcal{T}_h^{n+1}$ to obtain $\tilde{U}^n_h = (\tilde{h}^n_h,\tilde{m}^n_h,\tilde{w}^n_h)^T$ and $B^{n+1}_h$ (cf. \eqref{DG-interp-1} and \eqref{B-update-1}).

\item[3.] {\bf Solution of the SWEs on the new mesh.}
Integrate the SWEs from $t_n$ to $t_{n+1}$ on the new mesh $\mathcal{T}_h^{n+1}$ using the MM-DG scheme \eqref{third} to obtain $U^{n+1}_h = (h_h^{n+1},m^{n+1}_h,w^{n+1}_h)^T$.
At each Runge-Kutta stage, the TVB limiter is applied to the local characteristic variables based on
the variables $\big((h_h+B_h),m_h, w_h\big)^T$, followed by linear scaling PP limiter for the water depth
and a corresponding high-order correction to the approximation of the bottom topography (cf. (\ref{B-update-2}).

\end{itemize}
\end{algorithm}

\section{A conservative DG-interpolation scheme}
\label{sec:DG-interp}

In this section we briefly describe a DG-interpolation scheme \cite{Zhang-Huang-Qiu-2019arXiv}
for the interpolation of a numerical solution
$q_h^n$ from the old mesh $\mathcal{T}^n_h$ to the new one $\mathcal{T}_h^{n+1}$. The meshes are assumed
to have the same number of vertices and elements and the same connectivity and can be viewed
as a deformation from each other. The scheme works for arbitrary bounded mesh deformation.
It has high-order accuracy, conserves the mass, positivity-preserving, and satisfies the geometric conservation law (GCL) and thus preserves constant solutions, and, with a linear scaling PP limiter \cite{Liu-Osher1996,ZhangShu2010,ZhangXiaShu2012}, can preserve
the nonnegativity of the function to be interpolated.
The reader is referred to \cite{Zhang-Huang-Qiu-2019arXiv} for the detail.

The interpolation problem between $\mathcal{T}_h^{n}$ and $\mathcal{T}_h^{n+1}$
is mathematically equivalent to solving the linear convection equation
\begin{equation}
\label{pde}
\frac{\partial q}{\partial \varsigma} (\bm{x},\varsigma) =0,\quad (\bm{x},\varsigma)\in
\mathcal{D} \times(0,1]
\end{equation}
on the moving mesh $\mathcal{T}_h(\varsigma)$ that is defined as the linear interpolant of $\mathcal{T}_h^{n}$
and $\mathcal{T}_h^{n+1}$.
Specifically, $\mathcal{T}_h(\varsigma)$ has the same number of elements and vertices
and the same connectivity as $\mathcal{T}_h^{n}$ and $\mathcal{T}_h^{n+1}$ and its nodal positions and
displacements (or deformation) are given by
\begin{align}
\label{location+speed}
&\bm{x}_i(\varsigma) = (1-\varsigma)\bm{x}_i^{n}+\varsigma \bm{x}_i^{n+1}, \quad i =
1,...,N_v
\\
&\dot{\bm{x}}_i = \bm{x}_i^{n+1}-\bm{x}_i^{n}, \quad i = 1,...,N_v .
\label{location+speed+1}
\end{align}
We define the piecewise linear mesh deformation function as
\begin{equation}\label{Xdot-1}
\dot{\bm{X}}(\bm{x},\varsigma) = \sum_{i=1}^{N_v} \dot{\bm{x}}_i \phi_i(\bm{x},\varsigma),
\end{equation}
where $ \phi_i$ is the linear basis function associated with the vertex $\bm{x}_i$.

We use a quasi-Lagrangian MM-DG method \cite{Luo-Huang-Qiu-2019JCP,Zhang-Cheng-Huang-Qiu-2020CiCP} to solve \eqref{pde}. The semi-discrete scheme for \eqref{pde} is to find $q_h \in \mathcal{V}^{k}_h(\varsigma)$ such that
\begin{equation}
\label{semi-DG}
\frac{d}{d \varsigma}\int_{K} q_h \phi d\bm{x}
+\sum_{e \in \partial K} \int_{e} \phi F_{e}(q_{K}^{int}, q_{K}^{ext}) ds
+\int_{K} (q_h \dot{\bm{X}}) \cdot \nabla  \phi d\bm{x}=0, \quad
\forall  \phi\in \mathcal{V}^{k}_h(\varsigma)
\end{equation}
where $F_{e}(q_{K}^{int}, q_{K}^{ext}) \approx -q \dot{\bm{X}} \cdot \bm{n}_K$ is
a numerical flux defined on $e \in \partial K$, and $q_{K}^{int}$ and $q_{K}^{ext}$ denote the values of $q_h$ on $K$
and on the element (denoted by $K'$) sharing the common edge $e$ with $K$.
We use the local Lax-Friedrichs numerical flux, i.e.,
\begin{align}
F_{e}(q_{K}^{int}, q_{K}^{ext}) =
\frac{1}{2}\Big{(}
\big{(}-q_K^{int}\dot{\bm{X}}^{e}-q_K^{ext} \dot{\bm{X}}^{e} \big{)} \cdot \bm{n}^e_K
-\alpha_{e} (q_{K}^{ext}-q_{K}^{int})\Big{)},\quad \forall e \in \partial K
\label{llf-flux}
\end{align}
where $\dot{\bm{X}}^{e}$ denotes the restriction of $\dot{\bm{X}}$ on $e$ and $\alpha_{e} = \max\big{(}
|\dot{\bm{X}}^{e}\cdot \bm{n}^e_K |,
|\dot{\bm{X}}^{e}\cdot \bm{n}^e_{K'} |\big{)} $.
Note that the numerical flux is zero on the boundary due to $\dot{\bm{X}}^{e}\cdot \bm{n}^e_K =0$ if $e \in \partial \mathcal{D}$.

We consider the quasi-time instants
\[
0=\varsigma^0<\varsigma^1<...<\varsigma^{\nu}<\varsigma^{\nu+1}<... <\varsigma^{N_\varsigma}=1,
\quad \quad \Delta \varsigma^{\nu} = \varsigma^{\nu+1}- \varsigma^{\nu}.
\]
To ensure the  stability of the scheme, we choose $\Delta \varsigma$ according to the CFL condition
\begin{equation}
\label{DG-interp-cfl}
\Delta \varsigma = \frac{\mathcal{C}_{p}}{\max\limits_{e,K}|\dot{\bm{X}}^{e}\cdot \bm{n}^e_K |}
\cdot\min\big(a^{n}_{min},a^{n+1}_{min}\big) .
\end{equation}
where $\mathcal{C}_{p}$ is the CFL condition constant number and usually taken as $1/(2k+2)$  in our computation, unless stated otherwise.
A third-order Runge-Kutta scheme is used for time integration. Then,
the fully-discrete quasi-Lagrangian MM-DG scheme for \eqref{pde} is:
\begin{equation}
\label{third-interp}
\begin{cases}
\begin{split}
\int_{K^{\nu,(1)}}q_h^{\nu,(1)}\phi^{\nu,(1)} d\bm{x}
&= \int_{K^{\nu}}q_h^{\nu}\phi^{\nu} d\bm{x}+\Delta \varsigma^\nu \mathcal{A}(q_h^{\nu},\phi^{\nu})|_{K^\nu},
\\
\int_{K^{\nu,(2)}}q_h^{\nu,(2)}\phi^{\nu,(2)} d\bm{x}
&= \frac{3}{4}\int_{K^{\nu}}q_h^{\nu}\phi^{\nu}d\bm{x}  +\frac{1}{4} \int_{K^{\nu,(1)}}q_h^{\nu,(1)}\phi^{\nu,(1)} d\bm{x}
\\
& \qquad
+\frac{\Delta \varsigma^\nu}{4} \mathcal{A}(q_h^{\nu,(1)},\phi^{\nu,(1)})|_{K^{\nu,(1)}},
\\
\int_{K^{\nu+1}}q_h^{\nu+1}\phi^{\nu+1} d\bm{x}
&= \frac{1}{3}\int_{K^{\nu}}q_h^{\nu}\phi^{\nu} d\bm{x}
+ \frac{2}{3} \int_{K^{\nu,(2)}}q_h^{\nu,(2)} \phi^{\nu,(2)} d\bm{x}
\\
& \qquad
+\frac{2 \Delta \varsigma^\nu}{3} \mathcal{A}(q_h^{\nu,(2)},\phi^{\nu,(2)})|_{K^{\nu,(2)}} ,
\end{split}
\end{cases}
\end{equation}
where ($q_h^{\nu,(1)}$, $\phi^{\nu,(1)}$, $K^{\nu,(1)}$) are the stage values
at $\varsigma = \varsigma^{\nu + 1}$,
($q_h^{\nu,(2)}$, $\upsilon^{\nu,(2)}$, $K^{\nu,(2)}$) are the values
at $\varsigma = \varsigma^{\nu+\frac{1}{2}}$,
($q_h^{\nu+1}$, $\phi^{\nu+1}$, $K^{\nu+1}$) are at $\varsigma = \varsigma^{\nu + 1}$, and
\[
\mathcal{A}(q_h,\phi)|_{K} = -\sum_{e \in \partial K} \int_{e} \phi F_{e}(q_{K}^{int}, q_{K}^{ext}) ds
-\int_{K} (q_h \dot{\bm{X}}) \cdot \nabla  \phi d\bm{x} .
\]
It is emphasized that the volume of $K$ needs to be updated at these stages as
\begin{equation}
\label{Karea-1}
\begin{split}
|K^{\nu,(1)}|
&= |K^{\nu}|+\Delta \varsigma^\nu |K^{\nu}|\nabla\cdot\dot{\bm{X}}|_{K^{\nu}},
\\
|K^{\nu,(2)}|
&= \frac{3}{4}|K^{\nu}|
+\frac{1}{4}\big (|K^{\nu,(1)}|
+\Delta \varsigma^\nu |K^{\nu,(1)}|\nabla\cdot\dot{\bm{X}}|_{K^{\nu,(1)}}\big ),
\\
|K^{\nu+1}|
&= \frac{1}{3}|K^{\nu}|
+\frac{2}{3}\big ( |K^{\nu,(2)}|
+\Delta \varsigma^\nu |K^{\nu,(2)}|\nabla\cdot\dot{\bm{X}}|_{K^{\nu,(2)}}\big ).
\end{split}
\end{equation}
This ensures that the geometric conservation law be satisfied and constant solutions
be preserved.

\begin{prop}[\cite{Zhang-Huang-Qiu-2019arXiv}]
\label{fully-mass}
The DG-interpolation scheme \eqref{third-interp} conserves the mass, i.e.,
\begin{align}
\label{f3-4}
\sum_{K^{\nu+1}}\int_{K^{\nu+1}} q_h^{\nu+1}  d\bm{x}
=\sum_{K^{\nu}}\int_{K^{\nu}} q_h^{\nu}  d\bm{x}, \quad \nu = 0, 1, ...
\end{align}
\end{prop}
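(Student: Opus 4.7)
The plan is to exploit the standard test-function trick for establishing mass conservation in DG schemes: since $\mathbb{P}^{0}\subset \mathbb{P}^{k}$ for every element, the constant function $\phi\equiv 1$ lies in $\mathcal{V}_h^k(\varsigma)$ at every quasi-time instant and can be used as a legitimate test function at every stage of \eqref{third-interp}. Taking $\phi\equiv 1$ has two immediate simplifications: the volume term in $\mathcal{A}(q_h,\phi)|_K$ vanishes because $\nabla\phi=0$, and the left-hand side of each stage equation reduces to the total mass on the corresponding element, $\int_K q_h\, d\bm{x}$. This reduces the entire question to verifying that $\sum_K \mathcal{A}(q_h,1)|_K = 0$ at each stage.

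The main step I would carry out is the telescoping of the boundary fluxes. For an interior edge $e$ shared by two elements $K$ and $K'$, the numerical flux $F_e$ defined in \eqref{llf-flux} is conservative in the sense that $F_e(q_K^{int},q_K^{ext}) + F_e(q_{K'}^{int},q_{K'}^{ext}) = 0$, because swapping the roles of $(q_K^{int},q_K^{ext})$ with $(q_{K'}^{int},q_{K'}^{ext})$ coincides with flipping the sign of $\bm{n}_K^e$ in the central part, while the Lax--Friedrichs dissipation $-\alpha_e(q_K^{ext}-q_K^{int})$ is antisymmetric under the same swap. For boundary edges, the condition $\dot{\bm{X}}^e\cdot\bm{n}_K^e = 0$ on $\partial\mathcal{D}$ stated after \eqref{llf-flux} drives both the central and dissipative parts of $F_e$ to zero (since $\alpha_e$ is defined through $|\dot{\bm{X}}^e\cdot\bm{n}^e_K|$). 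Consequently, summing $\mathcal{A}(q_h,1)|_K$ over all $K$ kills the edge contributions entirely, giving
\begin{equation*}
\sum_{K}\mathcal{A}(q_h,1)\big|_K \;=\; 0.
\end{equation*}

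With this ingredient in hand, the proof is finished by propagating through the three Runge--Kutta stages. Substituting $\phi\equiv 1$ into the first stage of \eqref{third-interp} and summing over elements yields $\sum_{K^{\nu,(1)}}\int_{K^{\nu,(1)}} q_h^{\nu,(1)}\, d\bm{x} = \sum_{K^{\nu}}\int_{K^{\nu}} q_h^{\nu}\, d\bm{x}$. Identical manipulations for the second and third stages produce convex combinations of the preceding stage totals plus an $\mathcal{A}$-term that again sums to zero, so each stage preserves the global mass. Chaining the three identities then delivers \eqref{f3-4}.

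The only subtle point worth emphasising is that the argument does \textbf{not} require the updated element volumes \eqref{Karea-1} at all — mass conservation is a consequence of the flux cancellation alone and is independent of the GCL-compatible area update, which is instead needed for preservation of constant solutions. The true obstacle, if any, is book-keeping: being careful that the test-function indices $\phi^{\nu,(1)}$, $\phi^{\nu,(2)}$, $\phi^{\nu+1}$ used on the two sides of each stage equation can all be chosen to be the same constant (which they obviously can, since $1$ belongs to $\mathcal{V}_h^k(\varsigma)$ for every $\varsigma$), so the integrals on the left and right are legitimately comparable.
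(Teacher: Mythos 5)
Your proof is correct and is essentially the argument behind the cited result: the paper itself defers the proof of Proposition~\ref{fully-mass} to \cite{Zhang-Huang-Qiu-2019arXiv}, where mass conservation is obtained in exactly this way, by testing each Runge--Kutta stage of \eqref{third-interp} with $\phi\equiv 1$ so that the volume term drops, the interior edge fluxes \eqref{llf-flux} cancel pairwise by conservativity, and the boundary fluxes vanish since $\dot{\bm{X}}^{e}\cdot\bm{n}^e_K=0$ on $\partial\mathcal{D}$. Your closing observation that the volume update \eqref{Karea-1} is needed for constant preservation (Proposition~\ref{WB-interp}) but not for global mass conservation is also accurate.
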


%\begin{proof}
%The detail of proof can be seen in \cite{Zhang-Huang-Qiu-2019arXiv}.
%\end{proof}

\begin{prop}
\label{WB-interp}
The DG-interpolation scheme \eqref{third-interp} preserves constant solutions,
i.e., for any arbitrary constant $C$, $q_h^{\nu} \equiv C$ implies $q_h^{\nu+1} \equiv C$ if the element volume is
updated according to \eqref{Karea-1}.
\end{prop}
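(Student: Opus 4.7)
The plan is to prove by induction on the Runge--Kutta stages of \eqref{third-interp} that the assumption $q_h^\nu \equiv C$ propagates stage by stage: $q_h^{\nu,(1)}\equiv C$, then $q_h^{\nu,(2)}\equiv C$, and finally $q_h^{\nu+1}\equiv C$. The second and third lines of \eqref{third-interp} are convex combinations of earlier stage values plus an increment of the same form as in the first line, and the volume update \eqref{Karea-1} has exactly the matching convex-combination pattern, so it suffices to work out the first stage carefully; the remaining stages then follow by repeating the same calculation.

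For the first stage, I would substitute $q_h^\nu \equiv C$ into the local Lax--Friedrichs flux \eqref{llf-flux}. Since $q_K^{int}=q_K^{ext}=C$, the jump term vanishes and $F_e(C,C)=-C\,\dot{\bm{X}}^e\cdot\bm{n}^e_K$. Inserting this into $\mathcal{A}(C,\phi^\nu)|_{K^\nu}$ and using the divergence theorem to rewrite
\begin{equation*}
\int_{K^\nu}\dot{\bm{X}}\cdot\nabla\phi^\nu\,d\bm{x}
=\sum_{e\in\partial K^\nu}\int_e\phi^\nu\,\dot{\bm{X}}\cdot\bm{n}^e_K\,ds
-\int_{K^\nu}\phi^\nu\,\nabla\cdot\dot{\bm{X}}\,d\bm{x},
\end{equation*}
the boundary contributions cancel exactly, leaving $\mathcal{A}(C,\phi^\nu)|_{K^\nu}=C\int_{K^\nu}\phi^\nu\,\nabla\cdot\dot{\bm{X}}\,d\bm{x}$. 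Because $\dot{\bm{X}}$ is piecewise linear by \eqref{Xdot-1}, its divergence is constant on each element, so this further reduces to $C\,\nabla\cdot\dot{\bm{X}}|_{K^\nu}\int_{K^\nu}\phi^\nu\,d\bm{x}$. Substituting back into the first line of \eqref{third-interp} then yields
\begin{equation*}
\int_{K^{\nu,(1)}}q_h^{\nu,(1)}\,\phi^{\nu,(1)}\,d\bm{x}
= C\,\bigl(1+\Delta\varsigma^\nu\,\nabla\cdot\dot{\bm{X}}|_{K^\nu}\bigr)\int_{K^\nu}\phi^\nu\,d\bm{x}.
\end{equation*}

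The decisive step is to identify the test functions on $K^\nu$ and $K^{\nu,(1)}$ through a common reference simplex $\hat K$: for simplicial meshes the maps $\hat K\to K^\nu$ and $\hat K\to K^{\nu,(1)}$ are affine, so $\int_{K^\nu}\phi^\nu\,d\bm{x}=(|K^\nu|/|\hat K|)\int_{\hat K}\hat\phi\,d\hat{\bm{x}}$ and similarly for $K^{\nu,(1)}$. Combining this with the volume update $|K^{\nu,(1)}|=|K^\nu|(1+\Delta\varsigma^\nu\,\nabla\cdot\dot{\bm{X}}|_{K^\nu})$ from the first line of \eqref{Karea-1}, the right-hand side of the identity above equals $C\int_{K^{\nu,(1)}}\phi^{\nu,(1)}\,d\bm{x}$, so $\int_{K^{\nu,(1)}}(q_h^{\nu,(1)}-C)\,\phi^{\nu,(1)}\,d\bm{x}=0$ for every $\phi^{\nu,(1)}\in\mathbb{P}^k(K^{\nu,(1)})$, forcing $q_h^{\nu,(1)}\equiv C$. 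Iterating the same calculation, with the second and third lines of \eqref{Karea-1} matching the corresponding convex-combination updates in \eqref{third-interp}, delivers $q_h^{\nu,(2)}\equiv C$ and $q_h^{\nu+1}\equiv C$. The main obstacle --- and the only place where \eqref{Karea-1} is actually used --- lies in pairing the right stage of the volume update with the identification of test functions across meshes; this is precisely the discrete geometric conservation law that motivates \eqref{Karea-1}.
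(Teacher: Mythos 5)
Your proof is correct, and it is essentially the argument the paper invokes: the paper's proof simply defers to the geometric conservation law (Lemma 2.1) of \cite{Zhang-Huang-Qiu-2019arXiv}, whose content is exactly your computation that $\mathcal{A}(C,\phi)|_{K}=C\,\nabla\cdot\dot{\bm{X}}|_{K}\int_{K}\phi\,d\bm{x}$ combined with the identification of test functions through the reference simplex and the stage-by-stage volume update \eqref{Karea-1}. You have correctly isolated the one place where \eqref{Karea-1} is actually needed, so this is the same approach, just written out in full rather than cited.
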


\begin{proof} This proposition follows from the same proof for the geometric conservation law
(Lamma 2.1) in \cite{Zhang-Huang-Qiu-2019arXiv}.
\hfill
\end{proof}

\begin{prop}
[\cite{Zhang-Huang-Qiu-2019arXiv}]
\label{PP-interp}
When the linear scaling PP limiter \cite{Liu-Osher1996,ZhangShu2010,ZhangXiaShu2012} is further applied
at each step,
the DG-interpolation scheme \eqref{third-interp} preserves the solution positivity
in the sense that, for $\nu = 0, 1, ...$, if $q_h^{\nu}$ has nonnegative cell averages for all elements
and is nonnegative at a set of special points at each element of $\mathcal{T}_h(\varsigma^\nu)$,
then $q_h^{\nu+1}$ has nonnegative cell averages for all elements
and is nonnegative at the corresponding set of special points at each element of $\mathcal{T}_h(\varsigma^{\nu+1})$. The DG-interpolation with the linear scaling PP limiter we call as the PP-DG-interpolation.
\end{prop}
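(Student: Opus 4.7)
The plan is to follow the standard Zhang--Shu positivity-preservation framework adapted to the quasi-Lagrangian setting. Since the proposition is stated for the full three-stage scheme \eqref{third-interp}, the first step is to exploit the fact that the SSP Runge--Kutta formulation writes each stage as a convex combination of forward-Euler updates of the spatial operator $\mathcal{A}$. Because convex combinations preserve both nonnegativity of cell averages and the property of being a linear scaling PP-limited polynomial, it suffices to prove the result for a single forward-Euler step followed by one application of the linear scaling PP limiter.

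Next I would establish the key cell-average statement: if $q_h^\nu$ has nonnegative cell averages and is nonnegative at the designated Gauss--Lobatto-type special points on each element of $\mathcal{T}_h(\varsigma^\nu)$, then the forward-Euler update produces a polynomial whose cell average on the deformed element $K^{\nu,(1)}$ is nonnegative. To do this I would take $\phi\equiv 1$ in \eqref{semi-DG}, use the GCL volume update \eqref{Karea-1} to eliminate the $\nabla\cdot\dot{\bm X}$ contribution from the volume integral, and then rewrite the resulting expression for $|K^{\nu,(1)}|\,\bar q_K^{\nu,(1)}$ as a linear combination of the cell-averaged value $\bar q_K^\nu$ (expanded via the Gauss--Lobatto quadrature rule that produces exactly the special points) and the edge contributions from the local Lax--Friedrichs flux \eqref{llf-flux}. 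A standard rearrangement splits each edge term into a piece involving $q_K^{\textup{int}}$ and a piece involving $q_{K'}^{\textup{ext}}$, both with coefficients that are nonnegative precisely when $\Delta\varsigma^\nu$ satisfies the CFL restriction \eqref{DG-interp-cfl} with $\mathcal{C}_p=1/(2k+2)$ (this is the weight of the boundary node in the $(k+1)$-point Gauss--Lobatto rule). The construction then expresses the updated cell average as a convex combination of values of $q_h^\nu$ at the special points interior to $K$ and on its edges, all nonnegative by hypothesis, hence $\bar q_K^{\nu,(1)}\ge 0$.

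Once nonnegativity of the cell average is in hand, applying the linear scaling PP limiter of \cite{Liu-Osher1996,ZhangShu2010,ZhangXiaShu2012} rescales the polynomial about its cell mean so that its values at the designated special points of $K^{\nu,(1)}$ become nonnegative while the cell average is left invariant. This yields a post-limited $q_h^{\nu,(1)}$ which satisfies both positivity hypotheses on $\mathcal{T}_h(\varsigma^{\nu,(1)})$, closing the induction for one forward-Euler sub-step. Combining with the SSP convex-combination argument above completes the proof for the full scheme \eqref{third-interp}.

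The main obstacle is the second step: identifying the correct Gauss--Lobatto decomposition of the cell-average update in the moving-mesh setting and verifying that the GCL volume correction \eqref{Karea-1} exactly cancels the spurious divergence term so that the remaining coefficients are manifestly nonnegative under the stated CFL. Everything else (SSP convexity and the action of the linear scaling limiter) is standard. Since the statement is quoted from \cite{Zhang-Huang-Qiu-2019arXiv}, I would cite that reference for the detailed algebra rather than reproducing it in full.
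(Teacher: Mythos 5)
The paper does not actually prove this proposition: it is quoted verbatim from \cite{Zhang-Huang-Qiu-2019arXiv} with no proof environment attached, so there is nothing in this manuscript to compare your argument against line by line. That said, your outline is the standard Zhang--Shu positivity framework (SSP stages as convex combinations of forward-Euler updates, a Gauss--Lobatto decomposition of the updated cell average under a CFL restriction, then the linear scaling limiter to push nonnegativity from the cell average to the special points), and this is indeed the route the cited reference takes, so your plan is sound and consistent with the source. Two small imprecisions are worth fixing. First, when you take $\phi\equiv 1$ in \eqref{semi-DG} the volume term $\int_{K}(q_h\dot{\bm{X}})\cdot\nabla\phi\,d\bm{x}$ vanishes identically because $\nabla\phi=0$; the GCL update \eqref{Karea-1} does not ``cancel'' anything in the weak form but rather supplies the new volume $|K^{\nu,(1)}|$ by which you divide to form the new cell average, and you must separately check that $|K^{\nu,(1)}|>0$ under the step-size restriction \eqref{DG-interp-cfl} (this is where the mesh-velocity bound enters, in addition to the edge-flux convexity argument). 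Second, your identification of $\mathcal{C}_p=1/(2k+2)$ as ``the weight of the boundary node in the $(k+1)$-point Gauss--Lobatto rule'' is not exact (the endpoint weight of the $N$-point Gauss--Lobatto rule on a unit interval is $1/(N(N-1))$, with $N$ chosen so that the rule is exact for degree $2N-3\ge k$); the constant in the paper is a practical choice and the precise CFL bound should be taken from the cited reference rather than asserted. Neither issue undermines the overall structure, and since both the paper and you defer the detailed algebra to \cite{Zhang-Huang-Qiu-2019arXiv}, the proposal is acceptable as a sketch.
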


\begin{rem}
\label{rem-DG-interp}{\em
It is reasonable to assume that the mesh movement speed is finite during the course of computation.
Under this assumption, the deformation between $\mathcal{T}_h^{n}$ and $\mathcal{T}_h^{n+1}$ is proportional
to $\Delta t^n$, i.e.,
\[
\dot{\bm{x}}_i = \bm{x}_i^{n+1} - \bm{x}_i^{n} = \mathcal{O} (\Delta t^n) .
\]
From (\ref{Xdot-1}), we have
\[
\max\limits_{e,K}|\dot{\bm{X}}^{e}\cdot \bm{n}^e_K | = \mathcal{O} (\Delta t^n) .
\]
Combining this with (\ref{MMDG-cfl}) and (\ref{DG-interp-cfl}), we get
\[
\Delta \varsigma =\mathcal{O} \left ( \max\limits_{K, e}\Big{(} \max\limits_s (|\lambda^{s}((\tilde{U}^n_{h,K})^{int})|, |\lambda^{s}((\tilde{U}^n_{h,K})^{ext})|)\Big{)}\right ) ,
\]
which is close to being constant since the characteristic speeds (\ref{eigen-123}) are close to $c$.
As a result, only a constant number of steps are needed in integrating (\ref{pde}). Since each step of (\ref{third-interp})
requires $\mathcal{O}(N_v)$ operations, the total cost of the DG-interpolation is $\mathcal{O}(N_v)$.
The detailed discussion on the cost of DG-interpolation can be seen in \cite{Zhang-Huang-Qiu-2019arXiv}.
}
\end{rem}

\section{MMPDE moving mesh method}
\label{sec:mmpde}

In this section we describe the generation of the new mesh $\mathcal{T}^{n+1}_{h}$
from the old one $\mathcal{T}^{n}_{h}$ using the MMPDE moving mesh method \cite{Huang-etal-1994JCP,Huang-etal-1994Siam,Huang-Russell-2011}.
We use the $\bm{\xi}$-formulation of the method and its new implementation
proposed in \cite{Huang-Kamenski-2015JCP}.

For mesh generation purpose, we introduce a computational mesh
$\mathcal{T}_c= \{\bm{\xi}_1, \;...,\;\bm{\xi}_{N_v} \}$ which can be viewed as a deformation of the mesh $\mathcal{T}^{n}_{h}$ and serves as an intermediate variable.
We also assume that a reference computational mesh
$\hat{\mathcal{T}}_c = \{\hat{ \bm{\xi}}_1,\; ...,\; \hat{ \bm{\xi}}_{N_v} \}$ has been given.
This mesh is kept fixed in the computation and should be chosen as uniform as possible.
Often it can be chosen as the initial physical mesh.

A key idea of the MMPDE method is to view any nonuniform mesh as a uniform one
in some Riemannian metric \cite{Huang-2006CiCP, Huang-Russell-2011}.
The metric tensor $\mathbb{M}$,  a symmetric and uniformly positive definite
matrix-valued function defined on $\mathcal{D}$, provides the information
needed for determining the size, shape, and orientation of the mesh elements throughout the domain.
Various metric tensors have been proposed; e.g., see \cite{Huang-Sun-2003JCP, Huang-Russell-2011}.
We use here an optimal metric tensor based on the $L^2$-norm of piece linear interpolation error.
To be specific, we consider a physical variable $q$ and its finite element approximation $q_h$.
Let $H_K$ be a recovered Hessian of $q_h$ on $K\in \mathcal{T}_{h}$
obtained using the least squares fitting Hessian recovery technique \cite{Zhang-Naga-2005Siam}.
Assuming that the eigen-decomposition of $H_K$ is given by
\[
H_K = Q\hbox{diag}(\lambda_1,\cdots,\lambda_d)Q^T,
\]
where $Q$ is an orthogonal matrix, we define
\[
|H_K| = Q\hbox{diag}(|\lambda_1|,...,|\lambda_d|)Q^T.
\]
The metric tensor is defined as
\begin{equation}\label{mer}
\mathbb{M}_{K} =\det \big{(}\alpha_{h}\mathbb{I}+|H_K|\big{)}^{-\frac{1}{d+4}}
\big{(}\alpha_{h}\mathbb{I}+|H_K|\big{)},
\quad \forall K \in \mathcal{T}_h
\end{equation}
where $\mathbb{I}$ is the identity matrix, $\det(\cdot)$ is the determinant of a matrix,
$d$ is the dimension of the spatial domain,
and $\alpha_{h}$ is a regularization parameter defined through the algebraic equation
\[
\sum_{K\in\mathcal{T}_h}|K|\, \hbox{det}(\alpha_{h}\mathbb{I}+|H_K|)^{\frac{2}{d+4}}
=2\sum_{K\in\mathcal{T}_h}|K|\,
\hbox{det}(|H_K|)^{\frac{2}{d+4}}.
\]

It is interesting to note that the entropy has been commonly used for adaptive mesh simulation
of shock waves \cite{Li-Tang-2006JSC,Luo-Huang-Qiu-2019JCP}. For the SWEs, the total energy
$E =\frac{1}{2}(hu^2+hv^2)+\frac{1}{2}gh^2+ghB$ plays the role of the entropy function. However, we have found through numerical experiment
that the metric tensor based on the equilibrium variable $\mathcal{E}=\frac{1}{2}(u^2+v^2)+g(h+B) $ and
the water depth $h$ leads to more desirable mesh concentration than that based on the total energy/entropy alone
(cf. Figs.~\ref{Fig:test3-1d-large-metric}, \ref{Fig:test3-1d-small-metric},
and \ref{Fig:test4-2d-comparison-12}~--~\ref{Fig:test4-2d-comparison-48}).
A motivation for this is that the mesh is adapted to both the perturbations of the lake-at-rest steady state
through $\mathcal{E}$ and the water depth distribution through $h$.
Specifically, we first compute $\mathbb{M}^{\mathcal{E}}_{K} $ and $\mathbb{M}^{h}_{K} $ using
(\ref{mer}) with $q = \mathcal{E}$ and $h$, respectively.
Then, a new metric tensor is obtained through matrix intersection as
\begin{equation}\label{mer-Eh}
\tilde{\mathbb{M}}_{K}
=\frac{{\mathbb{M}}^{\mathcal{E}}_{K} }{|||{\mathbb{M}}^{\mathcal{E}}_{K}|||}
\cap\frac{\delta\cdot{\mathbb{M}}^{h}_{K}}{|||{\mathbb{M}}^{h}_{K}|||} ,
%\quad \hbox{with}\quad
%\bar{\mathbb{M}}_K = \frac{1}{d+1}\sum_{i=1}^{d+1}\mathbb{M}(\bm{x}_i^K),
\end{equation}
where $|||\cdot |||$ denotes the maximum absolute value
of the entries of a matrix and ``$\cap$" stands for matrix intersection.
% and\[\bar{\mathbb{M}}_K = \frac{1}{1+d}\sum_{\bm{x}_i\in \mathcal{K}_v}\mathbb{M}(\bm{x}_i),\]
%we need to explain $\max$ (over what) and $\bar{M}$.
%where
The reader is referred to
\cite{Zhang-Cheng-Huang-Qiu-2020CiCP} for the definition and geometric interpretation of matrix intersection.
We take $\delta=0.1$ in our numerical simulations.

The next step is to make sure that the metric tensor is bounded since it is known \cite{Huang-Kamenski-2018MC}
that the moving mesh generated by the MMPDE method stays nonsingular (free from tangling)
if the metric tensor is bounded and the initial mesh is nonsingular.
We define
\begin{equation}\label{mer-ceil}
  \hat{\mathbb{M}}_K = \frac{\tilde{\mathbb{M}}_K}{\sqrt{1+\Big(\frac{\hbox{tr}(\tilde{\mathbb{M}}_K)}{\beta}\Big)^2}},
  \end{equation}
where $\beta$ is a positive number and $\hbox{tr}(\cdot)$ denotes the trace of a matrix. It is not difficult
to show $\lambda_{\max}(\hat{\mathbb{M}}_K) \leq \beta$.
In our computation, we take $\beta=1000$.

The metric tensor is generally non-smooth since the received Hessian can contain the discontinuous.
A common practice in the moving mesh context is to smooth the metric tensor for smoother meshes.
Here we simply average the metric tensor at a vertex over its neighboring vertices, i.e.,
\begin{equation}
\mathbb{M}_i \longleftarrow \frac{1}{|\mathcal{N}_i|}\sum_{j\in\mathcal{N}_i}\hat{\mathbb{M}}_j, \quad \forall
\bm{x}_i\in \mathcal{T}_h
\end{equation}
where $\hat{\mathbb{M}}_i$ is the nodal value at vertex $\bm{x}_i$ obtained by area-averaging the values
of the metric tensor on the neighboring elements,
the $\mathcal{N}_i$ denotes the set of the immediate neighboring vertices (including itself) of $\bm{x}_i$,
and $|\mathcal{N}_i|$ is the length of the set $\mathcal{N}_i$.
This process can be repeated several times every time in the computation.

Having defined the metric tensor, we are ready to describe the MMPDE method.
Recalling that $\mathcal{T}_c$ and $\mathcal{T}_h$ are a deformation of each other,
for any element $K\in \mathcal{T}_h$, there exits an element $K_c\in \mathcal{T}_c$ corresponding to $K$.
Denote the affine mapping from $K_c$ to $K$ as $F_K$ and its Jacobian matrix as $F'_K$.
It is known \cite{Huang-2006CiCP, Huang-Russell-2011}
that any mesh $\mathcal{T}_h$ which is uniform
in the metric $\mathbb{M}$ in reference to the computational mesh $\mathcal{T}_c$, satisfies
\begin{align}
\label{ec}
& |K|\sqrt{\det(\mathbb{M}_K)}=\frac{\sigma_h|K_c|}{|\mathcal{D}_c|},&\quad \forall K\in \mathcal{T}_h
\\
\label{al}
& \frac{1}{d}\hbox{tr}\big{(} (F'_K)^{-1}\mathbb{M}_K^{-1}(F'_K)^{-T}\big{)}
= \hbox{det}\big{(}
(F'_K)^{-1}\mathbb{M}_K^{-1}(F'_K)^{-T}\big{)}^{\frac{1}{d}},&\quad \forall K \in \mathcal{T}_h
\end{align}
where $\mathbb{M}_K$ is the average of $\mathbb{M}$ over $K$ and
\[
|\mathcal{D}_c|=\sum\limits_{K_c\in\mathcal{T}_c}|K_c|,\quad \sigma_h
=\sum\limits_{K\in\mathcal{T}_h}|K|\hbox{det}(\mathbb{M}_K)^{\frac{1}{2}} .
\]
The equidistribution condition \eqref{ec} determines the size of elements through
the metric tensor $\mathbb{M}$ and implies that all the elements have the same size in the metric tensor $\mathbb{M}$.
The alignment condition \eqref{al} determines the shape and orientation of $K$ through $\mathbb{M}$ and shape of $K_c$ and implies that any element $K$ when measured in the metric $\mathbb{M}_K$ is similar to $K_c$ measured in the Euclidean metric.

A mesh energy function associated with the equidistribution and alignment conditions is given by
\begin{equation}
\label{energy}
\begin{split}
I_h(\mathcal{T}_h;\mathcal{T}_c)
=&\frac{1}{3}\sum_{K\in\mathcal{T}_h}|K|\hbox{det}(\mathbb{M}_K)^{\frac{1}{2}}\big{(}
\hbox{tr}((F'_K)^{-1}\mathbb{M}^{-1}_K(F'_K)^{-T})\big{)}^{\frac{3 d}{4}}
\\&+\frac{1}{3} d^{\frac{3 d}{4}}\sum_{K\in\mathcal{T}_h}|K|\hbox{det}(\mathbb{M}_K)^{\frac{1}{2}}
\left (\hbox{det}(F'_K) \hbox{det}(\mathbb{M}_K)^{\frac{1}{2}} \right )^{-\frac{3}{2}},
\end{split}
\end{equation}
which is a Riemann sum of a continuous functional developed in \cite{Huang-2001JCP}.

For a given mesh $\mathcal{T}_h^{n}$, we want to find a new mesh $\mathcal{T}_h^{n+1}$ by minmimizing the mesh energy function $I_h$.
Note that $I_h(\mathcal{T}_h,\mathcal{T}_c)$ is a function of
the vertices $\bm{\xi}_i,\; i=1,...,N_v$ of $\mathcal{T}_c$
and the vertices $\bm{x}_i,\;i=1,...,N_v$ of $\mathcal{T}_h$.
We take $\mathcal{T}_h$ as $\mathcal{T}_h^{n}$,
minimize $I_h(\mathcal{T}_h^{n},\mathcal{T}_c)$ with respect to $\mathcal{T}_c$
(and denote the final mesh as $\mathcal{T}_c^{n+1}$),
and obtain $\mathcal{T}_h^{n+1}$ through the relation between
$\mathcal{T}_h^{n}$ and $\mathcal{T}_c^{n+1}$.
The minimization of $I_h(\mathcal{T}_h^{n},\mathcal{T}_c)$ is carried out by solving
the mesh equation defined as the gradient system of the energy function (the MMPDE approach), i.e.,
\begin{equation}\label{MM}
\begin{split}
\frac{d \bm{\xi}_i }{dt}
=-\frac{\hbox{det}(\mathbb{M}(\bm{x_i}))^{\frac{1}{2}} }{\tau}
\Big{(}\frac{\partial I_h(\mathcal{T}^{n}_h,\mathcal{T}_c)}{\partial \pmb{\xi}_i}\Big{)}^T,
\quad i=1,...,N_v
\end{split}
\end{equation}
where ${\partial I_h }/{\partial \bm{\xi}_i}$ is considered as a row vector and
$\tau>0$ is a parameter used to adjust the response time of mesh movement to
the changes in $\mathbb{M}$.

Define the function $G$ associated with the energy \eqref{energy} as
\begin{equation}\label{G}
G(\mathbb{J},\hbox{det}(\mathbb{J}))=
\frac{1}{3}\hbox{det}(\mathbb{M}_{K})^{\frac{1}{2}}
(\hbox{tr}(\mathbb{J}\mathbb{M}_{K}^{-1}\mathbb{J}^T))^{\frac{3 d}{4}}
+\frac{1}{3} d^{\frac{3 d}{4}}\hbox{det}(\mathbb{M}_{K})^{\frac{1}{2}}
\left (\frac{\hbox{det}(\mathbb{J})}{\hbox{det}(\mathbb{M}_{K})^{\frac{1}{2}}} \right )^{\frac{3}{2}},
\end{equation}
where $\mathbb{J}=(F'_K)^{-1} = E_{K_c}E_K^{-1}$,
and $E_K=[\bm{x}_1^K-\bm{x}_0^K,\;...,\; \bm{x}_d^K-\bm{x}_0^K]$ and
$E_{K_c}=[\bm{\xi}_1^K - \bm{\xi}_0^K,\;...,\; \bm{\xi}_d^K -\bm{\xi}_0^K ]$ are the edge matrices of $K$ and $K_c$, respectively.
Using the notion of scalar-by-matrix differentiation, the derivatives of $G$ with respect
to $\mathbb{J}$ and $\hbox{det}(\mathbb{J})$ can be obtained \cite{Huang-Kamenski-2015JCP} as
\begin{align}
&\frac{\partial G}{\partial\mathbb{J}}=
\frac{d}{2}\hbox{det}(\mathbb{M}_{K})^{\frac{1}{2}}
(\hbox{tr}(\mathbb{J}\mathbb{M}_K^{-1}\mathbb{J}^T))^{\frac{3 d}{4}-1}\mathbb{M}_K^{-1}\mathbb{J}^T,
\label{partial-J}
\\
&\frac{\partial G}{\partial \hbox{det}(\mathbb{J})}=
\frac{1}{2} d^\frac{3 d}{4}\hbox{det}(\mathbb{M}_{K})^{-\frac{1}{4}}\hbox{det}(\mathbb{J})^{\frac{1}{2}}
\label{partial-detJ}.
\end{align}
With these formulas, we can rewrite \eqref{MM} as (cf. \cite{Huang-Kamenski-2015JCP})
\begin{equation}\label{xim}
\begin{split}
\frac{d\bm{\xi}_i}{dt}=
\frac{\hbox{det}(\mathbb{M}(\bm{x_i}))^{\frac{1}{2}} }{\tau}
\sum_{K\in\omega_i}|K|\bm{v}^K_{i_K},
\quad i=1,...,N_v
\end{split}
\end{equation}
where $\omega_i$ is the element patch associated with the vertex $\bm{x}_i$,
$i_K$ is the local index of $\bm{x}_i$ on $K$,
and $\bm{v}^K_{i_K}$ is the local velocity contributed by the element $K$ to the vertex $i_K$.
The local velocities $\bm{v}^K_{i_K},\; i_K=0,...,d$ are given by
\begin{equation}\label{vjk}
\begin{split}
\left[
  \begin{array}{c}
    ( \bm{v}_1^K  )^T  \\
    ( \bm{v}_2^K  )^T   \\
    \vdots\\
    ( \bm{v}_d^K  )^T   \\
   \end{array}
 \right]
=
- E_K^{-1}\frac{\partial G }{\partial \mathbb{J} }
- \frac{\partial G}{\partial \hbox{det}(\mathbb{J})}\frac{ \hbox{det}( E_{K_c} )}
{\hbox{det}(E_K)} E_{K_c}^{-1},
\quad
\bm{v}^K_0=-\sum_{i_K=1}^d\bm{v}^K_{i_K} .
\end{split}
\end{equation}
Note that the velocities for the boundary nodes need to be modified properly.
For example, they should be set to be zero for the corner vertices.
For other boundary vertices, the velocities should be modified such that their normal component along the domain boundary are zeros so they slide only along the boundary and do not move out of the domain.

Starting with the reference computational mesh $\hat{\mathcal{T}}_c$ as the initial mesh,
the mesh equation \eqref{xim} is integrated over a physical time step.
The obtained new mesh is denoted by $\mathcal{T}_c^{n+1}$.
Note that $\mathcal{T}_h^{n}$ is kept fixed during the integration
and forms a correspondence with $\mathcal{T}_c^{n+1}$,
i.e., $\mathcal{T}_h^{n}=\Phi_h(\mathcal{T}_c^{n+1})$.
Then the new physical mesh $\mathcal{T}_h^{n+1}$ is defined
 as $\mathcal{T}_h^{n+1}=\Phi_h(\hat{\mathcal{T}}_c)$,
which can be computed using linear interpolation.

\section{Numerical results}
\label{sec:numerical-results}

In this section we present numerical results obtained with the MM-DG method described in the previous sections for the one- and two-dimensional SWEs.
We take the CFL number $C_{cfl}$ as $0.3$ for $P^1$-DG and $0.18$ for $P^2$-DG in one dimension,  and $0.2$ for $P^1$-DG and $0.1$ for $P^2$-DG in two dimensions, unless otherwise stated.
For the TVB limiter implemented in the RKDG scheme, the constant $M_{tvb}$ is taken as zero except for accuracy test Example \ref{test5-1d} (to avoid the accuracy order reduction near the extrema).
For mesh movement, we take $\tau=0.1 N^{-1/d}$.
The gravitation constant is taken as $g = 9.812$ in the computation.
For examples where the analytical exact solution is unavailable, we take the numerical solution obtained by the $P^2$-DG method with a fixed mesh of $N=10000$ as a reference solution, unless otherwise stated.
The error is computed with $21$ points used on each element.
Except for the accuracy test (Example \ref{test5-1d}) and the lake-at-rest steady-state flow tests
(Example \ref{test1+2-1d} and Example \ref{test1-2d}), to save space we omit the results for the $P^1$-DG method
since they are similar to those for the $P^2$-DG method.

\begin{example}\label{test5-1d}
(The accuracy test for the 1D SWEs over a sinusoidal hump.)
\end{example}
This example is used to verify the high order accuracy of the MM-DG method.
The bottom topography is
\begin{equation*}
B(x)=\sin^2(\pi x), \quad x \in (0,1).
\end{equation*}
We use periodic boundary conditions for all unknown variables. The initial conditions are given as
\begin{equation*}
h(x,0)=5+e^{\cos(2\pi x)}, \quad
hu(x,0)=\sin\big(\cos(2\pi x)\big) .
\end{equation*}
This example has been used as an accuracy-test in literature, e.g. see
\cite{Li-etal-2018JCAM,Xing-Shu-2006JCP,Xing-Shu-2006CiCP}.
The final simulation time is $T=0.1$ when the solutions remain smooth.
A reference solution is obtained using the $P^2$-DG method with a fixed mesh of $N=20000$.
The minmod constant $M_{tvb}$ in the TVB limiter is taken as $40$ in this example to avoid the accuracy order reduction near the extrema.
The $L^1$ and $L^\infty$ norm of the error with moving and fixed meshes for the water depth $h$ and water discharge $hu$ are plotted in Fig.~\ref{Fig:test5-1d-h-hu}.
It can be seen that the MM-DG method has the expected second-order convergence for $P^1$-DG and
third-order for $P^2$-DG in both $L^1$ and $L^\infty$ norm. Moreover, the figures show that a moving mesh produces a slightly smaller but otherwise comparable error than a fixed mesh of the same number of elements.
The error is much smaller for $P^2$-DG method than the $P^1$-DG method, as expected for smooth problems.

\begin{figure}[h]
\centering
\subfigure[$L^1$-error: $h$]{
\includegraphics[width=0.4\textwidth,trim=40 0 40 10,clip]{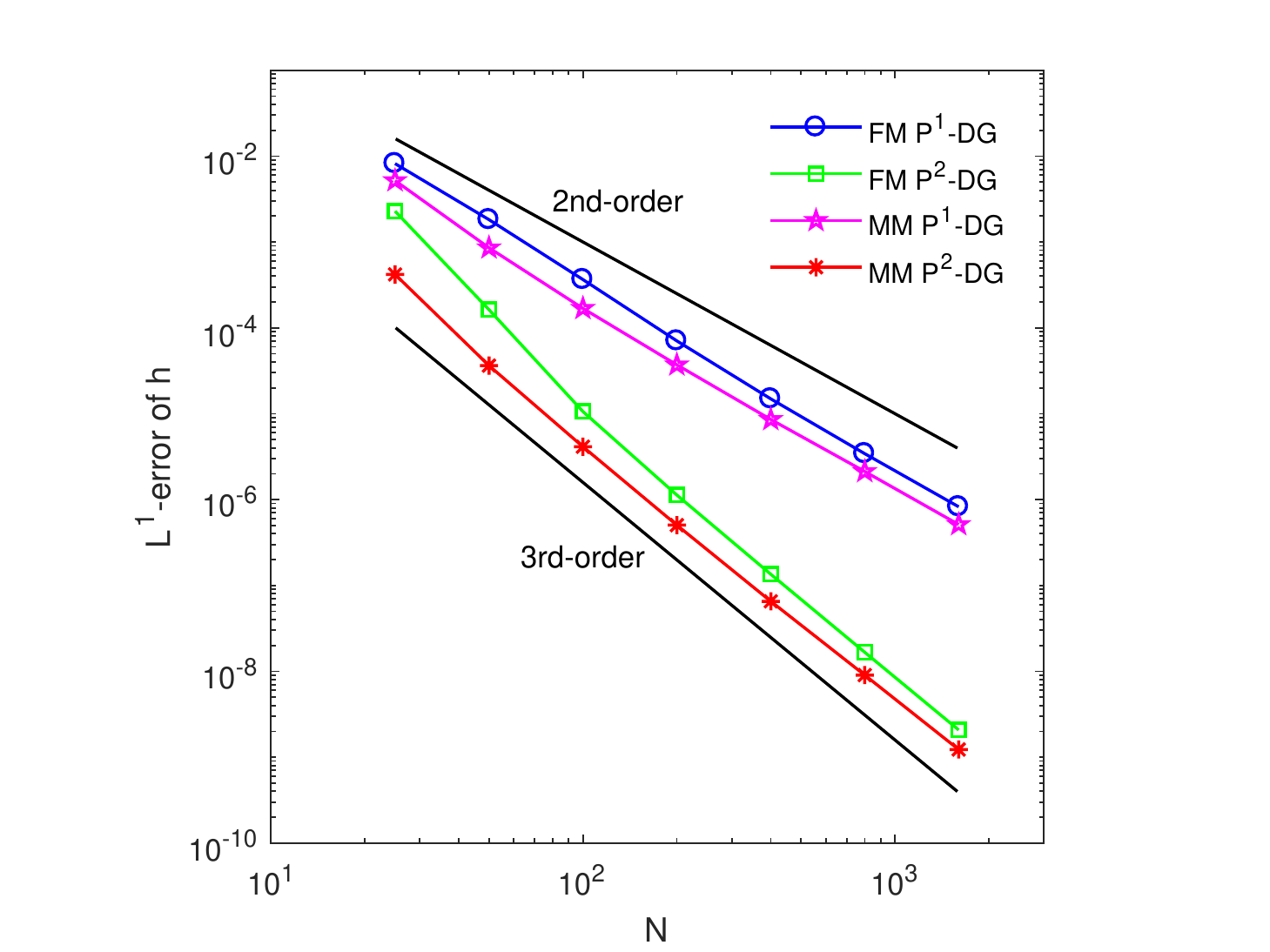}}
\subfigure[$L^\infty$-error: $h$]{
\includegraphics[width=0.4\textwidth,trim=40 0 40 10,clip]{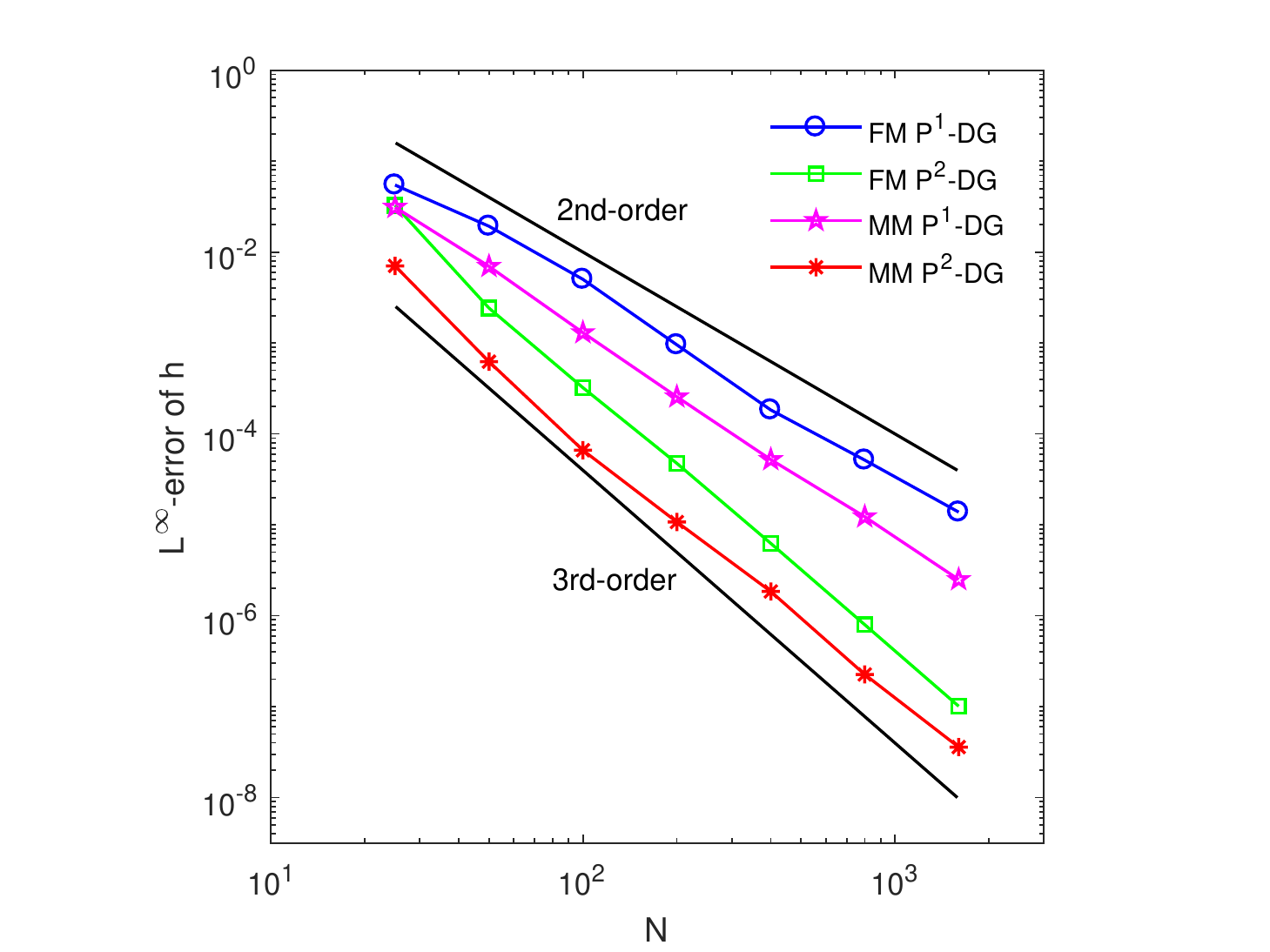}}
\subfigure[$L^1$-error: $hu$]{
\includegraphics[width=0.4\textwidth,trim=40 0 40 10,clip]{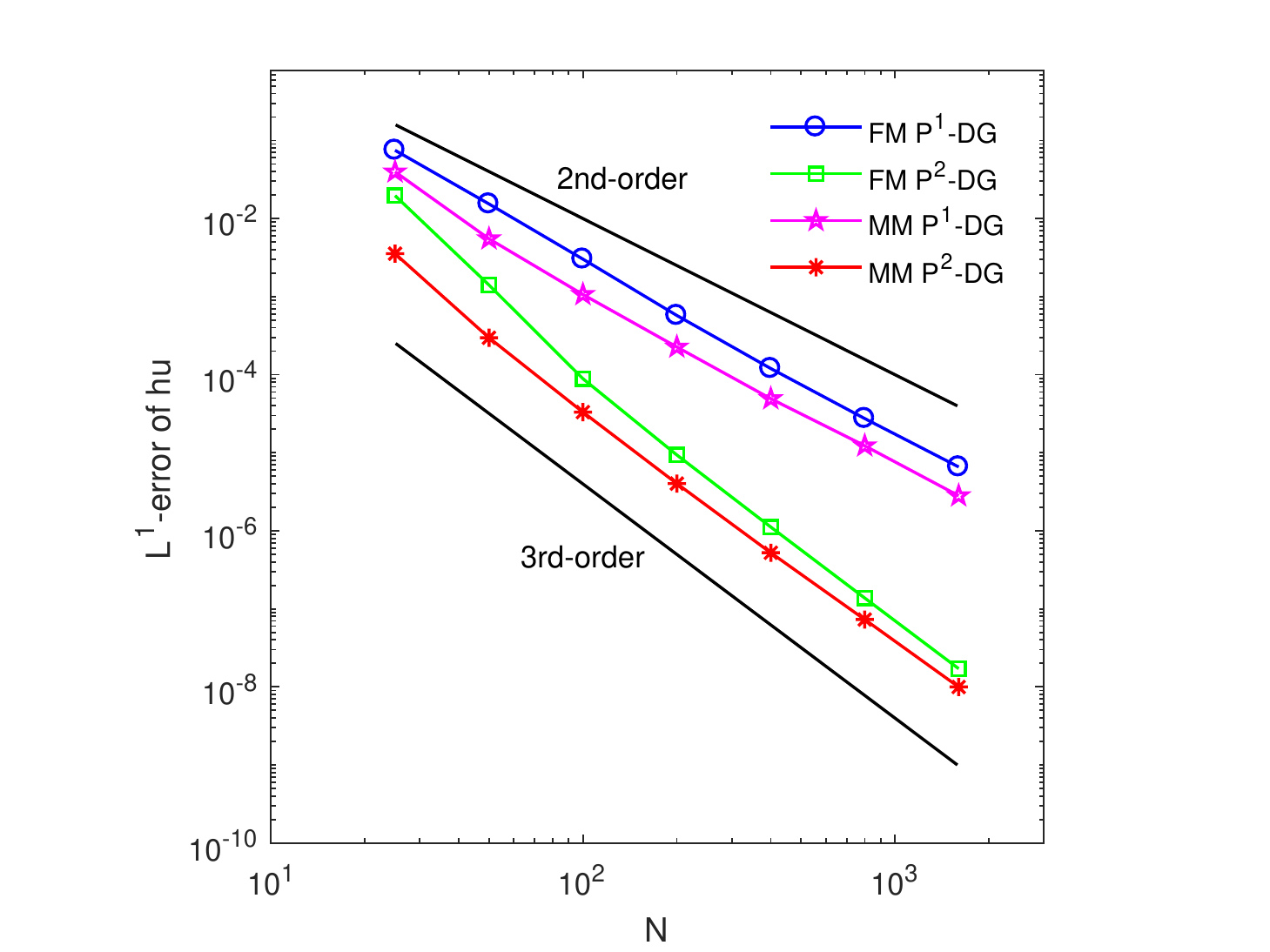}}
\subfigure[$L^\infty$-error: $hu$]{
\includegraphics[width=0.4\textwidth,trim=40 0 40 10,clip]{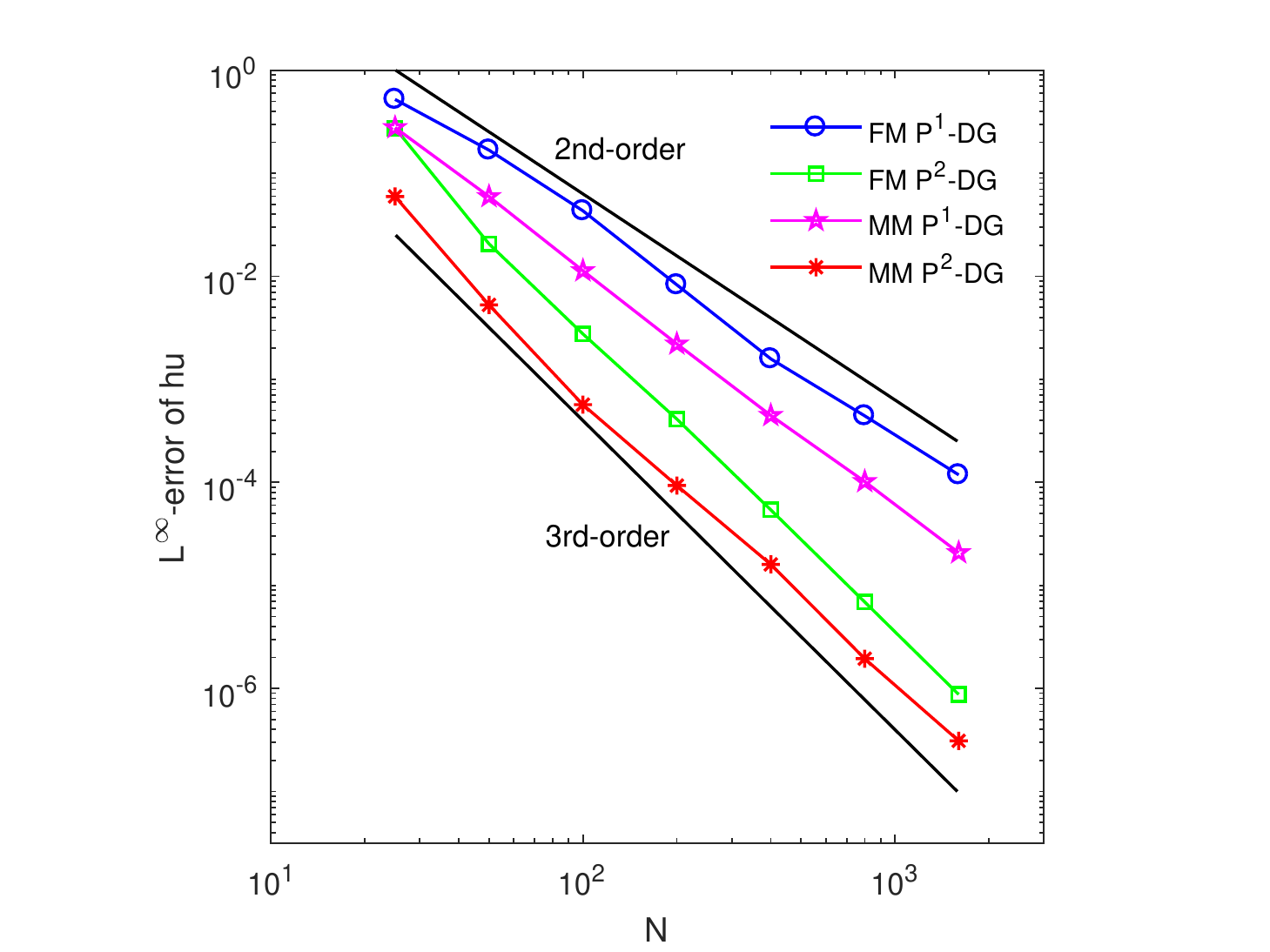}}
\caption{Example \ref{test5-1d}. The $L^1$ and $L^\infty$ norm of the error with moving and fixed meshes for the  water depth $h$ and the water discharge $hu$.}
\label{Fig:test5-1d-h-hu}
\end{figure}

\begin{example}\label{test1+2-1d}
(The lake-at-rest steady-state flow test for the 1D SWEs.)
\end{example}
In this example we consider a lake-at-rest steady-state flow over three different bottom topographies
to verify the well-balance property of the MM-DG method. Two of these topographies are smooth and
the other is discontinuous, i.e.,
\begin{align}
& B(x)=5 e^{-\frac25(x-5)^2},  \quad x \in (0,10)
\label{B-1}
\\
& B(x)=
\begin{cases}
4,& \text{for } x \in (4, 8) \\
0,& \text{for } x \in (0,4) \cup (8, 10)
\end{cases}
\label{B-2}
\\
& B(x)=10 e^{-\frac25(x-5)^2},  \quad x \in (0,10) .
\label{B-1-1}
\end{align}
The initial data is at the lake-at-rest steady state,
\begin{equation*}
u=0, \quad h+B=10.
\end{equation*}
This still water state solution should be preserved exactly if the MM-DG method is well-balanced.

We computed the solution up to $t=0.5$ on moving meshes.
To show that the well-balance property is attained up to the level of round-off error (double precision in MATLAB), we present the $L^1$ and $L^\infty$ error for $h+B$ and $hu$ in Tables~\ref{tab:test1-1d-p1-error} and~\ref{tab:test1-1d-p2-error} for $P^1$-DG and $P^2$-DG on moving meshes, respectively, for the smooth bottom topography.
Similar results for the discontinuous bottom topography are shown in Tables~\ref{tab:test2-1d-p1-error} and~\ref{tab:test2-1d-p2-error}. For comparison purpose, we also list the results obtained
with the $L^2$-projection of $B$.
From the results one can see that the MM-DG method with $B$ updated with DG-interpolation
preserves the lake-at-rest solution. On the other hand, when $B$ is updated with $L^2$-projection,
the lake-at-rest solution is not preserved by the MM-DG method.
The results show that the error, more precisely, the deviation from the lake-at-rest steady state,
is about 2nd-order for the smooth
topography for both $P^1$-DG and $P^2$-DG.
For the discontinuous topography, the error is about 2nd-order
in $L^1$ and 1st-order in $L^\infty$.
To explain this, we notice that in the current situation the deviation from the lake-at-rest steady state
comes from the error of the scheme which in turn is dominated by the error of the TVB limiter.
For this example, we have not made effort in optimizing the choice
of the TVB minmod constant $M_{tvb}$. Instead, we have simply chosen $M_{tvb} = 0$, for which
the TVB limiter becomes the TVD limiter that is second-order at best in one dimension \cite{DG-series2,Goodman-LeVeque-1985MC}.

The bottom topography (\ref{B-1-1}) (with a dry region) is used to verify the well-balance and PP properties
of the MM-DG method. This topography has a similar shape as (\ref{B-1}) but its height touches
the water level at $x=5$ where $h=0$ initially. The computed water depth can have negative values during
the computation and the application of the PP limiter is necessary.
We computed the solution up to $t=0.5$ on moving meshes.
To ensure positivity preservation (cf. \cite{Xing-Zhang-Shu-2010}), we take
smaller CFL numbers as $0.3$ and $0.15$ for $P^1$-DG and $P^2$-DG, respectively, for this test.
The $L^1$ and $L^\infty$ error for $h+B$ and $hu$ is listed in Table~\ref{tab:test-1d-wb-pp-error} for $P^1$-DG and $P^2$-DG. The results clearly show that the MM-DG method is well-balanced.

\begin{table}[htb]
\caption{Example \ref{test1+2-1d}. Well-balance test for the moving mesh $P^1$-DG method over the smooth bottom topography (\ref{B-1}).}
\vspace{3pt}
\centering
\label{tab:test1-1d-p1-error}
\begin{tabular}{ccccc}
 \toprule
  & \multicolumn{2}{c}{$h+B$}&\multicolumn{2}{c}{$hu$}\\
$N$&$L^1$-error  &$L^{\infty}$-error  &$L^1$-error  &$L^{\infty}$-error \\
\midrule
~   &  \multicolumn{4}{c}{\em{$B$ updated with DG-interpolation} }\\
\midrule
50	&	1.970E-14	&	2.551E-14	&	2.810E-14	&	7.858E-14	\\
100	&	4.066E-14	&	4.958E-14	&	4.955E-14	&	1.164E-13	\\
200	&	7.874E-14	&	9.286E-14	&	7.765E-14	&	1.911E-13	\\
\midrule
~   &  \multicolumn{4}{c}{\em{B updated with $L^2$-projection} }\\
\midrule
50	&3.393E-04&	9.016E-04&	2.827E-03&	7.166E-03\\
100	&9.550E-05&	2.852E-04&	8.202E-04&	2.338E-03\\
200	&2.381E-05&	8.143E-05&	2.039E-04&	6.823E-04\\
 \bottomrule	
\end{tabular}
\end{table}

\begin{table}[htb]
\caption{Example \ref{test1+2-1d}. Well-balance test for the moving mesh $P^2$-DG method over the smooth bottom topography (\ref{B-1}).}
\vspace{3pt}
\centering
\label{tab:test1-1d-p2-error}
\begin{tabular}{ccccc}
 \toprule
  & \multicolumn{2}{c}{$h+B$}&\multicolumn{2}{c}{$hu$}\\
$N$&$L^1$-error  &$L^{\infty}$-error  &$L^1$-error  &$L^{\infty}$-error \\
\midrule
~   &  \multicolumn{4}{c}{\em{$B$ updated with DG-interpolation} }\\
\midrule
50	&3.051E-14	&	3.813E-14&	3.951E-14	&	9.699E-14\\
100	&6.282E-14	&	7.583E-14&	7.464E-14	&	1.769E-13\\
200	&1.276E-13	&	1.503E-13&	1.449E-13	&	3.481E-13\\
\midrule
~   &  \multicolumn{4}{c}{\em{$B$ updated with $L^2$-projection} }\\
\midrule
50	&4.922E-06&	1.251E-05&	4.995E-05&	1.393E-04\\
100	&1.160E-06&	3.775E-06&	1.139E-05&	3.573E-05\\
200	&2.395E-07&	8.878E-07&	2.299E-06&	8.875E-06\\
 \bottomrule	
\end{tabular}
\end{table}

\begin{table}[htb]
\caption{Example \ref{test1+2-1d}. Well-balance test for the moving mesh $P^1$-DG method over the discontinuous bottom topography (\ref{B-2}).}
\vspace{3pt}
\centering
\label{tab:test2-1d-p1-error}
\begin{tabular}{ccccc}
 \toprule
  & \multicolumn{2}{c}{$h+B$}&\multicolumn{2}{c}{$hu$}\\
$N$&$L^1$-error  &$L^{\infty}$-error  &$L^1$-error  &$L^{\infty}$-error \\
\midrule
~   &  \multicolumn{4}{c}{\em{$B$ updated with DG-interpolation} }\\
\midrule
50	&1.175E-14	&	2.014E-14&	4.114E-14	&	1.001E-13\\
100	&1.912E-14	&	3.143E-14&	6.747E-14	&	1.501E-13\\
200	&3.108E-14	&	5.032E-14&	1.136E-13	&	2.576E-13\\
\midrule
~   &  \multicolumn{4}{c}{\em{$B$ updated with $L^2$-projection} }\\
\midrule
50	&6.001E-03	&	5.651E-02&	4.542E-02	&	3.688E-01\\
100	&1.823E-03	&	2.629E-02&	1.637E-02	&	1.713E-01\\
200	&5.499E-04	&	1.237E-02&	5.166E-03	&	9.294E-02\\
 \bottomrule	
\end{tabular}
\end{table}

\begin{table}[htb]
\caption{Example \ref{test1+2-1d}. Well-balance test for the moving mesh $P^2$-DG method over the discontinuous bottom topography (\ref{B-2}).}
\vspace{3pt}
\centering
\label{tab:test2-1d-p2-error}
\begin{tabular}{ccccc}
 \toprule
  & \multicolumn{2}{c}{$h+B$}&\multicolumn{2}{c}{$hu$}\\
$N$&$L^1$-error  &$L^{\infty}$-error  &$L^1$-error  &$L^{\infty}$-error \\
\midrule
~   &  \multicolumn{4}{c}{\em{$B$ updated with DG-interpolation} }\\
\midrule
50	&1.518E-14	&	2.635E-14&	5.499E-14	&	1.562E-13\\
100	&2.166E-14	&	4.096E-14&	1.003E-13	&	2.367E-13\\
200	&3.419E-14	&	6.230E-14&	1.679E-13	&	3.781E-13\\
\midrule
~   &  \multicolumn{4}{c}{\em{$B$ updated with $L^2$-projection} }\\
\midrule
50	&3.548E-03	&	2.796E-02&	3.090E-02	&	2.522E-01\\
100	&8.669E-04	&	1.073E-02&	7.585E-03	&	9.252E-02\\
200	&1.903E-04	&	3.710E-03&	1.624E-03	&	2.714E-02\\						
 \bottomrule	
\end{tabular}
\end{table}

\begin{table}[H]
\caption{Example \ref{test1+2-1d}. Well-balance test for the MM-DG method over the bottom topography (\ref{B-1-1})
(with a dry region).}
\vspace{3pt}
\centering
\label{tab:test-1d-wb-pp-error}
\begin{tabular}{ccccc}
 \toprule
  & \multicolumn{2}{c}{$h+B$}&\multicolumn{2}{c}{$hu$}\\
$N$&$L^1$-error  &$L^{\infty}$-error  &$L^1$-error  &$L^{\infty}$-error \\
\midrule
~   &  \multicolumn{4}{c}{\em{$P^1$ MM-DG method with PP limiter} }\\
\midrule
50	&	1.362E-14	&	2.131E-14	&	3.802E-14	&	8.519E-14	\\
100	&	2.426E-14	&	3.709E-14	&	6.166E-14	&	1.356E-13	\\
200	&	5.079E-14	&	7.733E-14	&	1.225E-13	&	2.779E-13	\\
\midrule
~   &  \multicolumn{4}{c}{\em{$P^2$ MM-DG method with PP limiter} }\\
\midrule
50	&	2.404E-14	&	4.065E-14	&	7.451E-14	&	1.781E-13	\\
100	&	4.197E-14	&	6.853E-14	&	1.355E-13	&	2.919E-13	\\
200	&	8.531E-14	&	1.322E-13	&	2.496E-13	&	5.637E-13	\\		
 \bottomrule	
\end{tabular}
\end{table}

%%%%%%%%%%%%%%%%%%%%%%%%%%%%%%%%%
\begin{example}\label{test3-1d}
(The perturbed lake-at-rest steady-state flow test for the 1D SWEs.)
\end{example}
This example has been used by a number of researchers
\cite{Donat-etal-2014JCP,LeVeque-1998JCP,Li-etal-2018JCAM,Xing-Shu-2006JCP,Xing-Shu-2006CiCP}
to demonstrate the capability of a numerical method to accurately compute small perturbations of a lake-at-rest steady-state flow over
non-flat bottom topographies.
The bottom topography in this example is taken as
\begin{equation}
\label{B-5}
B(x)=
\begin{cases}
0.25(\cos(10\pi(x-1.5))+1),& \text{for } x \in (1.4, 1.6) \\
0,& \text{for } x \in (0, 1.4) \cup (1.6, 2)
\end{cases}
\end{equation}
which has a bump in the middle of the physical interval.
The initial conditions are given by
\begin{equation*}
h(x,0)=
\begin{cases}
1-B(x)+\varepsilon,& \text{for}~1.1\leq x\leq 1.2\\
1-B(x),& \text{otherwise}
\end{cases}
\quad \hbox{and}\quad u(x,0)=0,
\end{equation*}
where $\varepsilon$ is a constant for the perturbation magnitude.
We consider two cases, $\varepsilon=0.2$ (big pulse) and $\varepsilon=10^{-5}$ (small pulse).
The initial conditions for both cases are plotted in Fig.~\ref{Fig:test3-1d-initial}.
The initial wave splits into two waves propagating at the characteristic speeds $\pm \sqrt{gh}$.
We use the transmissive boundary conditions and compute the solution up to $t=0.2$ when the right wave has
already passed the bottom bump.

The mesh trajectories, water surface $B+h$, and discharge $hu$ at $t=0.2$ obtained with $P^2$-DG and moving and fixed meshes of $N=160$ are shown in Figs.~\ref{Fig:test3-1d-large-metric} (for $\varepsilon=0.2$)
and \ref{Fig:test3-1d-small-metric} (for $\varepsilon=10^{-5}$).
Recall that these results are obtained with the metric tensor \eqref{mer-ceil} based on
the equilibrium variable $\mathcal{E}=\frac{1}{2}u^2+g(h+B)$ and the water depth $h$.
For comparison purpose, we also include the results obtained with the metric tensor based on
the entropy/total energy $E =\frac{1}{2}hu^2+\frac{1}{2}gh^2+ghB$.
Interestingly, for the large pulse $\varepsilon=0.2$ (Fig.~\ref{Fig:test3-1d-large-metric}),
both ways of computing the metric tensor lead to almost indistinguishable solutions although
they have slightly different but correct mesh concentration: the former concentrates more points
in the sharp wave regions whereas the latter leads to more points in the non-flat bottom region.
For the case with a small pulse $\varepsilon=10^{-5}$ (Fig.~\ref{Fig:test3-1d-small-metric}),
the situation is very different. The metric tensor based on $E$ misses the wave regions and concentrates
points only in the non-flat bottom region. On the other hand, the metric tensor based on
$\mathcal{E}$ and $h$ works well for this case too and concentrates mesh points
in both the wave regions and the non-flat bottom region.
The advantages of using $\mathcal{E}$ and $h$ over $E$ in mesh adaptation are clear
when the perturbation is smaller.

It is interesting to see that the small perturbation splits into two waves at about $t=0.0165$
and the mesh elements are concentrated properly near the waves before and after the split.

The water surface $B+h$ and discharge $hu$ obtained with $P^2$-DG and a moving mesh of $N=160$ and fixed meshes of $N=160$ and $N=480$ are plotted in Figs.~\ref{Fig:test3-1d-large-Bph} -- \ref{Fig:test3-1d-large-hu} (for $\varepsilon=0.2$) and Figs.~\ref{Fig:test3-1d-small-Bph} -- \ref{Fig:test3-1d-small-hu} (for $\varepsilon=10^{-5}$).
The results show that the moving mesh solutions with $N=160$ are more accurate than those with the fixed mesh of $N=160$ and $N=480$ and contain no visible spurious numerical oscillations.
They also show that the MM-DG method with moving or fixed meshes is able to capture the waves of
large or small pulses.

Comparison results are shown in Fig.~\ref{Fig:test3-1d-Bjob-large} ($\varepsilon=0.2$) and
Fig.~\ref{Fig:test3-1d-Bjob-small} ($\varepsilon=10^{-5}$) by using DG-interpolation or $L^2$-projection
in updating the bottom topography $B$. The computed solutions are almost indistinguishable for the large pulse
case with $\varepsilon=0.2$. On the other hand, for the small pulse case with $\varepsilon=10^{-5}$
(Fig.~\ref{Fig:test3-1d-Bjob-small}), the MM-DG method with DG-interpolation captures the waves
well whereas the method with $L^2$-projection misses them badly. Recall that the former is well-balanced
but the latter is not.
For the latter, the deviation from the lake-at-rest steady state comes from the error of the scheme
that is second-order at best since the TVB limiter with $M_{tvb}=0$ is used.
When this deviation is smaller than the size of the perturbation, the scheme is able to capture the waves
as shown in Fig.~\ref{Fig:test3-1d-Bjob-large}.
However, when the deviation is larger than the size of the perturbation,
the scheme will miss the waves, as seen in Fig.~\ref{Fig:test3-1d-Bjob-small}, although the situation can be improved with finer meshes. In contrast, well-balanced schemes are able to capture small perturbations
even on a reasonably coarse mesh.

We now comment on the cost of the DG-interpolation scheme.
Recall from Remark~\ref{rem-DG-interp} that for a given example, the integration of (\ref{pde})
requires only a constant number of time steps, which implies that each interpolation costs
$\mathcal{O}(N_v)$ operations. This has been observed
in all the numerical examples we have tested.
For example, the average number of time steps
is around $3$ ($\varepsilon=0.2$) and $4$ ($\varepsilon=10^{-5}$) for $P^2$-DG applied to the current example.
Since the cost of the DG-interpolation scheme has been studied extensively
in \cite{Zhang-Huang-Qiu-2019arXiv}, we will not discuss this further in this work.

To verify the well-balance and positivity-preserving properties of the MM-DG method
we modify the bottom topography (\ref{B-5}) to contain a dry region (near $x = 1.5$),
\begin{equation}
\label{B-5-1}
\begin{split}
&B(x)=
\begin{cases}
0.5(\cos(10\pi(x-1.5))+1),& \text{for } x \in (1.4, 1.6) \\
0,& \text{for } x \in (0, 1.4) \cup (1.6, 2)
\end{cases}
\end{split}
\end{equation}
We repeat the computation with $\varepsilon = 10^{-5}$.
The bottom topography, the initial water level, and the mesh trajectories of $N=160$
obtained with $P^2$ MM-DG method are plotted in Fig.~\ref{Fig:test-1d-wb-pp-preturb-initial}.
The mesh points are concentrated around the shock waves and the non-flat topography region.
Interestingly, the mesh trajectories show that the right moving shock stops after it hits the dry region.

The water surface $B+h$ and discharge $hu$ obtained with $P^2$-DG and a moving mesh of $N=160$
and fixed meshes of $N=160$ and $N=640$ are plotted in Figs.~\ref{Fig:test-1d-perturb-Bph} and
\ref{Fig:test-1d-preturb-hu}. The results show that the MM-DG method with moving or fixed meshes
is able to capture the waves of small perturbation for the situation with dry regions.
Moreover, the moving mesh solutions with $N=160$ are more accurate than those with
fixed mesh of $N=160$ and $N=640$ and contain no visible spurious numerical oscillations.

\begin{figure}[H]
\centering
\subfigure[Big pulse $\varepsilon=0.2$]{
\includegraphics[width=0.4\textwidth,trim=20 0 40 10,clip]{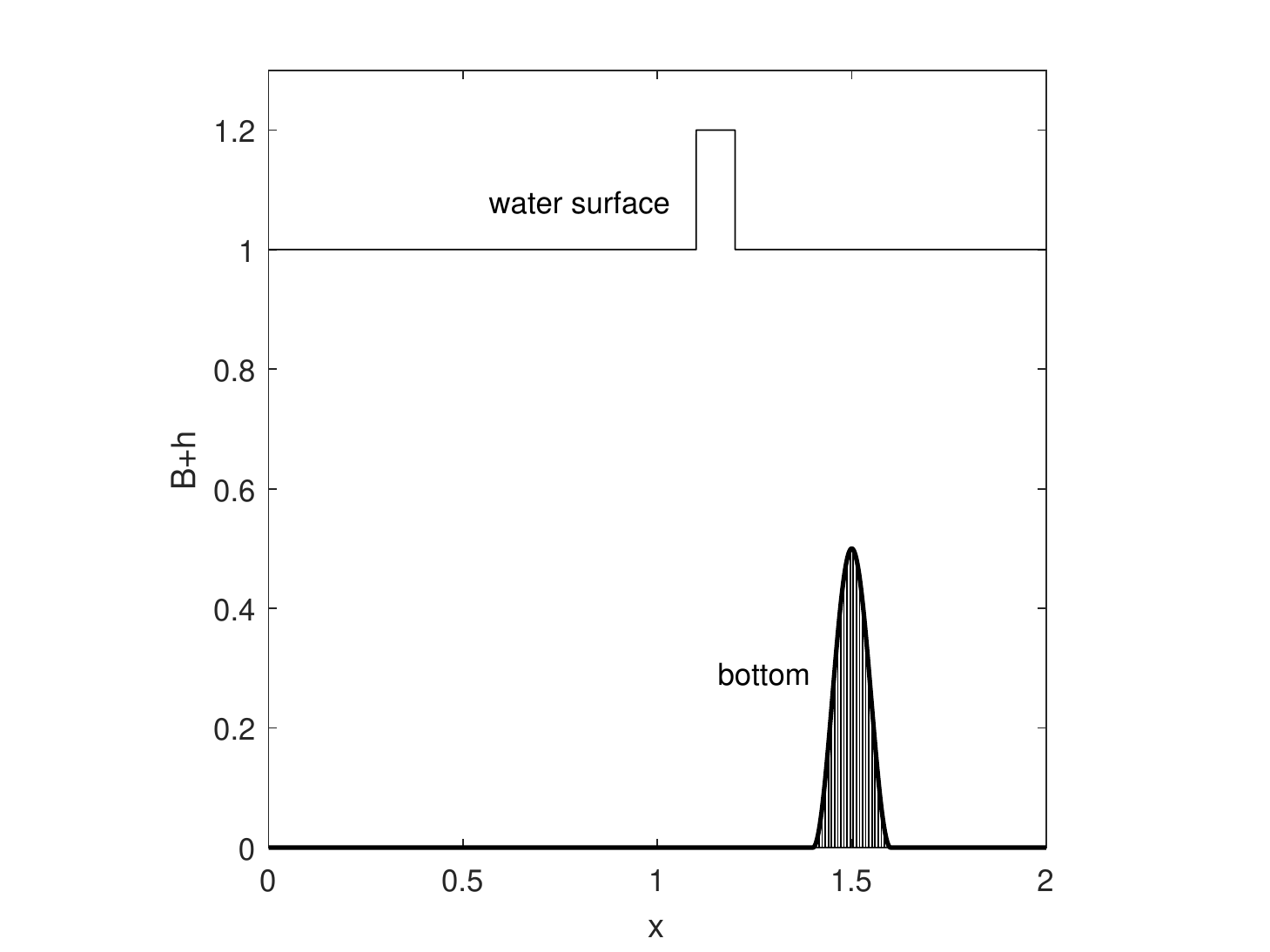}}
\subfigure[Small pulse $\varepsilon=10^{-5}$]{
\includegraphics[width=0.4\textwidth,trim=20 0 40 10,clip]{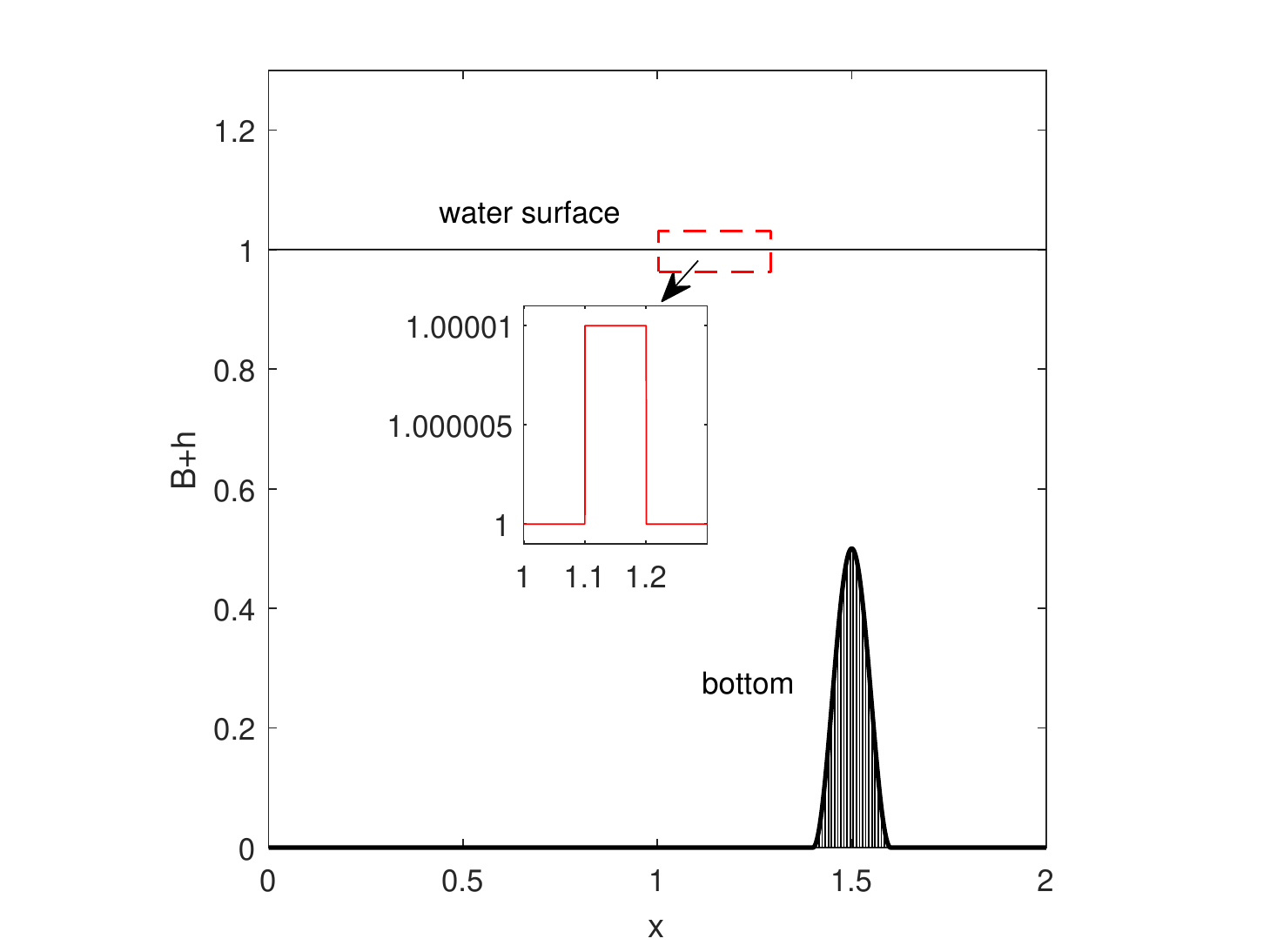}}
\caption{Example \ref{test3-1d}. The initial water surface level $B+h$ and bottom topography $B$ for the pulse of $\varepsilon=0.2$ and $\varepsilon=10^{-5}$.}
\label{Fig:test3-1d-initial}
\end{figure}

\begin{figure}[H]
\centering
\subfigure[$\mathcal{E}$ and $h$: Mesh trajectories]{
\includegraphics[width=0.4\textwidth,trim=20 0 40 10,clip]{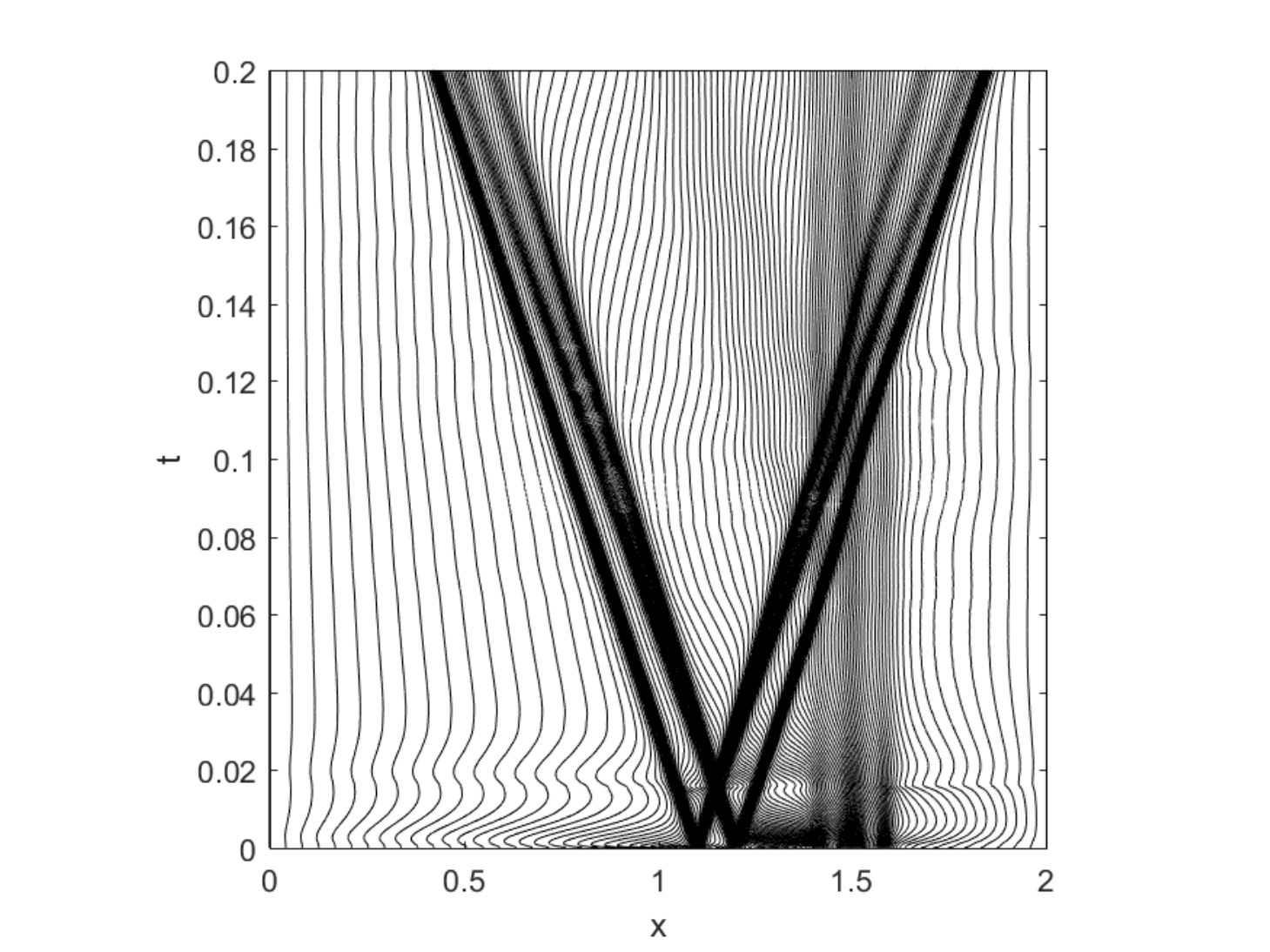}}
\subfigure[Entropy: Mesh trajectories]{
\includegraphics[width=0.4\textwidth,trim=20 0 40 10,clip]{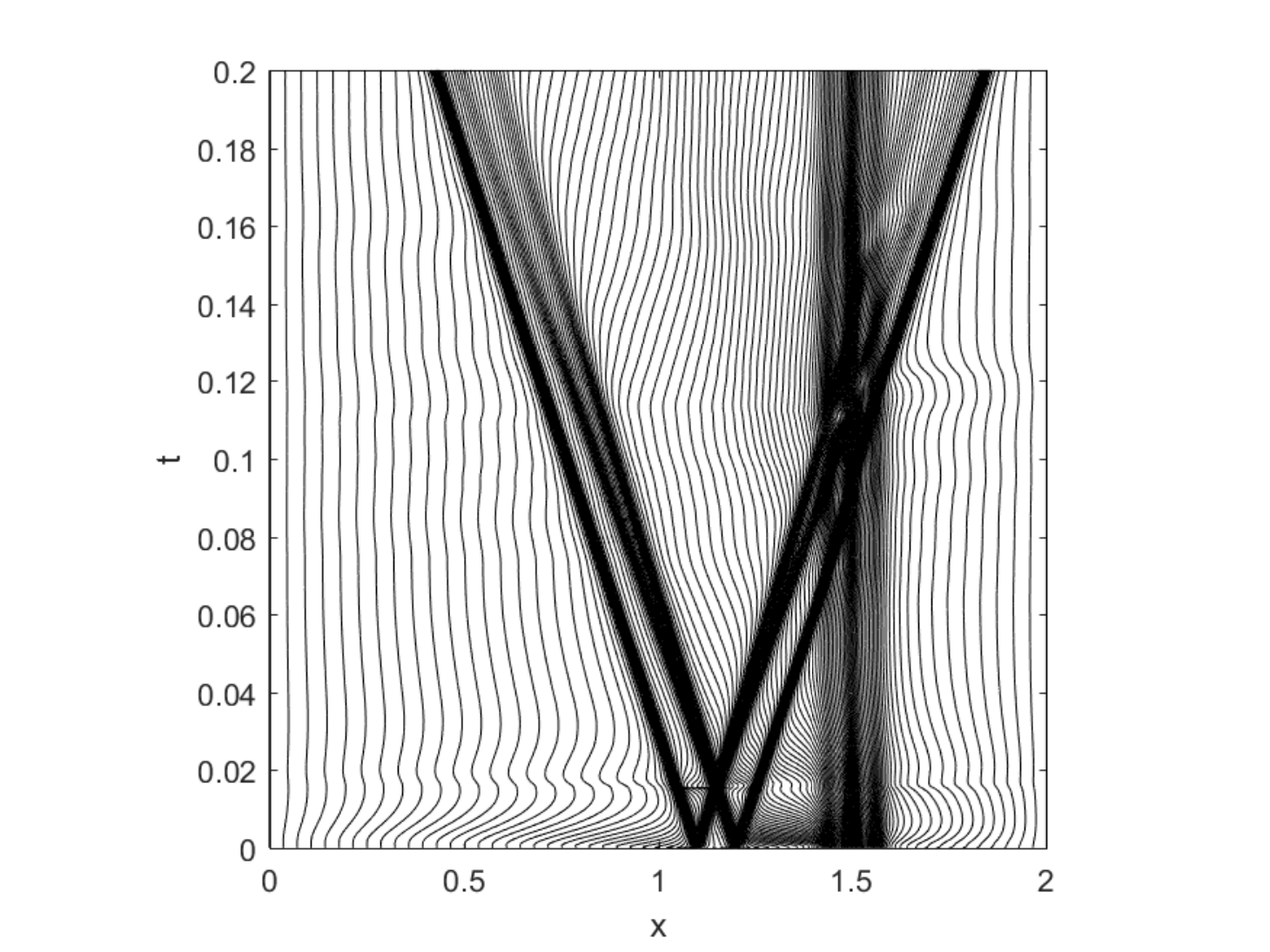}}
\subfigure[$\mathcal{E}$ and $h$: $B+h$]{
\includegraphics[width=0.4\textwidth,trim=20 0 40 10,clip]{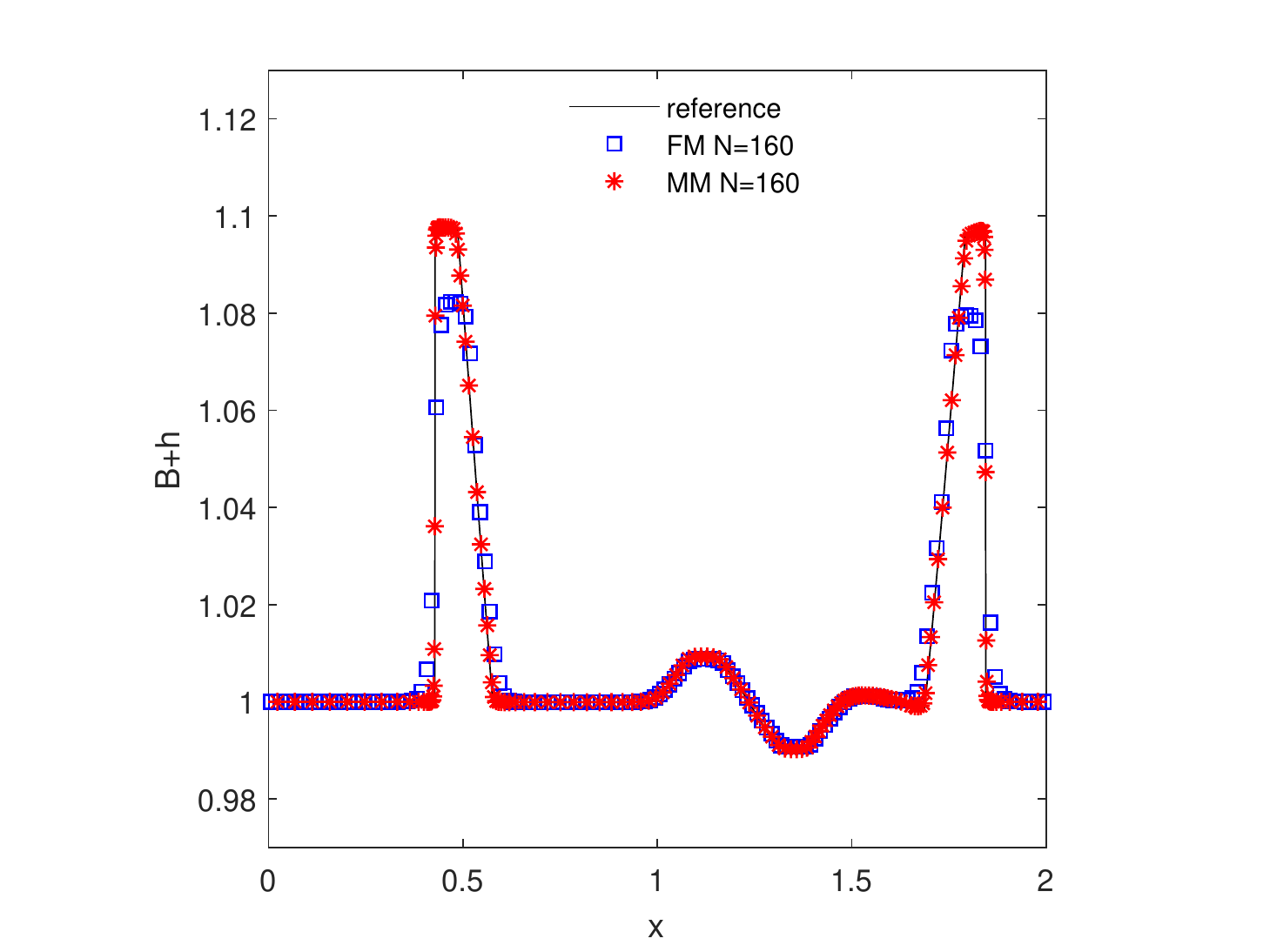}}
\subfigure[Entropy: $B+h$]{
\includegraphics[width=0.4\textwidth,trim=20 0 40 10,clip]{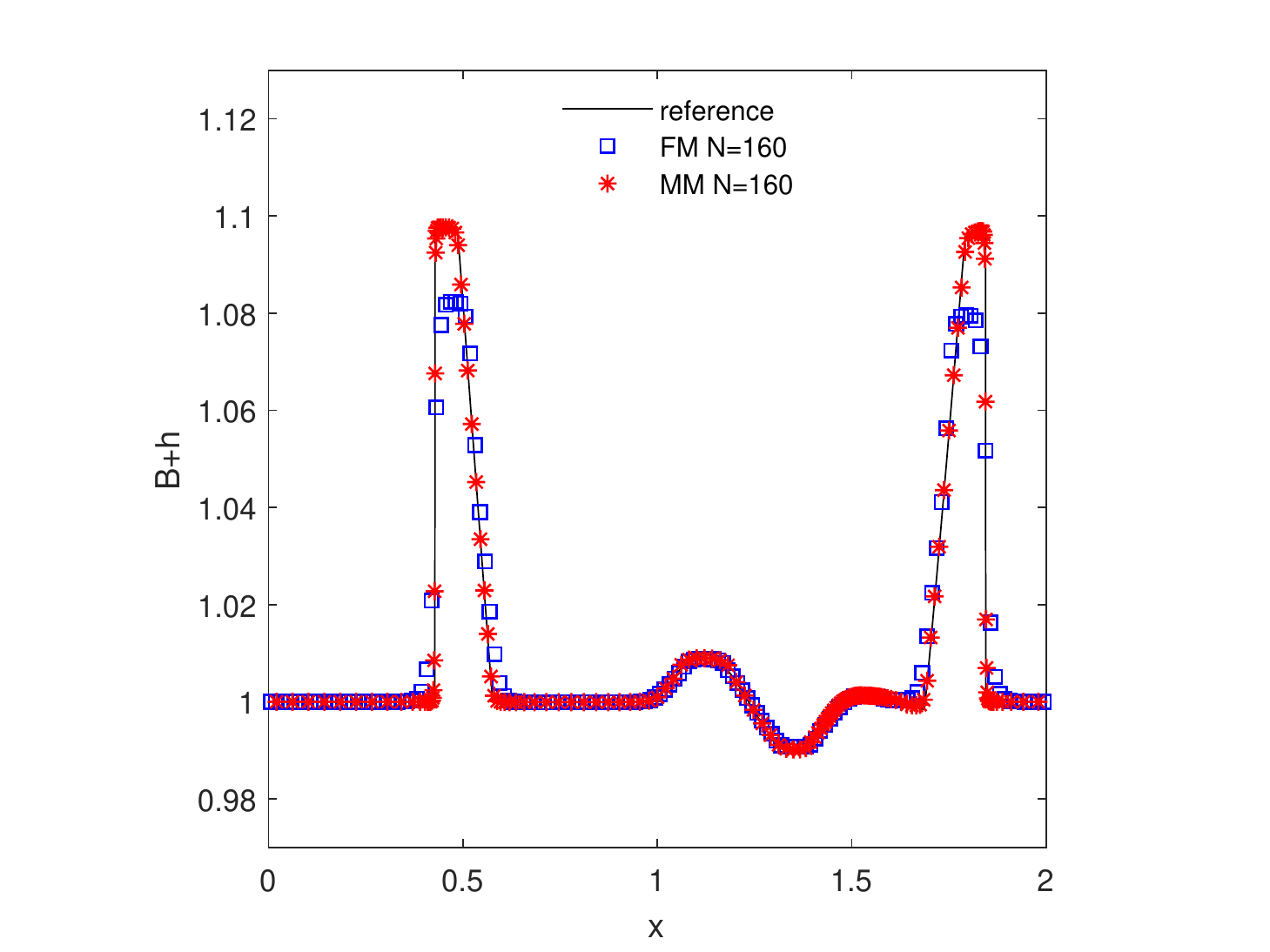}}
\subfigure[$\mathcal{E}$ and $h$: $hu$]{
\includegraphics[width=0.4\textwidth,trim=20 0 40 10,clip]{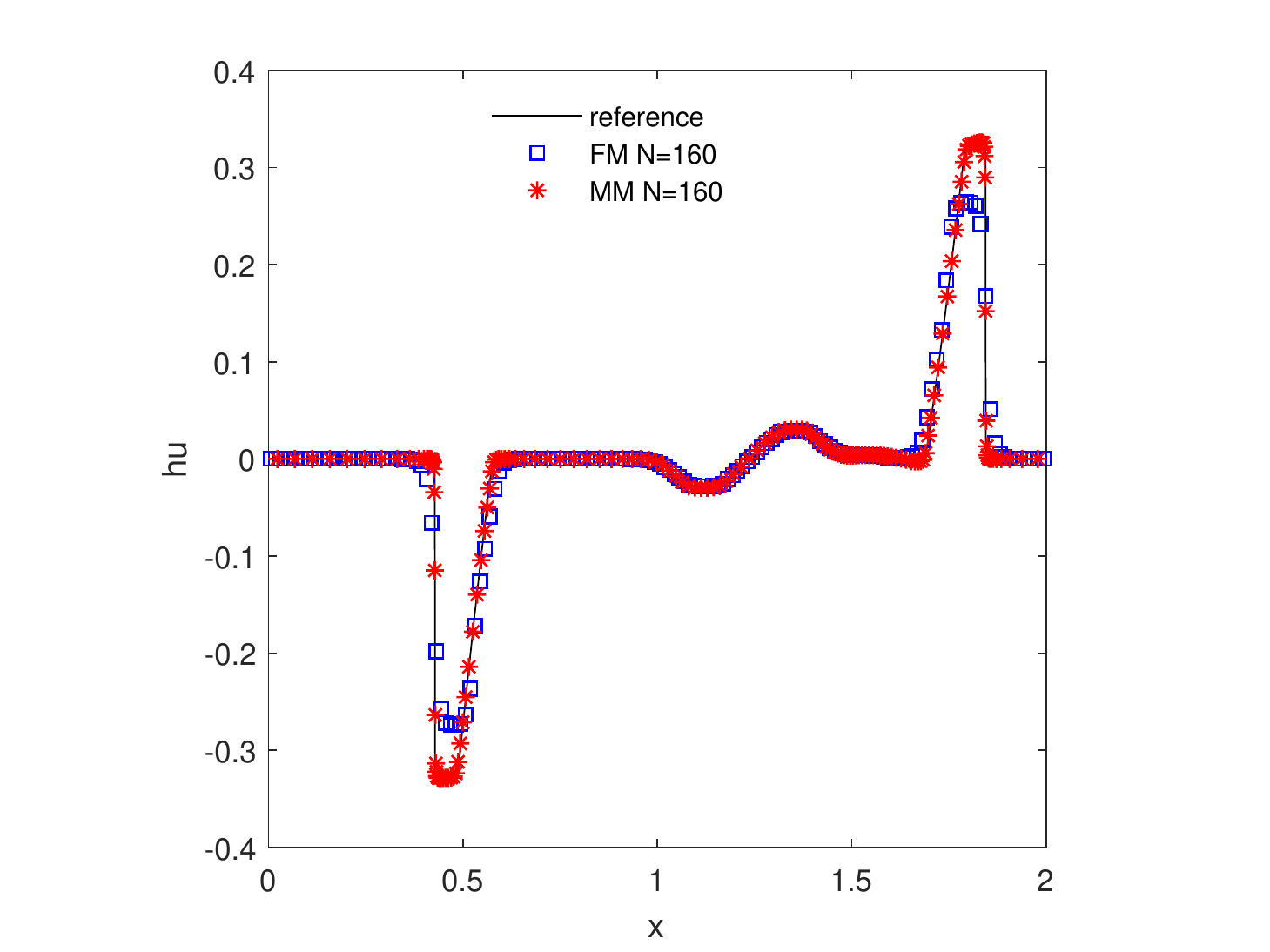}}
\subfigure[Entropy: $hu$]{
\includegraphics[width=0.4\textwidth,trim=20 0 40 10,clip]{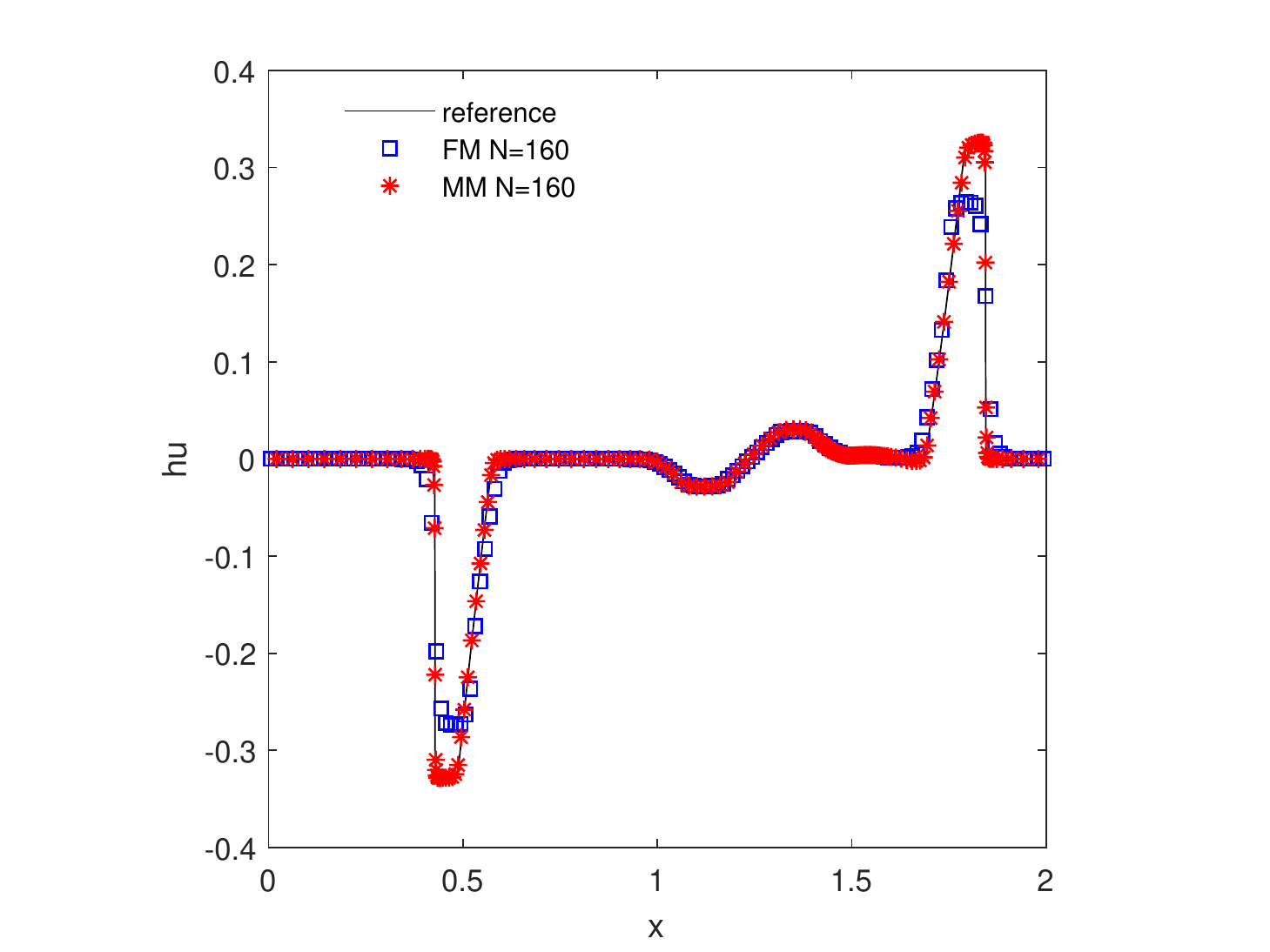}}
\caption{Example \ref{test3-1d}. The mesh trajectories and the water surface $B+h$ and discharge $hu$ at $t=0.2$ obtained with $P^2$-DG of a moving mesh of $N=160$ and a fixed mesh of $N=160$ for a small pulse $\varepsilon=0.2$.
The metric tensor is computed based on the equilibrium variable $\mathcal{E}$ and the water depth $h$ (left column)
or the entropy/total energy (right column).}
\label{Fig:test3-1d-large-metric}
\end{figure}

\begin{figure}[H]
\centering
\subfigure[$\mathcal{E}$ and $h$: Mesh trajectories]{
\includegraphics[width=0.4\textwidth,trim=20 0 40 10,clip]{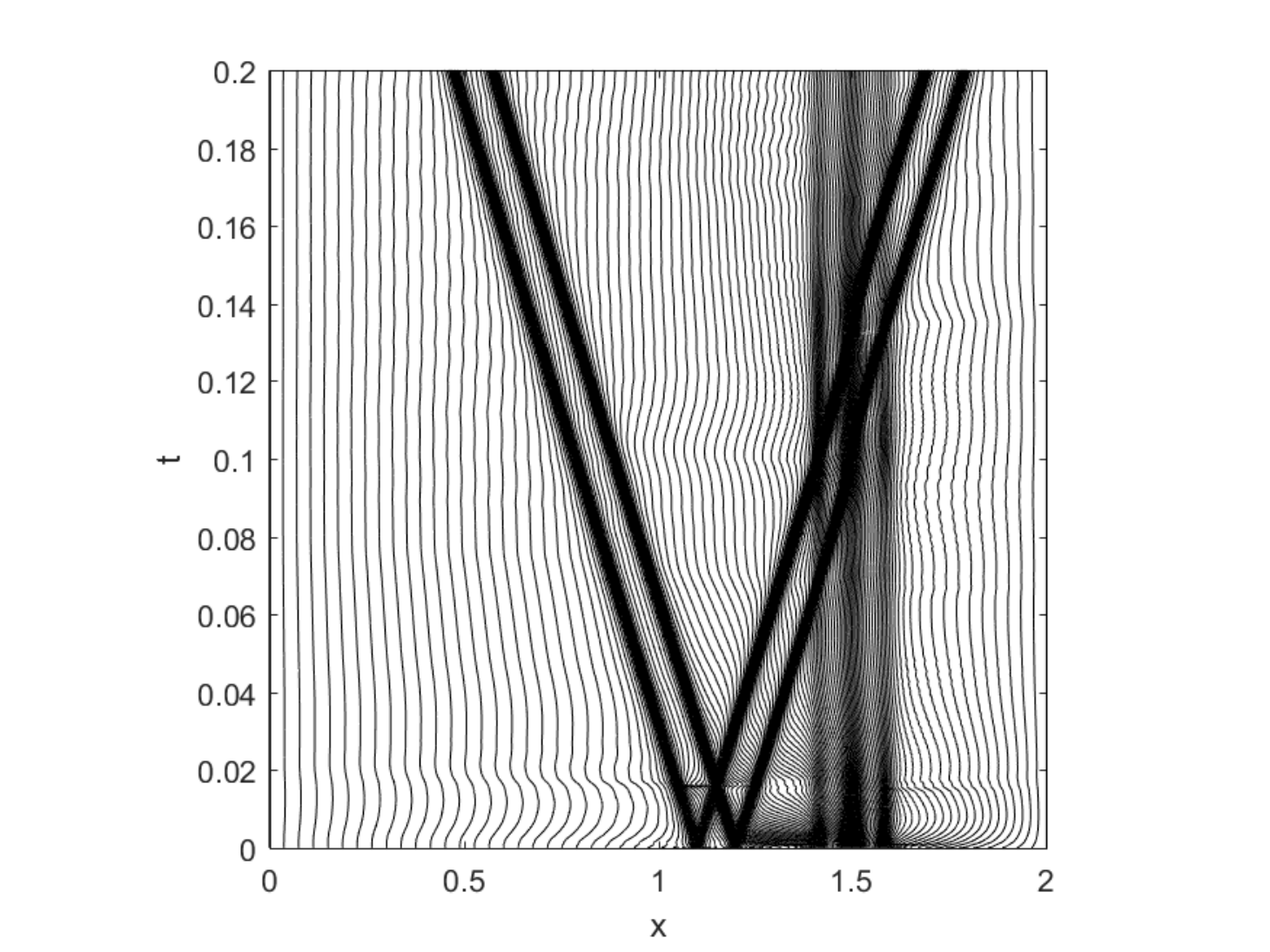}}
\subfigure[Entropy: Mesh trajectories]{
\includegraphics[width=0.4\textwidth,trim=20 0 40 10,clip]{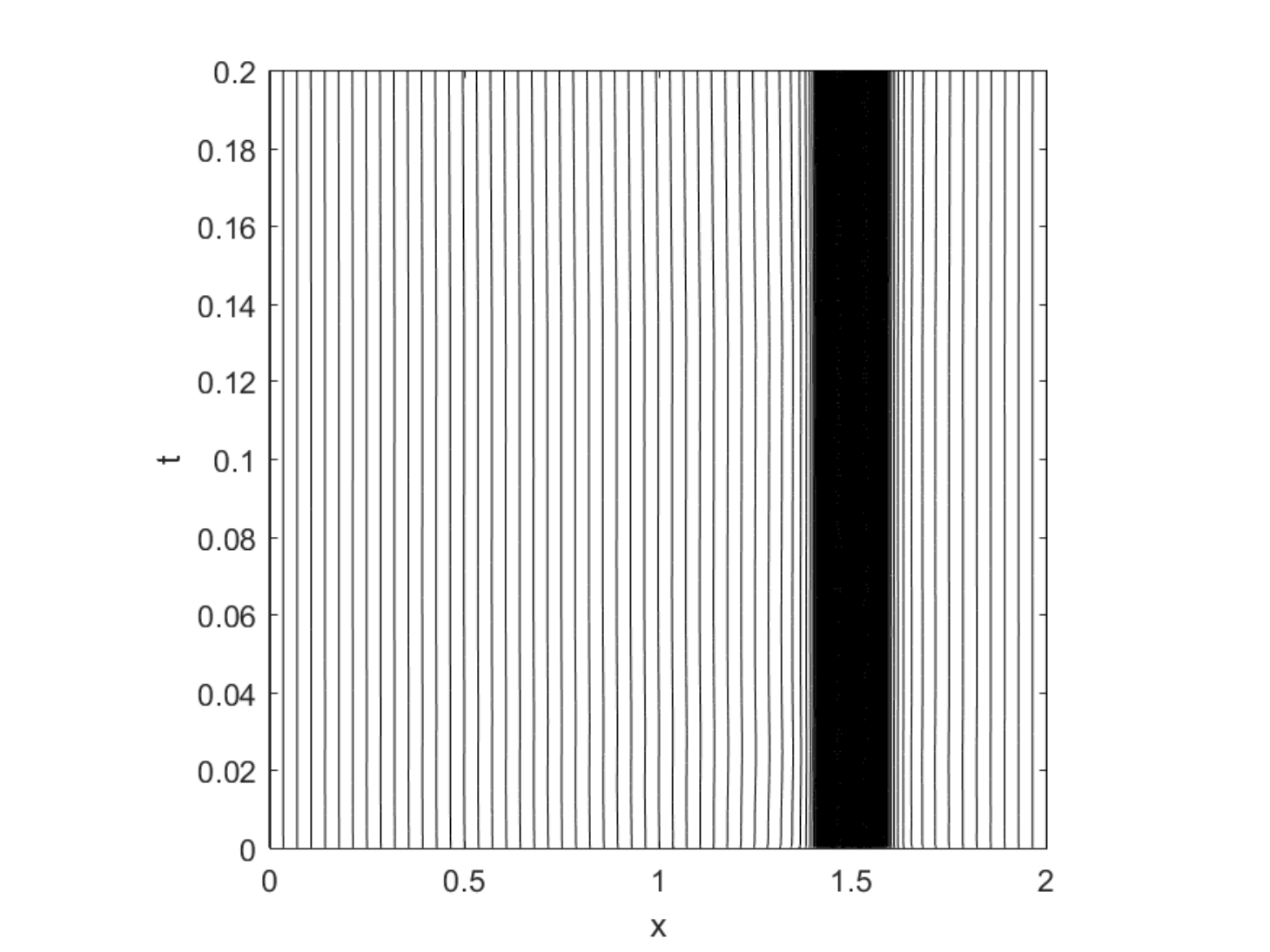}}
\subfigure[$\mathcal{E}$ and $h$: $B+h$]{
\includegraphics[width=0.4\textwidth,trim=20 0 40 10,clip]{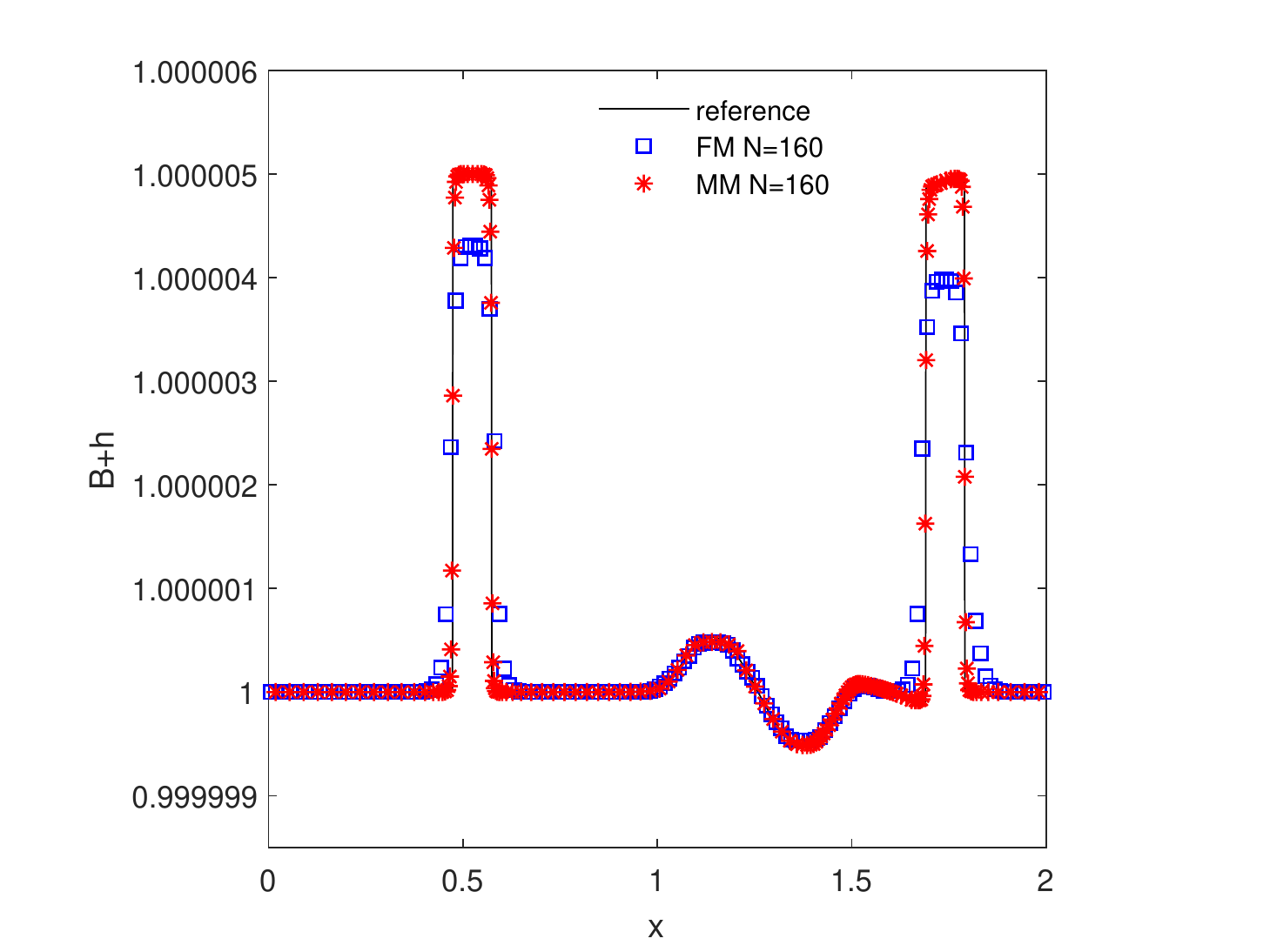}}
\subfigure[Entropy: $B+h$]{
\includegraphics[width=0.4\textwidth,trim=20 0 40 10,clip]{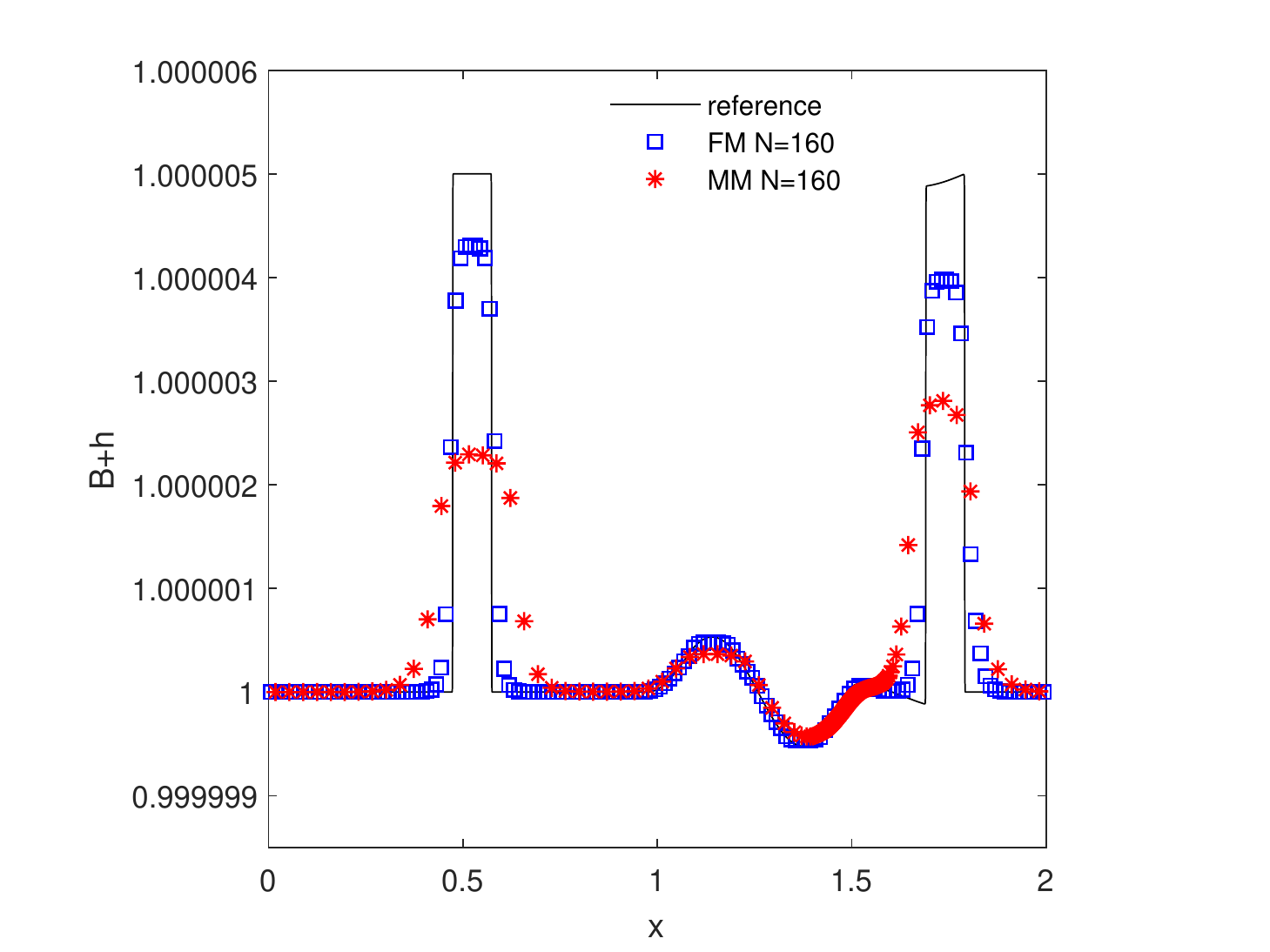}}
\subfigure[$\mathcal{E}$ and $h$: $hu$]{
\includegraphics[width=0.4\textwidth,trim=20 0 40 10,clip]{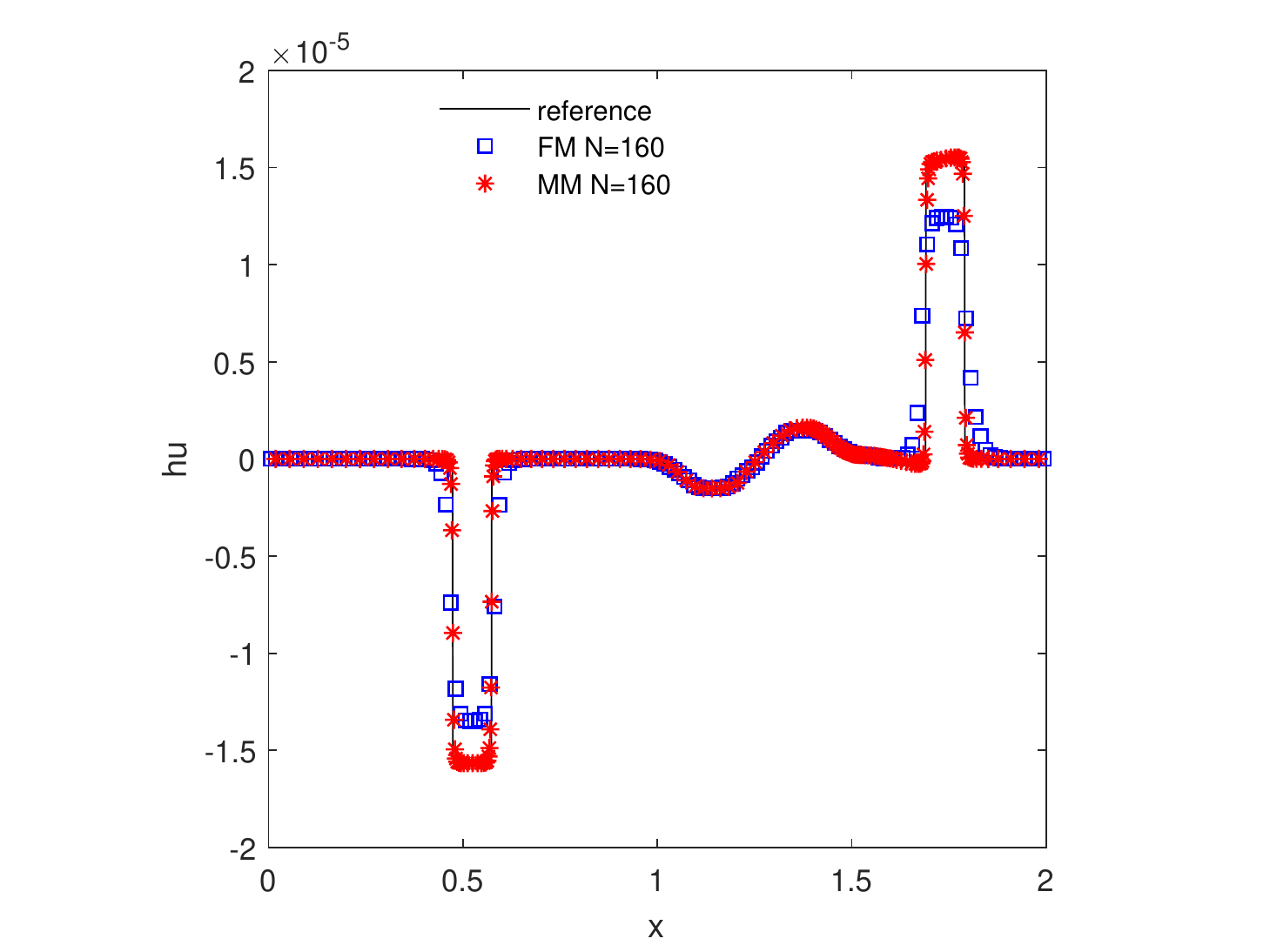}}
\subfigure[Entropy: $hu$]{
\includegraphics[width=0.4\textwidth,trim=20 0 40 10,clip]{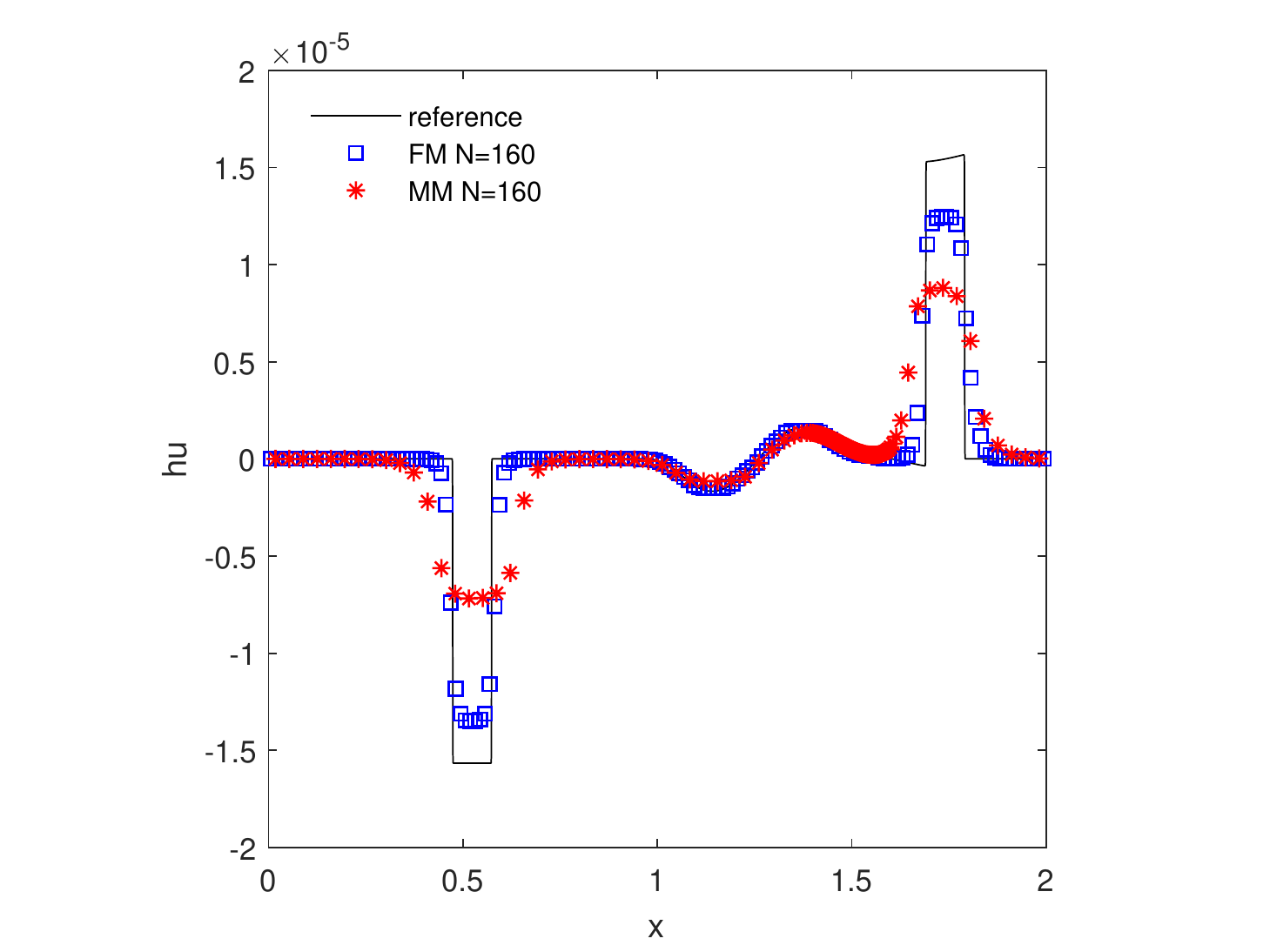}}
\caption{Example \ref{test3-1d}. The mesh trajectories and the water surface $B+h$ and discharge $hu$ at $t=0.2$ obtained with $P^2$-DG of a moving mesh of $N=160$ and a fixed mesh of $N=160$ for a small pulse $\varepsilon=10^{-5}$.
The metric tensor is computed based on the equilibrium variable $\mathcal{E}$ and the water depth $h$ (left column) or the entropy/total energy (right column).}
\label{Fig:test3-1d-small-metric}
\end{figure}

\begin{figure}[H]
\centering
\subfigure[$B+h$: FM 160 vs MM 160]{
\includegraphics[width=0.4\textwidth,trim=20 0 40 10,clip]{R_test3_large_Bph_1d_P2F160M160-eps-converted-to.pdf}}
\subfigure[Close view of (a)]{
\includegraphics[width=0.4\textwidth,trim=20 0 39 10,clip]{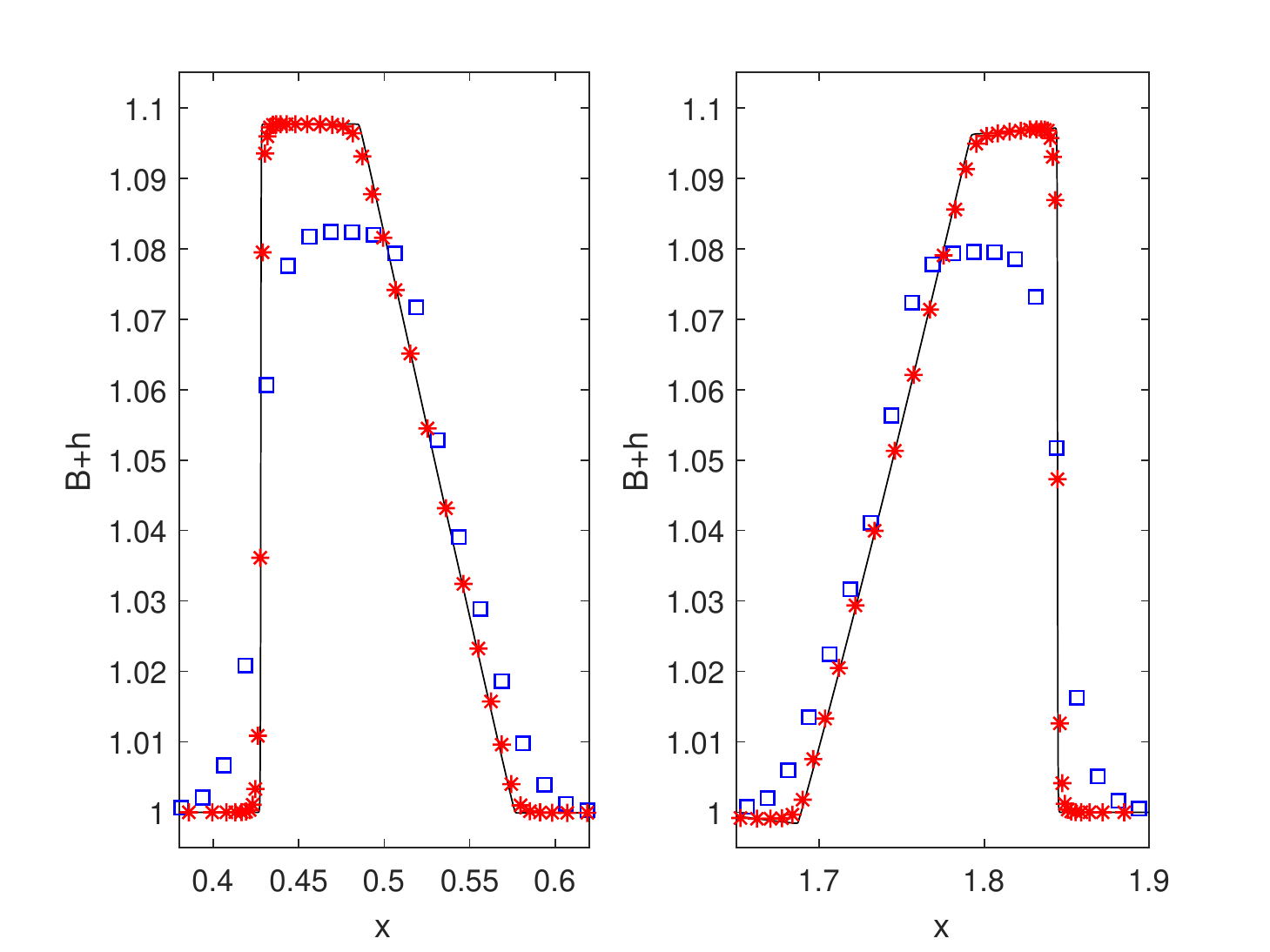}}
\subfigure[$B+h$: FM 480 vs MM 160]{
\includegraphics[width=0.4\textwidth,trim=20 0 40 10,clip]{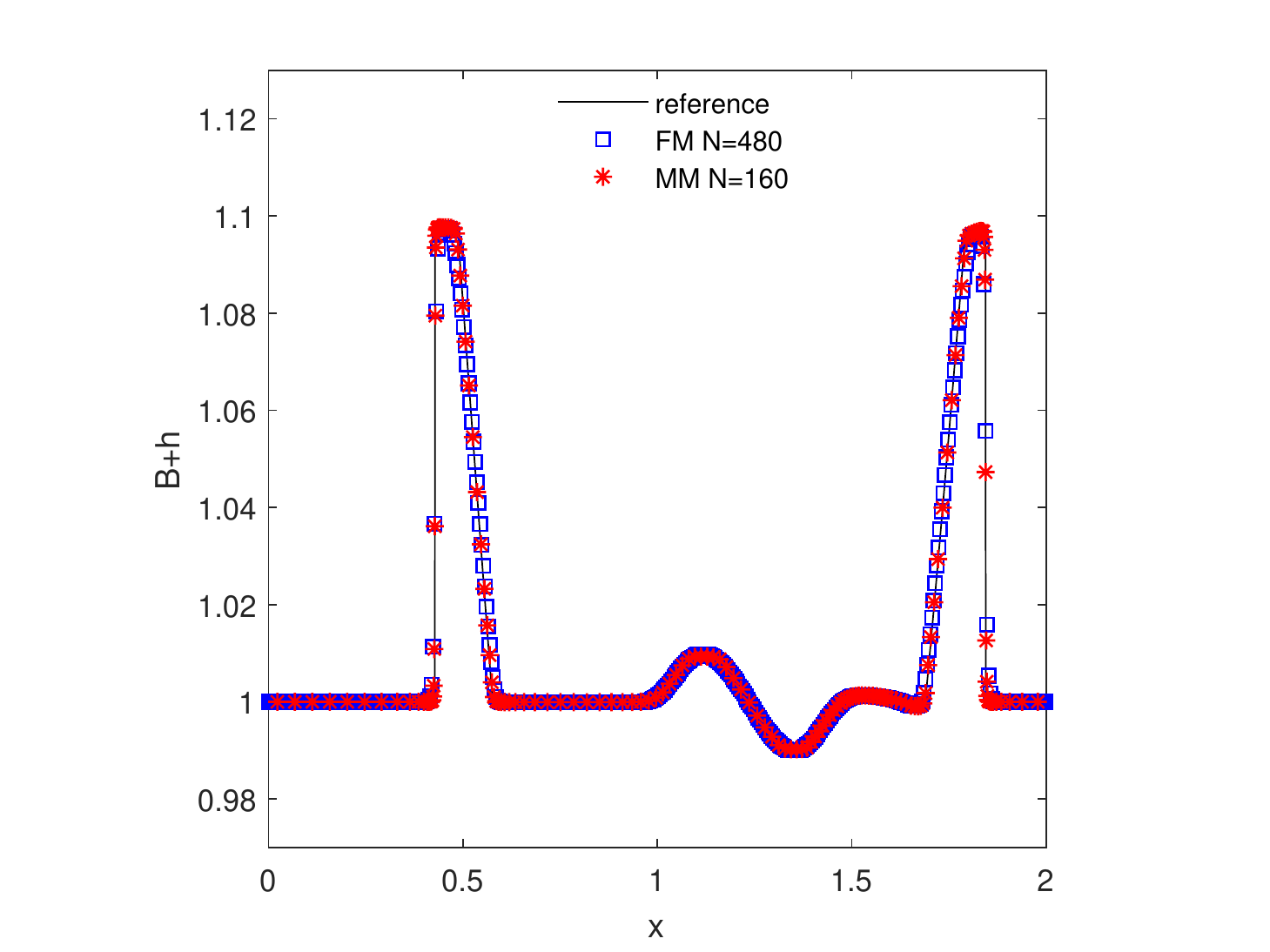}}
\subfigure[Close view of (c)]{
\includegraphics[width=0.4\textwidth,trim=20 0 39 10,clip]{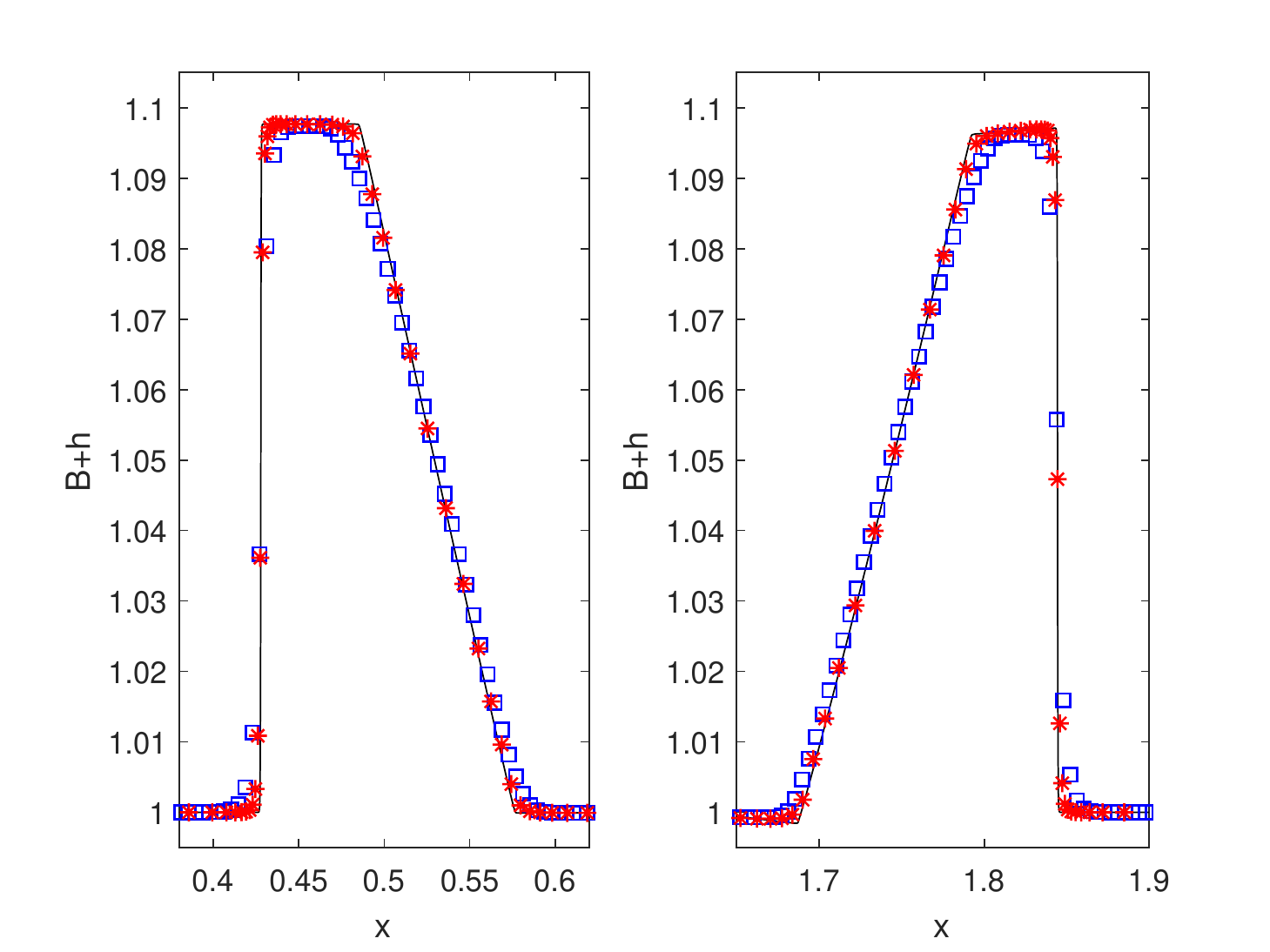}}
\caption{Example \ref{test3-1d}. The water surface $B+h$ at $t=0.2$ obtained with $P^2$-DG and a moving mesh of $N=160$ are compared with those obtained with a fixed mesh of $N=160$ and $N=480$ for a large pulse $\varepsilon=0.2$.}
\label{Fig:test3-1d-large-Bph}
\end{figure}

\begin{figure}[H]
\centering
\subfigure[$hu$: FM 160 vs MM 160]{
\includegraphics[width=0.4\textwidth,trim=20 0 40 10,clip]{R_test3_large_hu_1d_P2F160M160-eps-converted-to.pdf}}
\subfigure[Close view of (a)]{
\includegraphics[width=0.4\textwidth,trim=20 0 39 10,clip]{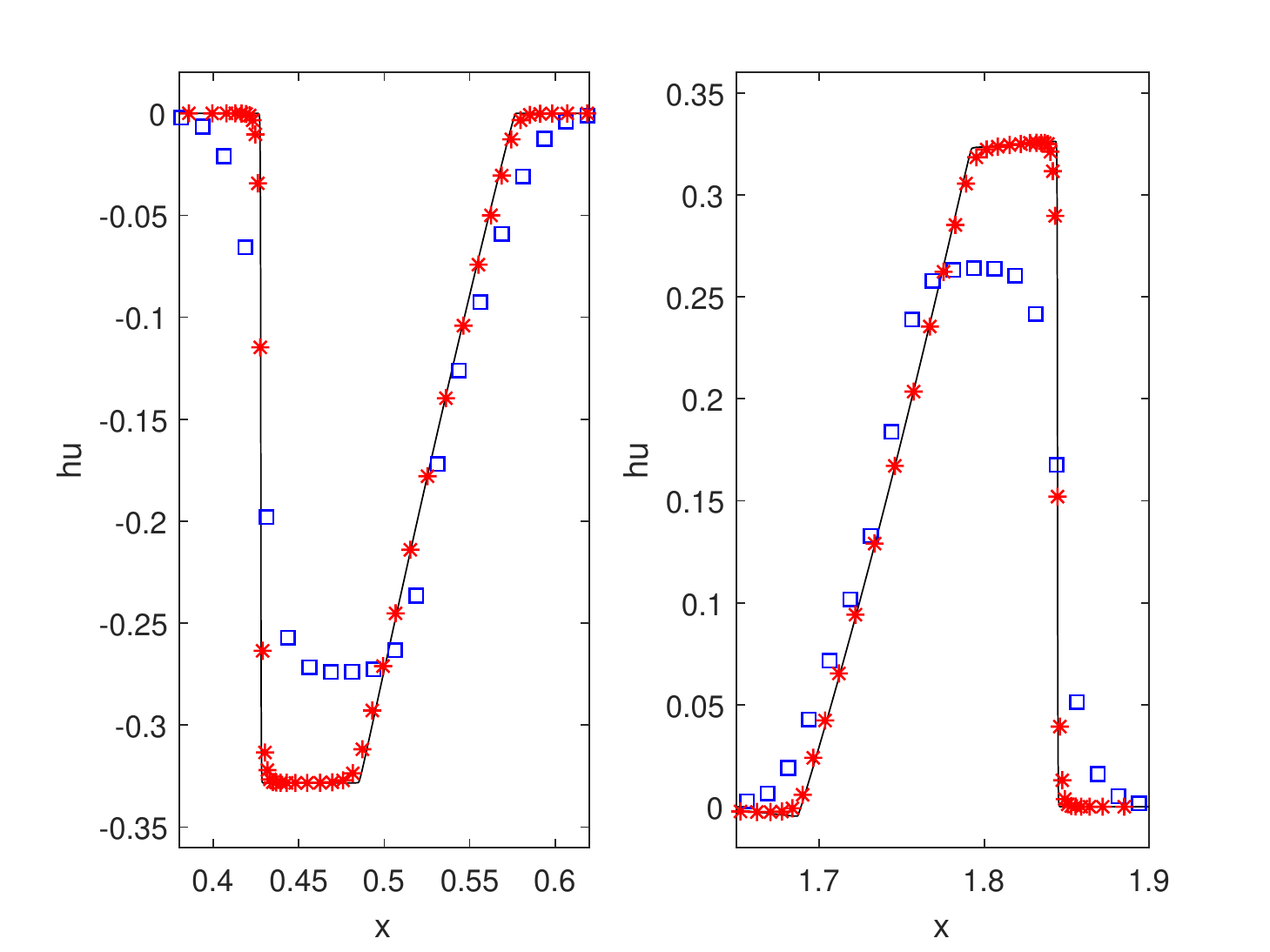}}
\subfigure[$hu$: FM 480 vs MM 160]{
\includegraphics[width=0.4\textwidth,trim=20 0 40 10,clip]{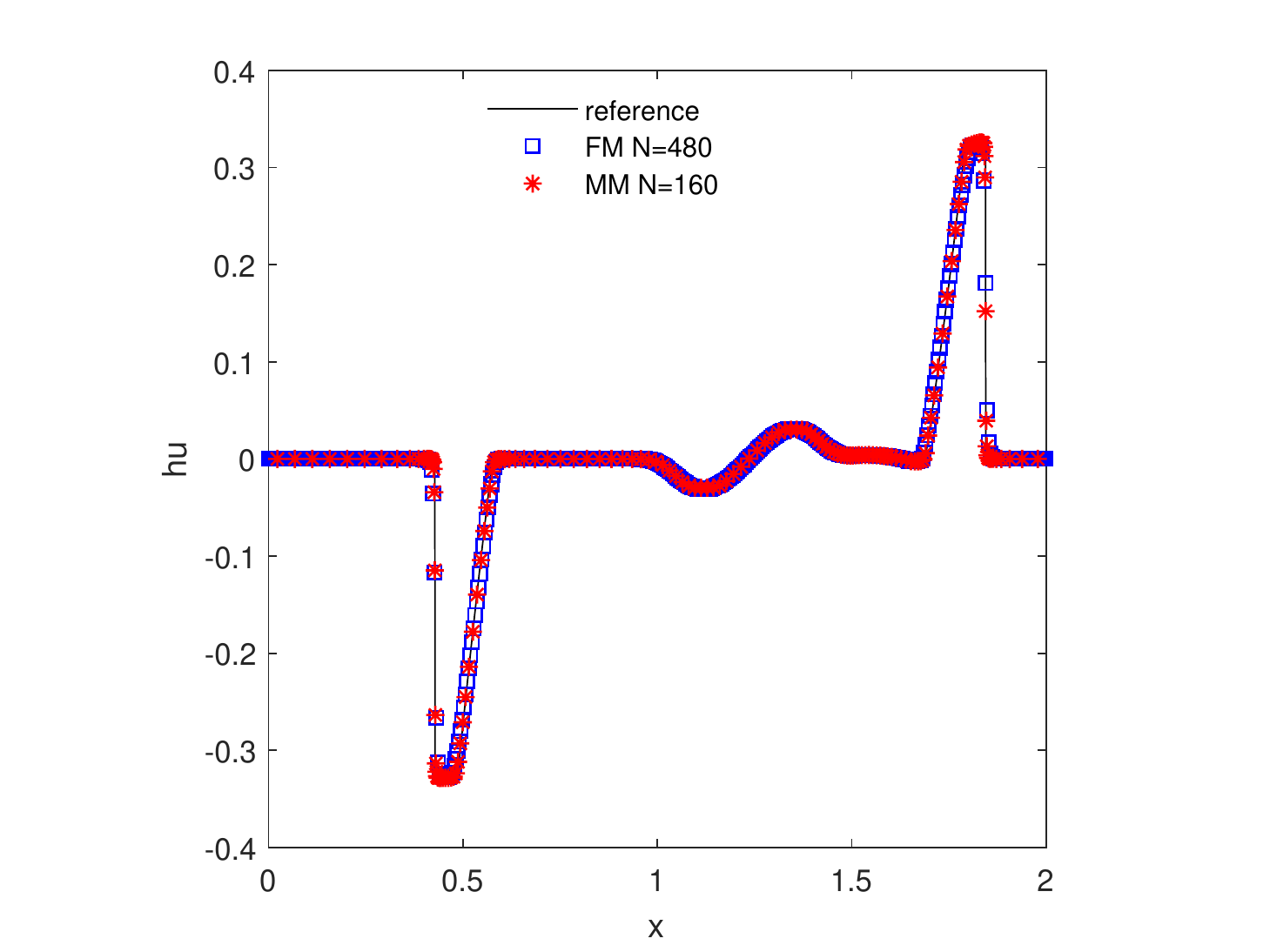}}
\subfigure[Close view of (c)]{
\includegraphics[width=0.4\textwidth,trim=20 0 39 10,clip]{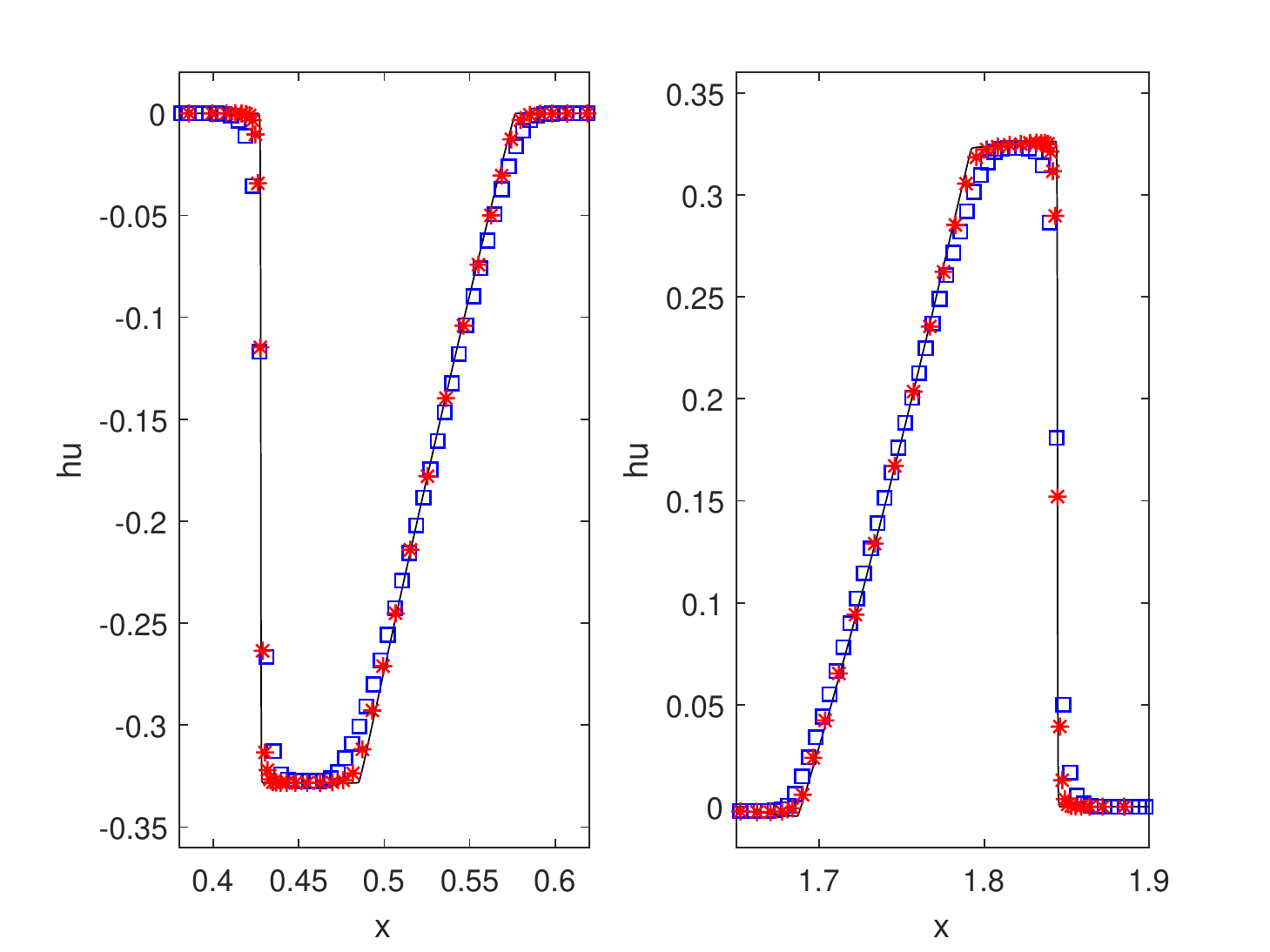}}
\caption{Example \ref{test3-1d}. The water discharge $hu$ at $t=0.2$ obtained with $P^2$-DG and a moving mesh of $N=160$ are compared with those obtained with a fixed mesh of $N=160$ and $N=480$ for a large pulse $\varepsilon=0.2$.}
\label{Fig:test3-1d-large-hu}
\end{figure}

\begin{figure}[H]
\centering
\subfigure[$B+h$: FM 160 vs MM 160]{
\includegraphics[width=0.4\textwidth,trim=20 0 40 10,clip]{R_test3_small_Bph_1d_P2F160M160-eps-converted-to.pdf}}
\subfigure[Close view of (a)]{
\includegraphics[width=0.4\textwidth,trim=20 0 39 10,clip]{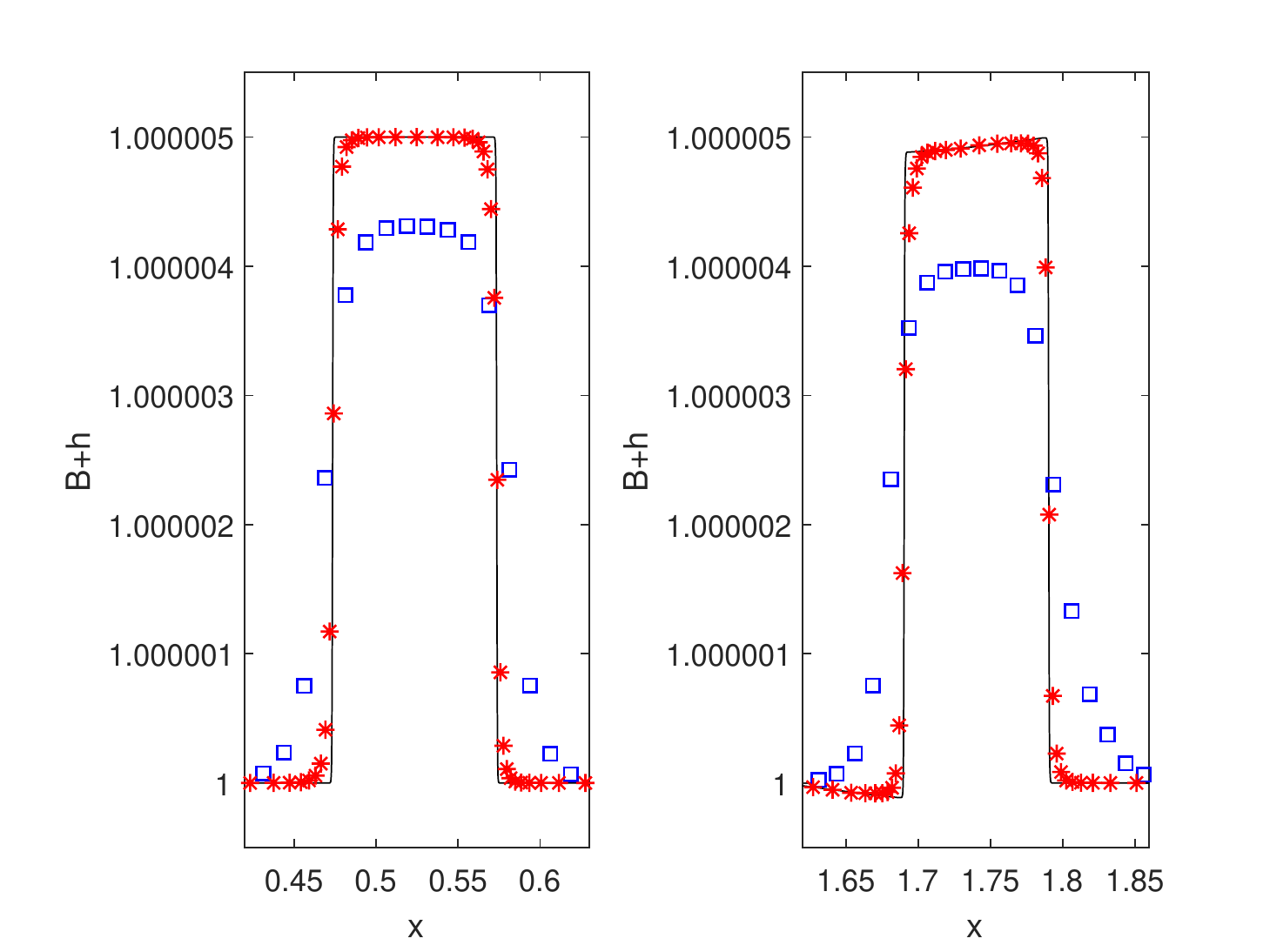}}
\subfigure[$B+h$: FM 480 vs MM 160]{
\includegraphics[width=0.4\textwidth,trim=20 0 40 10,clip]{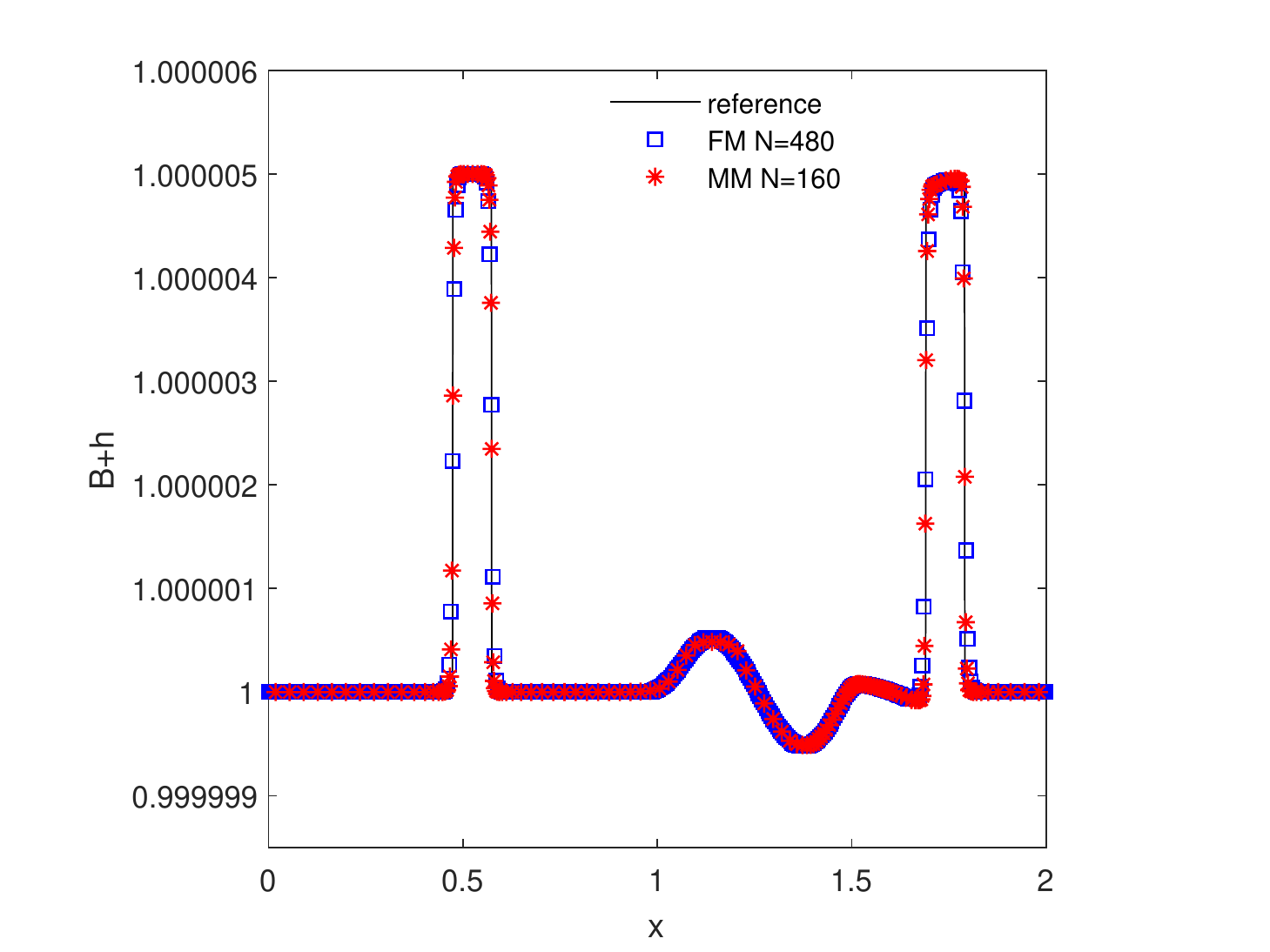}}
\subfigure[Close view of (c)]{
\includegraphics[width=0.4\textwidth,trim=20 0 39 10,clip]{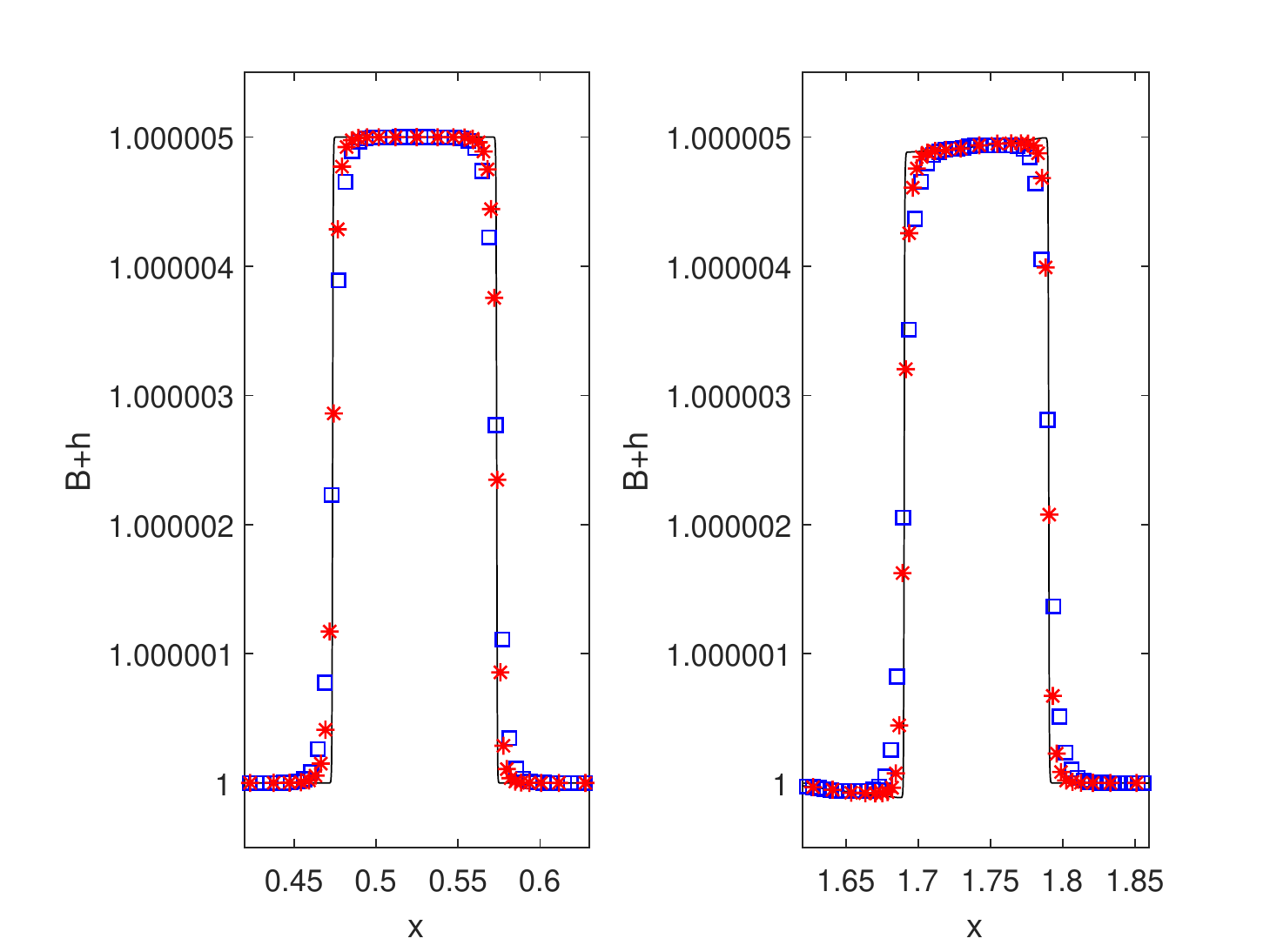}}
\caption{Example \ref{test3-1d}. The water surface $B+h$ at $t=0.2$ obtained with $P^2$-DG and a moving mesh of $N=160$ are compared with those obtained with a fixed mesh of $N=160$ and $N=480$ for a small pulse $\varepsilon=10^{-5}$.}
\label{Fig:test3-1d-small-Bph}
\end{figure}

\begin{figure}[H]
\centering
\subfigure[$hu$: FM 160 vs MM 160]{
\includegraphics[width=0.4\textwidth,trim=20 0 40 10,clip]{R_test3_small_hu_1d_P2F160M160-eps-converted-to.pdf}}
\subfigure[Close view of (a)]{
\includegraphics[width=0.4\textwidth,trim=20 0 39 10,clip]{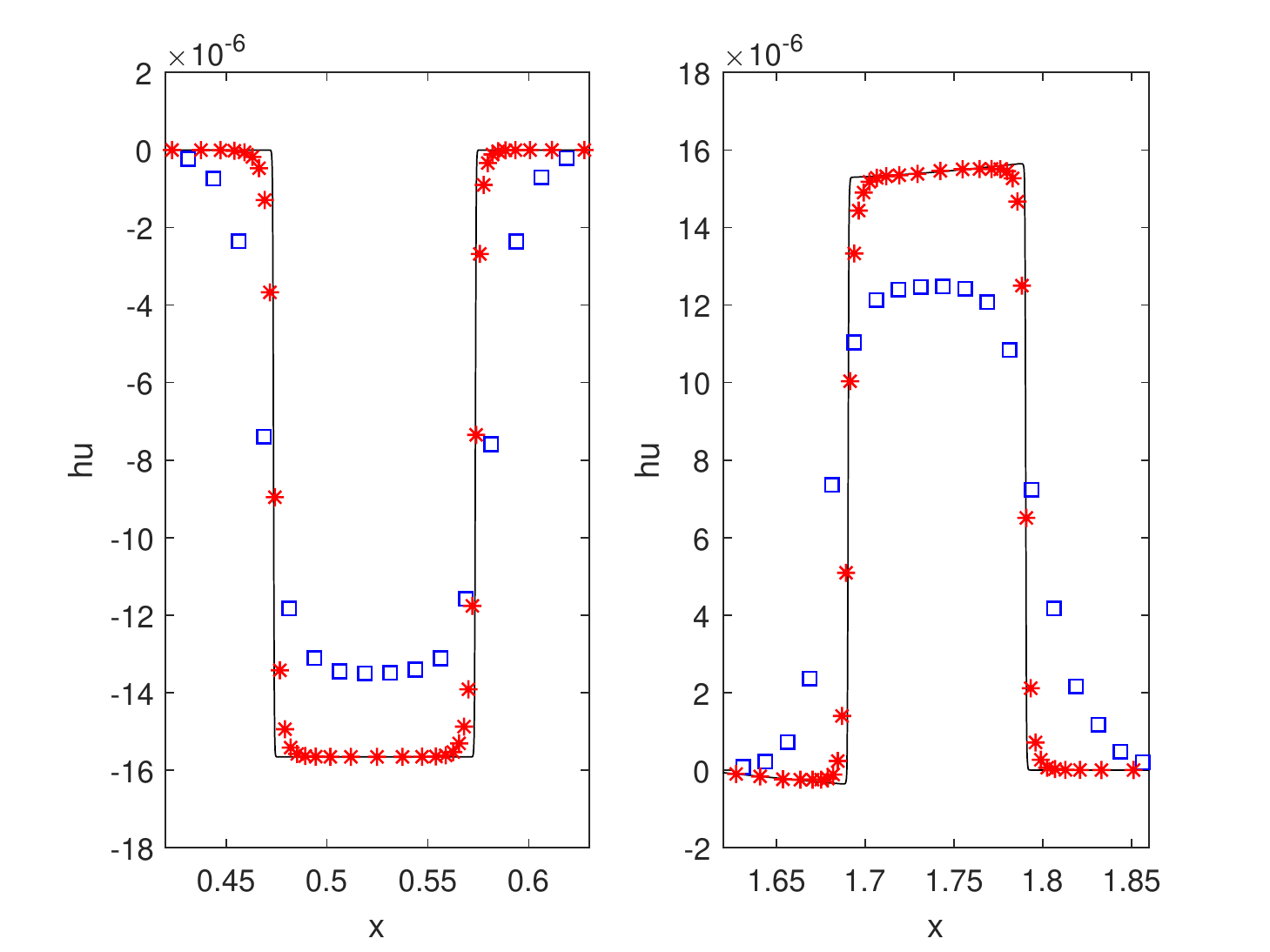}}
\subfigure[$hu$: FM 480 vs MM 160]{
\includegraphics[width=0.4\textwidth,trim=20 0 40 10,clip]{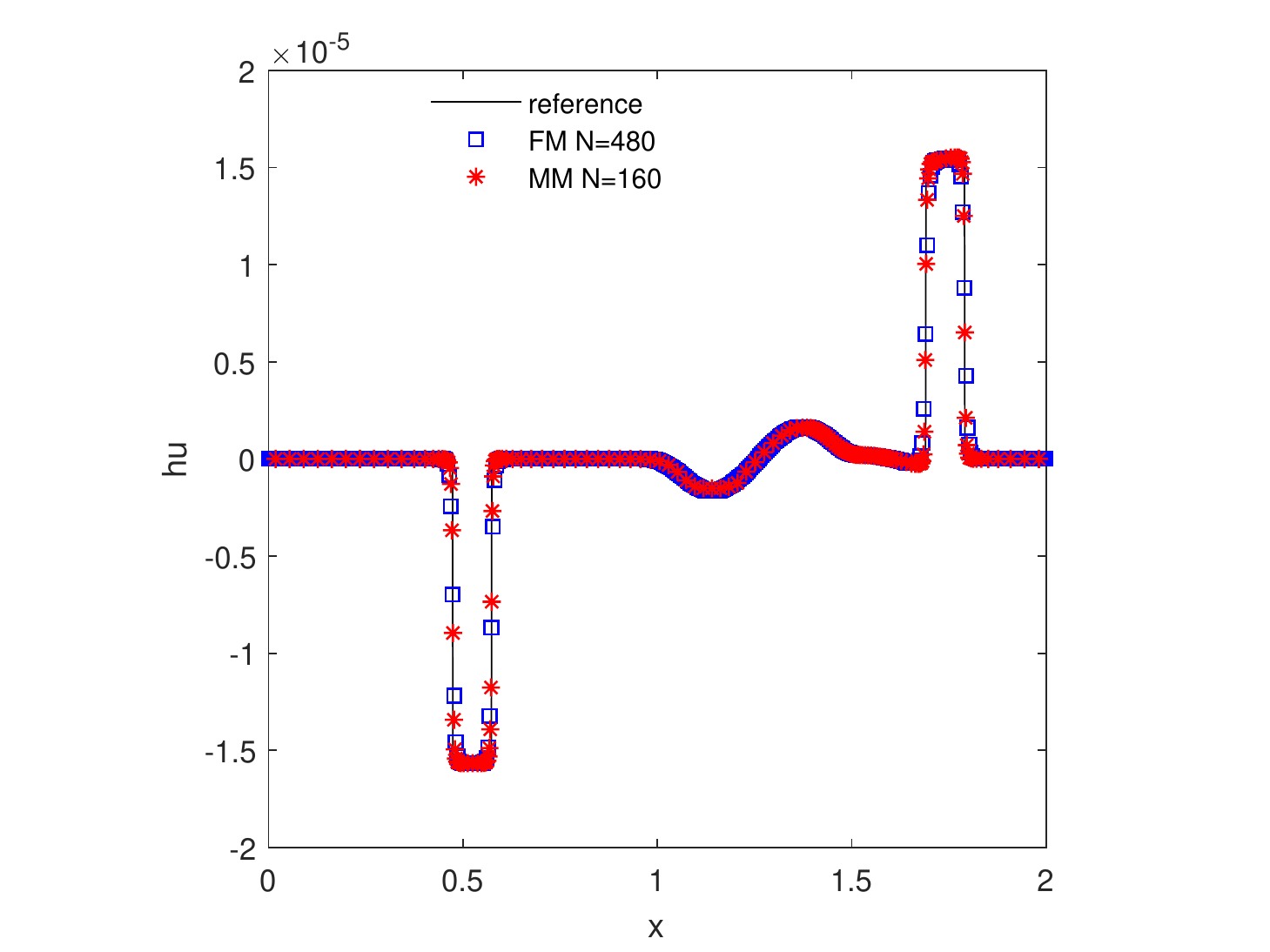}}
\subfigure[Close view of (c)]{
\includegraphics[width=0.4\textwidth,trim=20 0 39 10,clip]{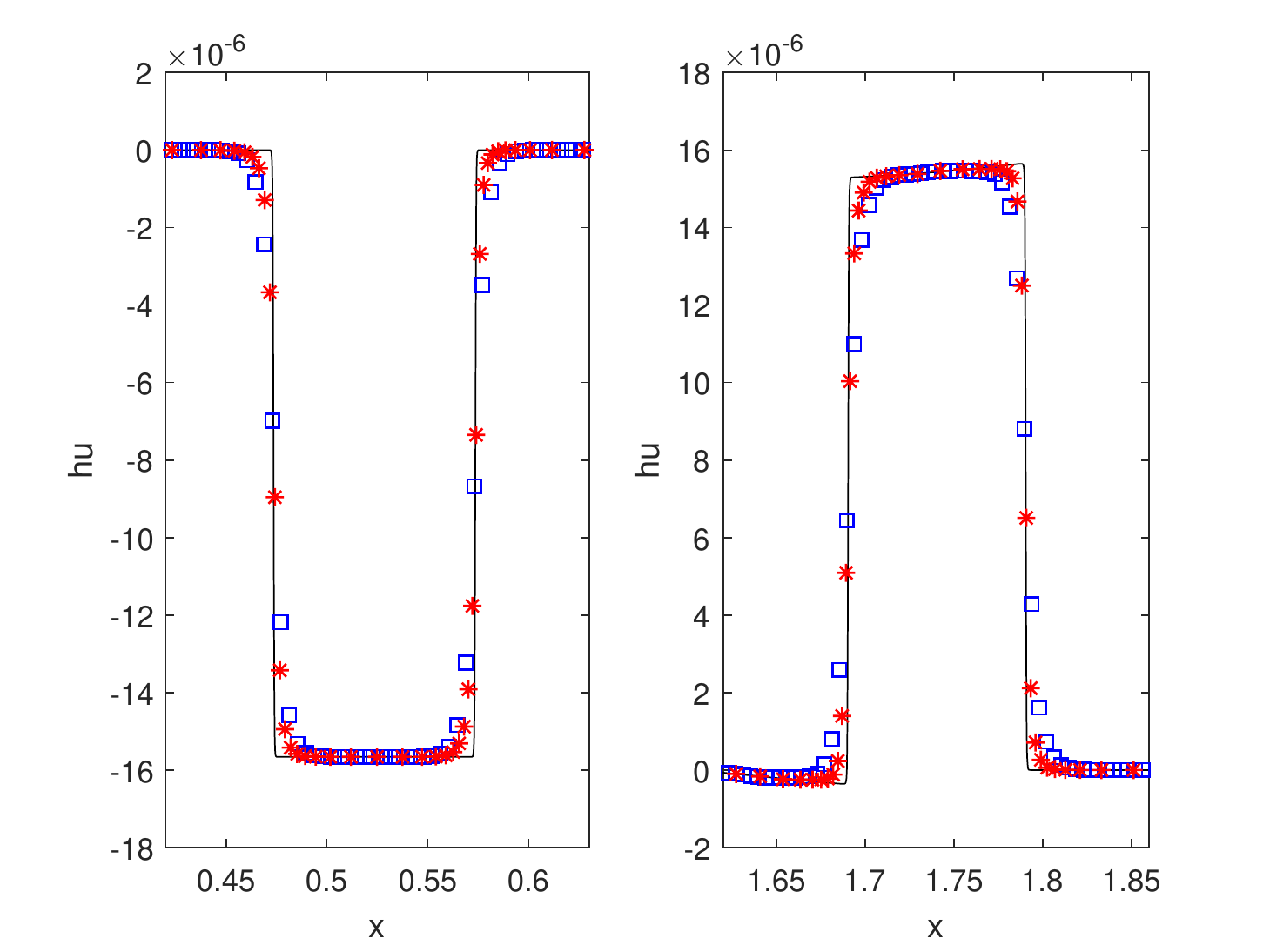}}
\caption{Example \ref{test3-1d}. The water discharge $hu$ at $t=0.2$ obtained with $P^2$-DG and a moving mesh of $N=160$ are compared with those obtained with a fixed mesh of $N=160$ and $N=480$ for a small pulse $\varepsilon=10^{-5}$.}
\label{Fig:test3-1d-small-hu}
\end{figure}
\begin{figure}[H]
\centering
\subfigure[$B+h$]{
\includegraphics[width=0.4\textwidth,trim=20 0 40 10,clip]{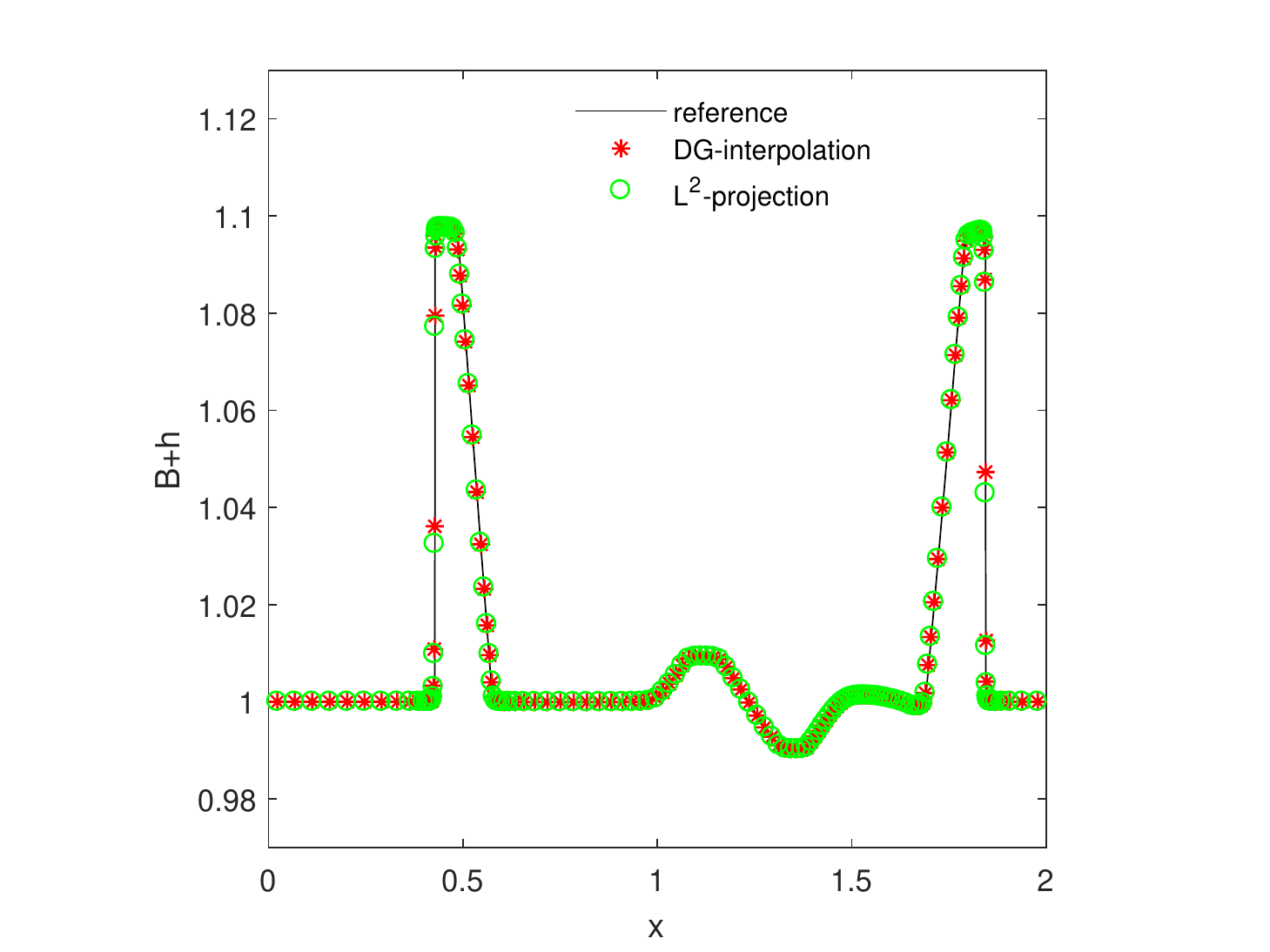}}
\subfigure[$hu$]{
\includegraphics[width=0.4\textwidth,trim=20 0 40 10,clip]{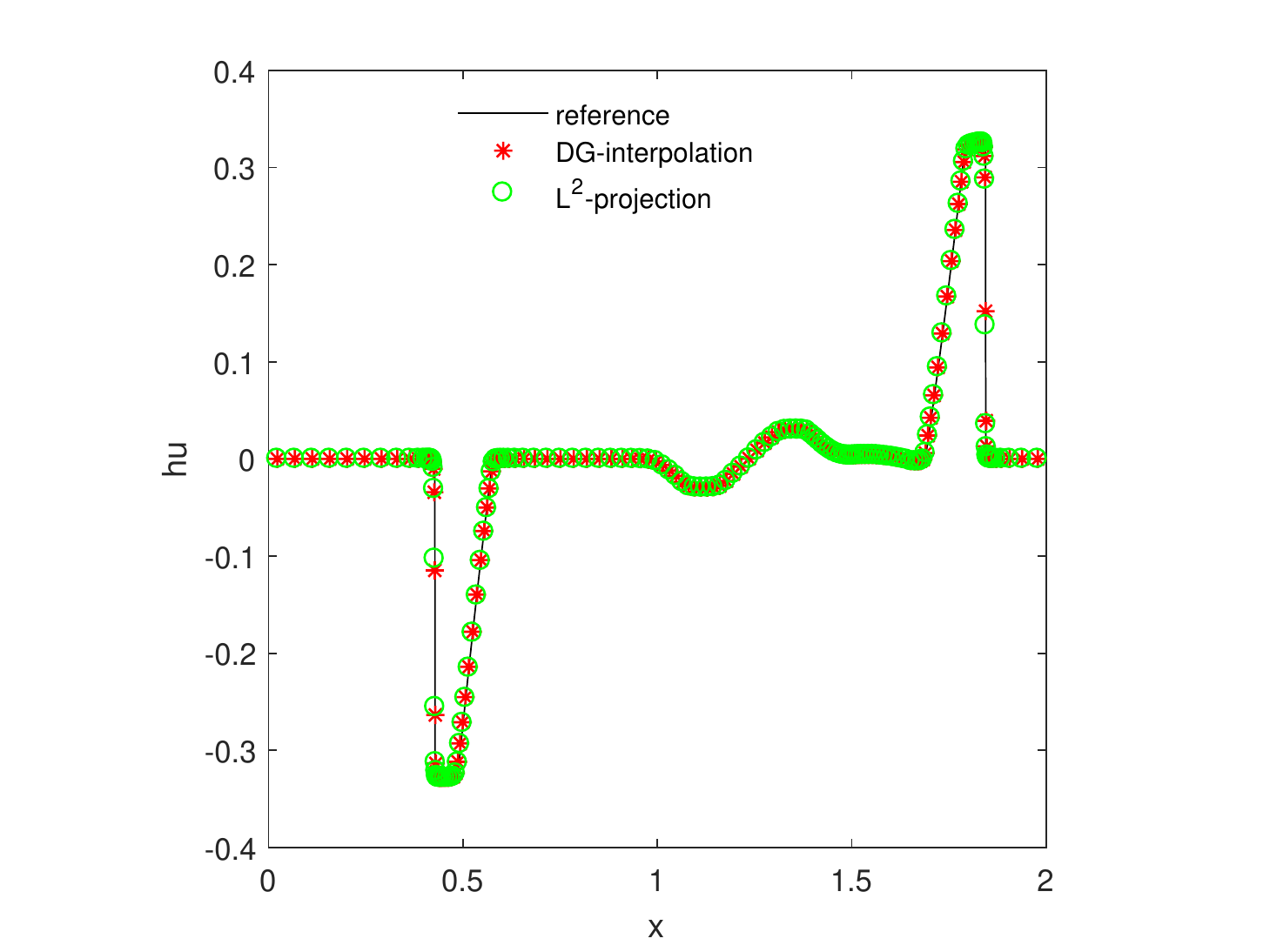}}
\caption{Example \ref{test3-1d}. The water surface $B+h$ and discharge $hu$ at $t=0.2$ are obtained
with $P^2$-DG and a moving mesh of $N=160$ for a large pulse $\varepsilon=0.2$.
Comparison is made for the use of DG-interpolation or $L^2$-projection in updating $B$.}
\label{Fig:test3-1d-Bjob-large}
\end{figure}
\begin{figure}[H]
\centering
\subfigure[$B+h$]{
\includegraphics[width=0.4\textwidth,trim=20 0 40 10,clip]{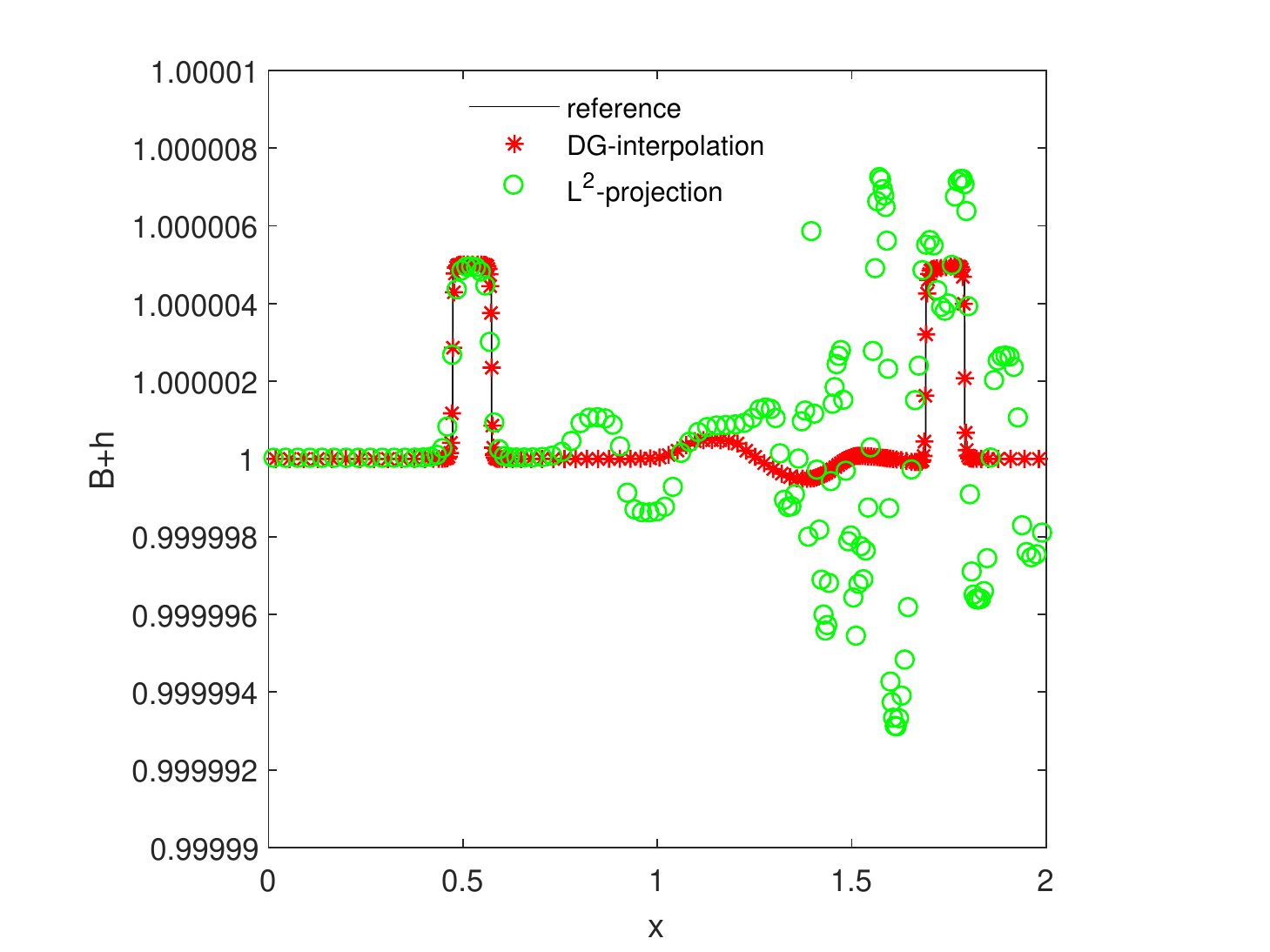}}
\subfigure[$hu$]{
\includegraphics[width=0.4\textwidth,trim=20 0 40 10,clip]{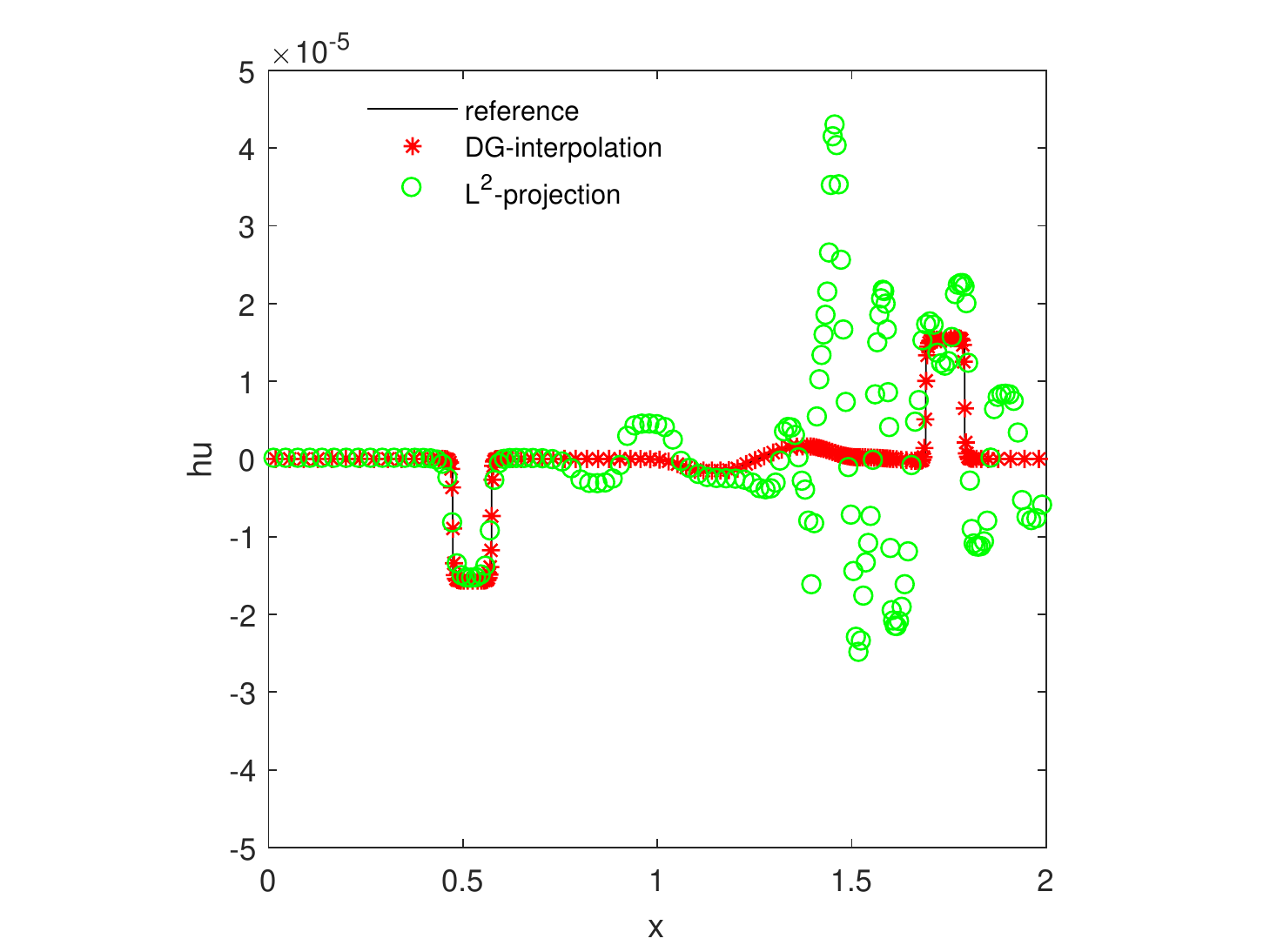}}
\caption{Example \ref{test3-1d}. The water surface $B+h$ and discharge $hu$ at $t=0.2$ are obtained
with $P^2$-DG and a moving mesh of $N=160$ for a large pulse $\varepsilon=10^{-5}$.
Comparison is made for the use of DG-interpolation or $L^2$-projection in updating $B$.}
\label{Fig:test3-1d-Bjob-small}
\end{figure}

\begin{figure}[H]
\centering
\subfigure[Initial surface and bottom]{
\includegraphics[width=0.4\textwidth,trim=20 0 40 10,clip]{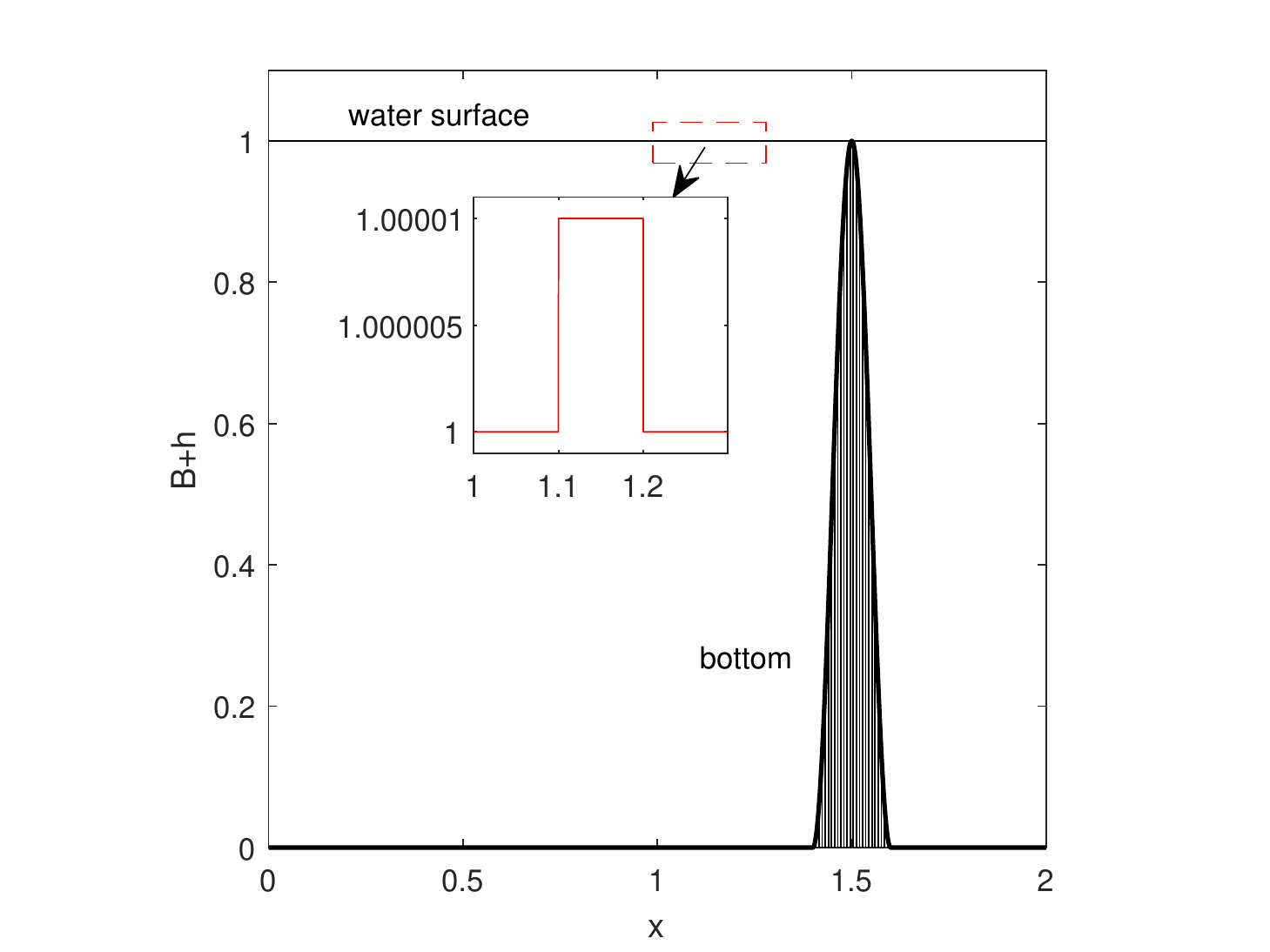}}
\subfigure[Mesh trajectories]{
\includegraphics[width=0.4\textwidth,trim=20 0 40 10,clip]{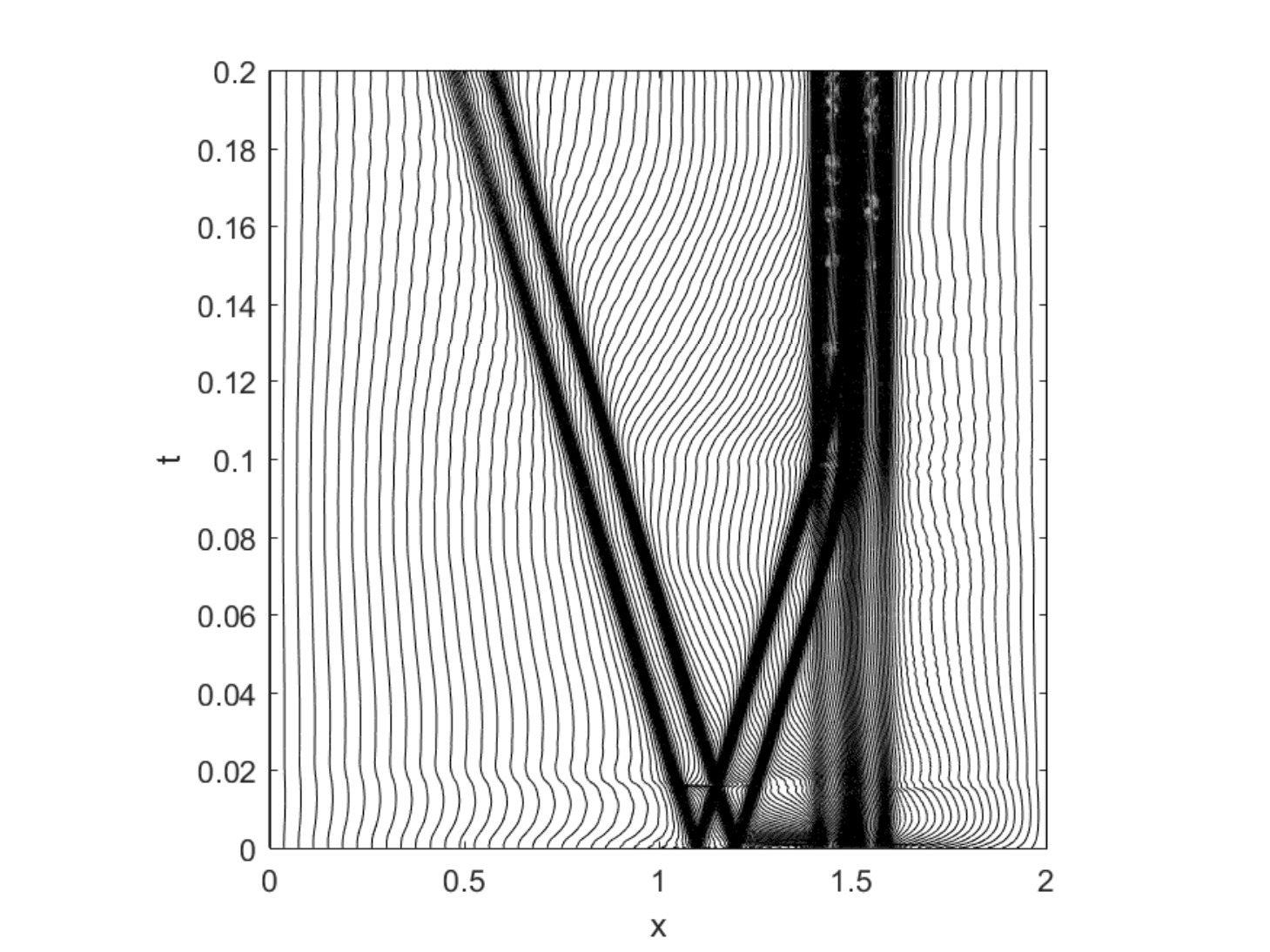}}
\caption{Example \ref{test3-1d}.
(a) The initial water surface $B+h$ and the bottom $B$ (\ref{B-5-1}) for the small perturbation test with a dry region.
(b) The mesh trajectories obtained with $P^2$ MM-DG method of $N=160$.}
\label{Fig:test-1d-wb-pp-preturb-initial}
\end{figure}

\begin{figure}[H]
\centering
\subfigure[$B+h$: FM 160 vs MM 160]{
\includegraphics[width=0.4\textwidth,trim=20 0 40 10,clip]
{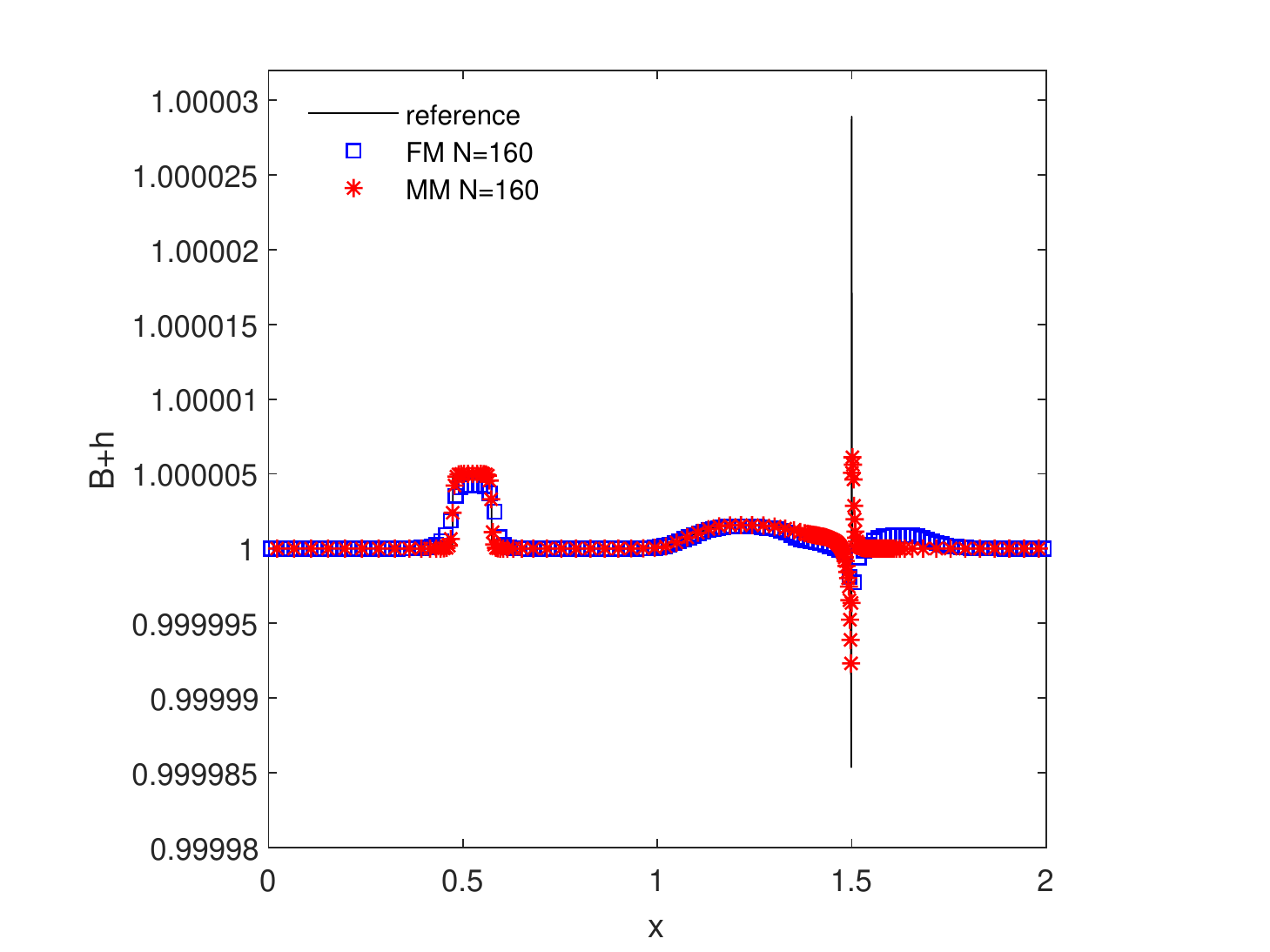}}
\subfigure[Close view of (a)]{
\includegraphics[width=0.4\textwidth,trim=20 0 39 10,clip]{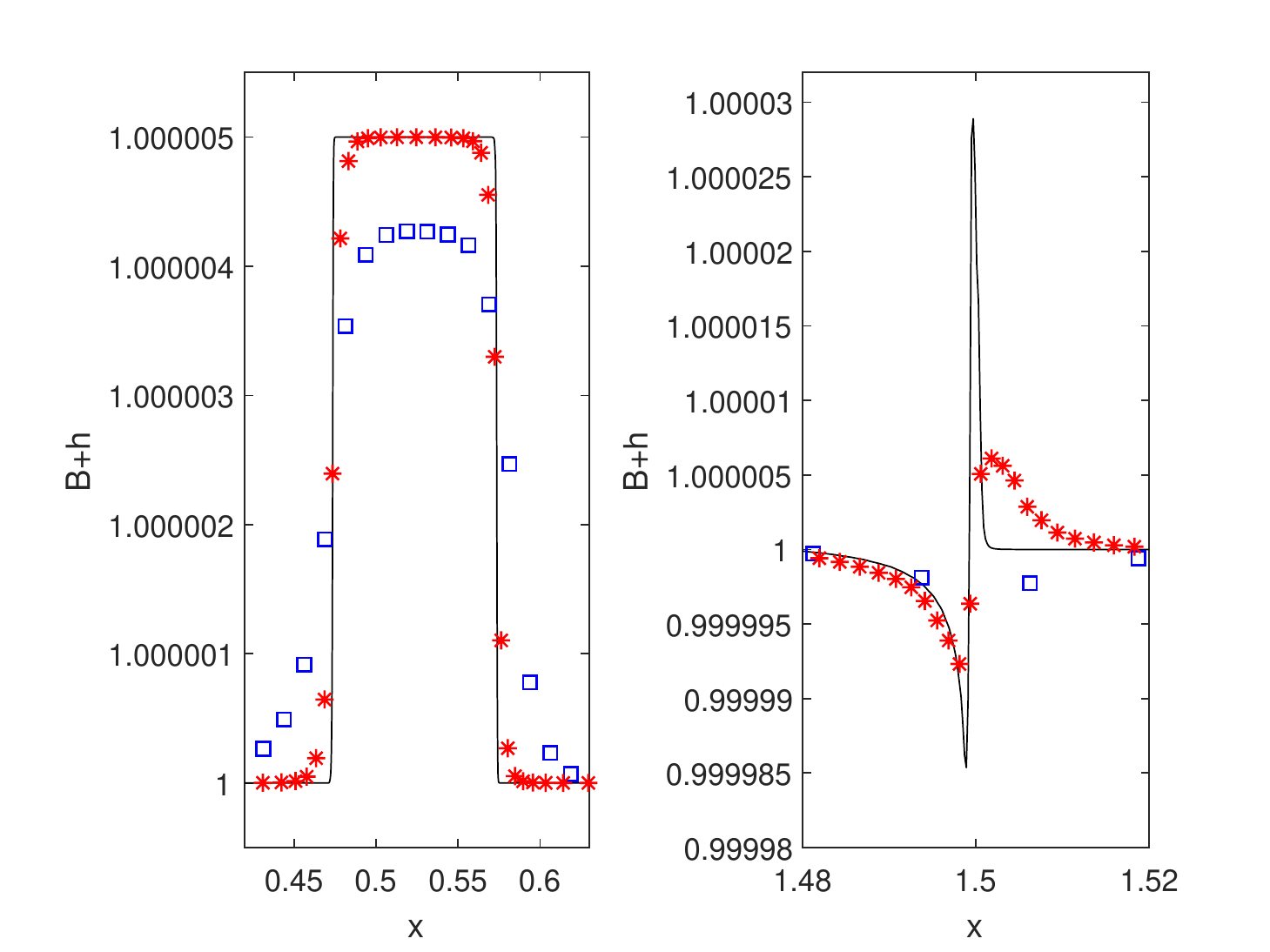}}
\subfigure[$B+h$: FM 640 vs MM 160]{
\includegraphics[width=0.4\textwidth,trim=20 0 40 10,clip]
{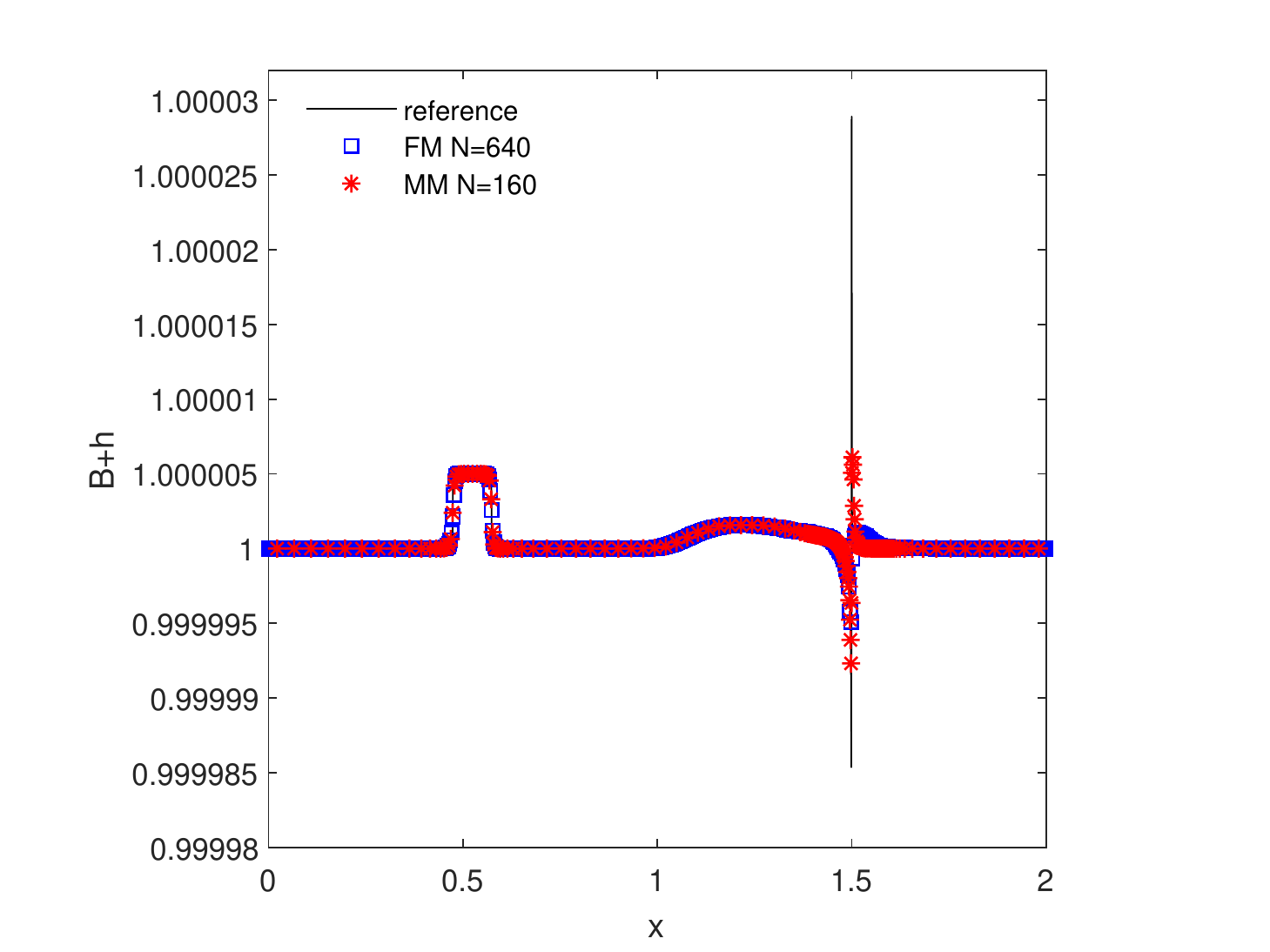}}
\subfigure[Close view of (c)]{
\includegraphics[width=0.4\textwidth,trim=20 0 39 10,clip]{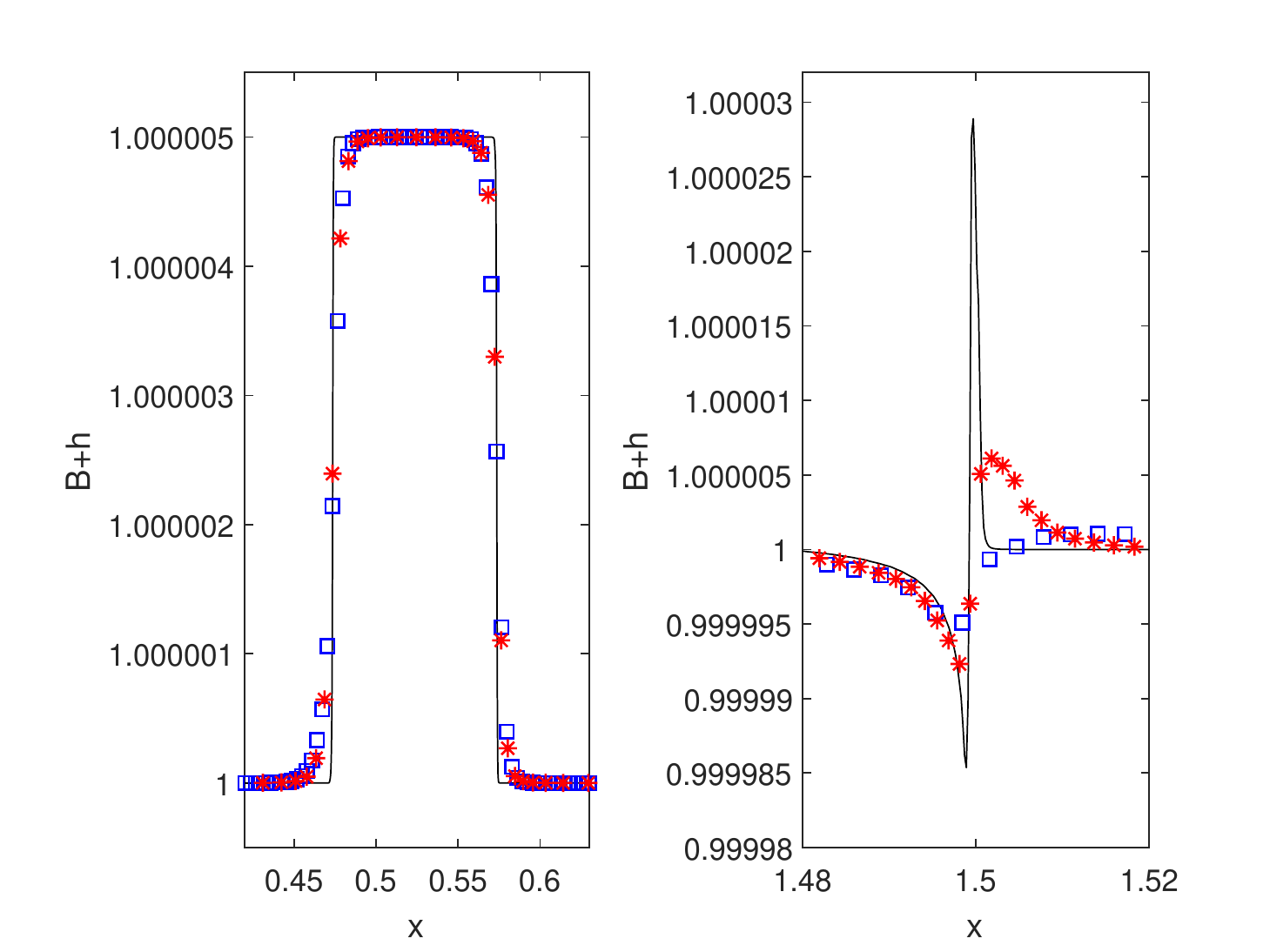}}
\caption{Example \ref{test3-1d} with the bottom topography (\ref{B-5-1}) with a dry region.
The water surface $B+h$ at $t=0.2$ obtained with $P^2$-DG and a moving mesh of $N=160$ are compared with those obtained with a fixed mesh of $N=160$ and $N=640$.}
\label{Fig:test-1d-perturb-Bph}
\end{figure}

\begin{figure}[H]
\centering
\subfigure[$hu$: FM 160 vs MM 160]{
\includegraphics[width=0.4\textwidth,trim=20 0 40 10,clip]
{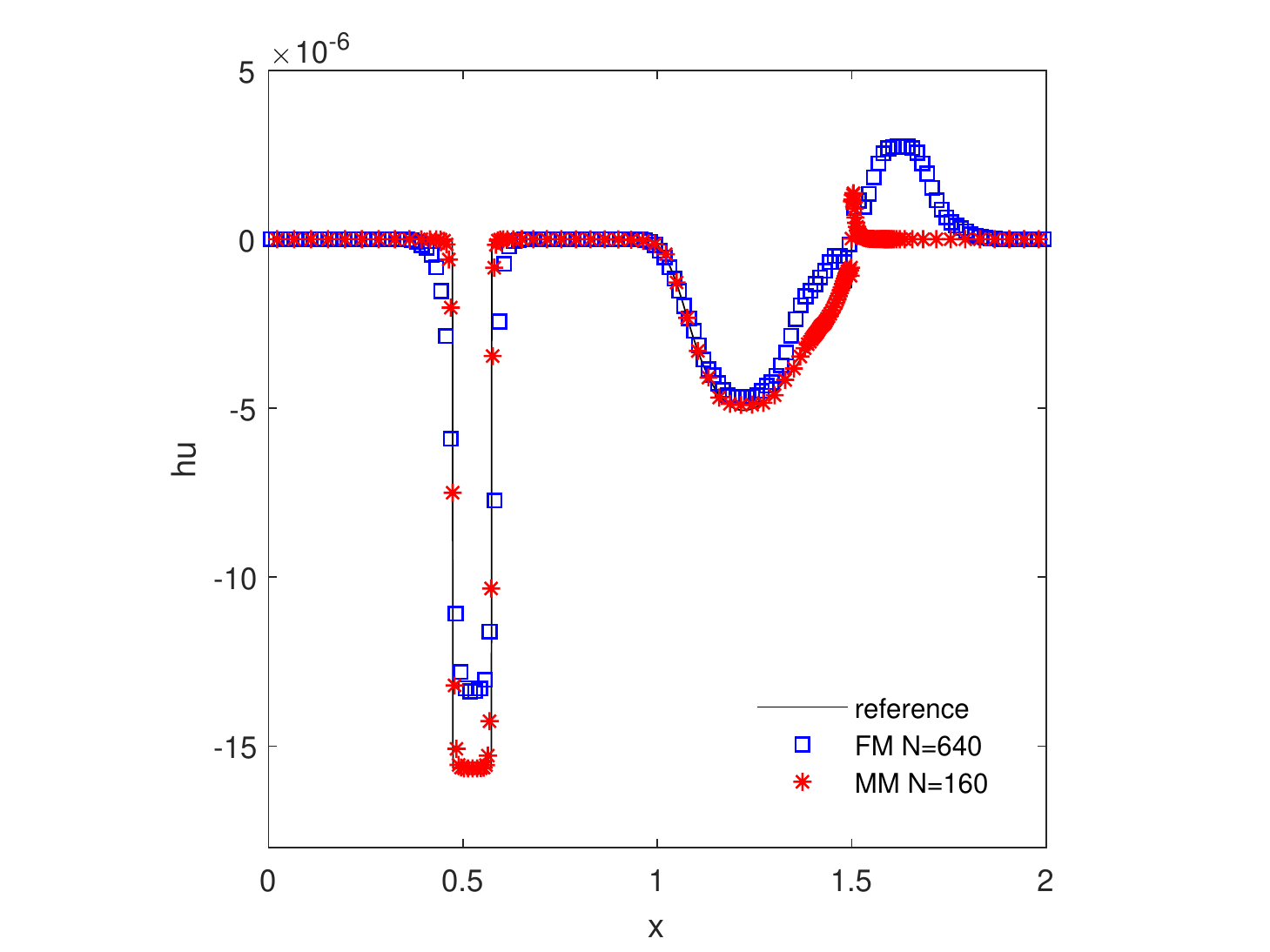}}
\subfigure[Close view of (a)]{
\includegraphics[width=0.4\textwidth,trim=20 0 39 10,clip]{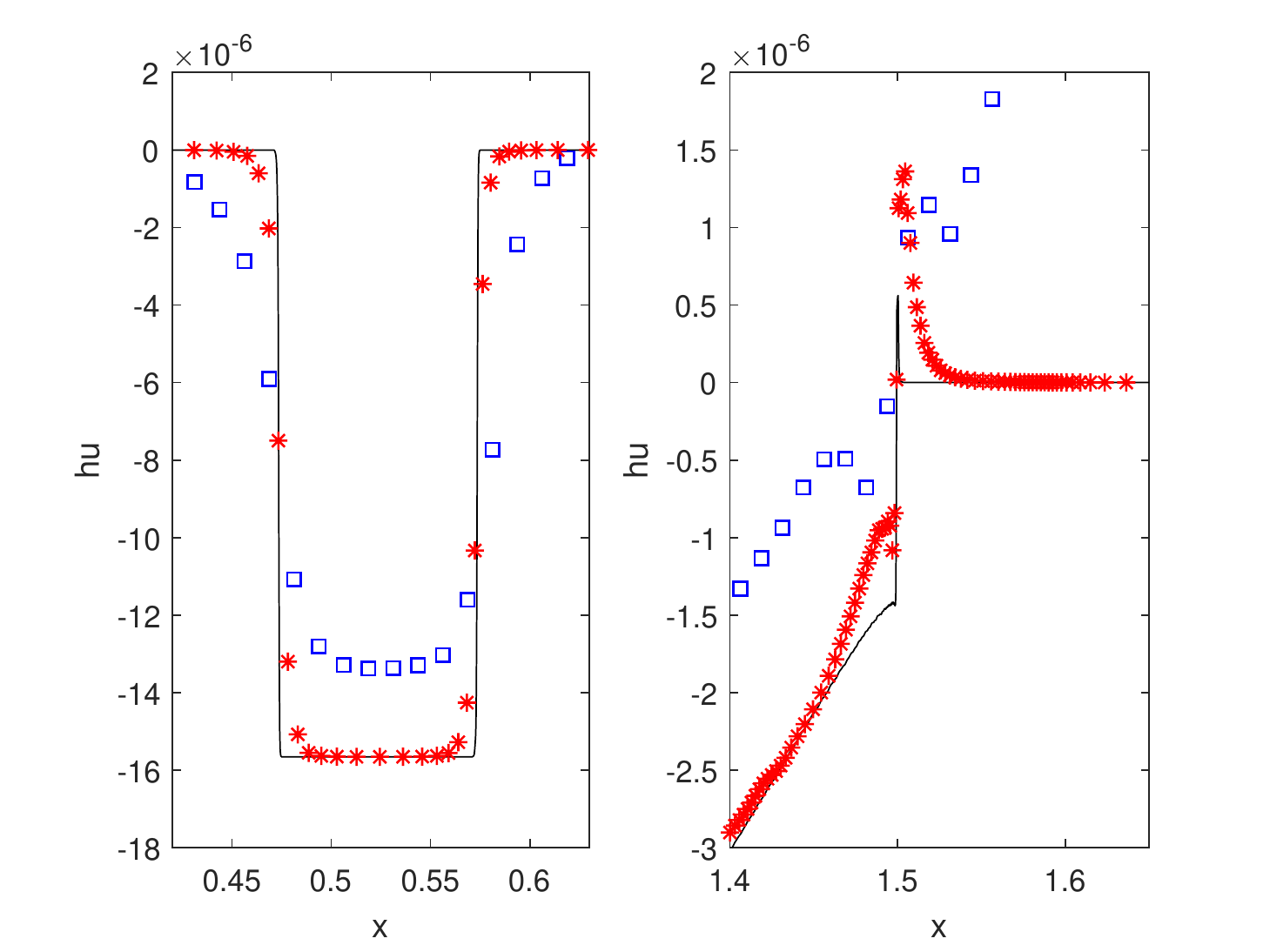}}
\subfigure[$hu$: FM 640 vs MM 160]{
\includegraphics[width=0.4\textwidth,trim=20 0 40 10,clip]
{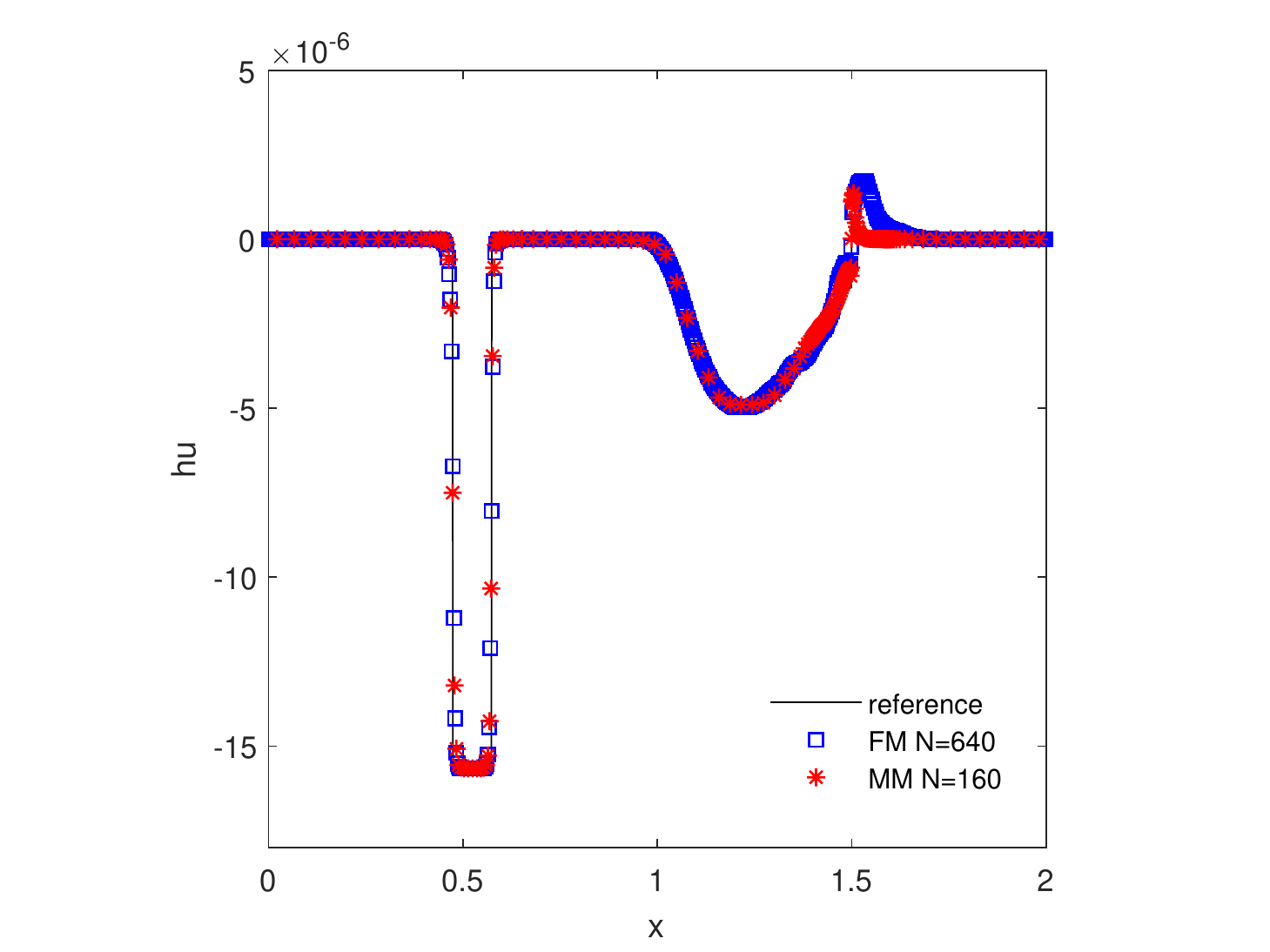}}
\subfigure[Close view of (c)]{
\includegraphics[width=0.4\textwidth,trim=20 0 39 10,clip]{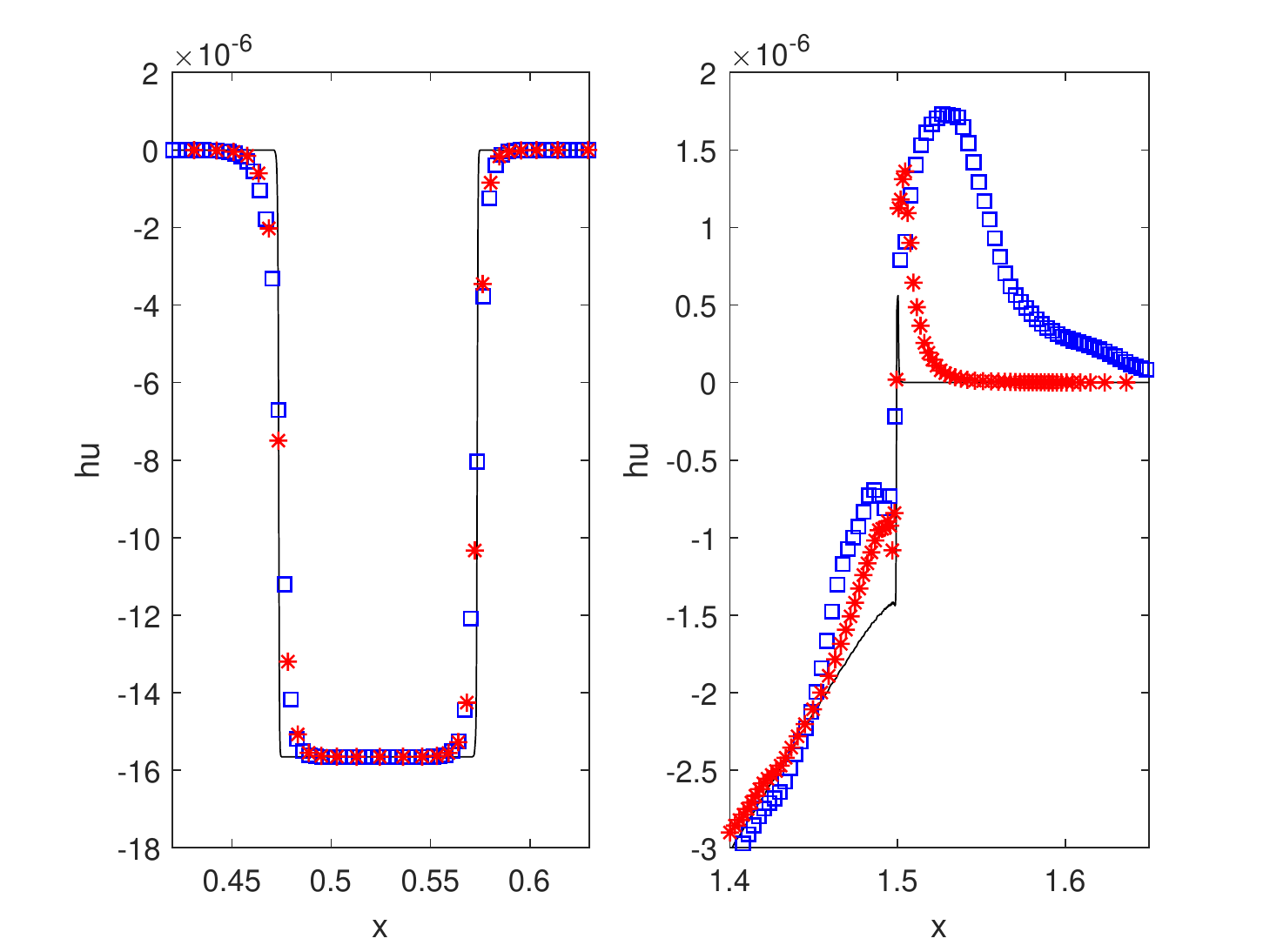}}
\caption{Example \ref{test3-1d} with the bottom topography (\ref{B-5-1}) with a dry region.
The water discharge $hu$ at $t=0.2$ obtained with $P^2$-DG and a moving mesh of $N=160$ are compared with those obtained with a fixed mesh of $N=160$ and $N=640$.}
\label{Fig:test-1d-preturb-hu}
\end{figure}

%%%%%%%%%%%%%%%%%%%%%%%%%%%%%%%%%%%%%%%%%%%%%%%%%%%%%%%%%%%%%%%%%%%%%%%%%%%
\begin{example}\label{test8-1d}
(The Riemann problem for the 1D SWEs with a step bottom topography.)
\end{example}
In this test we solve the 1D SWEs with a step bottom topography \cite{Alcrudo-Benkhaldoun-2001,Li-Lu-Qiu-2012JSC},
\begin{equation*}
B(x)=
\begin{cases}
0,& \text{for}~x \in (-10,0)\\
1,& \text{for}~x \in (0, 10)
\end{cases}
\end{equation*}
The initial conditions are given by
\begin{equation*}
h(x,0)=
\begin{cases}
4,& \text{for}~x\leq0\\
1,& \text{otherwise}\\
\end{cases}
\quad \quad
u(x,0)=\begin{cases}
5,& \text{for}~x\leq0\\
-0.9,& \text{otherwise.}\\
\end{cases}
\end{equation*}
The solution contains two hydraulic jumps/shocks and a stationary step subcritical transition.
The first shock moves to the left while the second one to the right.
The stationary step transition of the flow occurs at $x = 0$.

The mesh trajectories for the $P^2$-DG method with a moving mesh of $N=100$ are shown in Fig.~\ref{Fig:test8-1d-mesh}. We can see that the mesh has high element concentration at the location of two hydraulic jumps/shocks and the stationary step transition.
The moving mesh solutions of $N=100$ at $t=1$ and the fixed mesh solutions obtained with $N=100$ and $N=800$ are shown in Fig.~\ref{Fig:test8-1d-Bph} and Fig.~\ref{Fig:test8-1d-hu} for water surface level and water discharge, respectively.
The results show that the moving mesh solution ($N=100$)  provides a
better resolution of the shocks than that with the fixed mesh of $N=100$ and $N=800$.

\begin{figure}[H]
\centering
\includegraphics[width=0.4\textwidth,trim=40 0 40 10,clip]{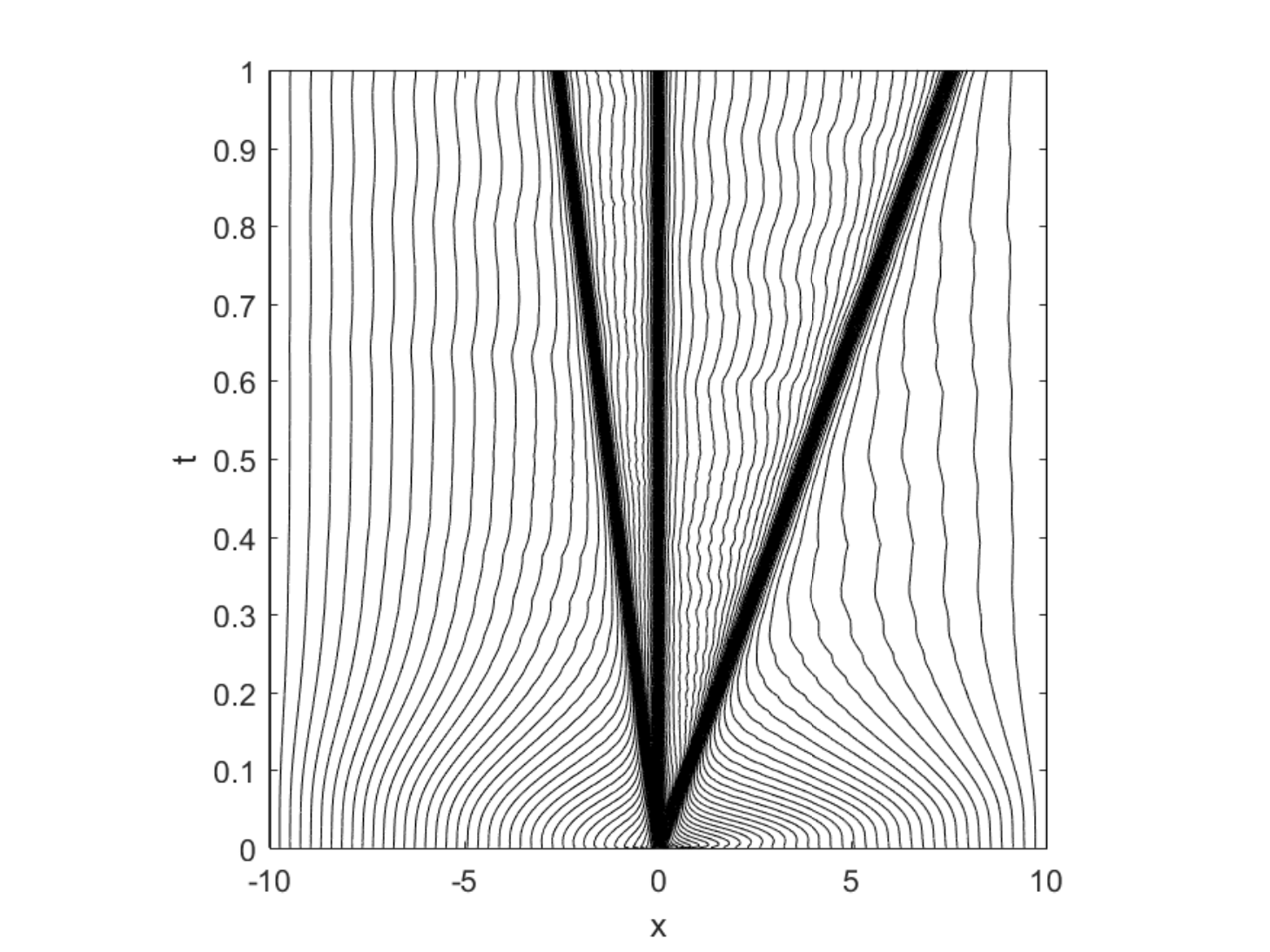}
\caption{Example \ref{test8-1d}.
 The mesh trajectories are obtained with $P^2$-DG method with a moving mesh of $N=100$.}
\label{Fig:test8-1d-mesh}
\end{figure}

\begin{figure}[H]
\centering
\subfigure[FM 100 vs MM 100]{
\includegraphics[width=0.4\textwidth,trim=10 0 30 10,clip]{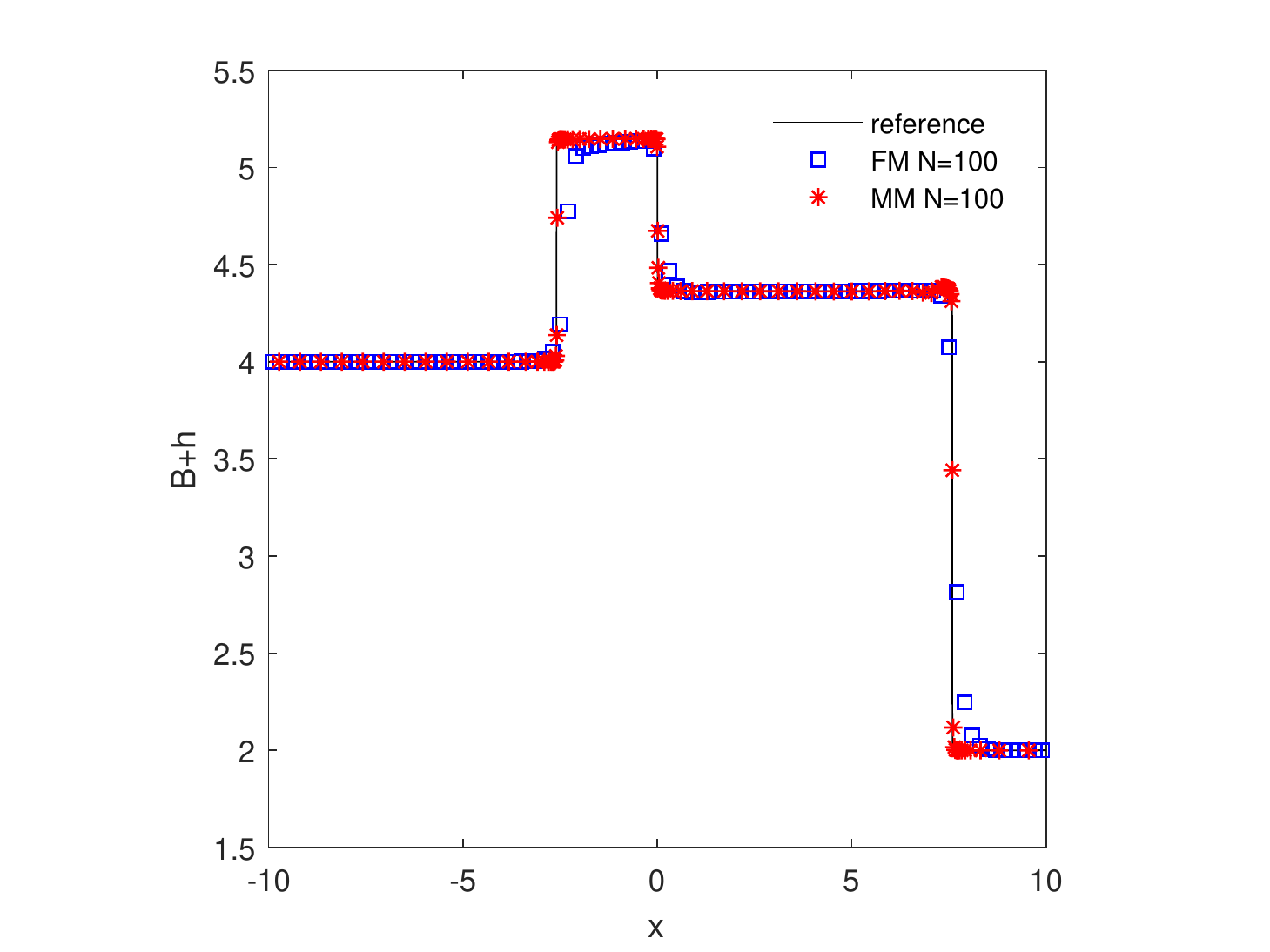}}
\subfigure[Close view of (a) near shocks]{
\includegraphics[width=0.4\textwidth,trim=10 0 30 10,clip]{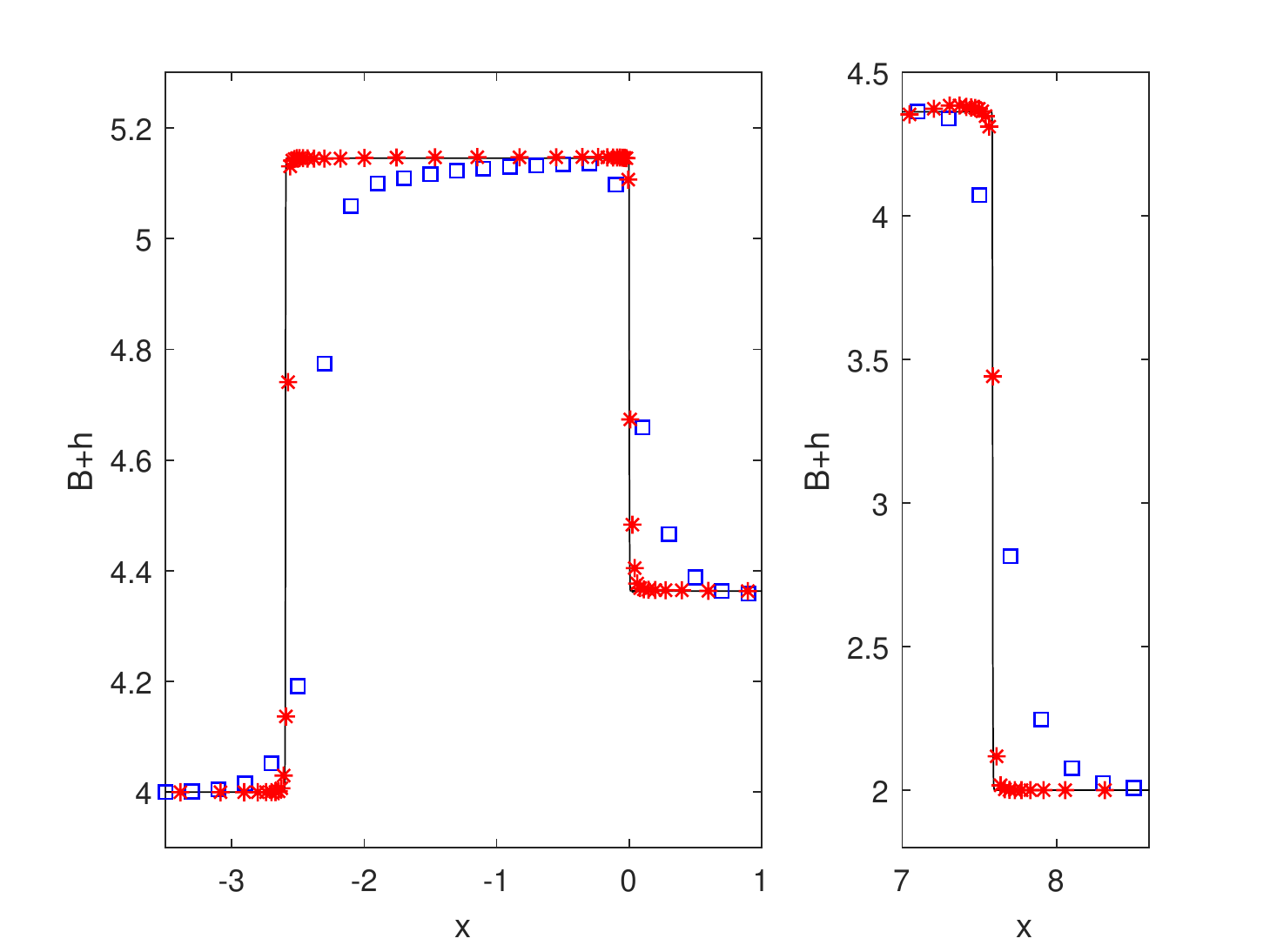}}
\subfigure[FM 800 vs MM 100]{
\includegraphics[width=0.4\textwidth,trim=10 0 30 10,clip]{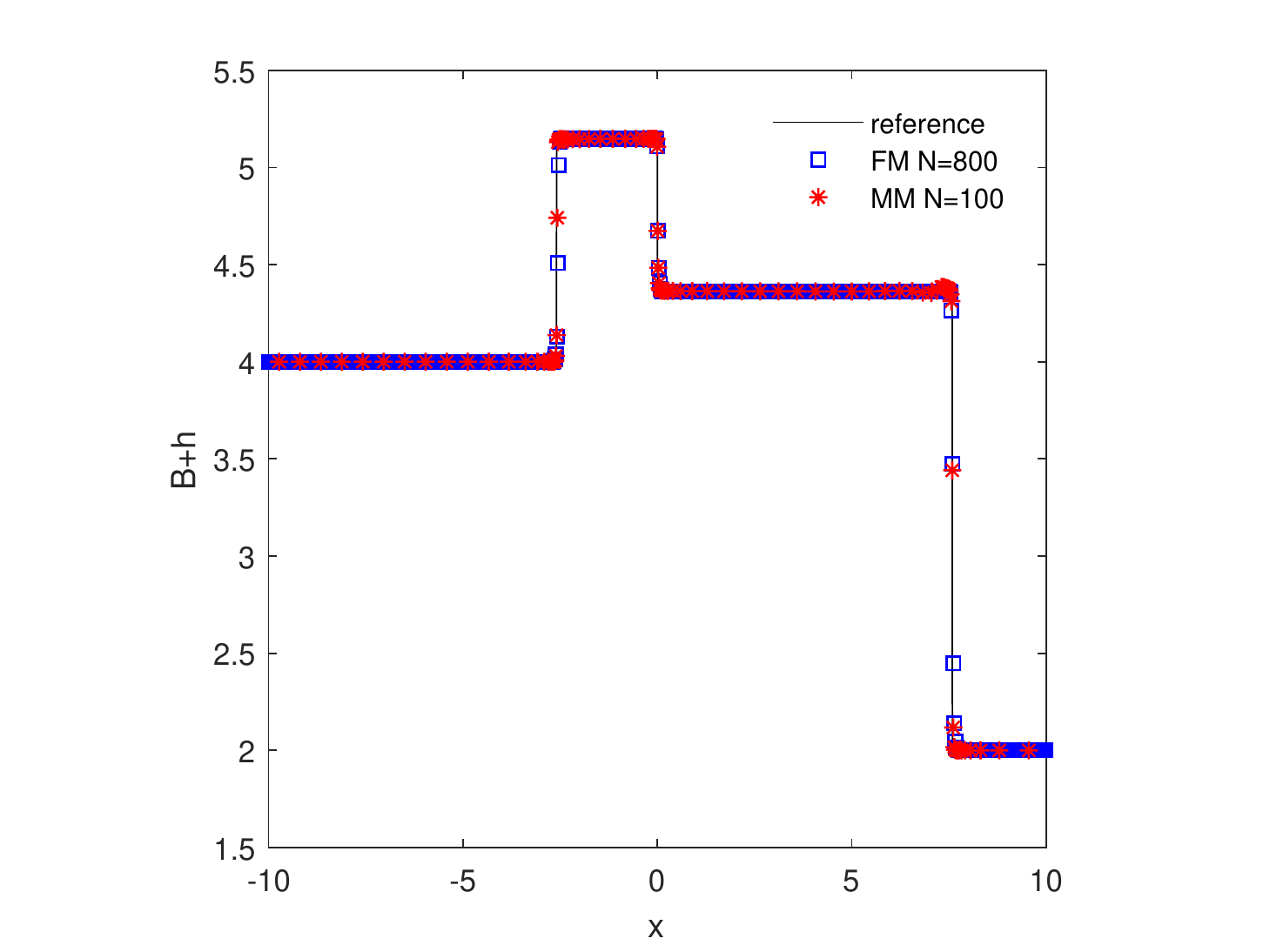}}
\subfigure[Close view of (c) near shocks]{
\includegraphics[width=0.4\textwidth,trim=10 0 30 10,clip]{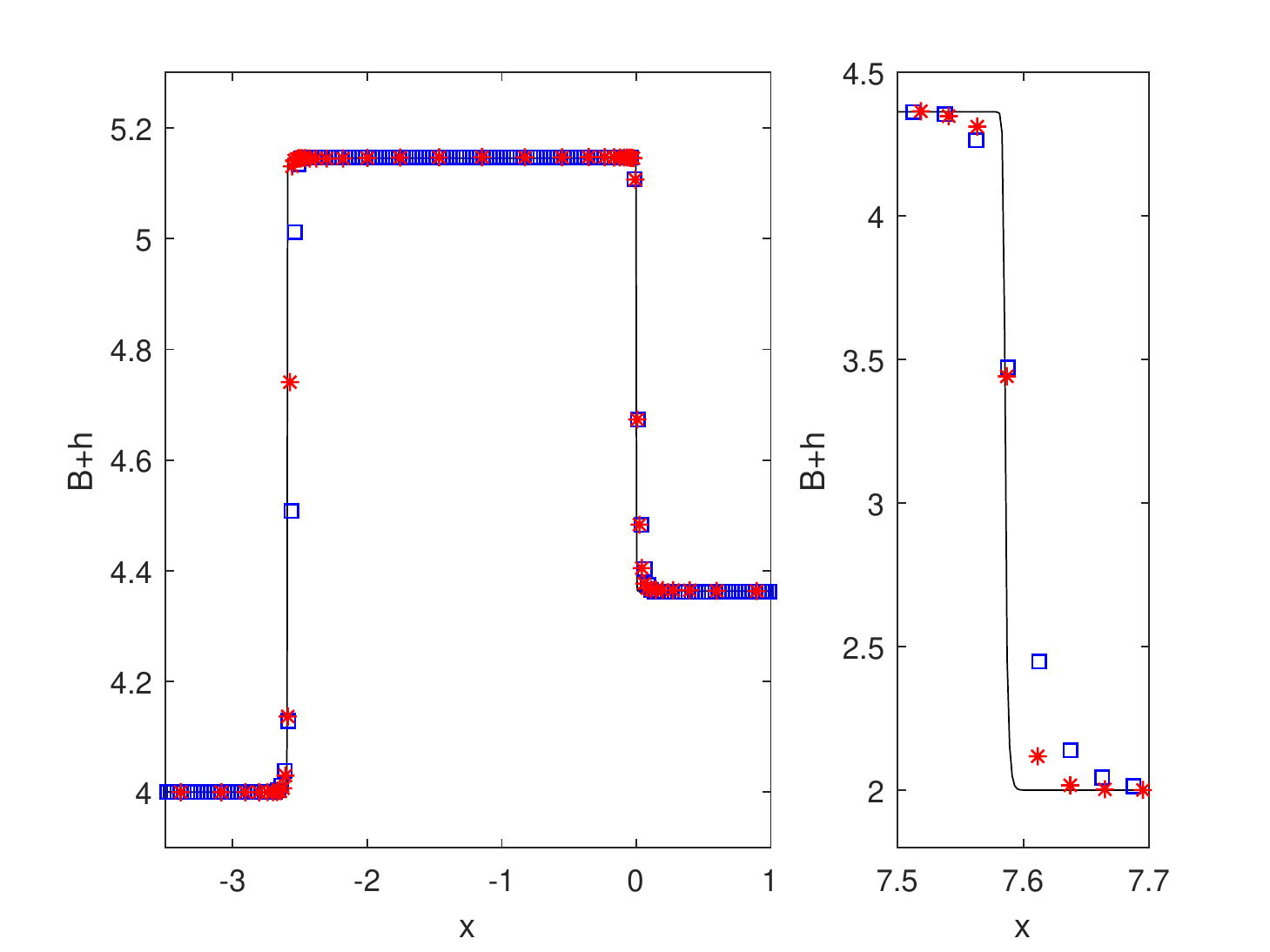}}
\caption{Example \ref{test8-1d}.
The water surface $B+h$ at $t=1$ obtained with $P^2$-DG method with a moving mesh of $N=100$ is compared with those obtained with the fixed meshes of $N=100$ and $N=800$.}
\label{Fig:test8-1d-Bph}
\end{figure}

\begin{figure}[H]
\centering
\subfigure[FM 100 vs MM 100]{
\includegraphics[width=0.4\textwidth,trim=10 0 30 10,clip]{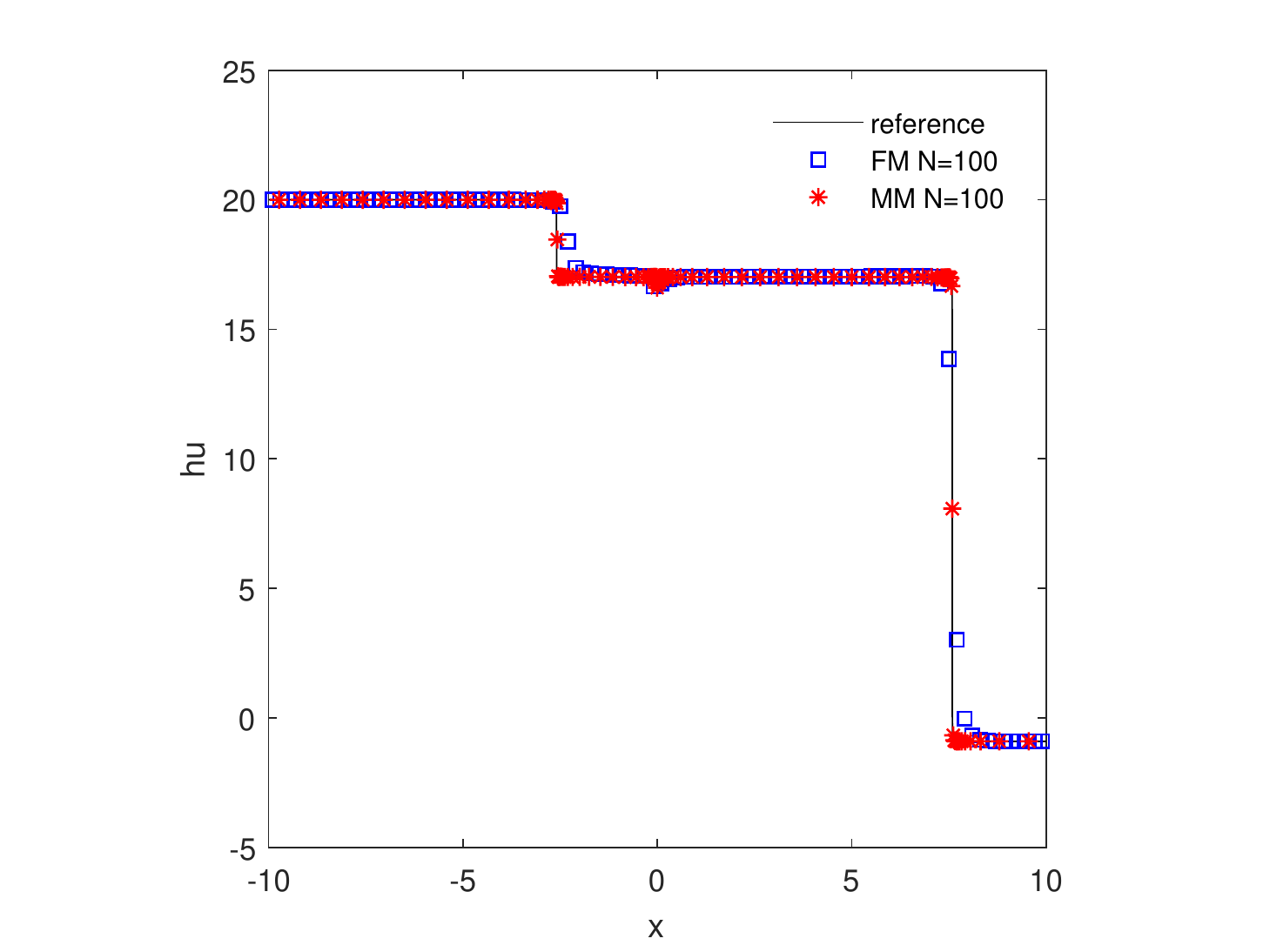}}
\subfigure[Close view of (a) near shocks]{
\includegraphics[width=0.4\textwidth,trim=10 0 30 10,clip]{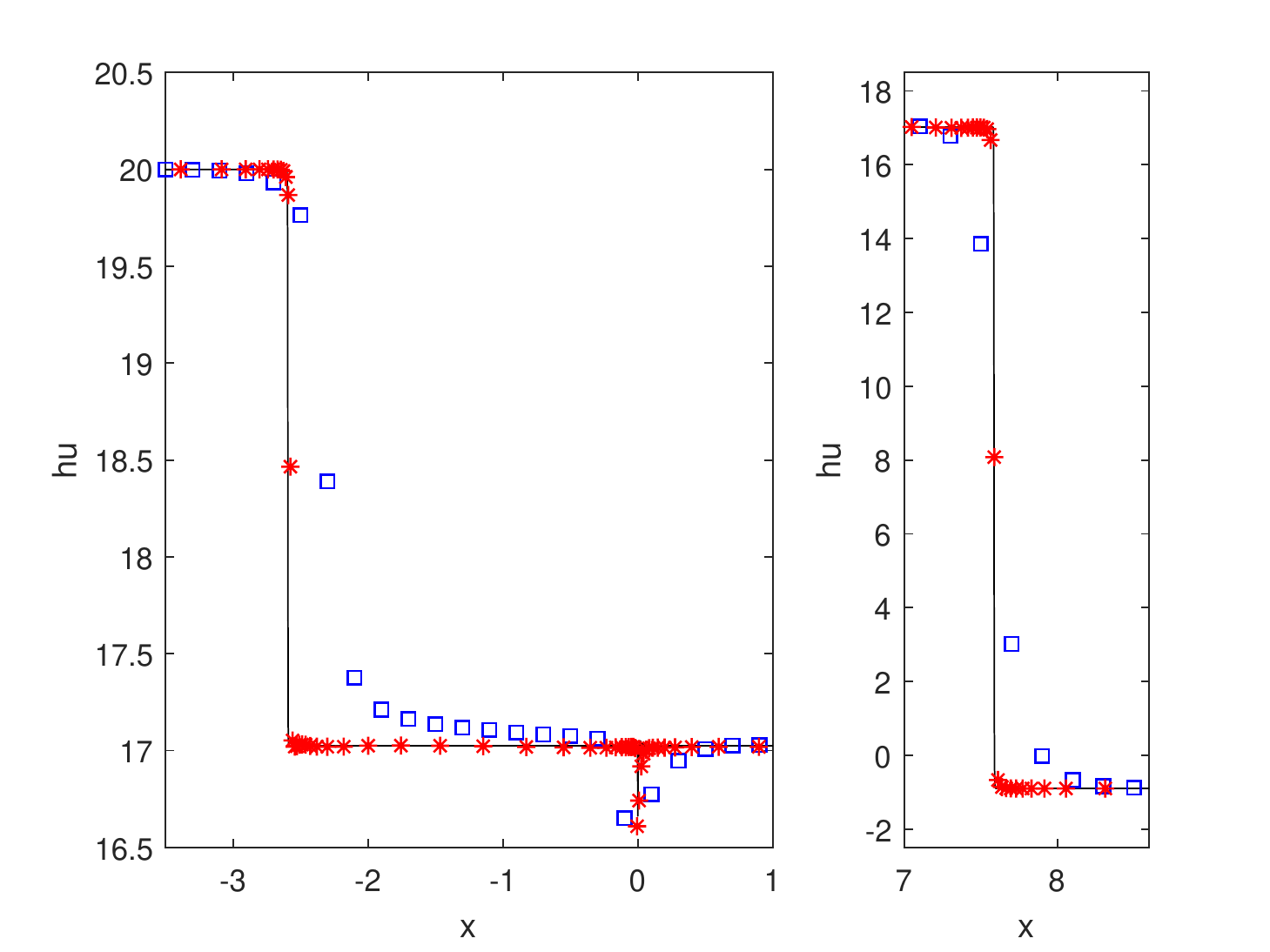}}
\subfigure[FM 800 vs MM 100]{
\includegraphics[width=0.4\textwidth,trim=10 0 30 10,clip]{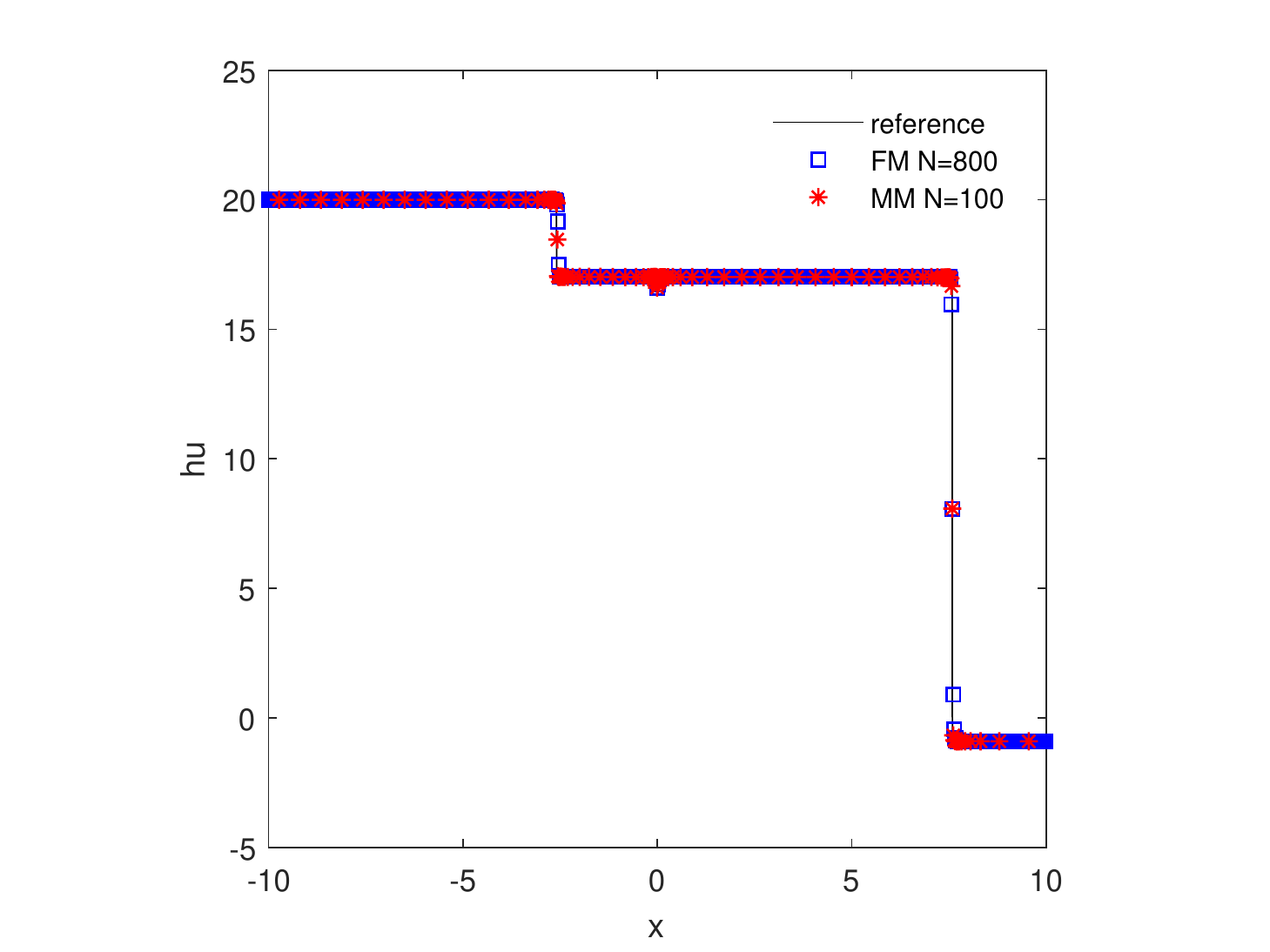}}
\subfigure[Close view of (c) near shocks]{
\includegraphics[width=0.4\textwidth,trim=10 0 30 10,clip]{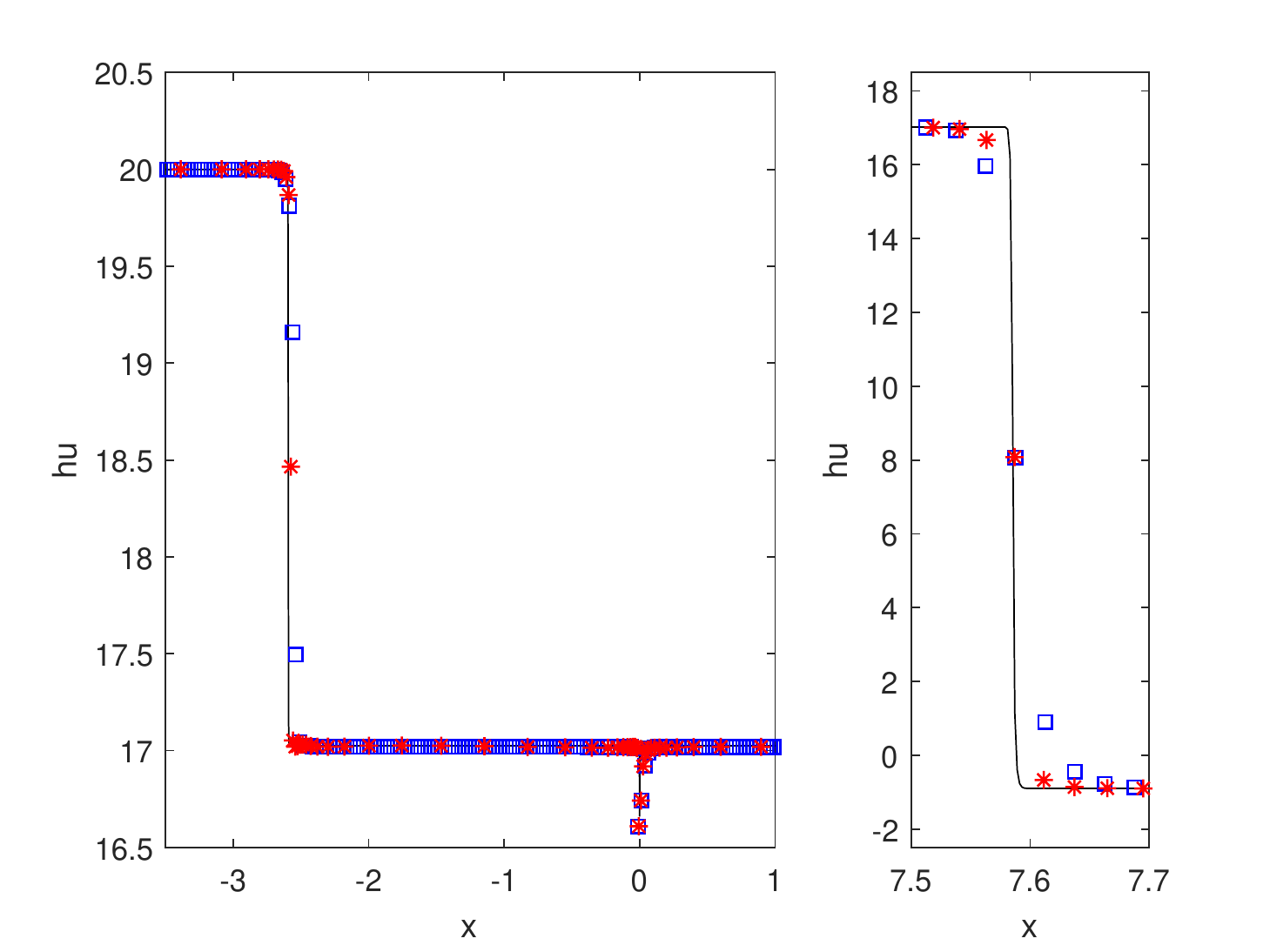}}
\caption{Example \ref{test8-1d}.
The water discharge $hu$ at $t=1$ obtained with $P^2$-DG method with a moving mesh of $N=100$ is compared with those obtained with the fixed meshes of $N=100$ and $N=800$.}
\label{Fig:test8-1d-hu}
\end{figure}

%%%%%%%%%%%%%%%%%%%%%%%%%%%%%%%%%
\begin{example}\label{test6-1d}
(The rarefaction and shock waves test for the 1D SWEs with flat and wavy bottom topographies.)
\end{example}
In this test we compute the 1D SWEs with two sets of bottom topographies $B(x)$.
The initial conditions are given by
\begin{equation*}
h(x,0)=
\begin{cases}
2-B(x),& \text{for}~x\in (-10,1)\\
0.35-B(x),& \text{for}~x\in (1, 10) \\%\text{for}~1\leq x\leq 10\\
\end{cases}
\quad \quad
u(x,0)=
\begin{cases}
1,& \text{for}~x\in (-10,1)\\
0,& \text{for}~x\in (1, 10).\\
\end{cases}
\end{equation*}
We choose the transmissive boundary conditions and compute the solution up to $t=1$.

We first consider $B(x)=0$.
The solution contains a rarefaction wave moving to the left and a shock wave (hydraulic jump) traveling to the right.
The mesh trajectories with a moving mesh of $N=100$ are shown in Fig.~\ref{Fig:test6-1d-mesh-B0}.
One can see that the mesh has high element concentration around the rarefaction and shock.
In Figs.~\ref{Fig:test6-1d-Bph-B0} and \ref{Fig:test6-1d-hu-B0}, we plot the water surface $B+h$
and water discharge $hu$ at $t=1$ obtained with $P^2$-DG and a moving mesh of $N=100$
and fixed meshes of $N=100$ and $N=1280$.
It can be seen that the moving mesh solutions of $N=100$ are more accurate than those
with a fixed mesh of $N=100$ and comparable with those with a fixed mesh of $ N =1280$.

To show the effects of non-flat bottom topographies on water flow, we repeat this example
with a wavy bottom topography \cite{Tang-2004},
\begin{equation}
\label{B-3}
B(x)=
\begin{cases}
0.3 \cos^{30}(\frac{\pi}{2}(x-1)),& \text{for}~0\leq x\leq 2\\
0,& \text{otherwise}.
\end{cases}
\end{equation}
The solution shows more complex features, made up of a left rarefaction wave and two hydraulic jumps/shocks.
One shock near the location of $x=2$ occurs in the flow over the non-flat bed topography.

The moving mesh trajectories ($N=160$) are plotted in Fig.~\ref{Fig:test6-1d-mesh}. One can see that the mesh elements concentrate around the rarefaction and hydraulic jumps/shocks.
In Figs.~\ref{Fig:test6-1d-Bph} and \ref{Fig:test6-1d-hu}, we plot the water surface level $B+h$ and water discharge $hu$ at $t=1$ obtained with $P^2$-DG and a moving mesh of $N=160$ and fixed meshes of $N=160$ and $N=1280$.
These results show that the moving mesh solutions of $N=160$ are more accurate than those
with a fixed mesh of $N=160$ and comparable with that with the fixed mesh of $N =1280$.
Moreover, the MM-DG method does a good job in resolving the shock near $x=2$ which seems
to be a difficult structure for a fixed mesh to resolve.

\begin{figure}[H]
\centering
\includegraphics[width=0.4\textwidth,trim=40 0 40 10,clip]{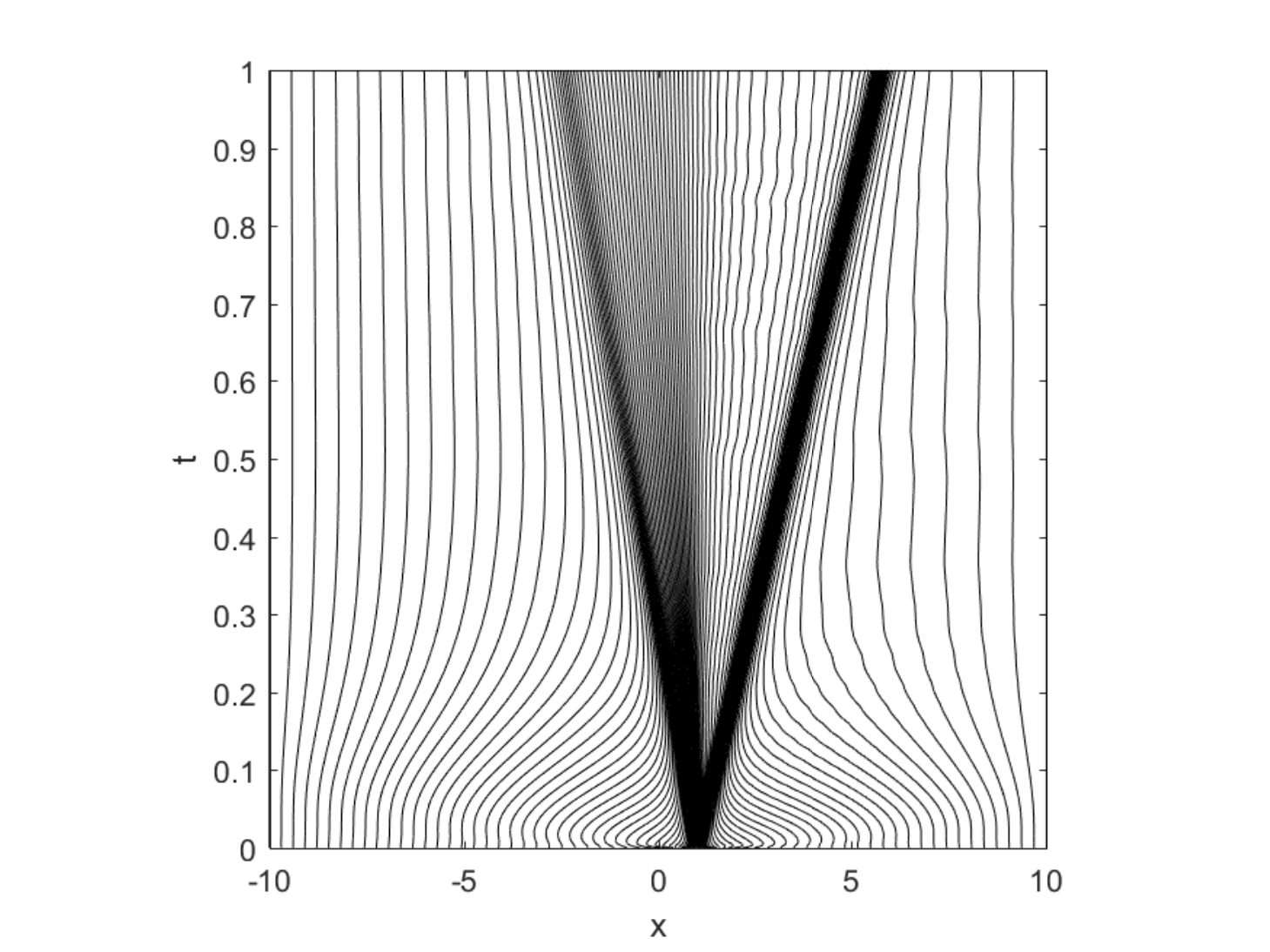}
\caption{Example \ref{test6-1d}. The mesh trajectories for $ B=0$ are obtained with $P^2$-DG and a moving mesh of $N=100$.}
\label{Fig:test6-1d-mesh-B0}
\end{figure}

\begin{figure}[H]
\centering
\subfigure[FM 100 vs MM 100]{
\includegraphics[width=0.4\textwidth,trim=10 0 30 10,clip]{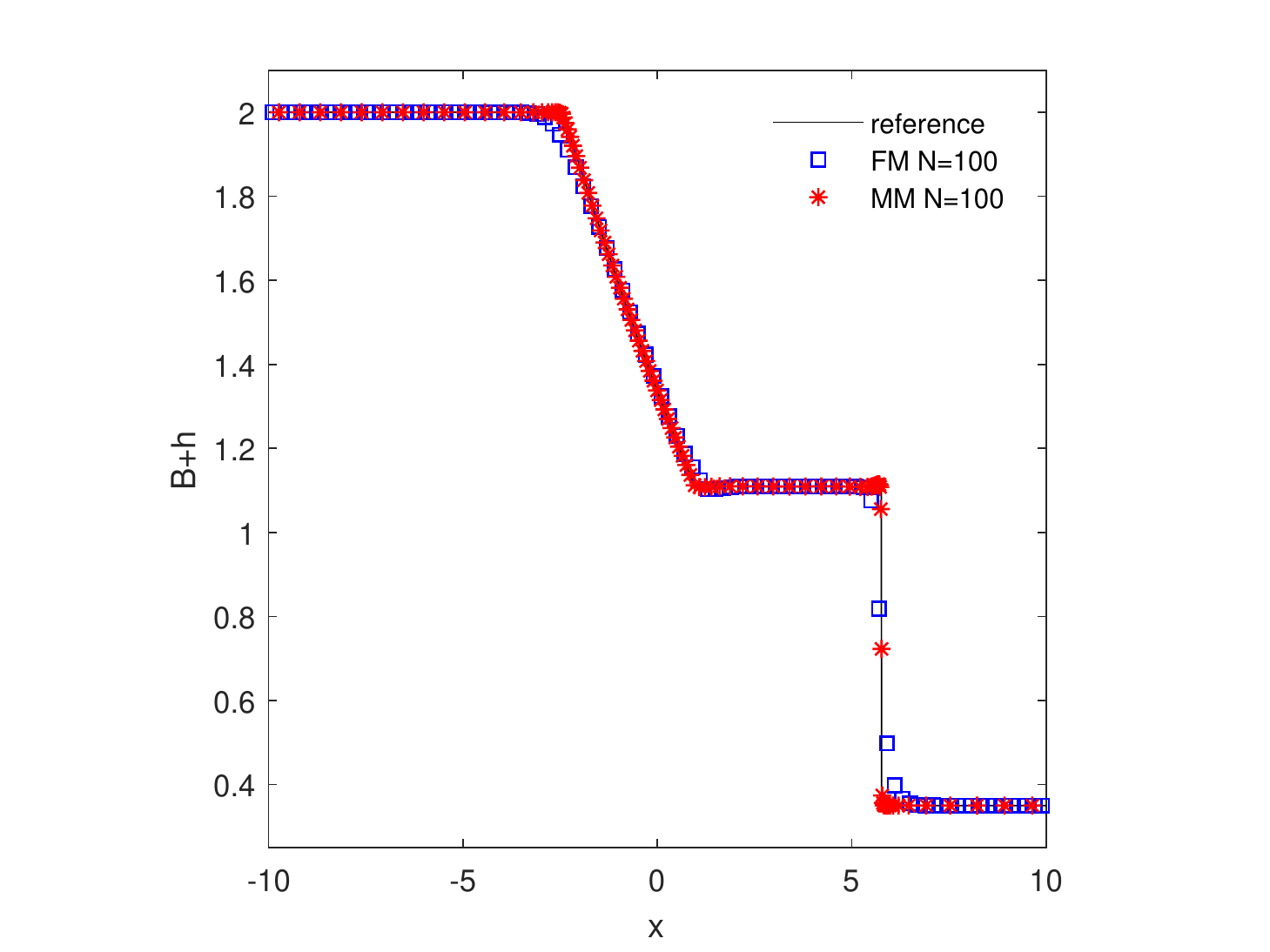}}
\subfigure[Close view of (a)]{
\includegraphics[width=0.4\textwidth,trim=10 0 30 10,clip]{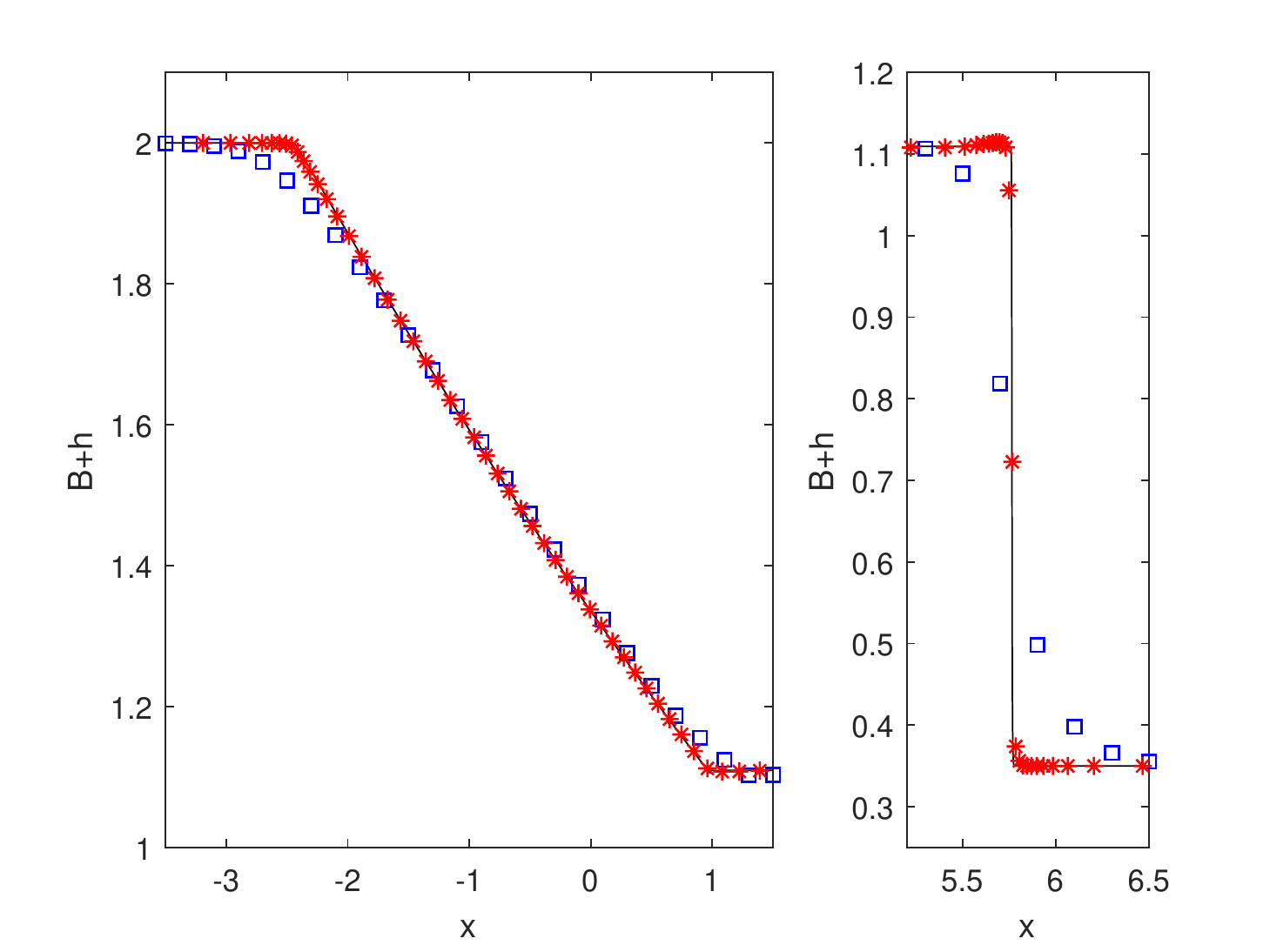}}
\subfigure[FM 1280 vs MM 100]{
\includegraphics[width=0.4\textwidth,trim=10 0 30 10,clip]{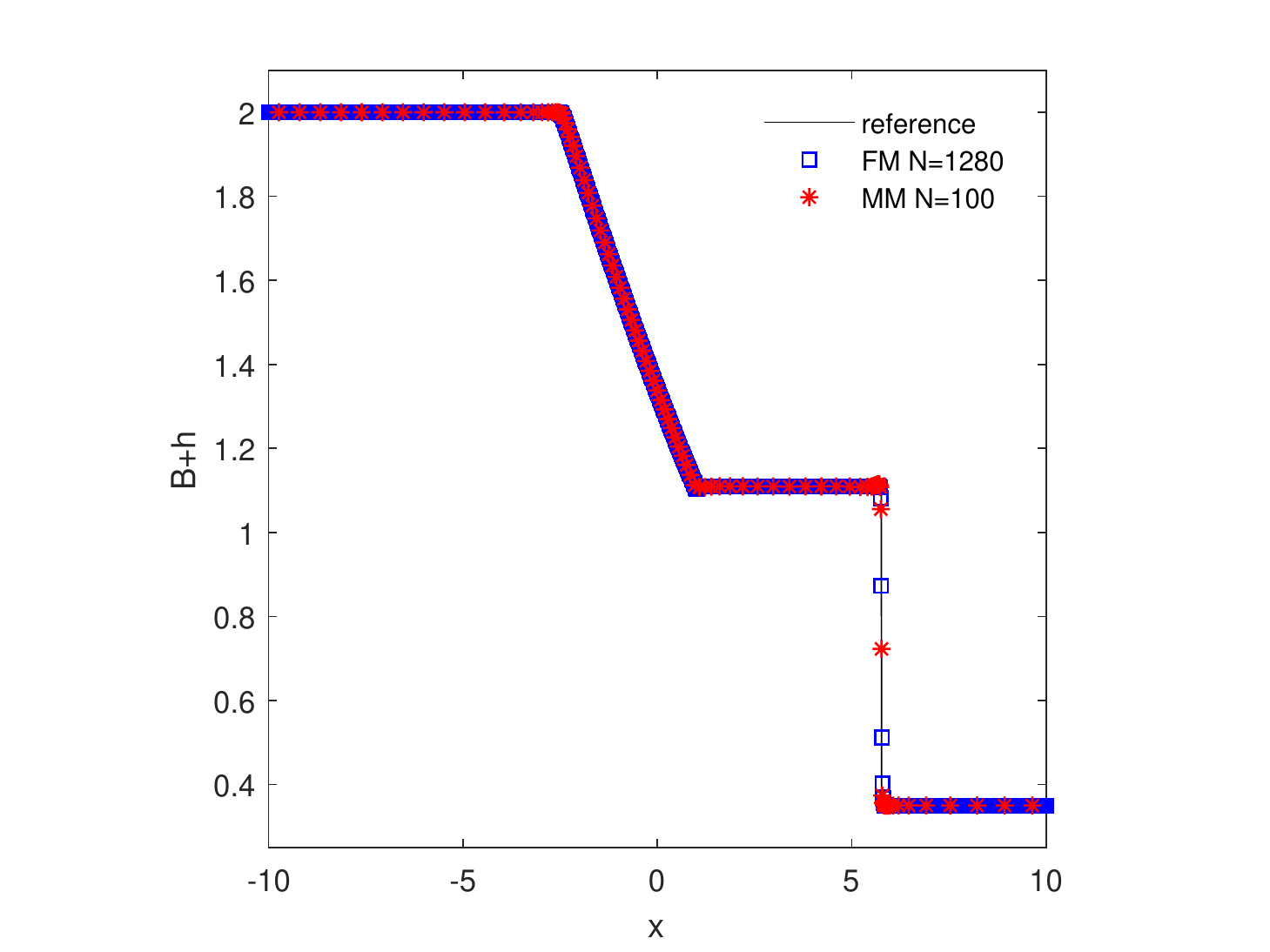}}
\subfigure[Close view of (c)]{
\includegraphics[width=0.4\textwidth,trim=10 0 30 10,clip]{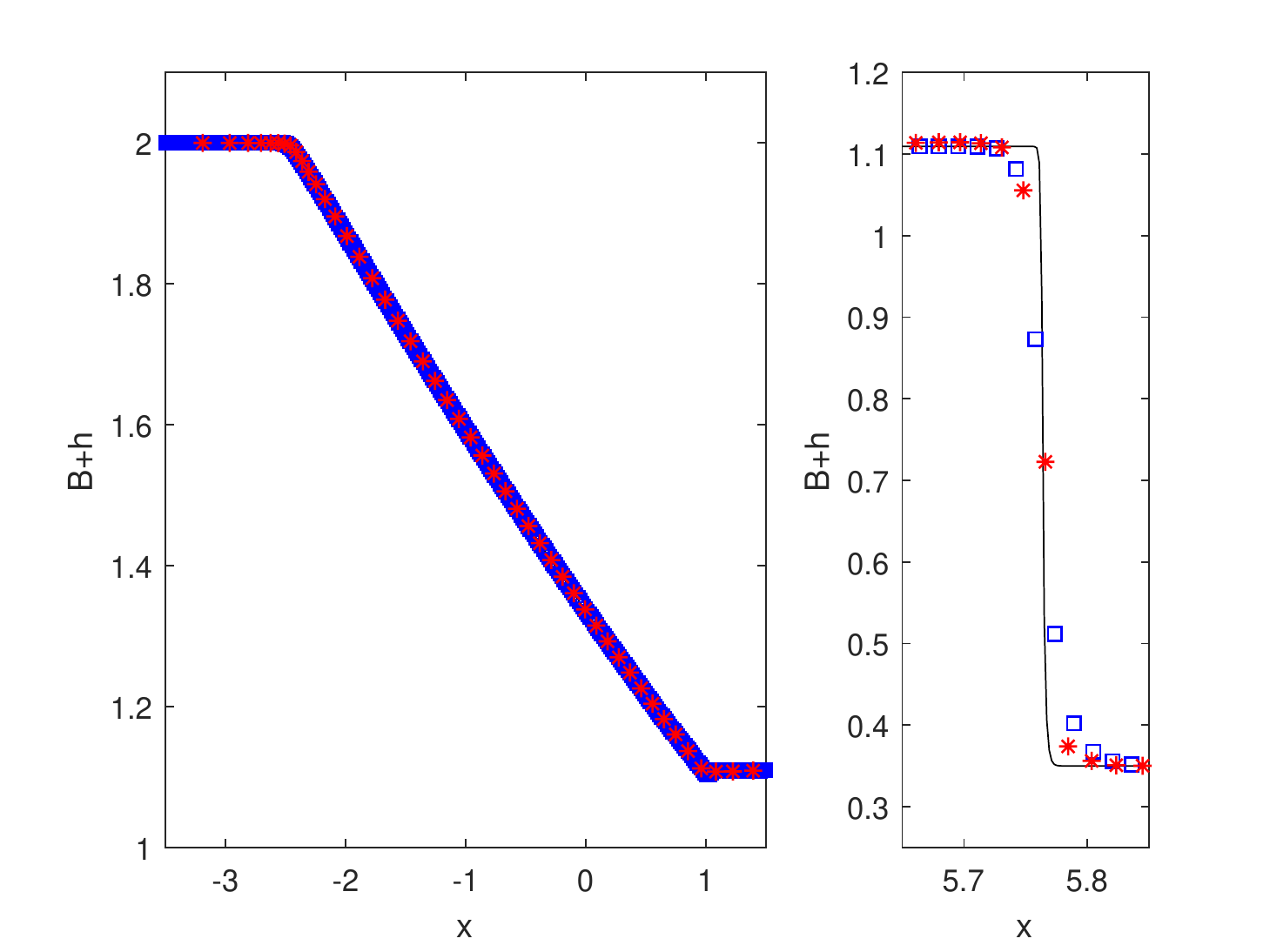}}
\caption{Example \ref{test6-1d}. The water depth $h$ (for $ B=0$) at $t=1$ obtained with $P^2$-DG and a moving mesh of $N=100$ is compared with those obtained with fixed meshes of $N=100$ and $N=1280$.}
\label{Fig:test6-1d-Bph-B0}
\end{figure}

\begin{figure}[H]
\centering
\subfigure[FM 100 vs MM 100]{
\includegraphics[width=0.4\textwidth,trim=10 0 30 10,clip]{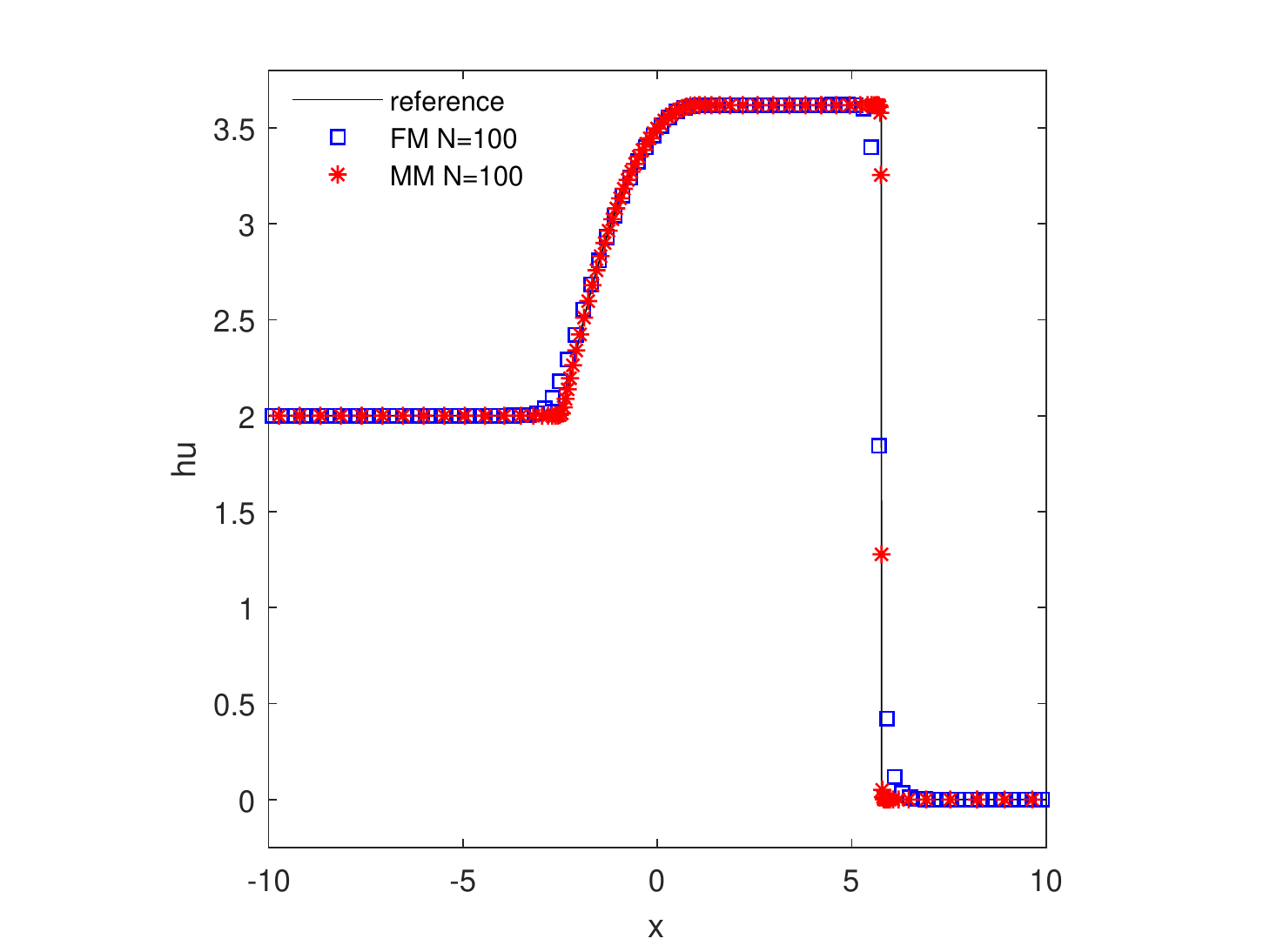}}
\subfigure[Close view of (a)]{
\includegraphics[width=0.4\textwidth,trim=10 0 30 10,clip]{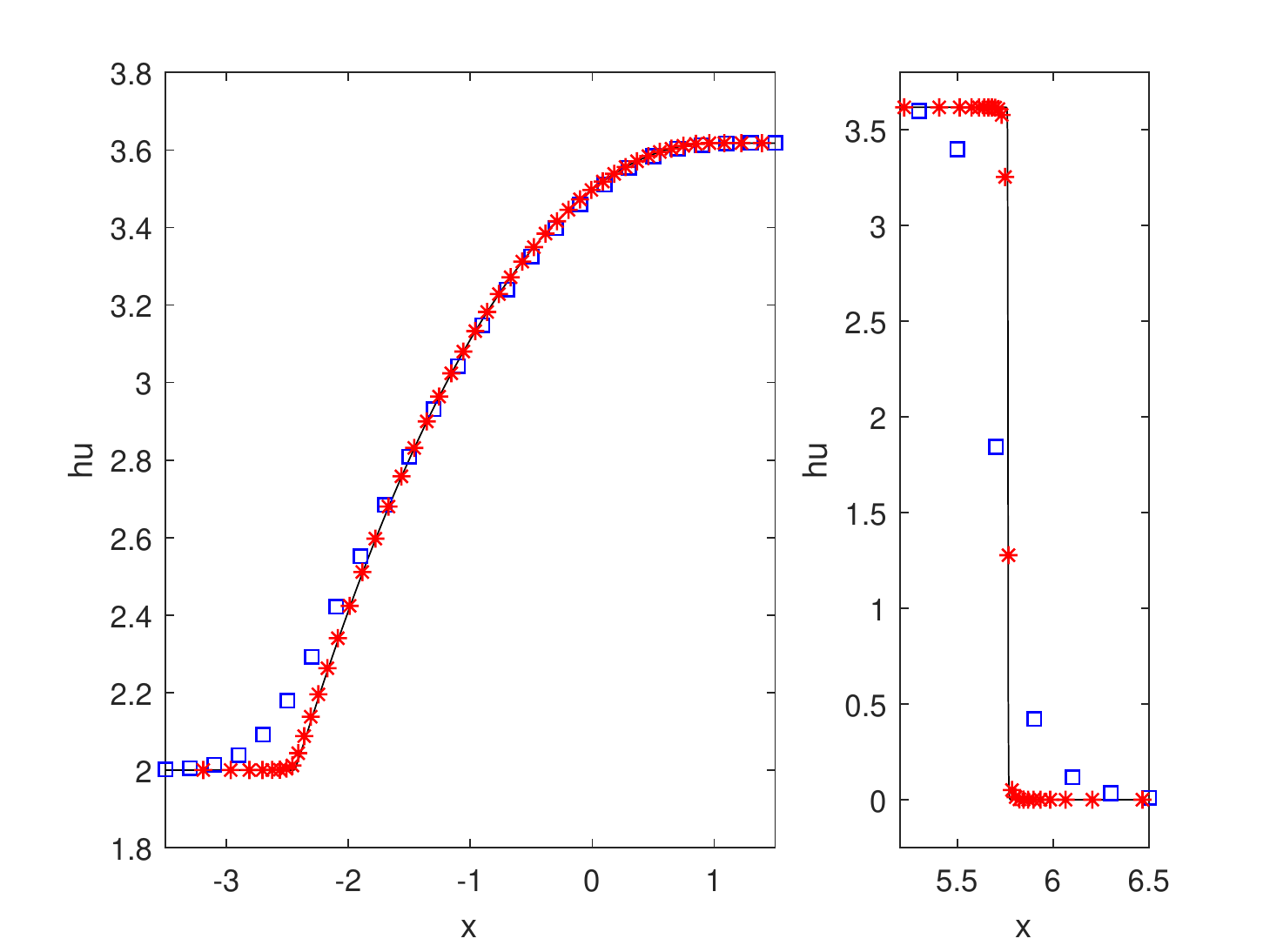}}
\subfigure[FM 1280 vs MM 100]{
\includegraphics[width=0.4\textwidth,trim=10 0 30 10,clip]{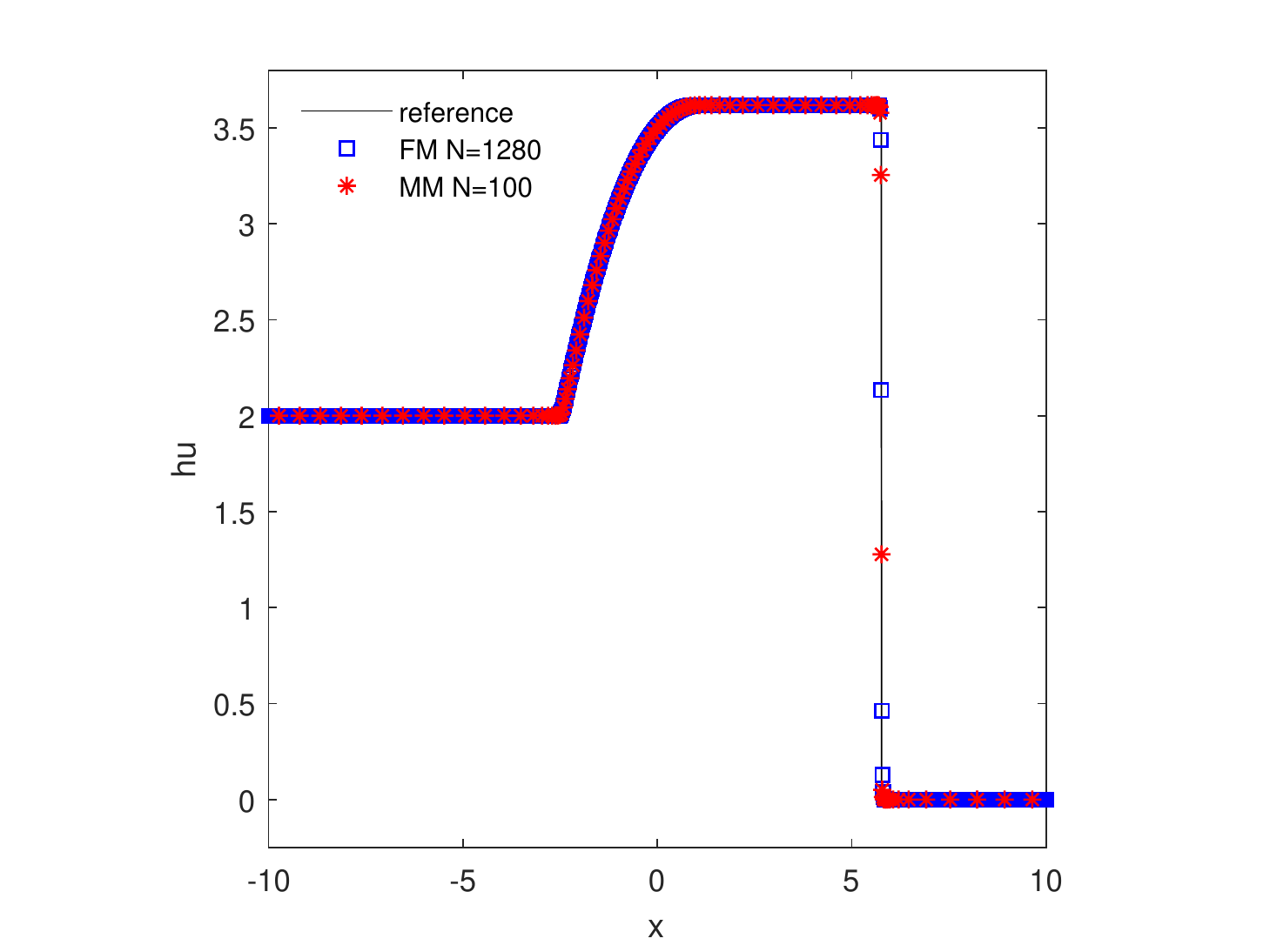}}
\subfigure[Close view of (c)]{
\includegraphics[width=0.4\textwidth,trim=10 0 30 10,clip]{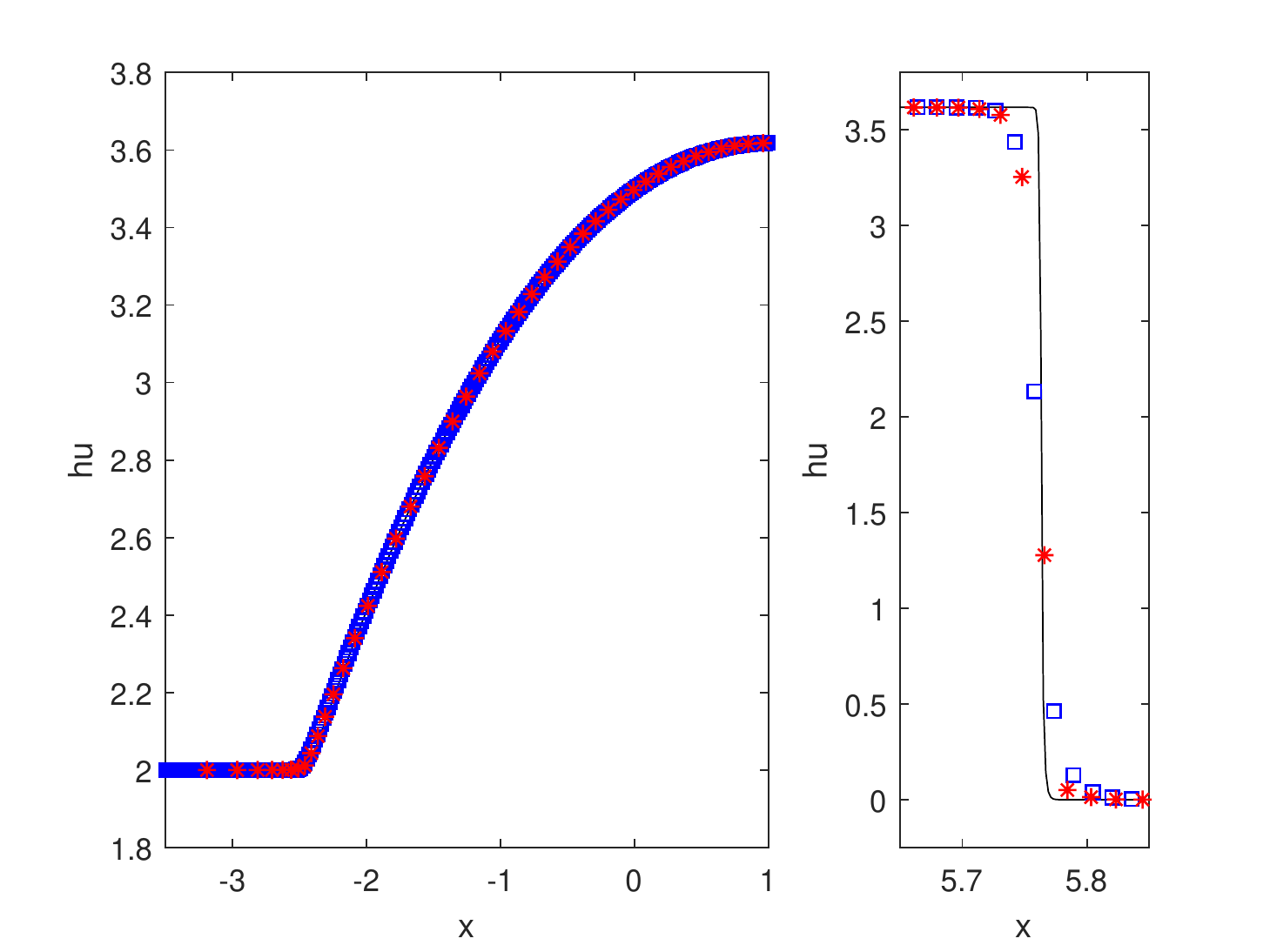}}
\caption{Example \ref{test6-1d}. The water discharge  $hu$ (for $ B=0$) at $t=1$ obtained with $P^2$-DG and a moving mesh of $N=100$ is compared with those obtained with fixed meshes of $N=100$ and $N=1280$.}
\label{Fig:test6-1d-hu-B0}
\end{figure}

\begin{figure}[H]
\centering
\includegraphics[width=0.4\textwidth,trim=40 0 40 10,clip]{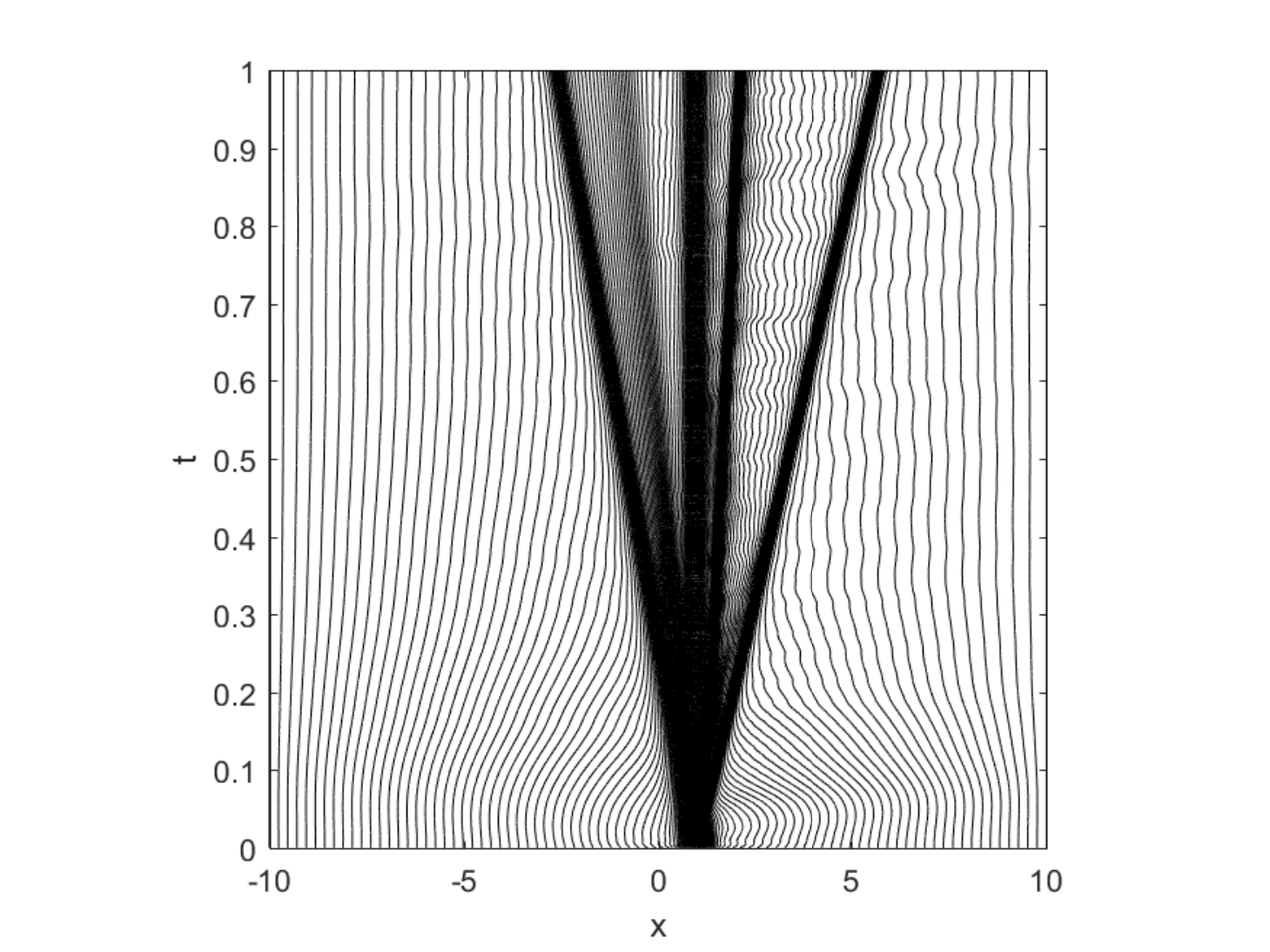}
\caption{Example \ref{test6-1d}.
The mesh trajectories (for $B$ defined in (\ref{B-3})) are obtained with $P^2$-DG and a moving mesh of $N=160$.}
\label{Fig:test6-1d-mesh}
\end{figure}

\begin{figure}[H]
\centering
\subfigure[FM 160 vs MM 160]{
\includegraphics[width=0.4\textwidth,trim=10 0 30 10,clip]{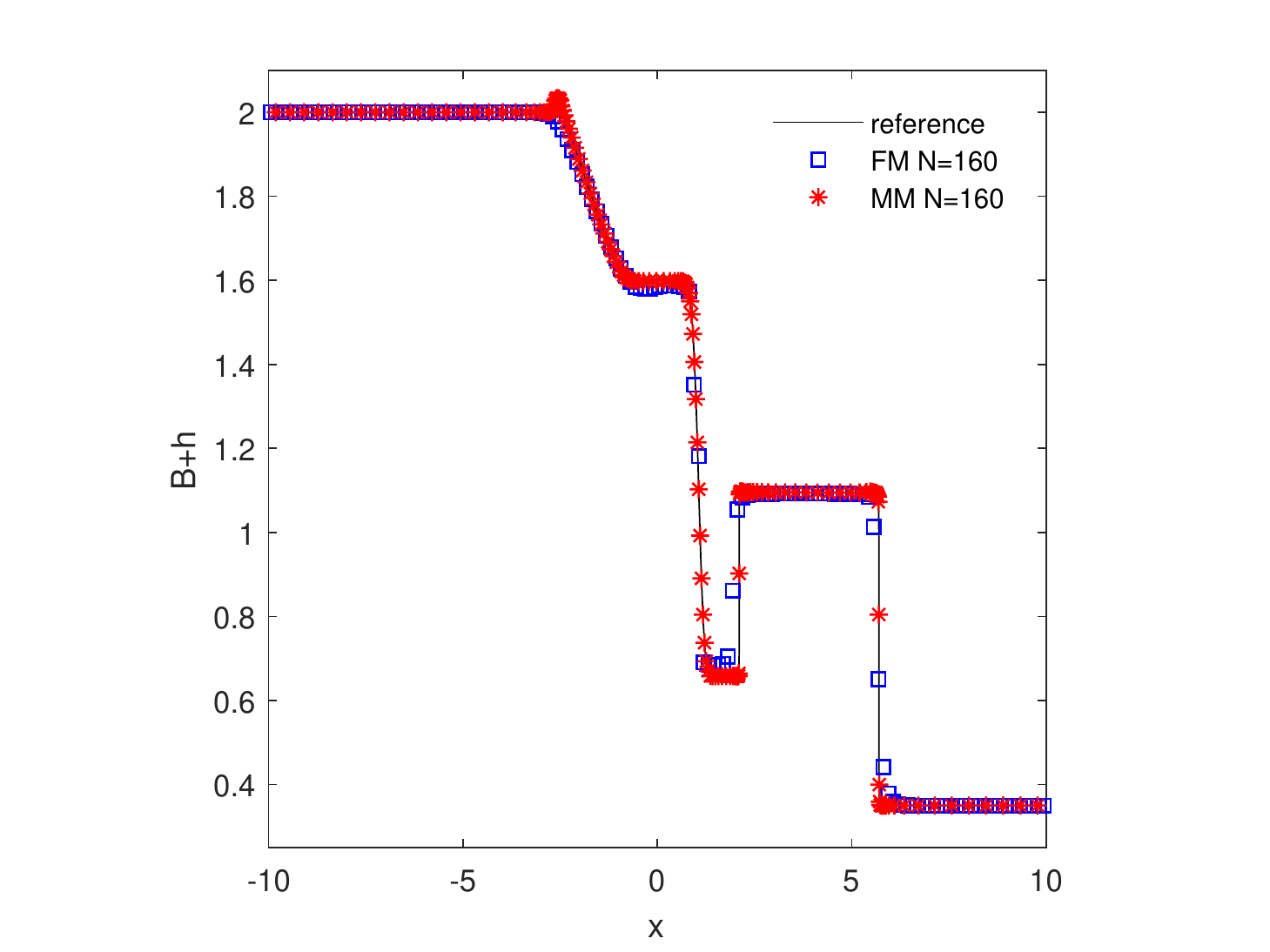}}
\subfigure[Close view of (a)]{
\includegraphics[width=0.4\textwidth,trim=10 0 30 10,clip]{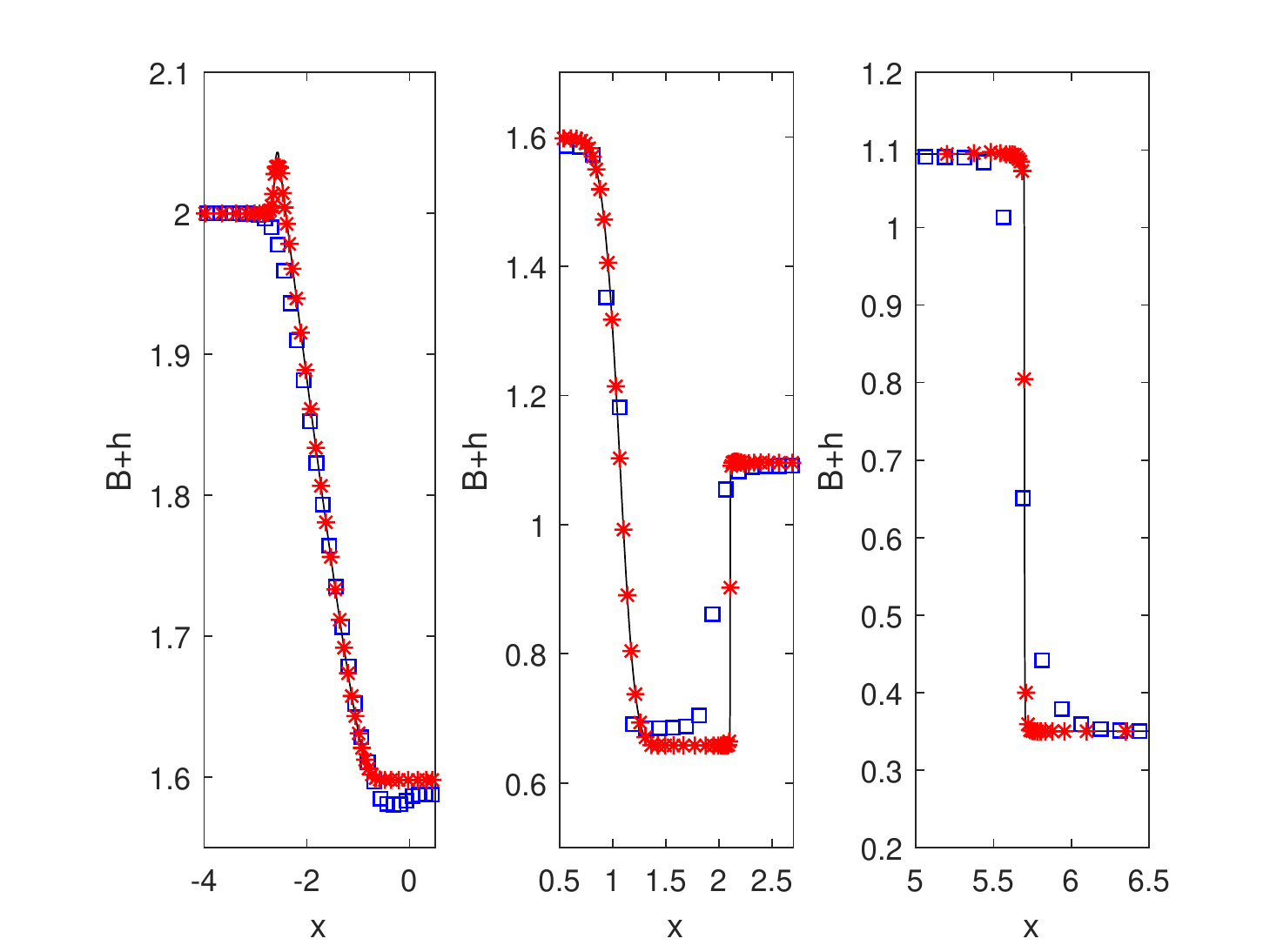}}
\subfigure[FM 1280 vs MM 160]{
\includegraphics[width=0.4\textwidth,trim=10 0 30 10,clip]{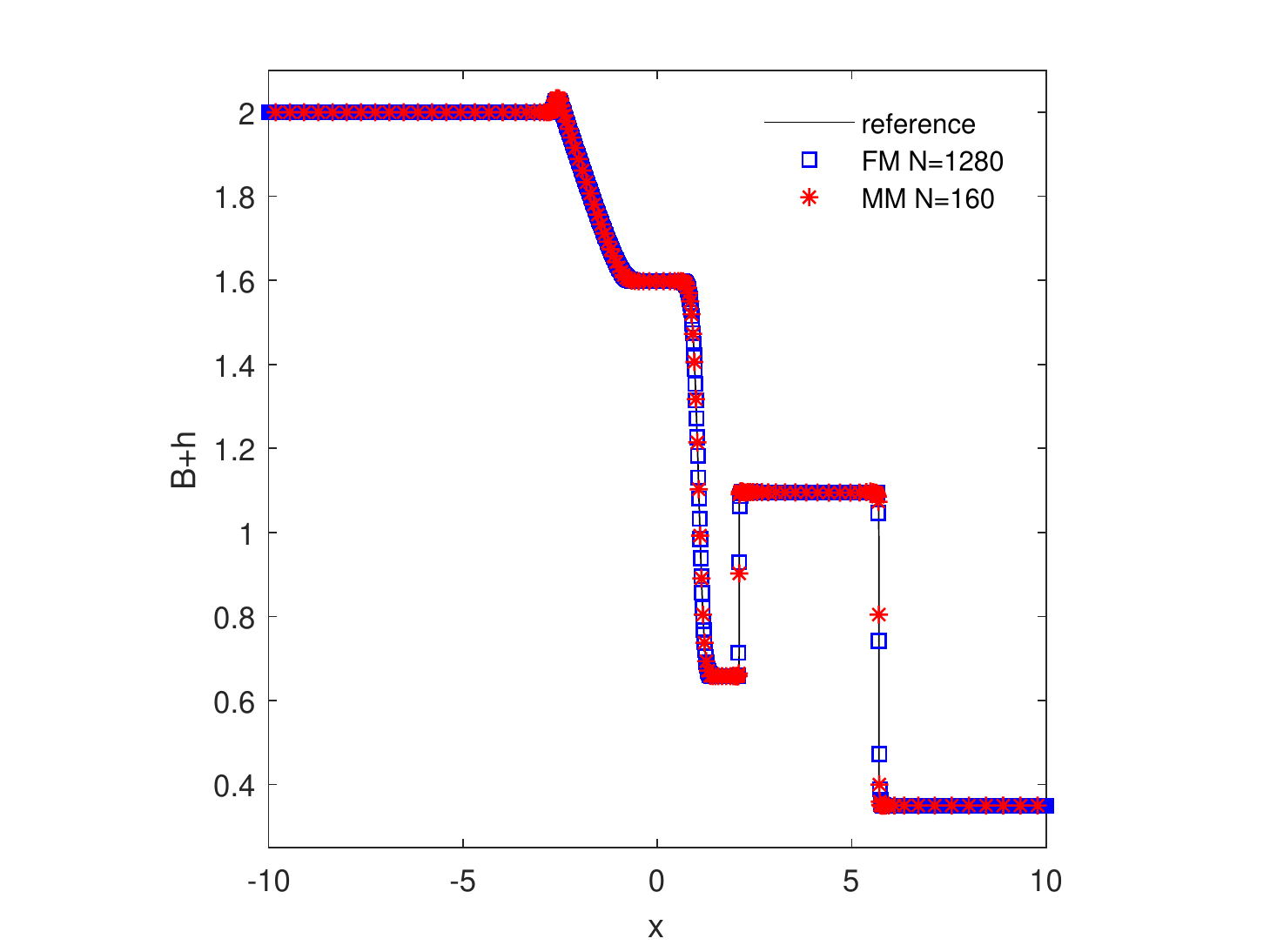}}
\subfigure[Close view of (c)]{
\includegraphics[width=0.4\textwidth,trim=10 0 30 10,clip]{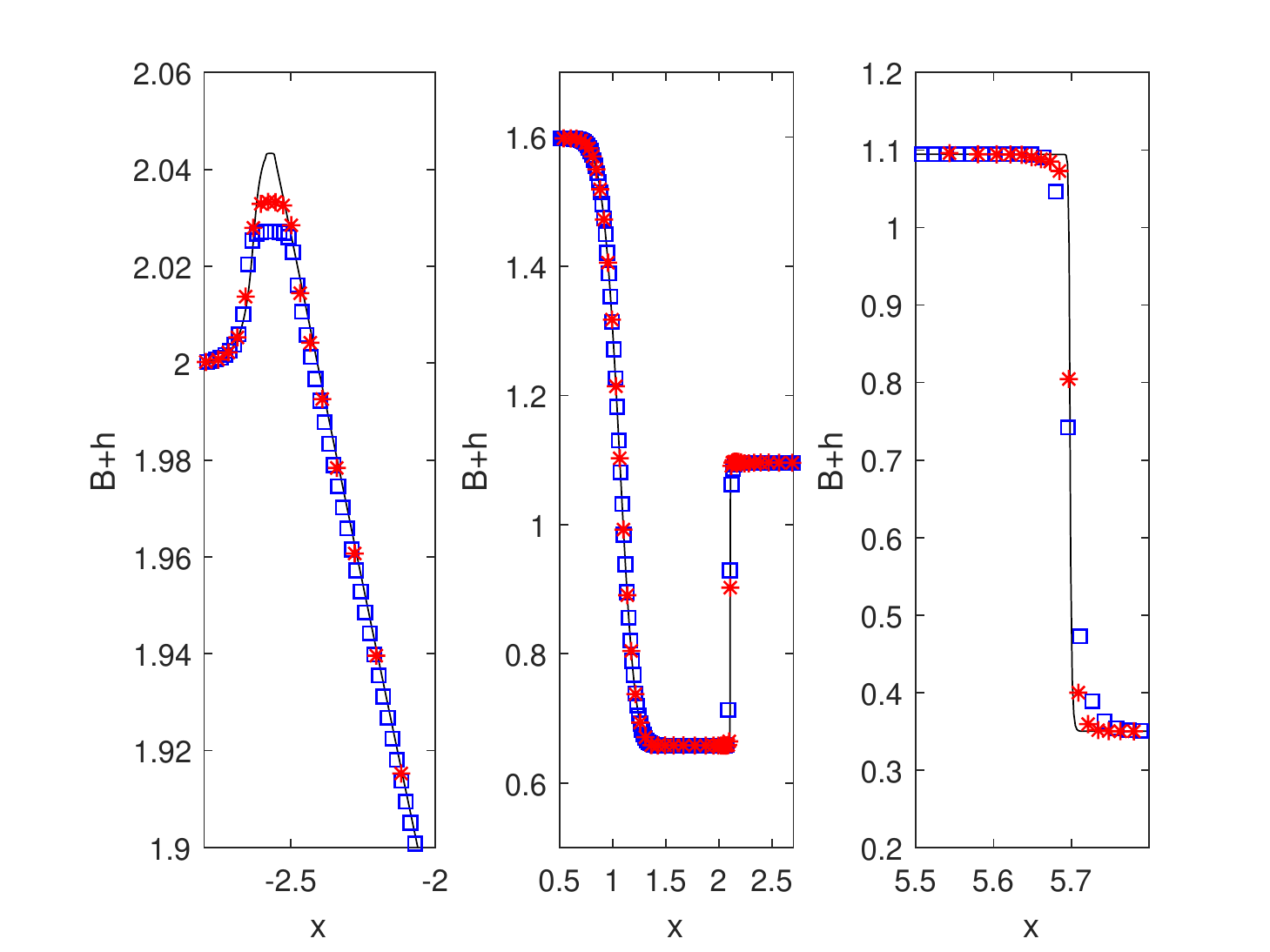}}
\caption{Example \ref{test6-1d}. The water surface level $B+h$ (for $B$ defined in (\ref{B-3})) at $t=1$ obtained with $P^2$-DG and a moving mesh of $N=160$ is compared with those obtained with fixed meshes of $N=160$ and $N=1280$.}
\label{Fig:test6-1d-Bph}
\end{figure}

\begin{figure}[H]
\centering
\subfigure[FM 160 vs MM 160]{
\includegraphics[width=0.4\textwidth,trim=10 0 30 10,clip]{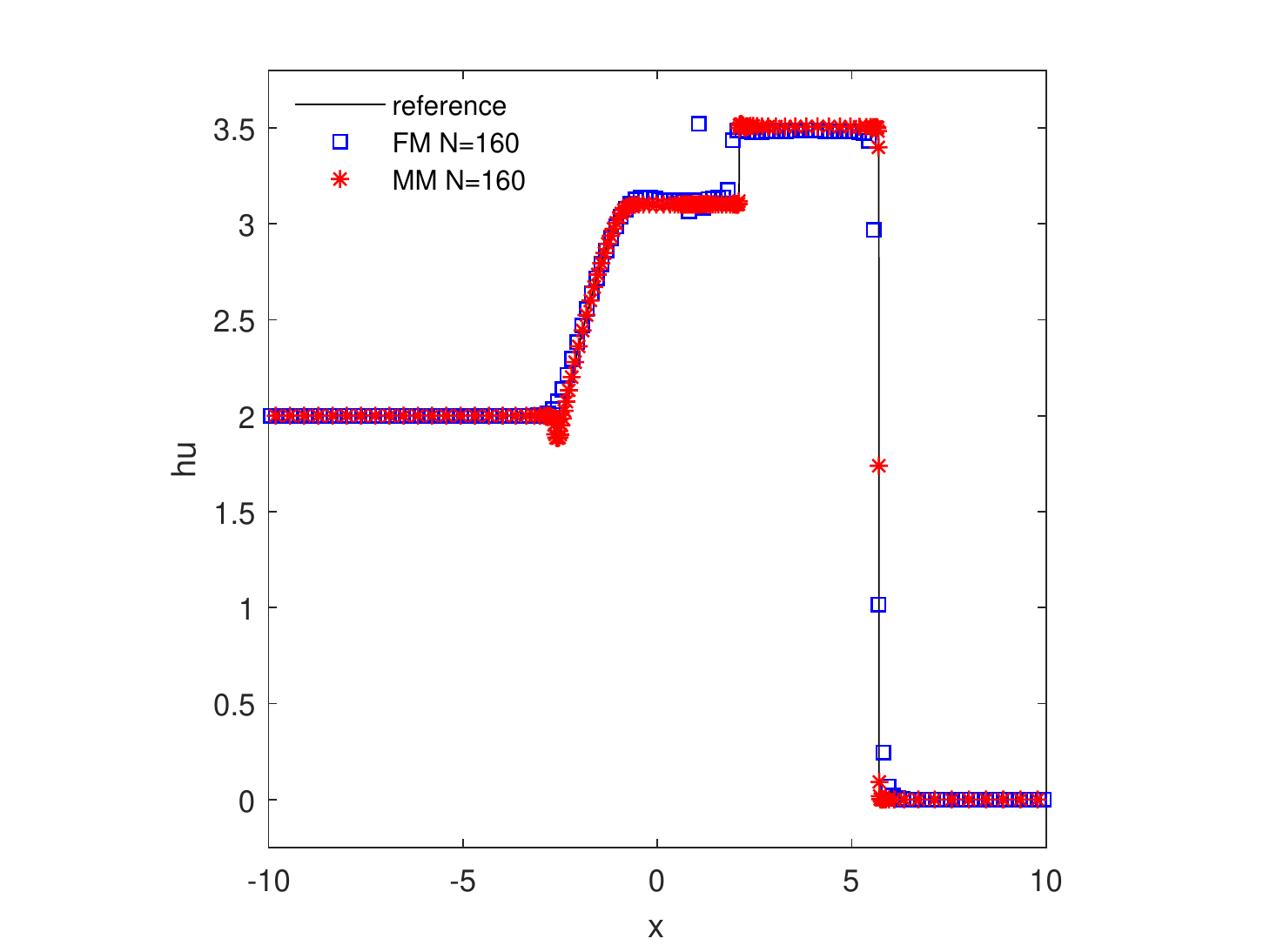}}
\subfigure[Close view of (a)]{
\includegraphics[width=0.4\textwidth,trim=10 0 30 10,clip]{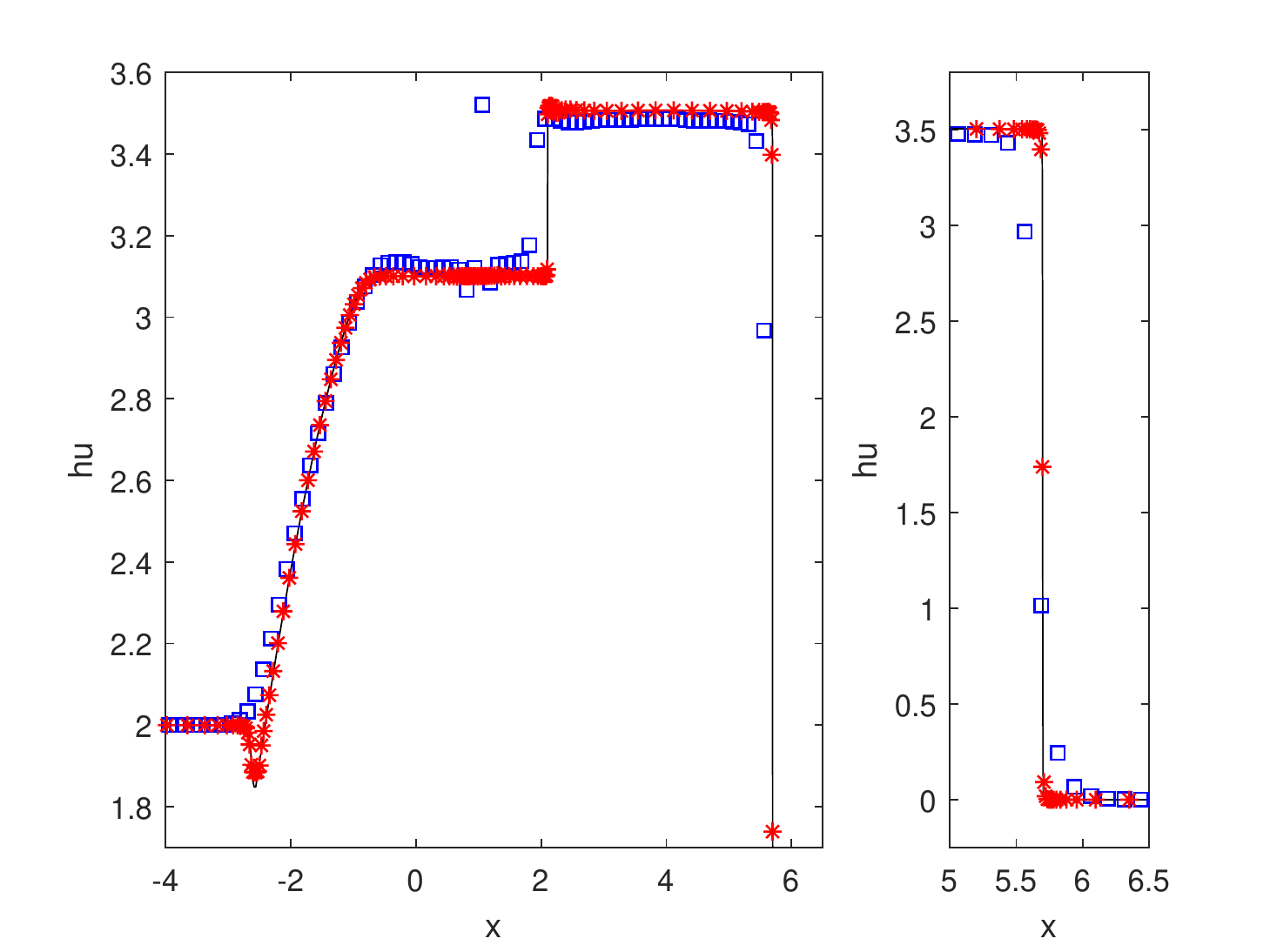}}
\subfigure[FM 1280 vs MM 160]{
\includegraphics[width=0.4\textwidth,trim=10 0 30 10,clip]{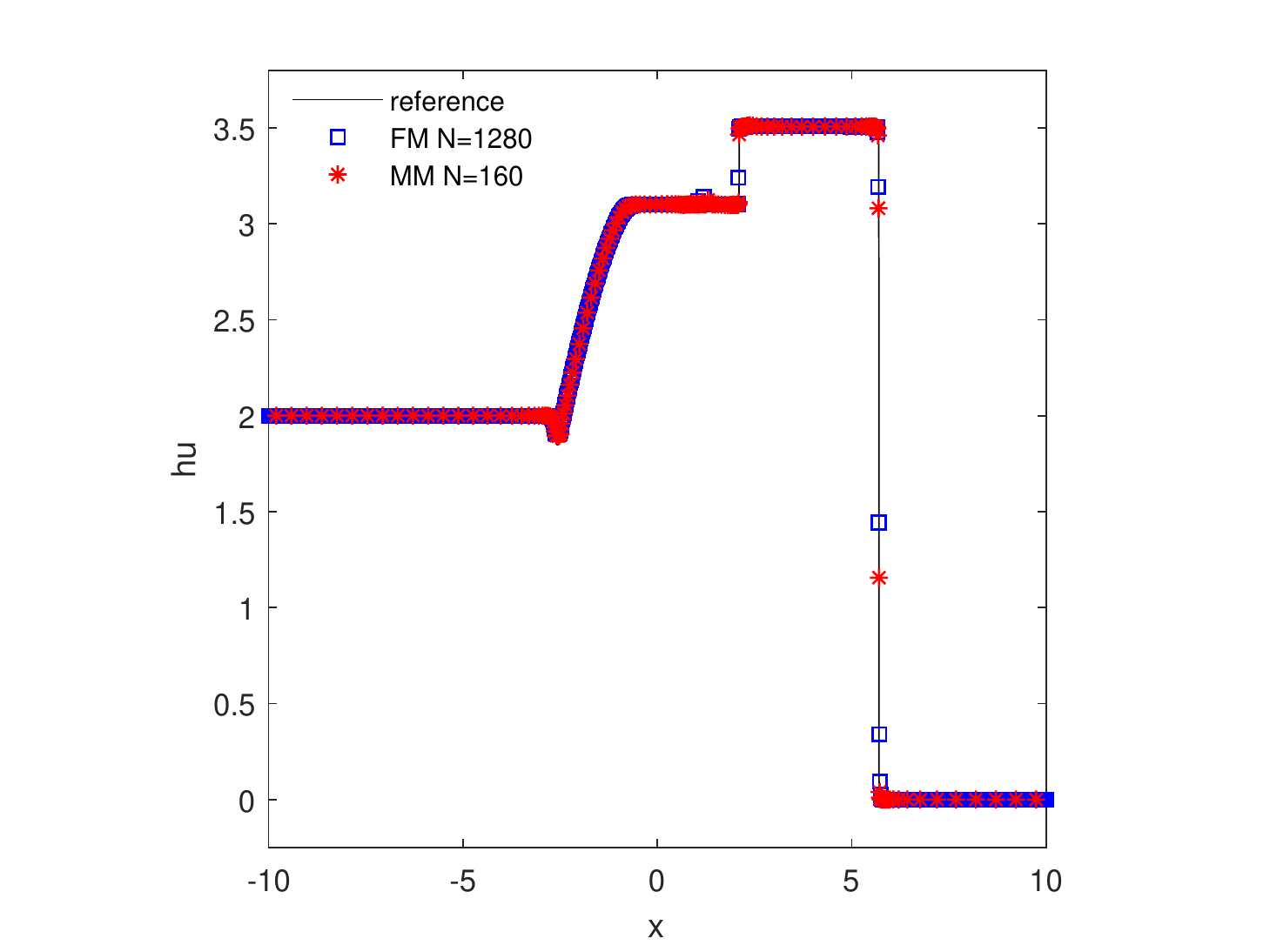}}
\subfigure[Close view of (c)]{
\includegraphics[width=0.4\textwidth,trim=10 0 30 10,clip]{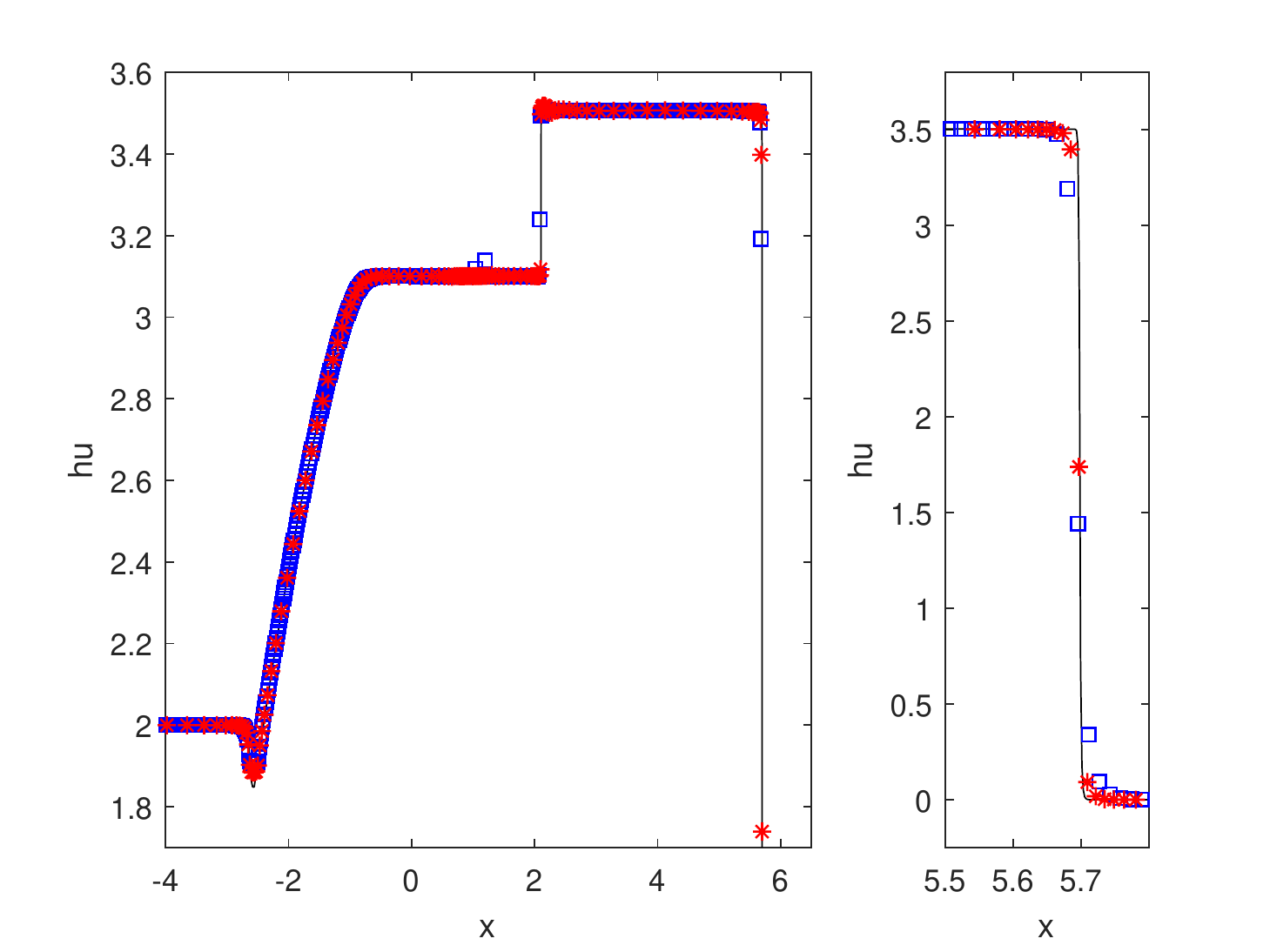}}
\caption{Example \ref{test6-1d}. The water discharge $hu$ (for $B$ defined in (\ref{B-3})) at $t=1$ obtained with $P^2$-DG and a moving mesh of $N=160$ is compared with those obtained with fixed meshes of $N=160$ and $N=1280$.}
\label{Fig:test6-1d-hu}
\end{figure}

%% 2d

\begin{example}\label{test1-2d}
(The lake-at-rest steady-state flow test for the 2D SWEs.)
\end{example}
We choose this example to verify the well-balance property of the MM-DG scheme
over non-flat bottom topographies. We take a bottom topography with an isolated elliptical-shaped
bump \cite{LeVeque-1998JCP} as
\begin{equation}
\label{B-4}
B(x,y)=0.8e^{-50\big((x-0.5)^2+(y-0.5)^2\big)}, \quad (x, y) \in (0,1)\times(0,1).
\end{equation}
The initial depth of water and velocities are given by
\begin{equation*}
h(x,y,0)=1-B(x,y),
\quad u(x,y,0)=0,\quad  v(x,y,0)=0.
\end{equation*}
We use periodic boundary conditions and compute the solution up to $t=0.1$ on moving meshes.
The flow surface should remain steady.
An initial triangular mesh, shown in Fig.~\ref{Fig:test-2d-tri}, is formed by dividing each cell of a rectangular mesh into four triangular elements. The $L^1$ and $L^\infty$ error for $h+B$ and discharges $hu$ and $hv$
is listed in Tables~\ref{tab:test1-2d-p1-error} and ~\ref{tab:test1-2d-p2-error} for $P^1$-DG and $P^2$-DG,
respectively. They show that the MM-DG method with $B$ updated with DG-interpolation
maintains the lake-at-rest steady state to the level of round-off error in both $L^1$ and $L^\infty$ norm.
On the other hand, the MM-DG method with $B$ updated with $L^2$-projection is not well-balanced.
The deviation from the lake-at-rest steady state is about first-order, which attributes to the use
of the TVB limiter with $M_{tvb} = 0$ that is about first-order in two dimensions \cite{DG-series5,Goodman-LeVeque-1985MC}.

\begin{figure}[H]
\centering
\includegraphics[width=0.4\textwidth,trim=40 0 40 10,clip]{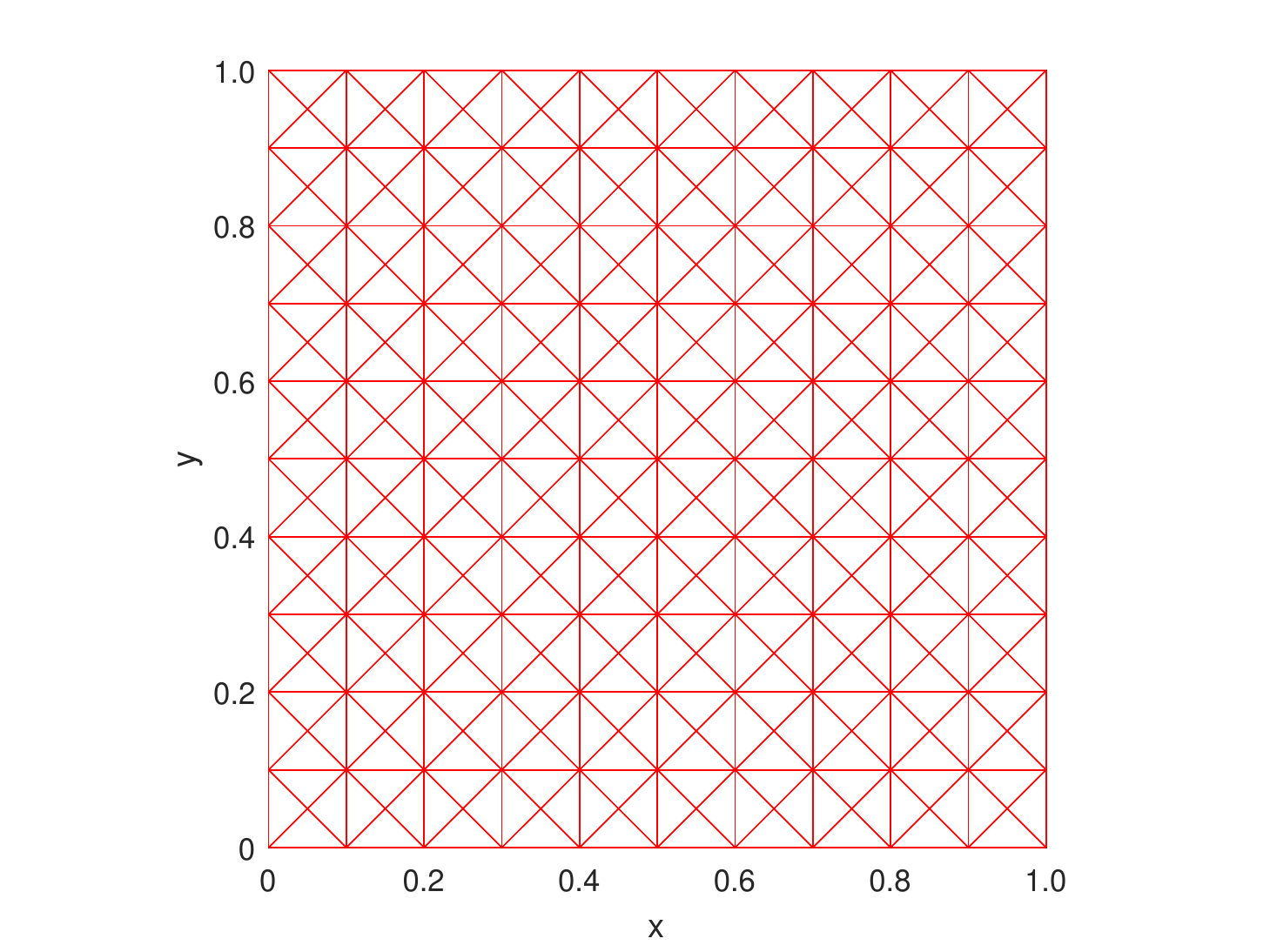}
\caption{Example \ref{test1-2d}.  An initial triangular mesh used in the computation is formed by dividing each cell of a rectangular mesh into 4 triangular elements. }
\label{Fig:test-2d-tri}
\end{figure}

\begin{table}[H]
\caption{Example \ref{test1-2d}. Well-balance test for the moving mesh $P^1$-DG method over an isolated elliptical shaped hump bottom topography \eqref{B-4}.}
\vspace{3pt}
\centering
\label{tab:test1-2d-p1-error}
\begin{tabular}{ccccccc}
 \toprule
  & \multicolumn{2}{c}{$h+B$}&\multicolumn{2}{c}{$hu$}&\multicolumn{2}{c}{$hv$}\\
$N$&$L^1$-error  &$L^{\infty}$-error  &$L^1$-error  &$L^{\infty}$-error &$L^1$-error  &$L^{\infty}$-error \\
\midrule
~   &  \multicolumn{6}{c}{\em{$B$ updated with DG-interpolation} }\\
\midrule
$10\times10\times4$	&1.589E-16&	3.077E-16	&1.359E-16&	8.941E-16	&1.361E-16& 8.588E-16	\\
$20\times20\times4$	&2.179E-16&	4.641E-16	&1.775E-16&	1.658E-15	&1.787E-16& 1.627E-15	\\
$40\times40\times4$	&3.344E-16&	8.298E-16	&2.793E-16&	3.069E-15	&2.813E-16& 3.054E-15	\\
\midrule
~   &  \multicolumn{6}{c}{\em{$B$ updated with $L^2$-projection} }\\
\midrule
$10\times10\times4$	&8.658E-06&	8.903E-05	&1.567E-05&	1.510E-04	&1.562E-05&	1.527E-04	\\
$20\times20\times4$	&4.444E-06&	5.940E-05	&8.208E-06&	7.290E-05	&8.291E-06&	7.005E-05	\\
$40\times40\times4$	&1.717E-06&	2.262E-05	&3.054E-06&	2.975E-05	&1.120E-06&	3.063E-05	\\
 \bottomrule	
\end{tabular}
\end{table}

\begin{table}[H]
\caption{Example \ref{test1-2d}. Well-balance test for the moving mesh $P^2$-DG method over an isolated elliptical shaped hump bottom topography (\ref{B-4}).}
\vspace{3pt}
\centering
\label{tab:test1-2d-p2-error}
\begin{tabular}{ccccccc}
 \toprule
  & \multicolumn{2}{c}{$h+B$}&\multicolumn{2}{c}{$hu$}&\multicolumn{2}{c}{$hv$}\\
$N$&$L^1$-error  &$L^{\infty}$-error  &$L^1$-error  &$L^{\infty}$-error &$L^1$-error  &$L^{\infty}$-error \\
\midrule
~   &  \multicolumn{6}{c}{\em{$B$ updated with DG-interpolation} }\\
\midrule
$10\times10\times4$	&4.259E-16&	1.855E-15	&8.005E-16&	4.425E-15	&8.105E-16&	4.288E-15	\\
$20\times20\times4$	&3.870E-16&	1.944E-15	&9.724E-16&	6.313E-15	&9.104E-16&	5.361E-15	\\
$40\times40\times4$	&4.257E-16&	2.158E-15	&1.467E-15&	1.129E-14	&1.204E-15&	8.449E-15	\\
\midrule
~   &  \multicolumn{6}{c}{\em{$B$ updated with $L^2$-projection} }\\
\midrule
$10\times10\times4$	&6.587E-07&	8.775E-06	&1.635E-06&	1.181E-05	&1.641E-06&	1.144E-05	\\
$20\times20\times4$	&1.114E-08&	1.555E-06	&2.827E-07&	2.472E-06	&2.827E-07&	2.449E-06	\\
$40\times40\times4$	&1.996E-08&	3.701E-07	&5.212E-08&	6.350E-07	&5.213E-08&	6.400E-07	\\
 \bottomrule	
\end{tabular}
\end{table}

\begin{example}\label{test4-2d}
(The perturbed lake-at-rest steady-state flow test for the 2D SWEs.)
\end{example}
We use this example, first proposed by LeVeque \cite{LeVeque-1998JCP}, to demonstrate the ability
of our well-balanced MM-DG scheme to capture small perturbations over the lake-at-rest water surface.
The bottom topography is an isolated elliptical shaped hump,
\begin{equation*}
B(x,y)=0.8e^{-5(x-0.9)^2-50(y-0.5)^2}, \quad (x,y) \in (-1,2)\times(0,1).
\end{equation*}
The initial depth of water and velocities are given by
\begin{equation*}
\begin{split}
&h(x,y,0)=\begin{cases}
1-B(x,y)+0.01,& \hbox{for~$x\in(0.05 , 0.15)$}\\
1-B(x,y),&\hbox{otherwise}\\
\end{cases}
\\&u(x,y,0)=0,\quad \quad v(x,y,0)=0.
\end{split}
\end{equation*}
As time being, the initial perturbation splits into two waves, propagating left and right
at the characteristic speeds $\pm \sqrt{gh}$.
The reflection boundary conditions \cite{Tumolo-2013JCP} are used for all domain boundary.

The mesh and contours of $B+h$, $hu$, and $hv$ at $t= 0.12$, $0.24$, $0.36$, and $0.48$
obtained with the $P^2$ MM-DG method and a moving mesh of $N=150\times 50\times4$ are
shown in Figs.~\ref{Fig:test4-2d-comparison-12}~--~\ref{Fig:test4-2d-comparison-48}.
Recall that these results are obtained with the metric tensor (\ref{mer-ceil}) based on
the equilibrium variable $\mathcal{E}=\frac{1}{2}(u^2+v^2)+g(h+B) $ and the water depth $h$.
For comparison purpose, we also include the results obtained the metric tensor based on
the entropy/total energy $E =\frac{1}{2}(hu^2+hv^2)+\frac{1}{2}gh^2+ghB$.
One can see that the distribution of the mesh concentration based on $\mathcal{E}$ and $h$
is consistent with the contours of $B+h$, $hu$, and $hv$, i.e., the mesh elements have higher concentration
in the regions where those variables have larger variations. This demonstrates the ability
of the MM-DG method to capture complex features in small perturbations.
On the other hand, the concentration
of the mesh based on $E$ does not reflect fully the variations of $B+h$, $hu$, and $hv$
especially at $t = 0.24$, $0.36$, and $0.48$.
The advantages of using $\mathcal{E}$ and $h$ over $E$ in mesh adaptation are clear.

The contours of $h+B$, $hu$, and $hv$ at $t=0.12$, 0.24, 0.36, and 0.48 obtained
with the $P^2$ MM-DG method of a moving mesh of $N=150\times 50\times4$ and fixed meshes of $N=150\times 50\times4$ and $N=600\times 200\times4$ are shown in
Figs.~\ref{Fig:test4-2d-h-hu-hv-t12} -- \ref{Fig:test4-2d-h-hu-hv-t48}.
We can see that the moving mesh solution with $N=150\times 50\times4$ is more accurate than that with
a fixed mesh of $N=150\times 50\times4$ and comparable with that with a fixed mesh of  $N=600\times 200\times4$.

% comparison with different metric tensors

\begin{figure}[H]
\centering
\subfigure[$\mathcal{E}$ and $h$: Mesh at $t=0.12$]{
\includegraphics[width=0.45\textwidth, trim=20 60 15 60, clip]
{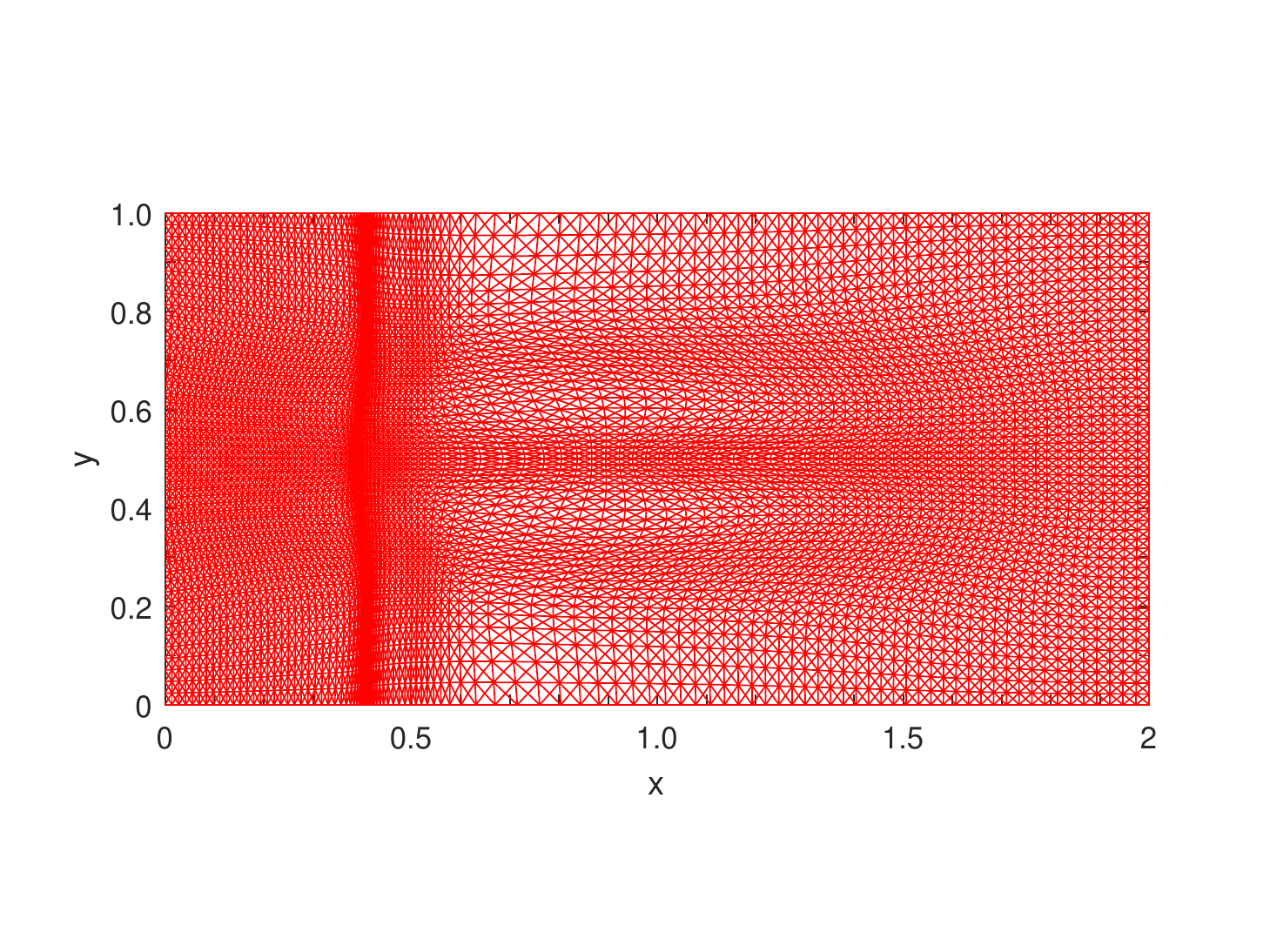}}
\subfigure[Entropy: Mesh at $t=0.12$]{
\includegraphics[width=0.45\textwidth, trim=20 60 15 60, clip]
{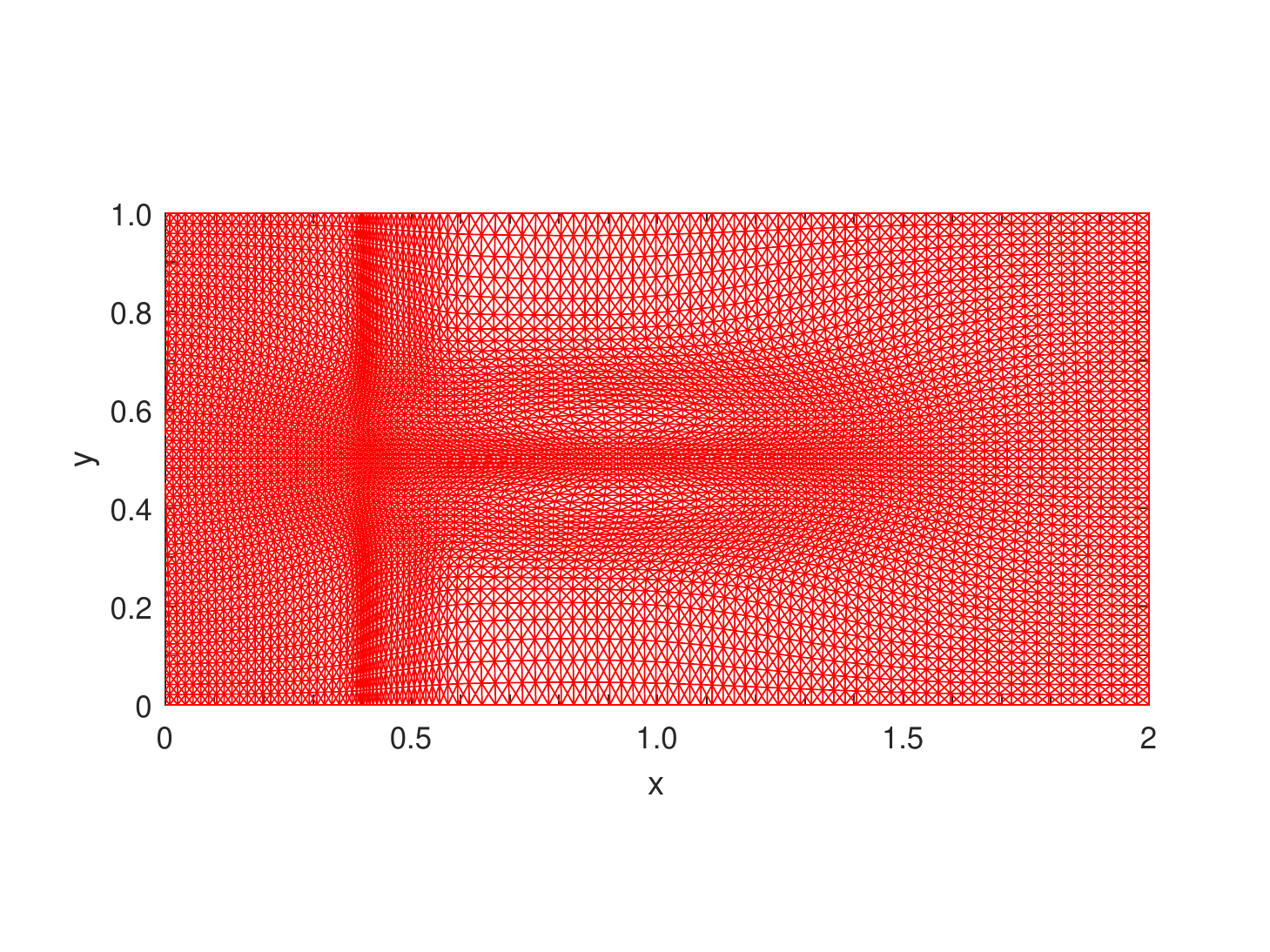}}
\subfigure[$\mathcal{E}$ and $h$: $h+B$ at $t=0.12$]{
\includegraphics[width=0.45\textwidth, trim=15 60 15 60, clip]
{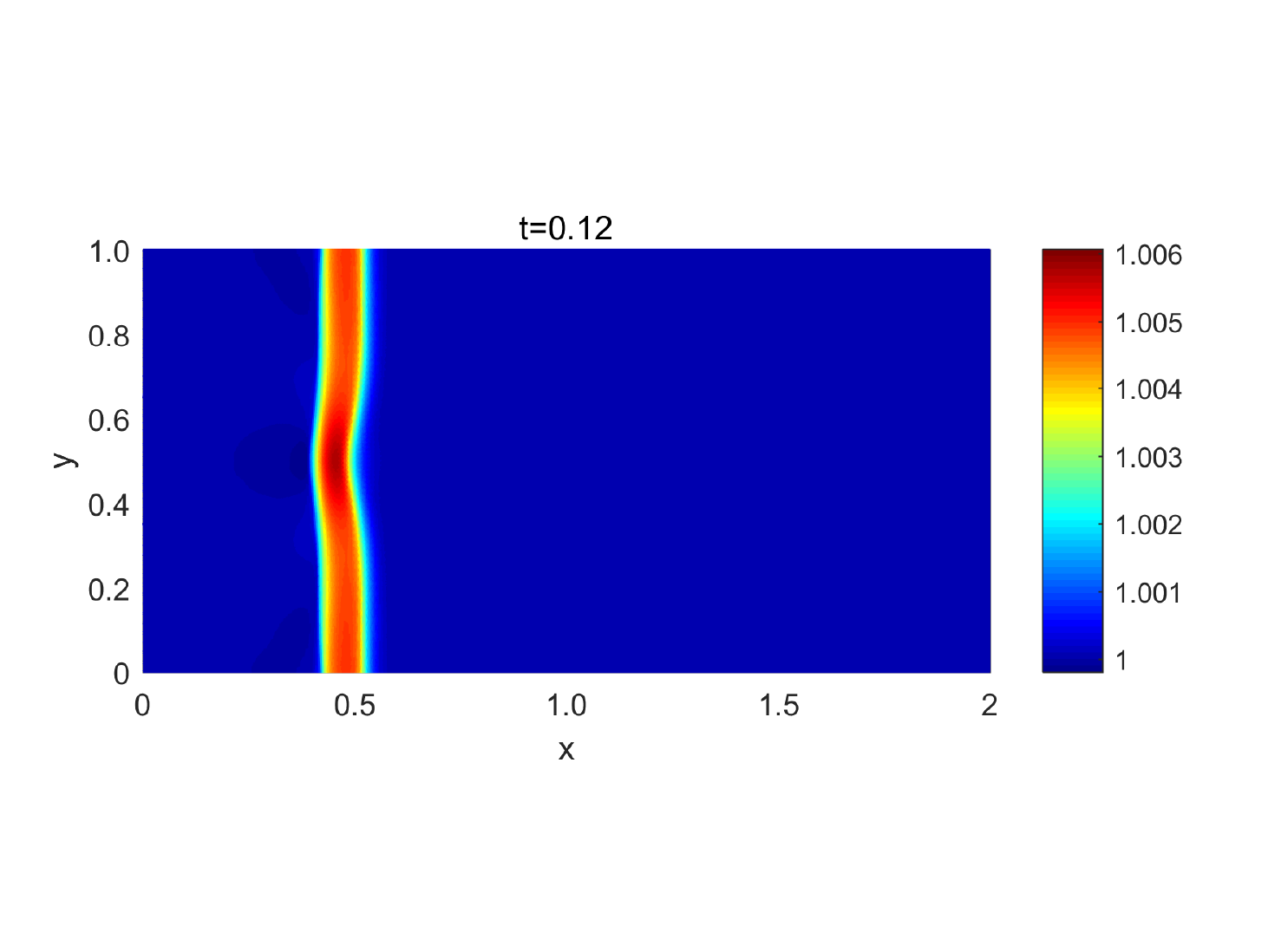}}
\subfigure[Entropy: $h+B$ at $t=0.12$]{
\includegraphics[width=0.45\textwidth, trim=15 60 15 60, clip]
{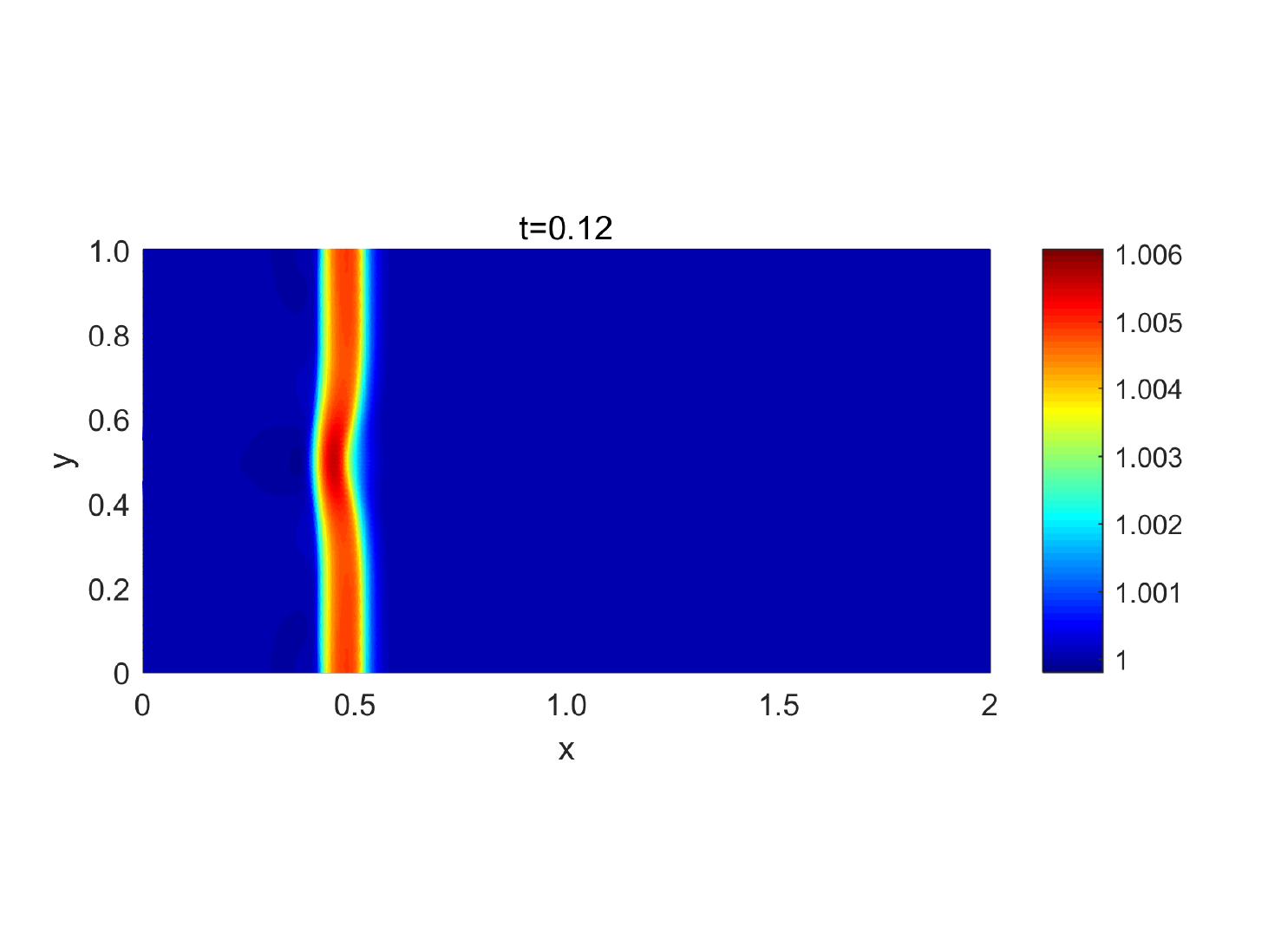}}
\subfigure[$\mathcal{E}$ and $h$: $hu$ at $t=0.12$]{
\includegraphics[width=0.45\textwidth, trim=20 50 0 60, clip]
{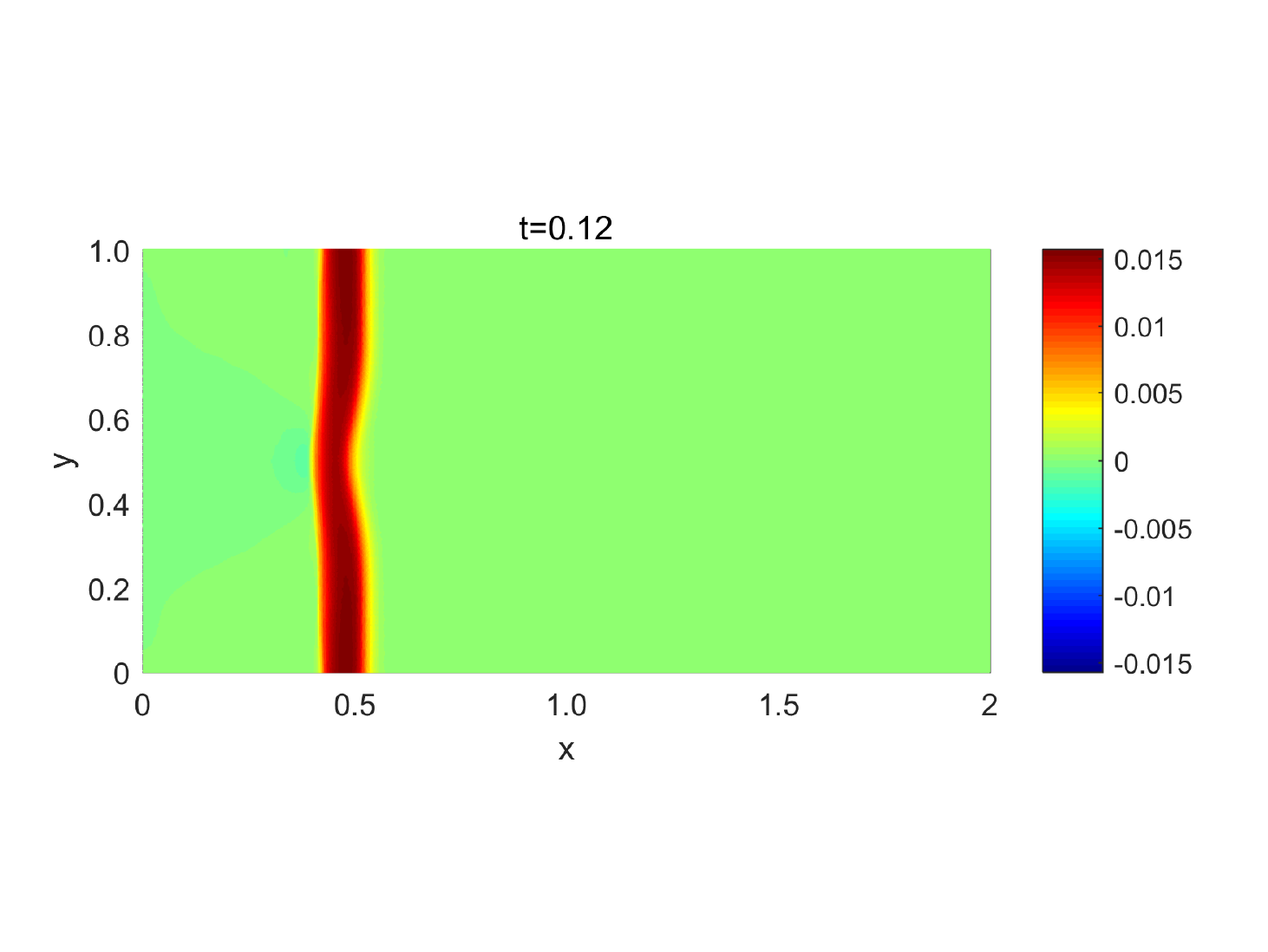}}
\subfigure[Entropy: $hu$ at $t=0.12$]{
\includegraphics[width=0.45\textwidth, trim=15 60 15 60, clip]
{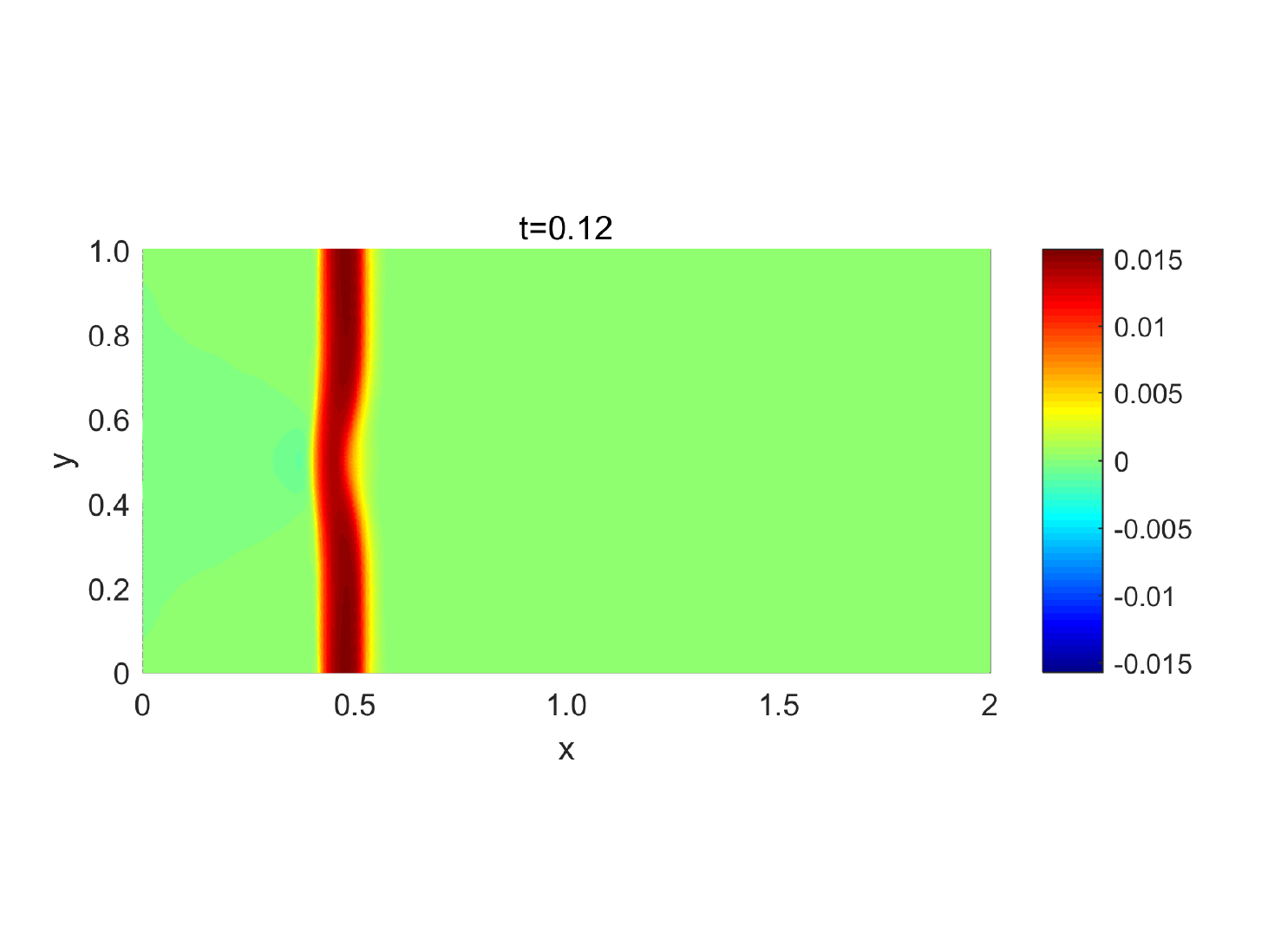}}
\subfigure[$\mathcal{E}$ and $h$: $hv$ at $t=0.12$]{
\includegraphics[width=0.45\textwidth, trim=15 60 15 60, clip]
{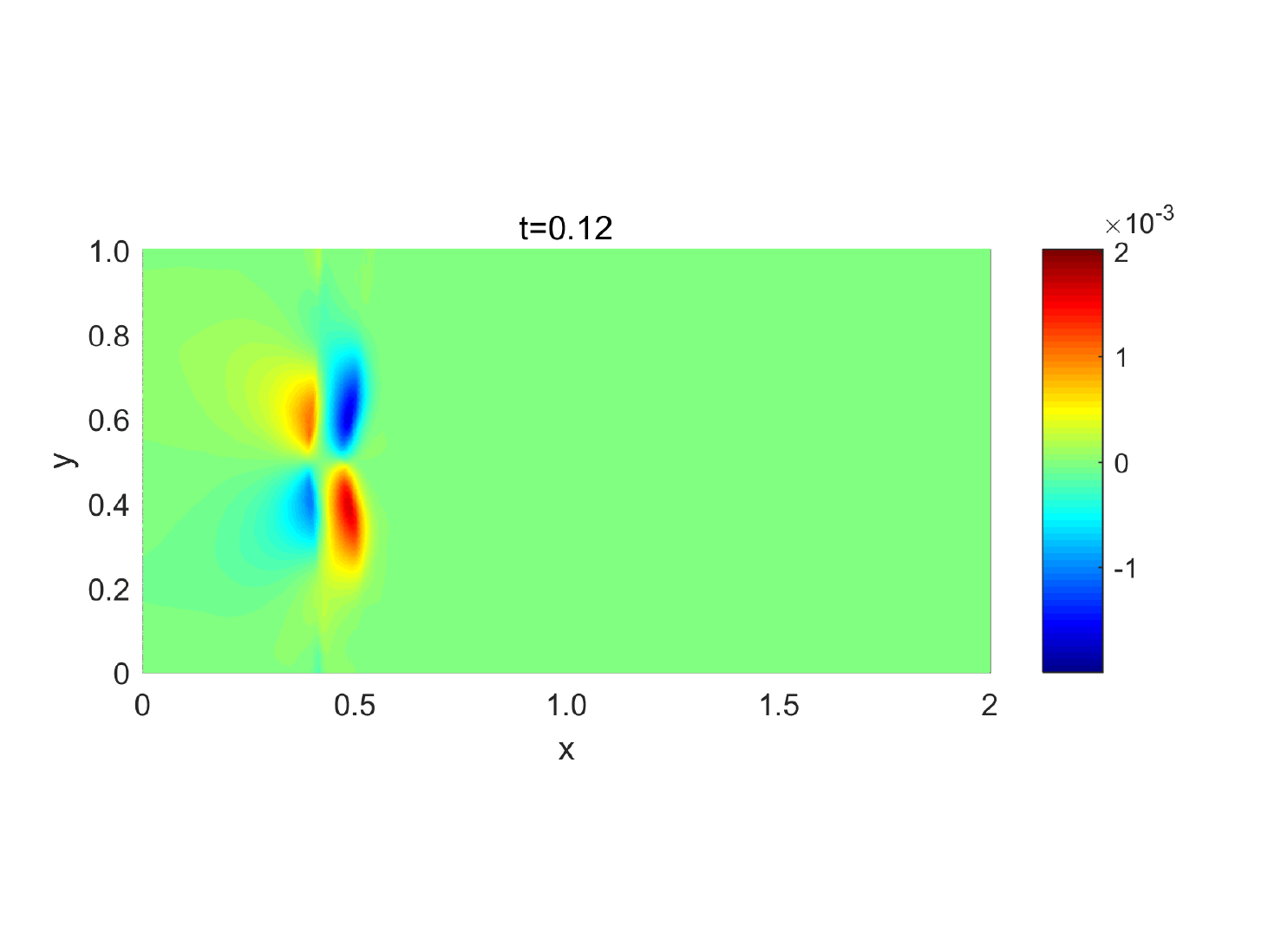}}
\subfigure[Entropy: $hv$ at $t=0.12$]{
\includegraphics[width=0.45\textwidth, trim=15 60 15 60, clip]
{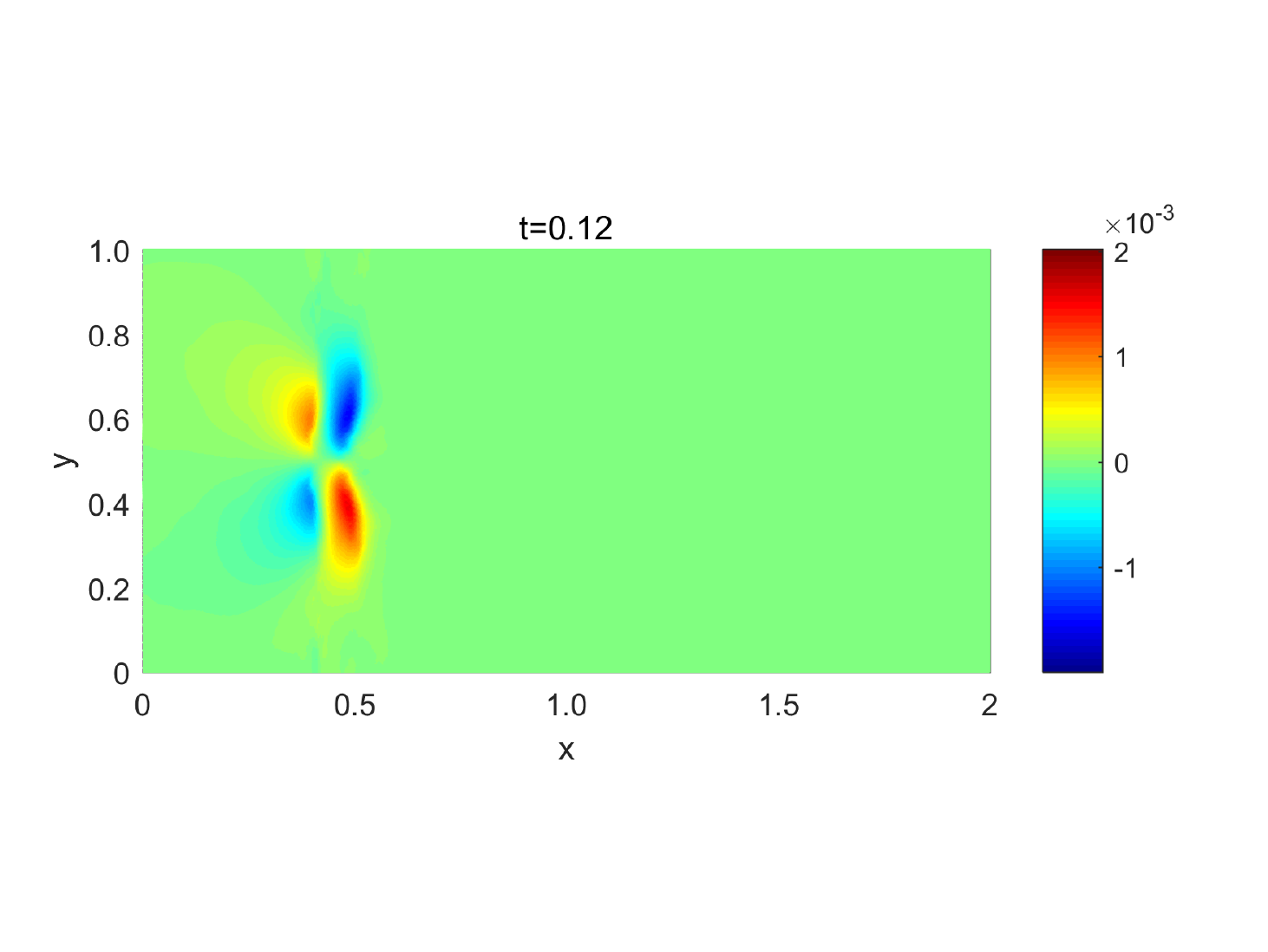}}
\caption{Example \ref{test4-2d}. The mesh and contours of $B+h$, $hu$, and $hv$ at $t=0.12$ are obtained
with the $P^2$ MM-DG method and a moving mesh of $N=150\times 50\times4$. The metric tensor
is computed based on the equilibrium variable $\mathcal{E}$ and the water depth $h$ (left column)
or the entropy/total energy (right column).}
\label{Fig:test4-2d-comparison-12}
\end{figure}

\begin{figure}[H]
\centering
\subfigure[$\mathcal{E}$ and $h$: Mesh at $t=0.24$]{
\includegraphics[width=0.45\textwidth, trim=20 50 0 60, clip]
{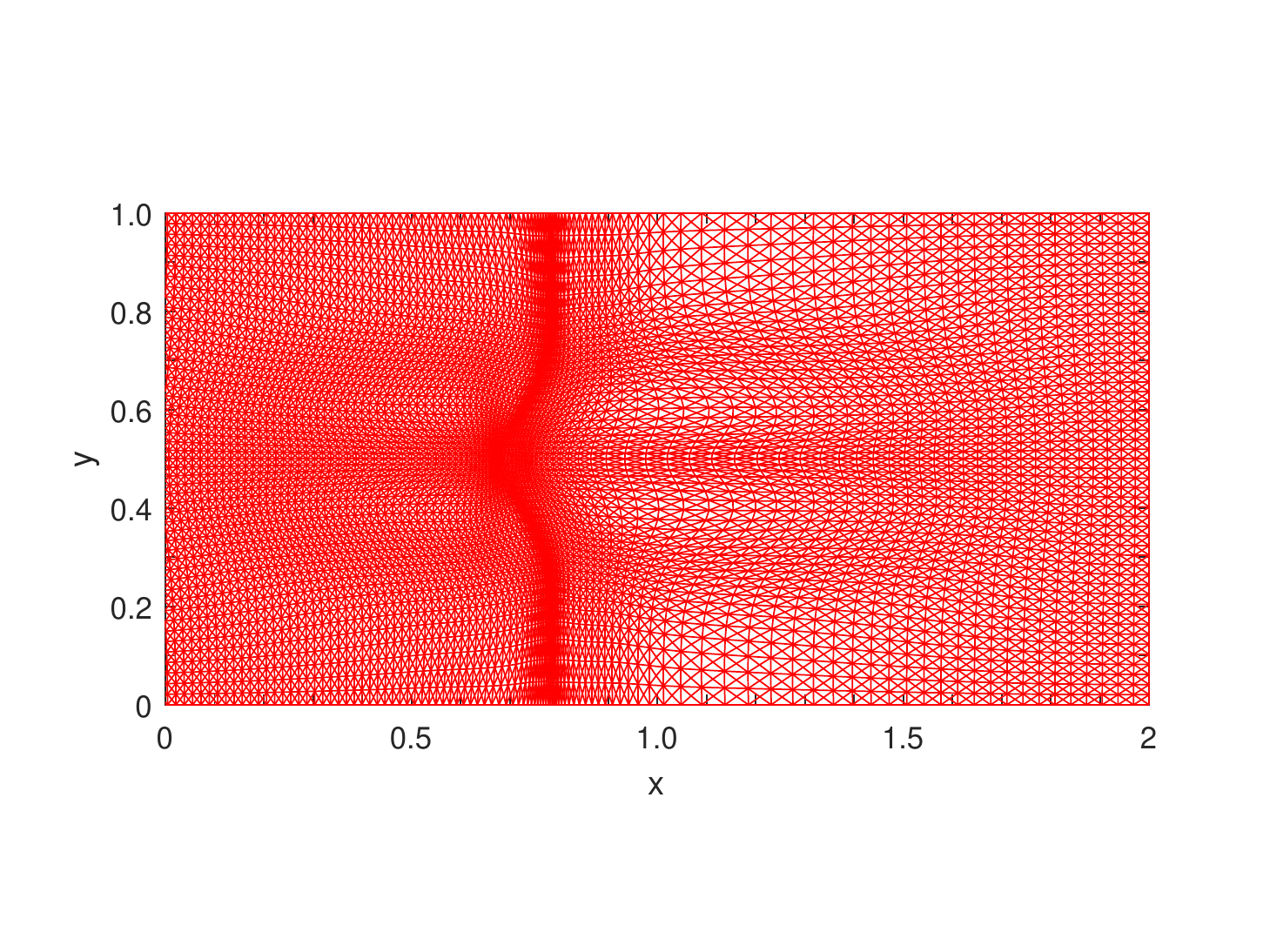}}
\subfigure[Entropy: Mesh at $t=0.24$]{
\includegraphics[width=0.45\textwidth, trim=15 60 15 60, clip]
{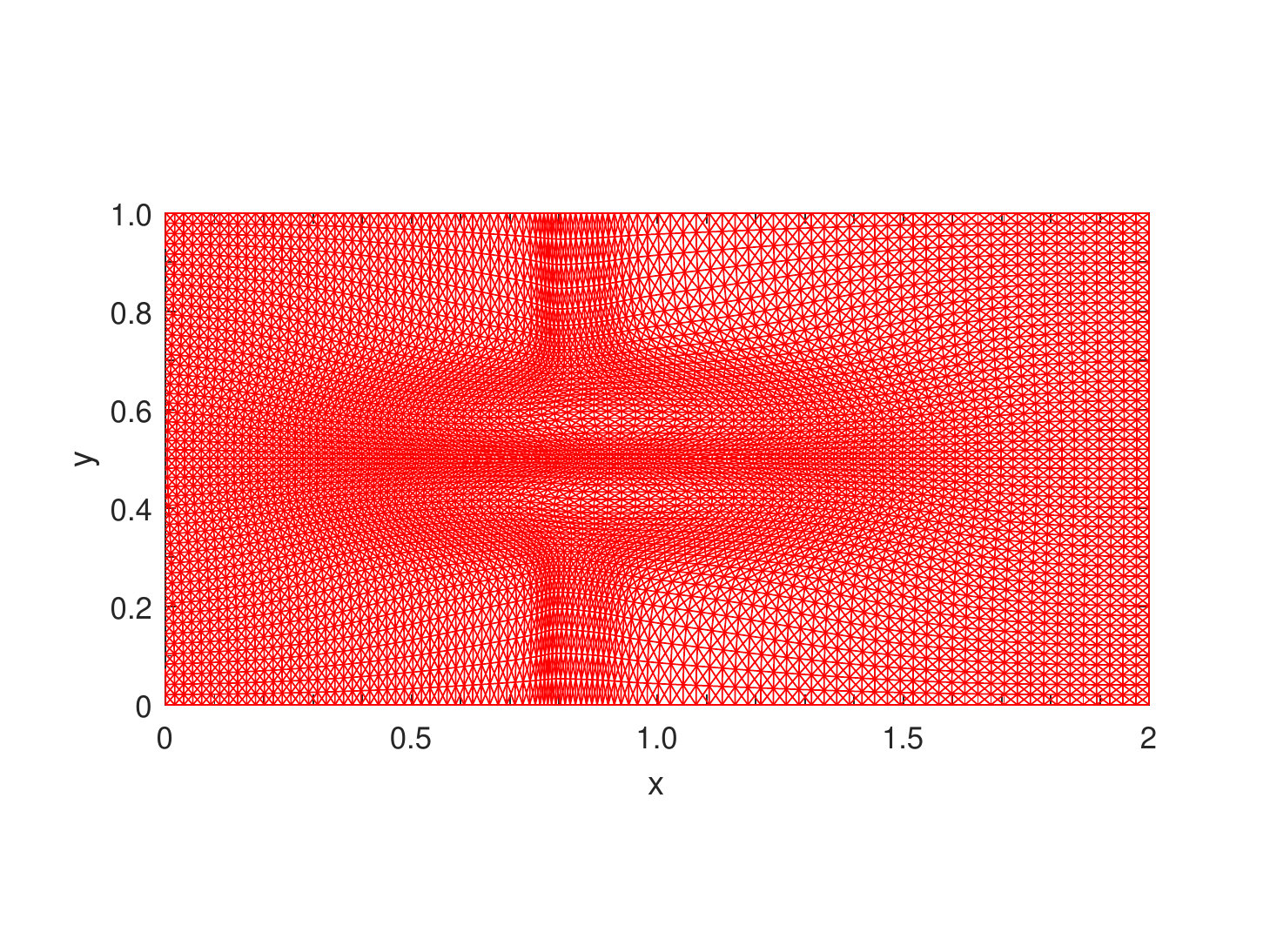}}
\subfigure[$\mathcal{E}$ and $h$: $h+B$ at $t=0.24$]{
\includegraphics[width=0.45\textwidth, trim=15 60 15 60, clip]
{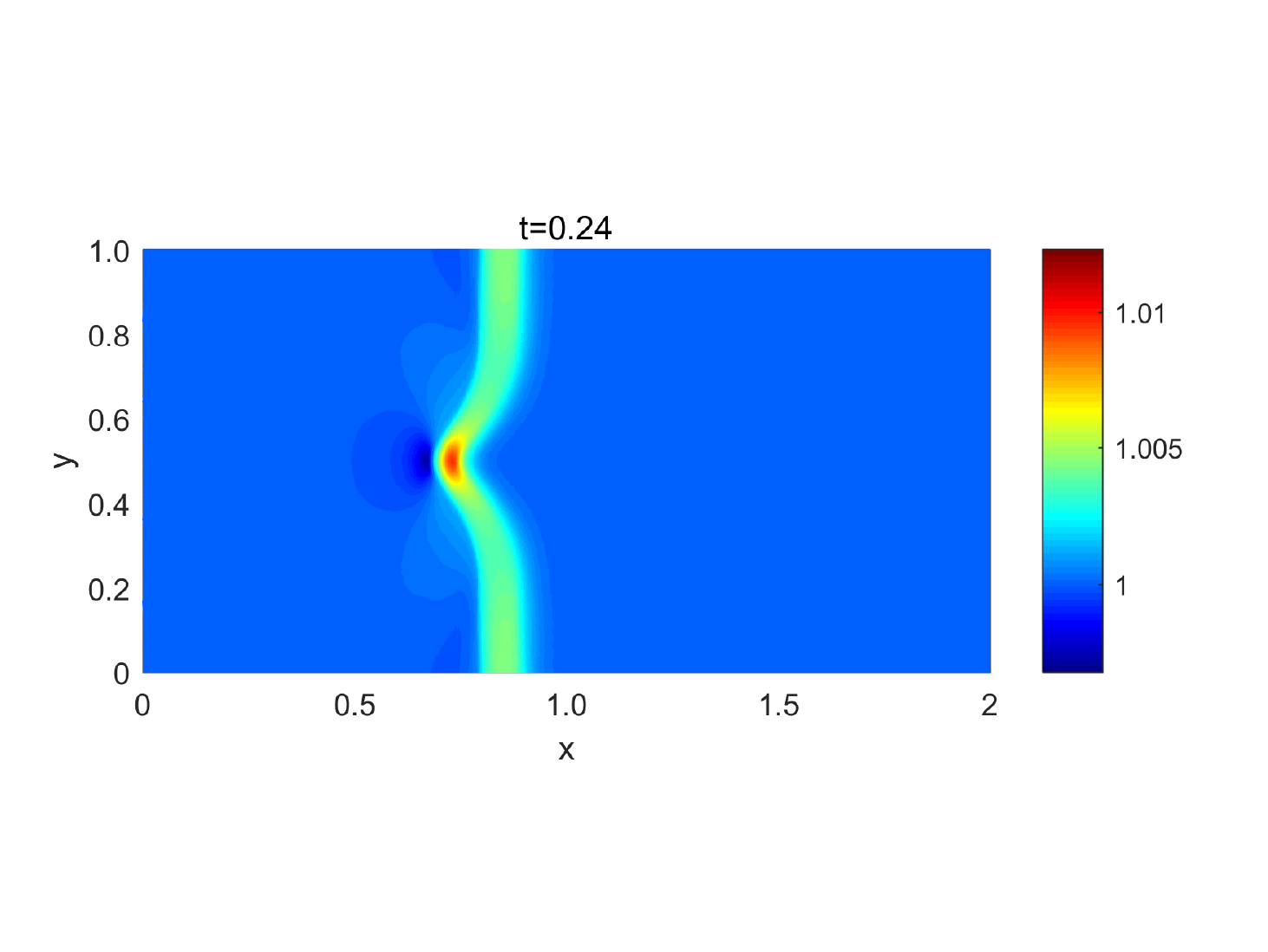}}
\subfigure[Entropy: $h+B$ at $t=0.24$]{
\includegraphics[width=0.45\textwidth, trim=15 60 15 60, clip]
{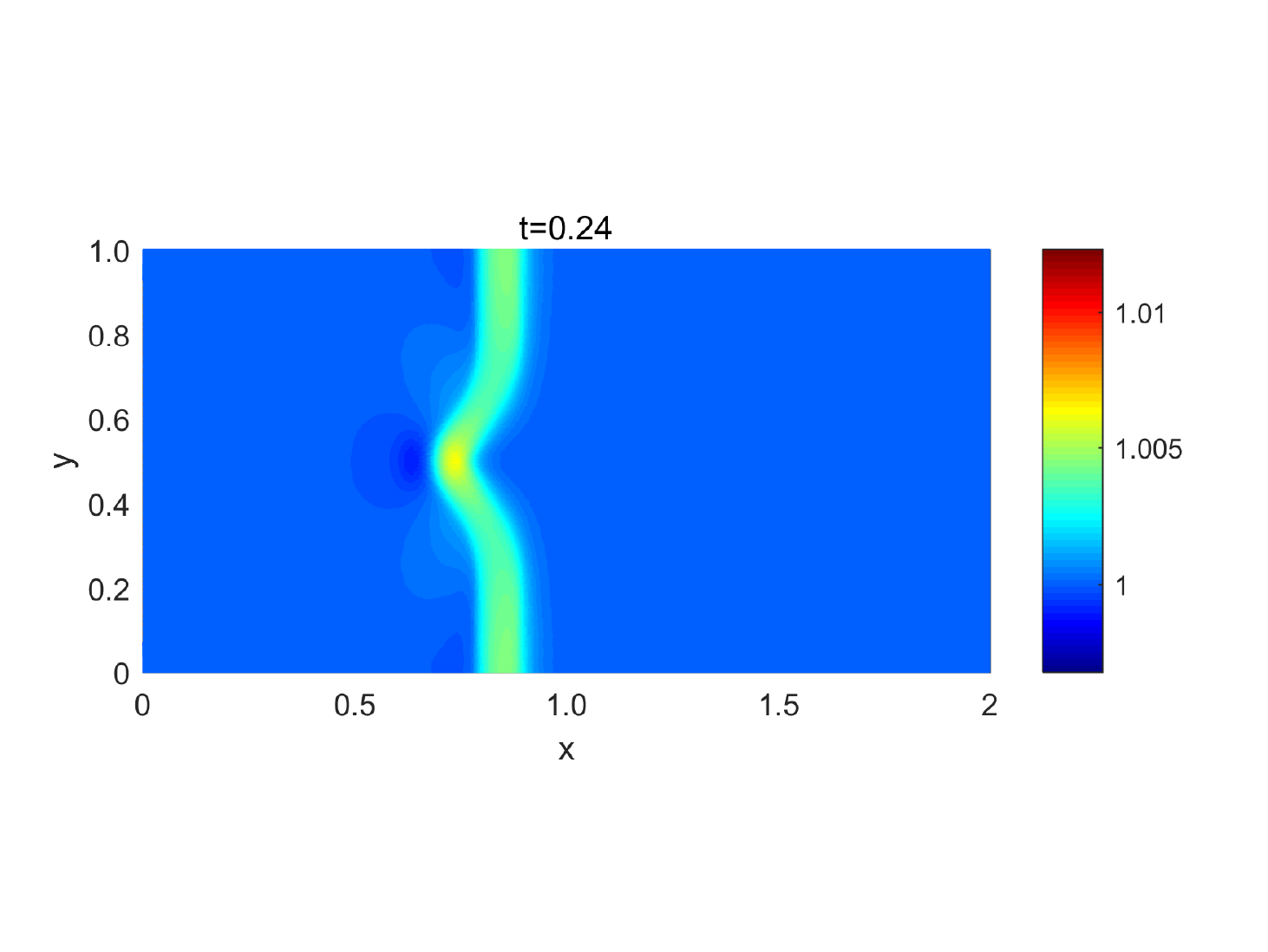}}
\subfigure[$\mathcal{E}$ and $h$: $hu$ at $t=0.24$]{
\includegraphics[width=0.45\textwidth, trim=20 50 0 60, clip]
{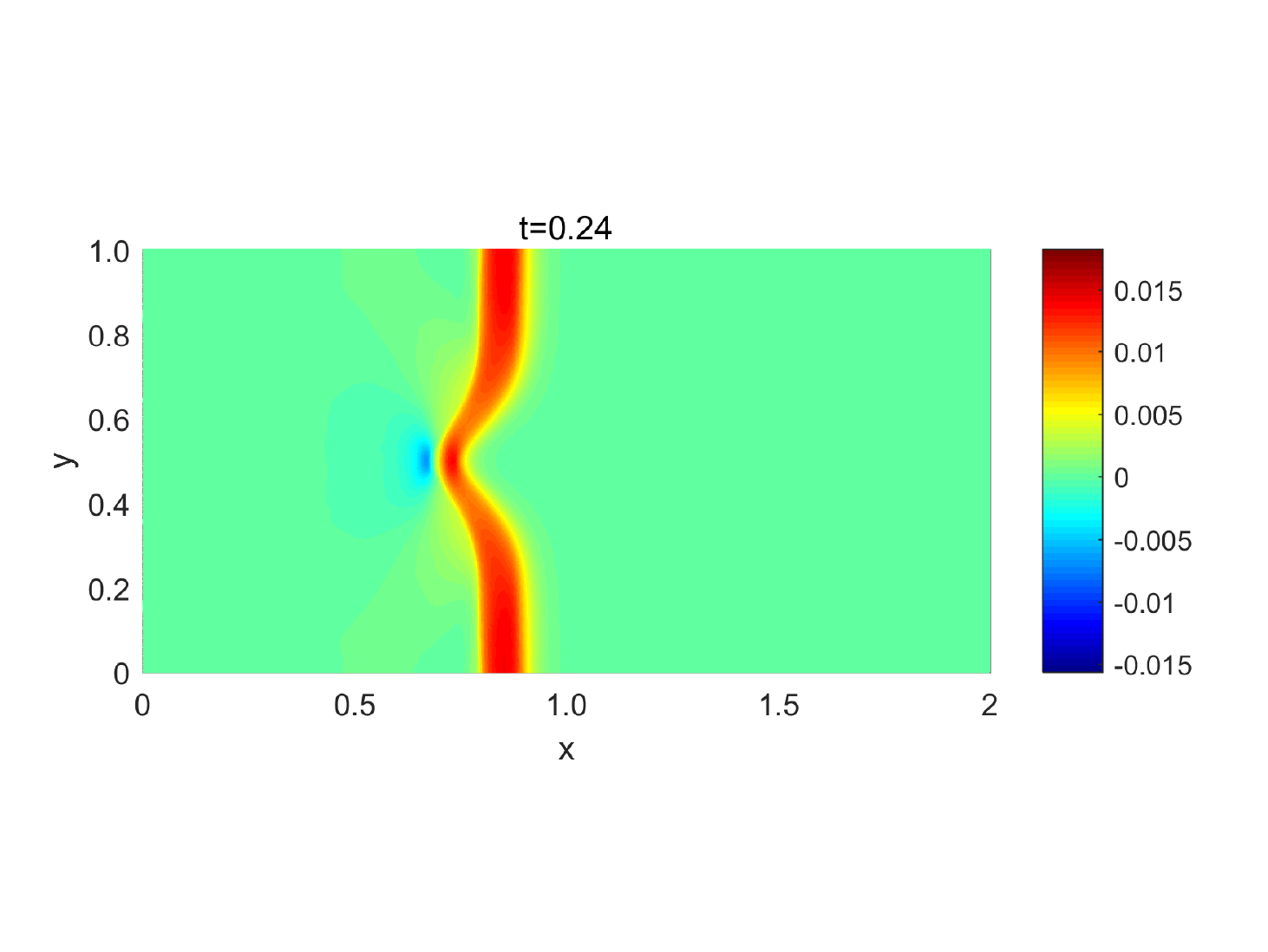}}
\subfigure[Entropy: $hu$ at $t=0.24$]{
\includegraphics[width=0.45\textwidth, trim=15 60 15 60, clip]
{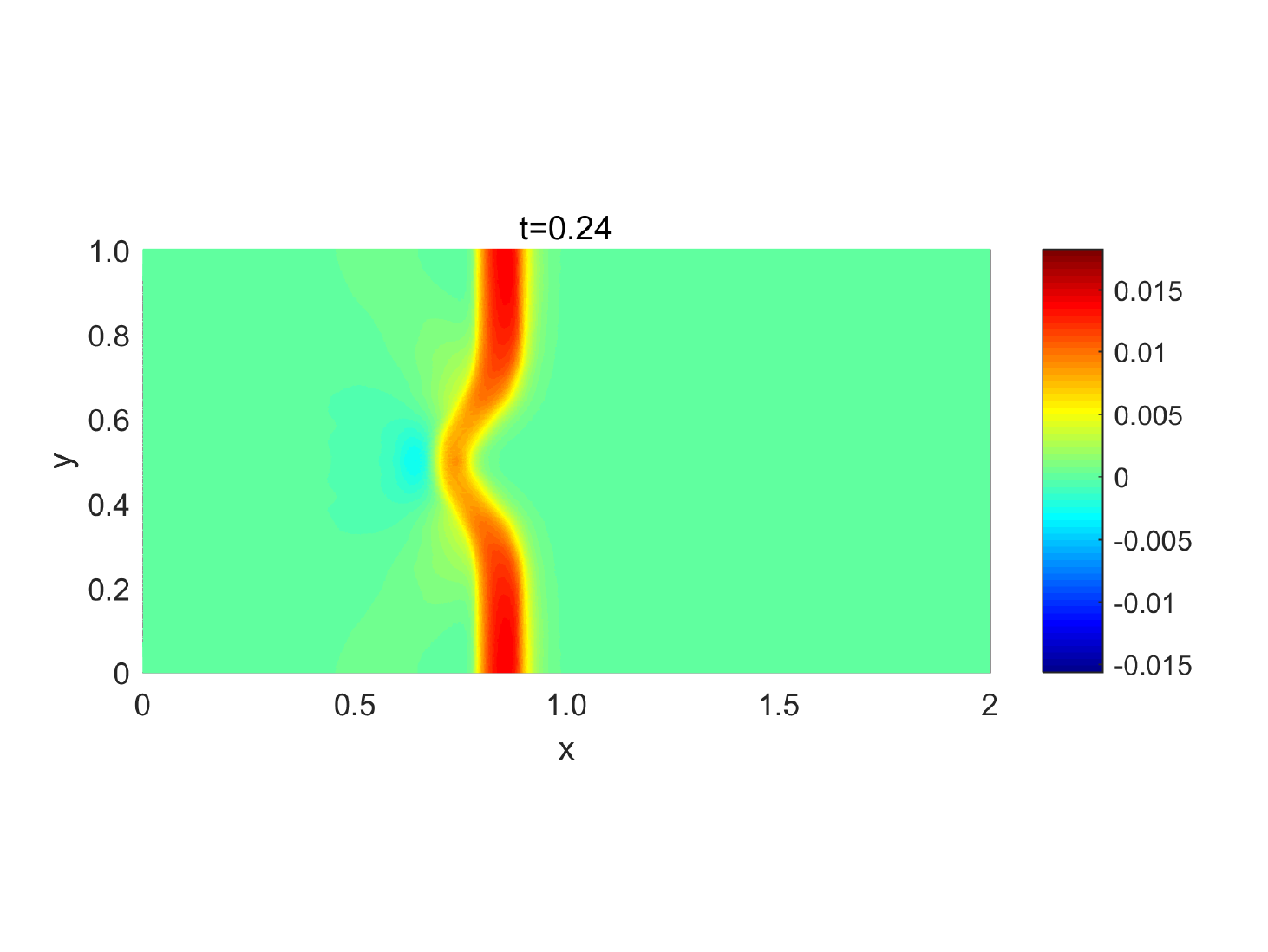}}
\subfigure[$\mathcal{E}$ and $h$: $hv$ at $t=0.24$]{
\includegraphics[width=0.45\textwidth, trim=15 60 15 60, clip]
{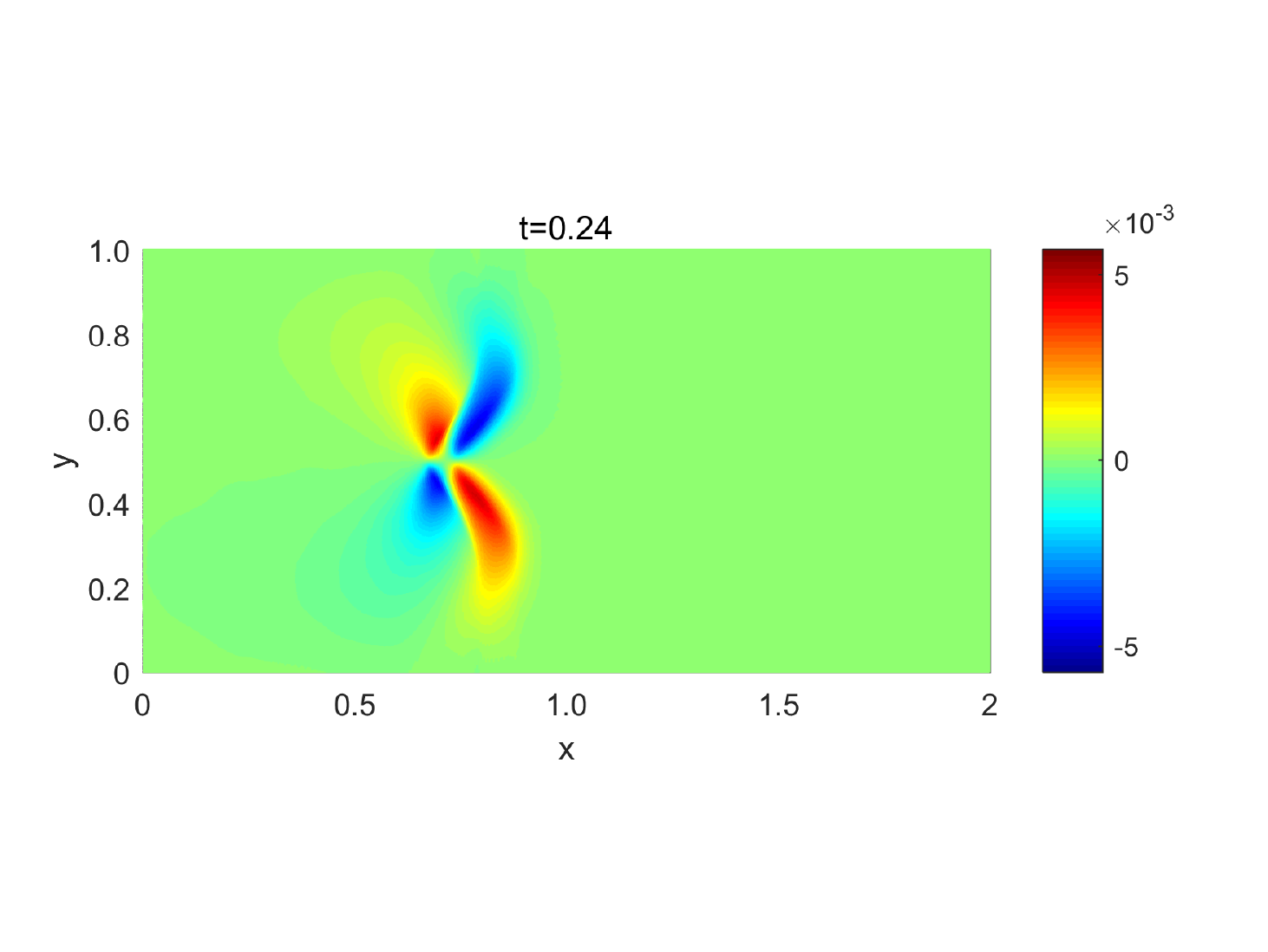}}
\subfigure[Entropy: $hv$ at $t=0.24$]{
\includegraphics[width=0.45\textwidth, trim=15 60 15 60, clip]
{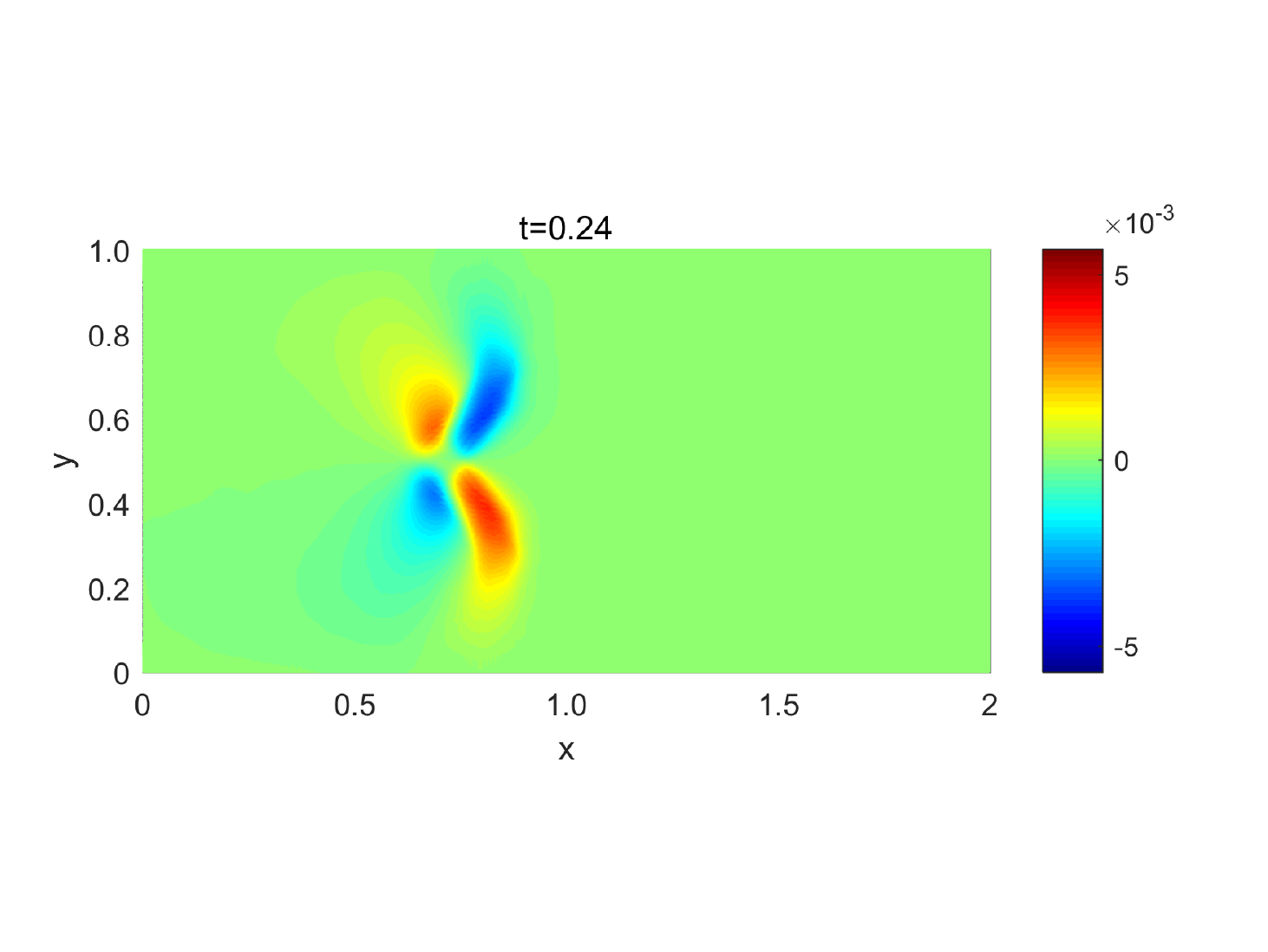}}
\caption{Continuation of Fig.~\ref{Fig:test4-2d-comparison-12}: $t = 0.24$.}
\label{Fig:test4-2d-comparison-24}
\end{figure}

\begin{figure}[H]
\centering
\subfigure[$\mathcal{E}$ and $h$: Mesh at $t=0.36$]{
\includegraphics[width=0.45\textwidth, trim=20 50 0 60, clip]
{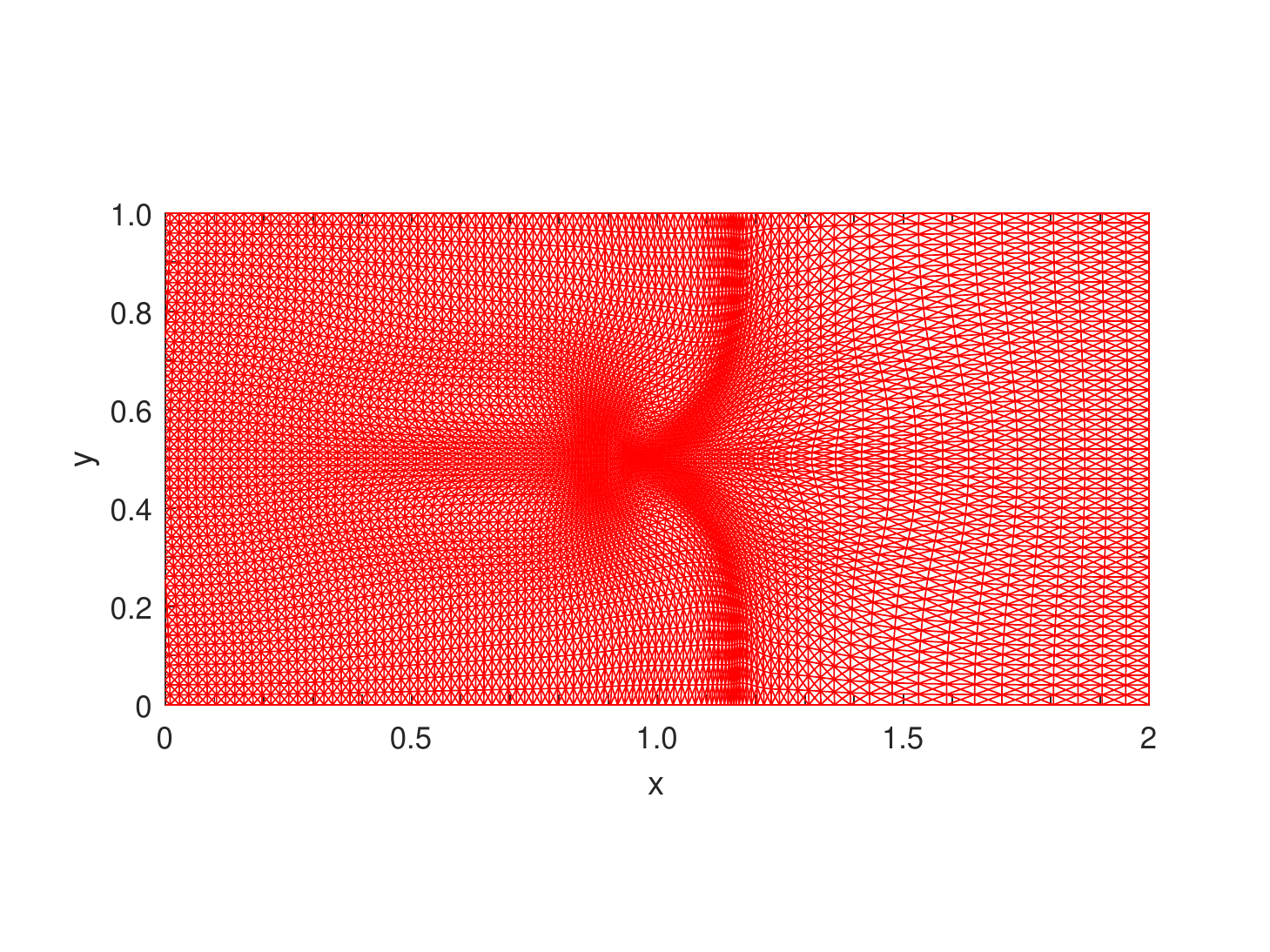}}
\subfigure[Entropy: Mesh at $t=0.36$]{
\includegraphics[width=0.45\textwidth, trim=15 60 15 60, clip]
{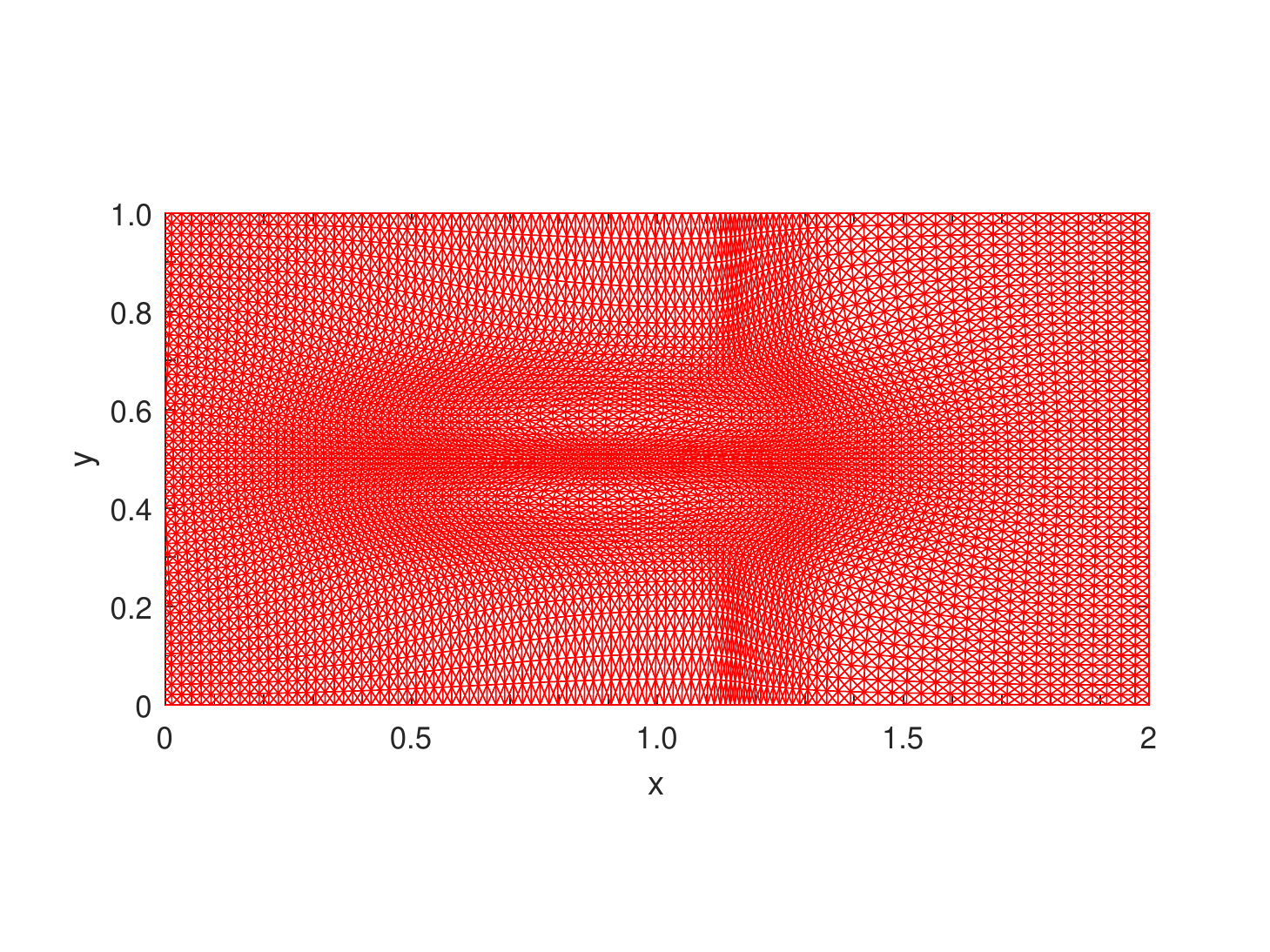}}
\subfigure[$\mathcal{E}$ and $h$: $h+B$ at $t=0.36$]{
\includegraphics[width=0.45\textwidth, trim=15 60 15 60, clip]
{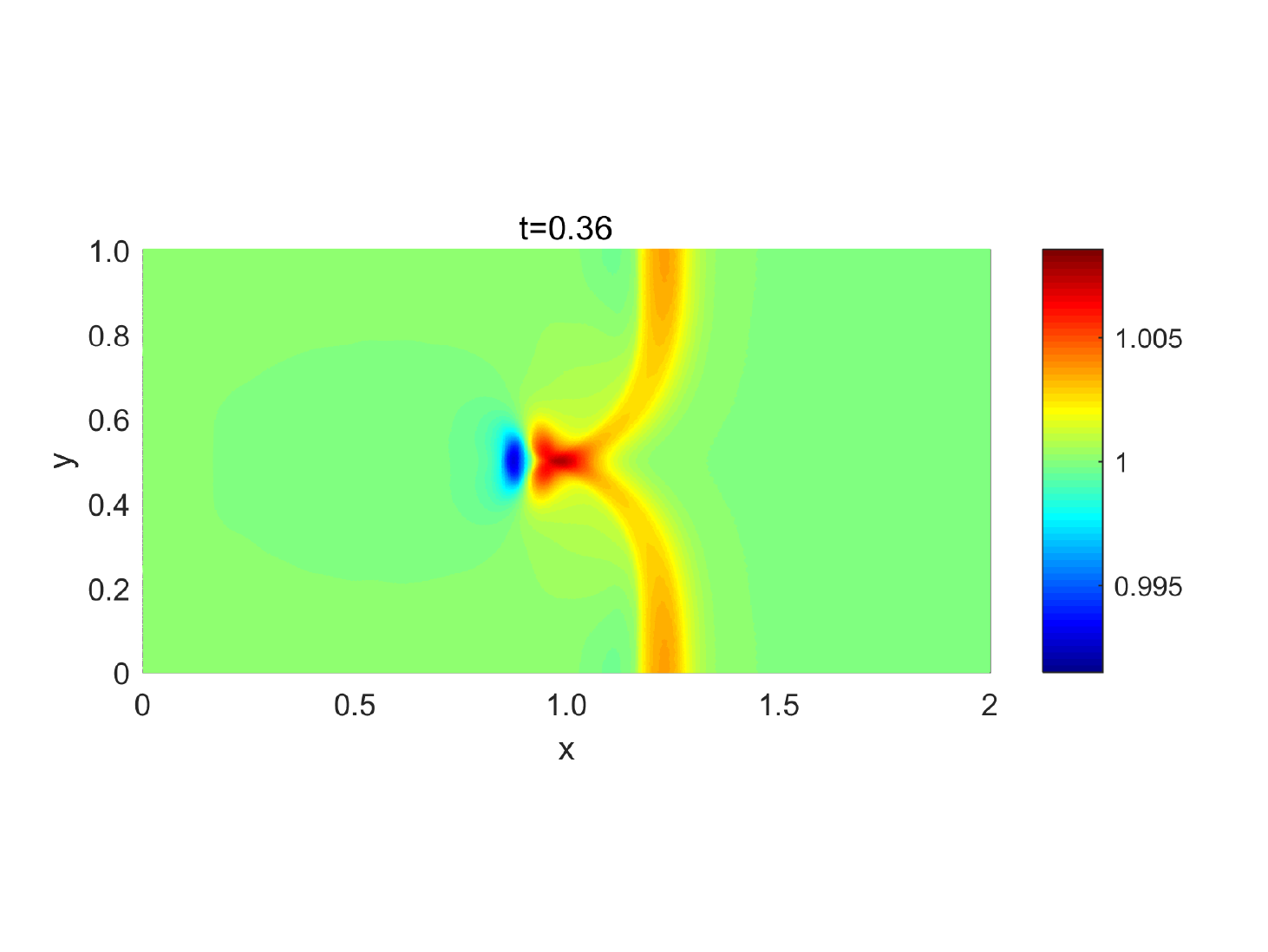}}
\subfigure[Entropy: $h+B$ at $t=0.36$]{
\includegraphics[width=0.45\textwidth, trim=15 60 15 60, clip]
{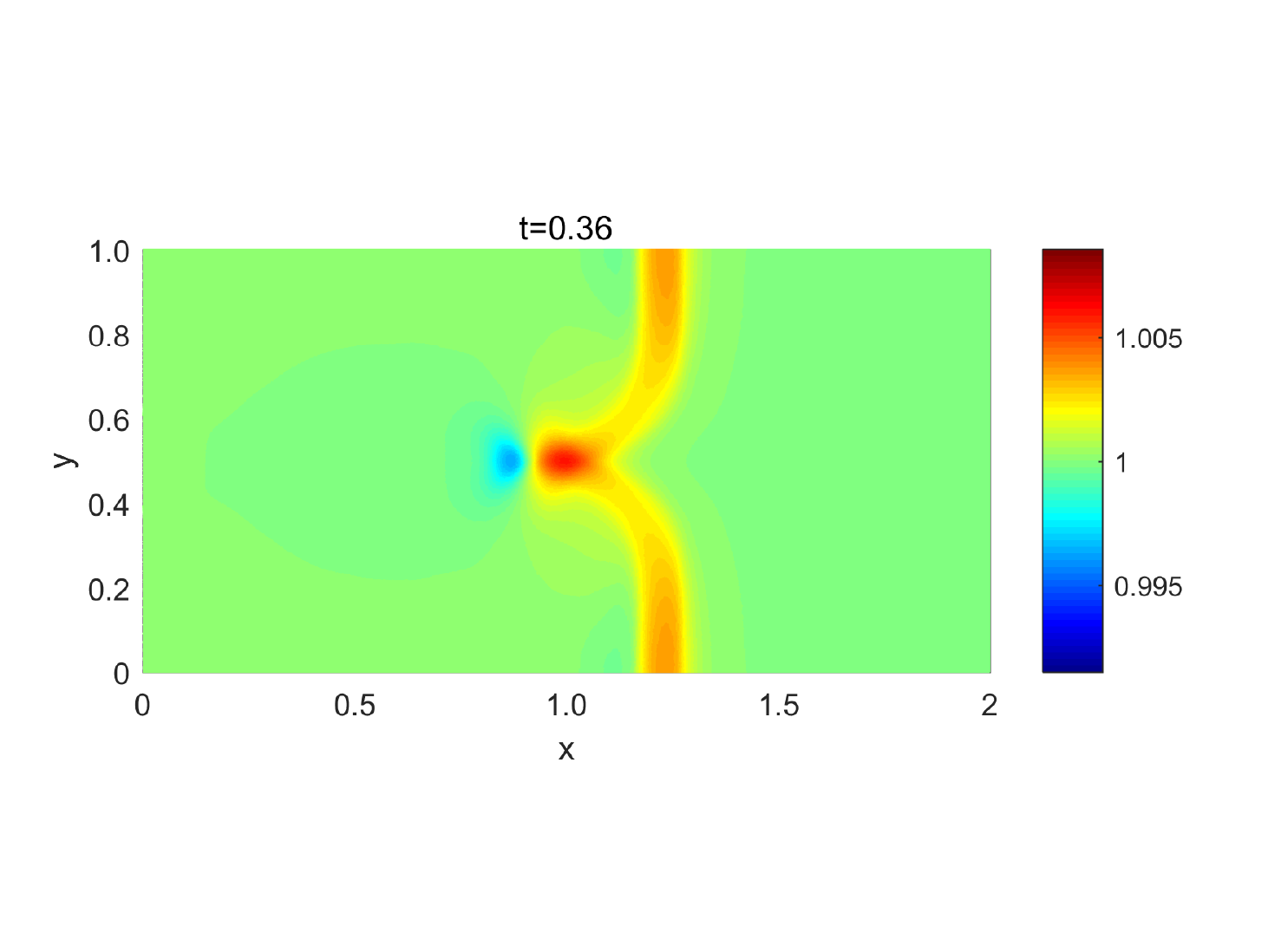}}
\subfigure[$\mathcal{E}$ and $h$: $hu$ at $t=0.36$]{
\includegraphics[width=0.45\textwidth, trim=20 50 0 60, clip]
{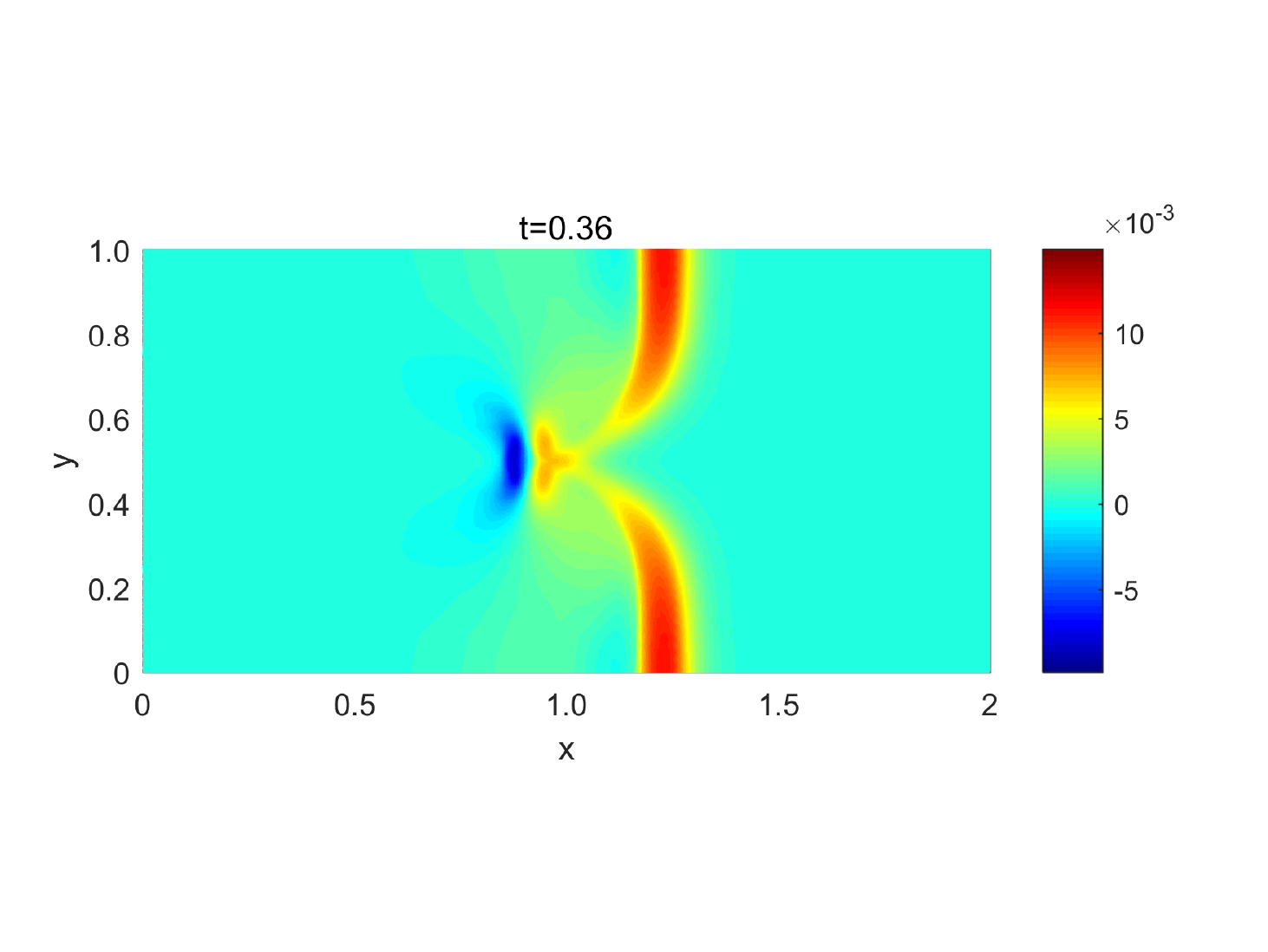}}
\subfigure[Entropy: $hu$ at $t=0.36$]{
\includegraphics[width=0.45\textwidth, trim=15 60 15 60, clip]
{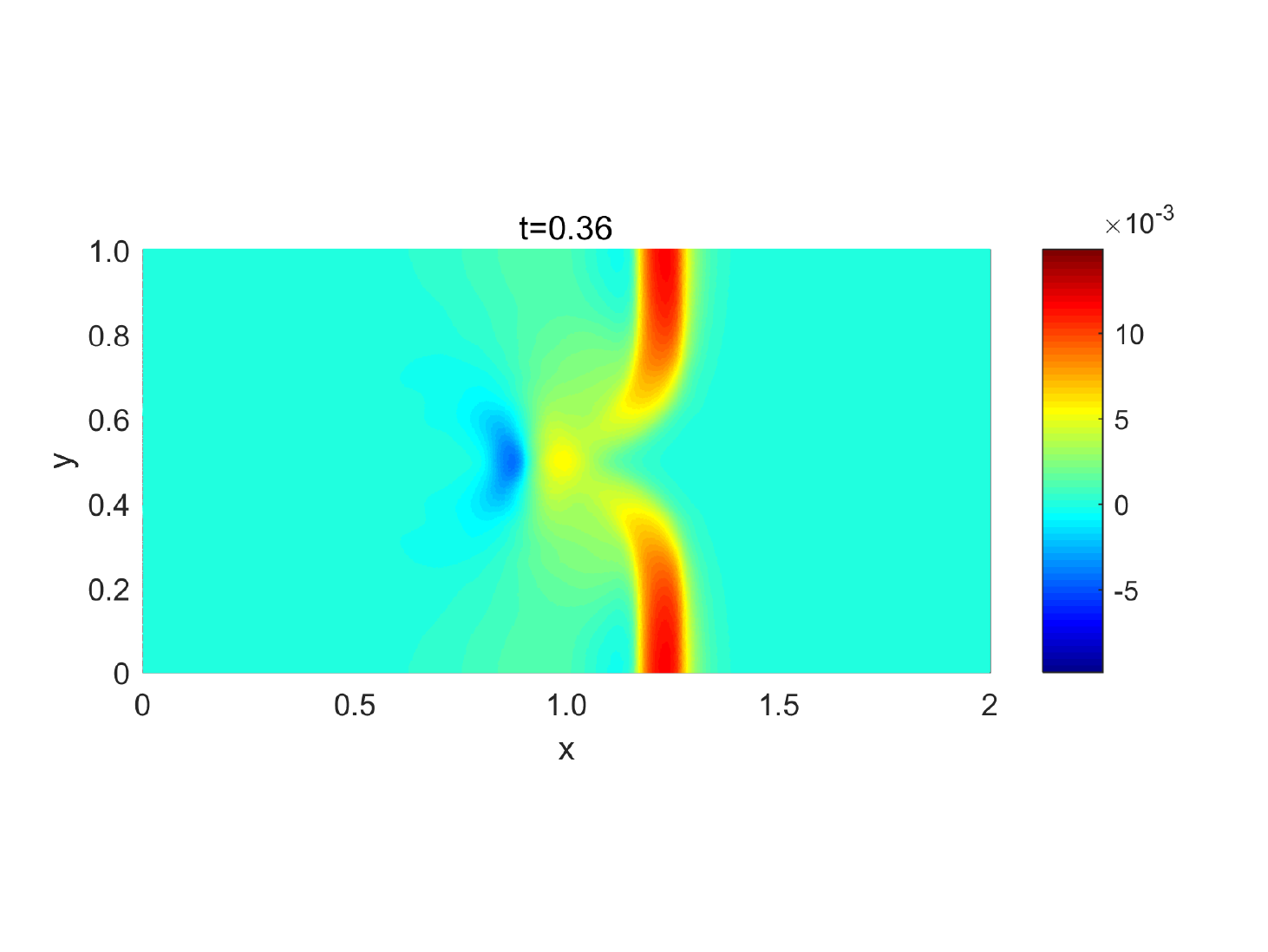}}
\subfigure[$\mathcal{E}$ and $h$: $hv$ at $t=0.36$]{
\includegraphics[width=0.45\textwidth, trim=15 60 15 60, clip]
{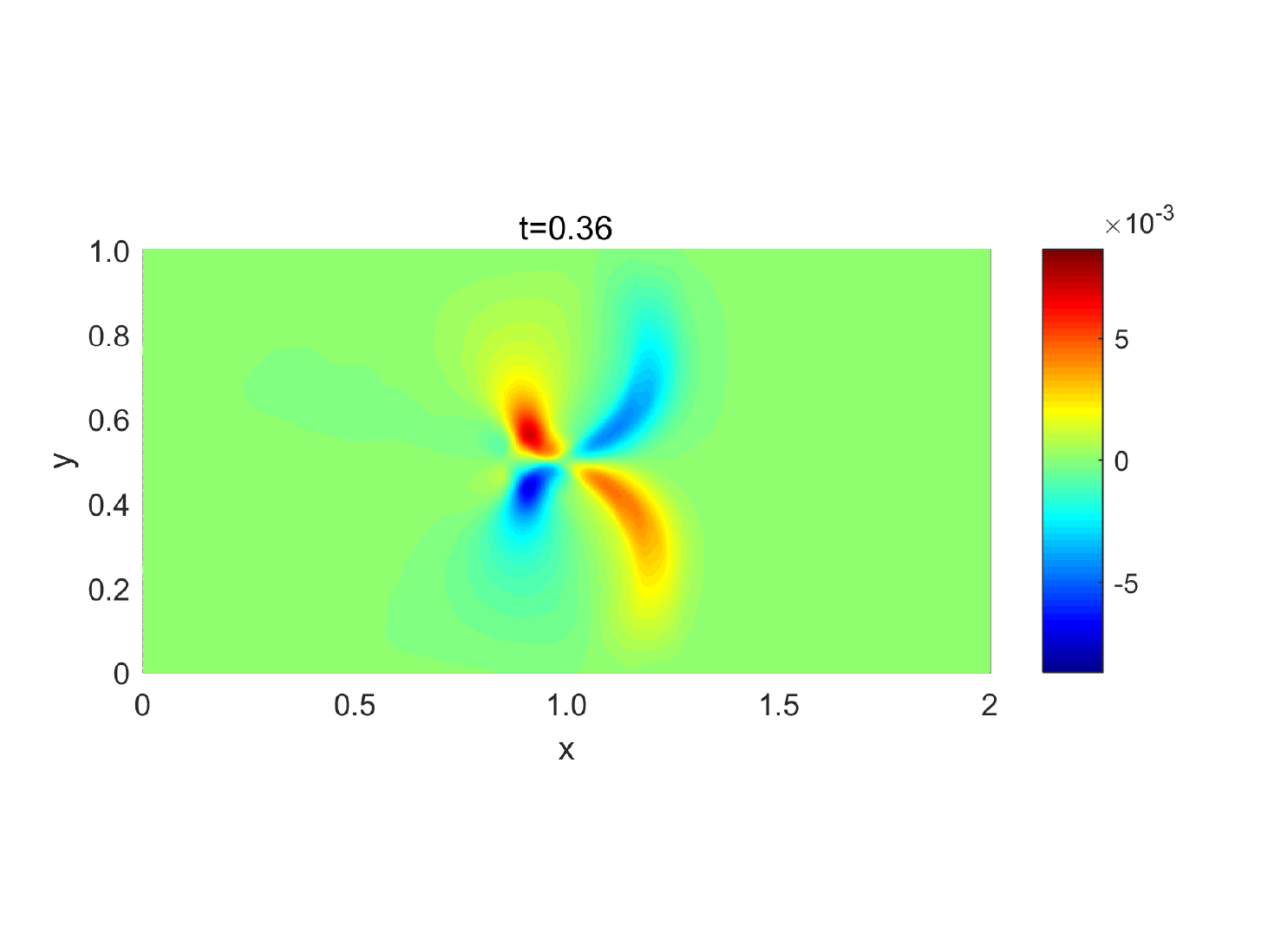}}
\subfigure[Entropy: $hv$ at $t=0.36$]{
\includegraphics[width=0.45\textwidth, trim=15 60 15 60, clip]
{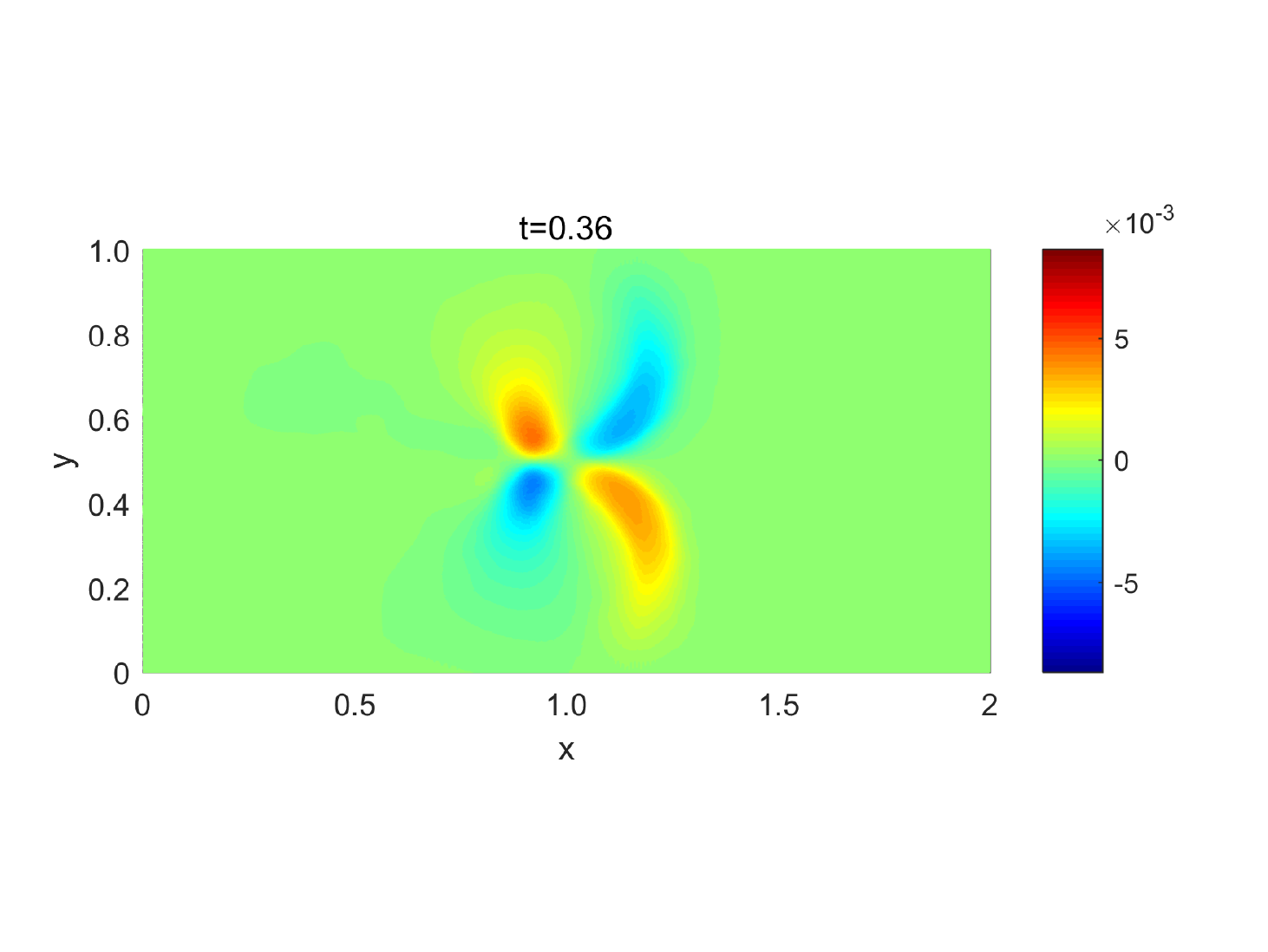}}
\caption{Continuation of Fig.~\ref{Fig:test4-2d-comparison-12}: $t = 0.36$.}
\label{Fig:test4-2d-comparison-36}
\end{figure}
\begin{figure}[H]
\centering
\subfigure[$\mathcal{E}$ and $h$: Mesh at $t=0.48$]{
\includegraphics[width=0.45\textwidth, trim=20 50 0 60, clip]
{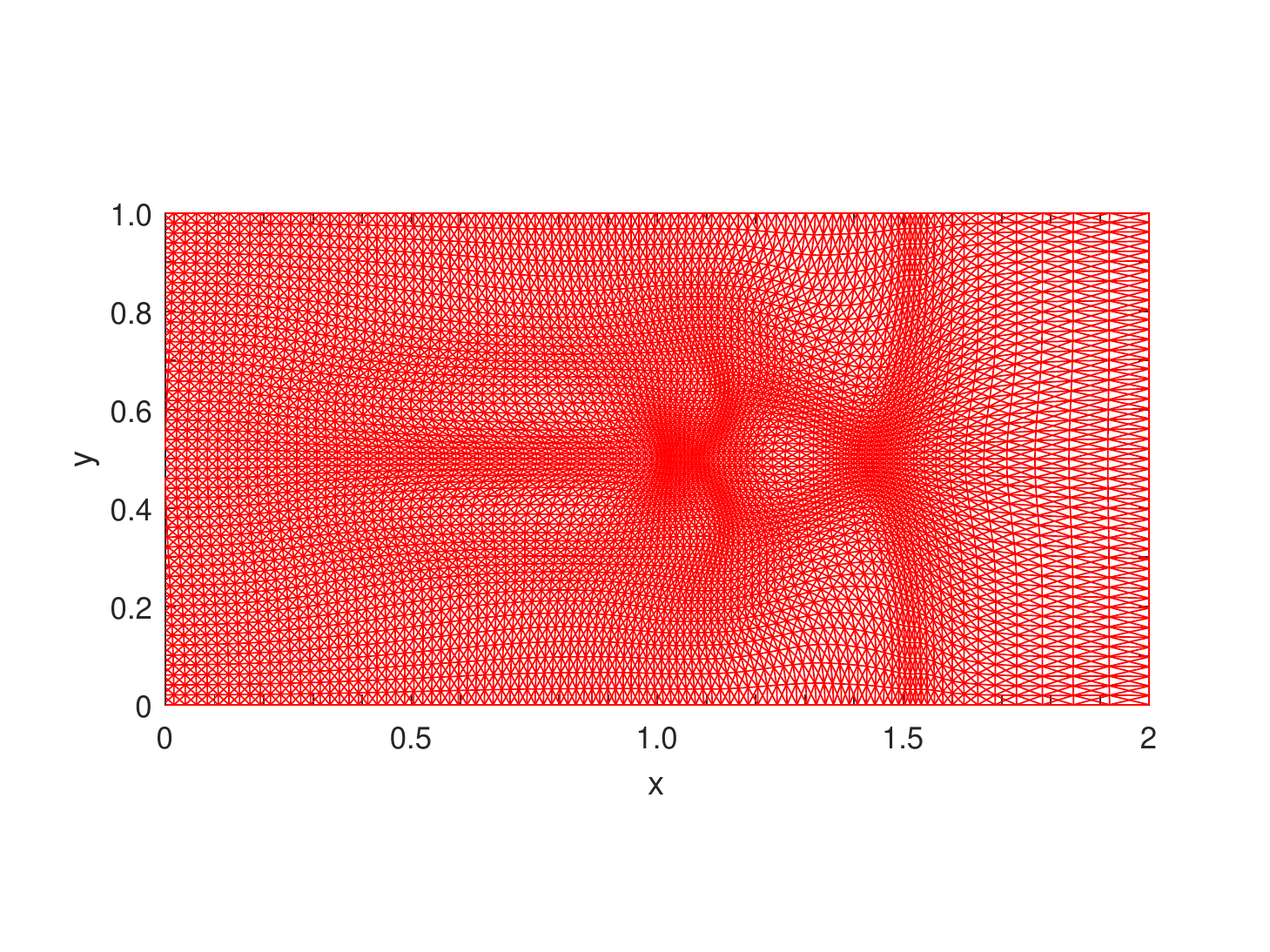}}
\subfigure[Entropy: Mesh at $t=0.48$]{
\includegraphics[width=0.45\textwidth, trim=15 60 15 60, clip]
{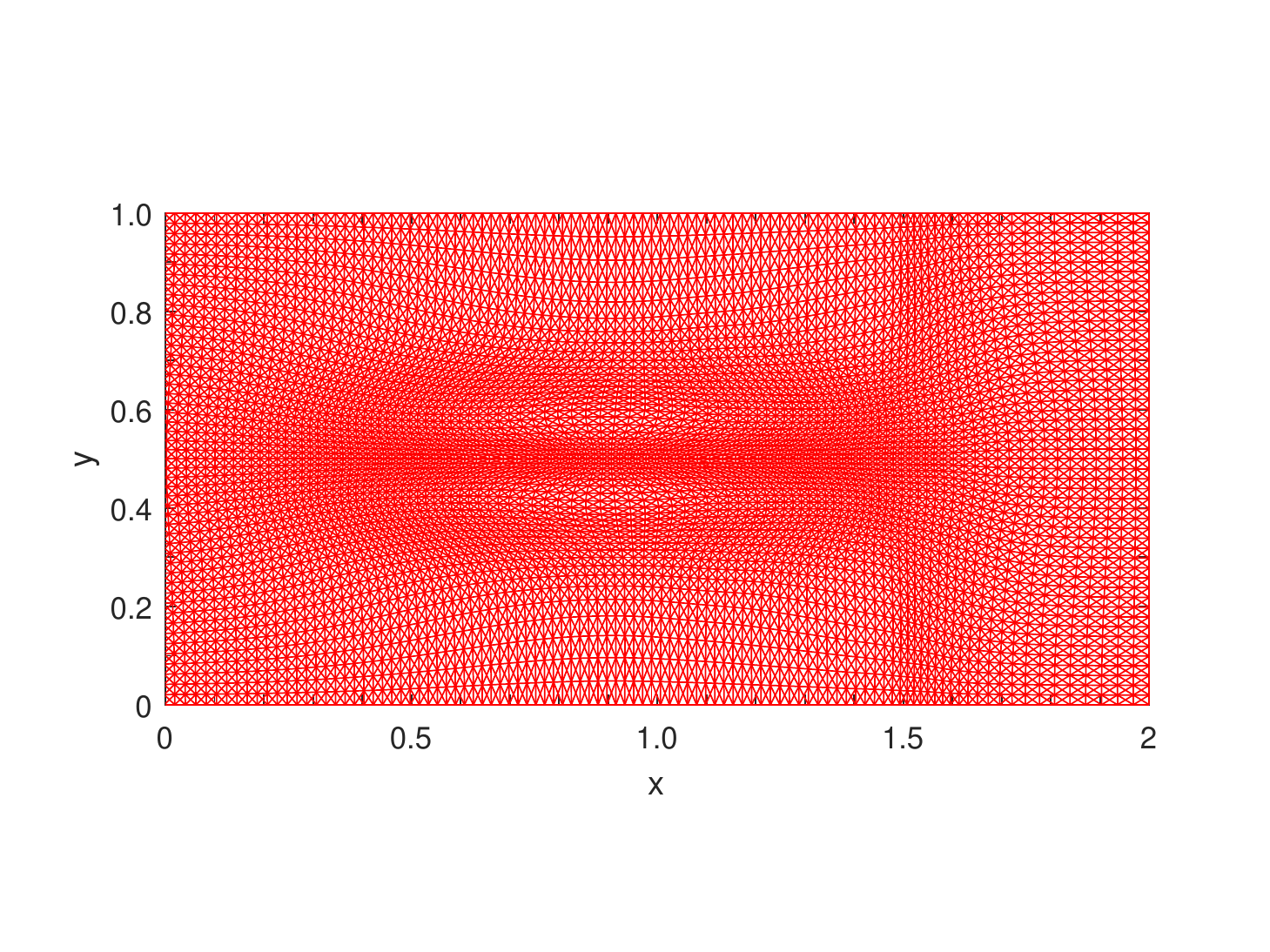}}
\subfigure[$\mathcal{E}$ and $h$: $h+B$ at $t=0.48$]{
\includegraphics[width=0.45\textwidth, trim=15 60 15 60, clip]
{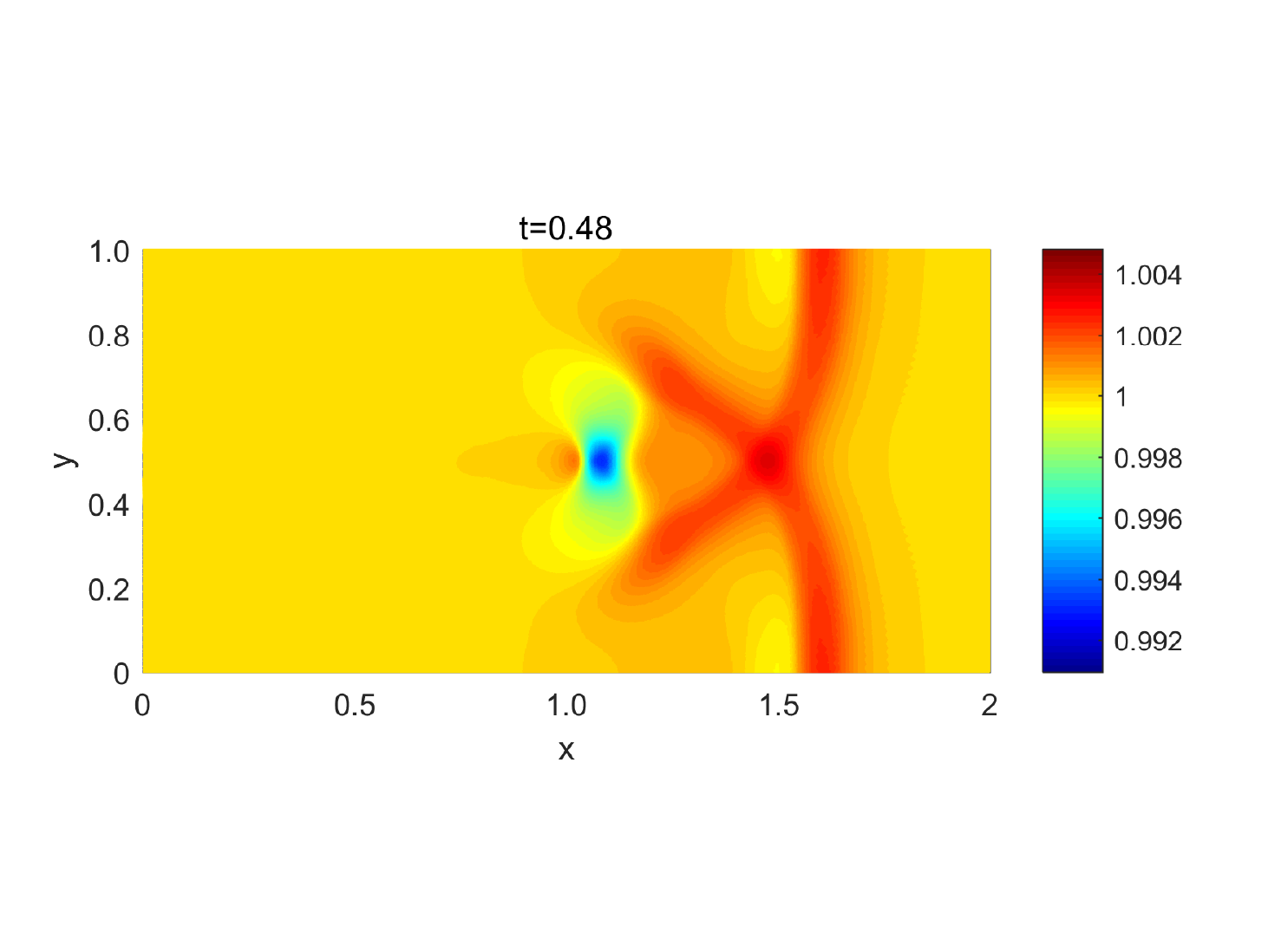}}
\subfigure[Entropy: $h+B$ at $t=0.48$]{
\includegraphics[width=0.45\textwidth, trim=15 60 15 60, clip]
{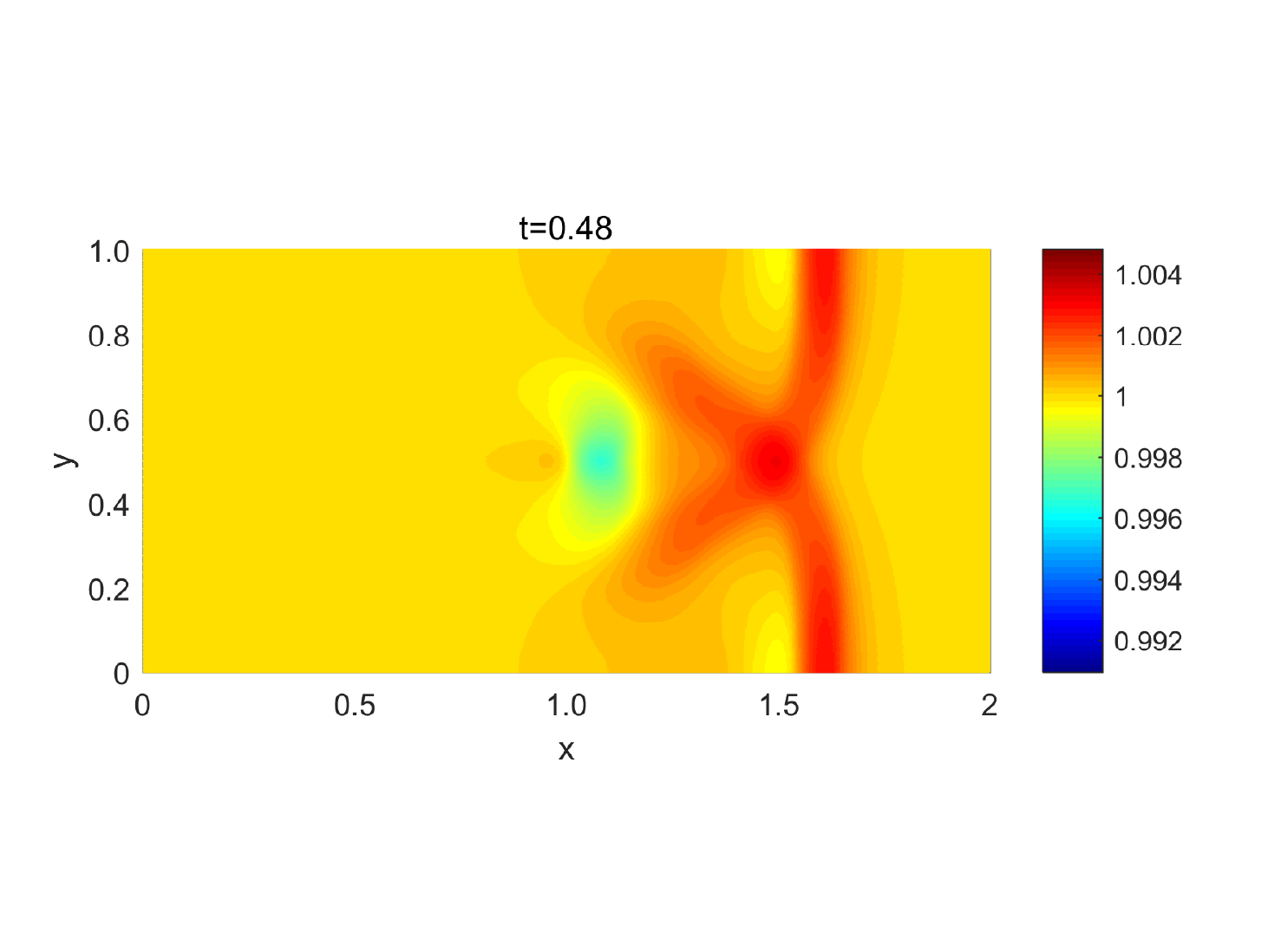}}
\subfigure[$\mathcal{E}$ and $h$: $hu$ at $t=0.48$]{
\includegraphics[width=0.45\textwidth, trim=20 50 0 60, clip]
{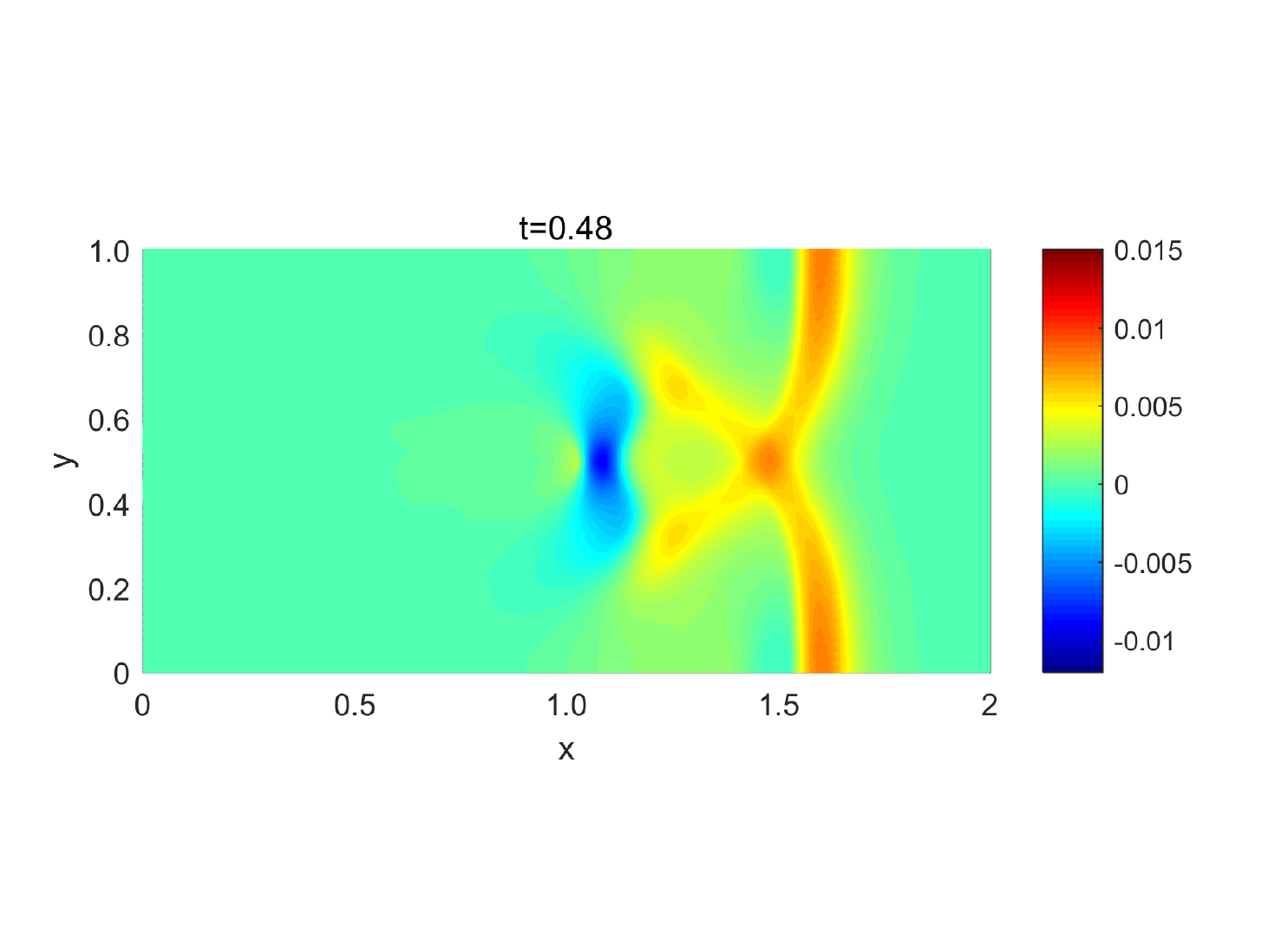}}
\subfigure[Entropy: $hu$ at $t=0.48$]{
\includegraphics[width=0.45\textwidth, trim=15 60 15 60, clip]
{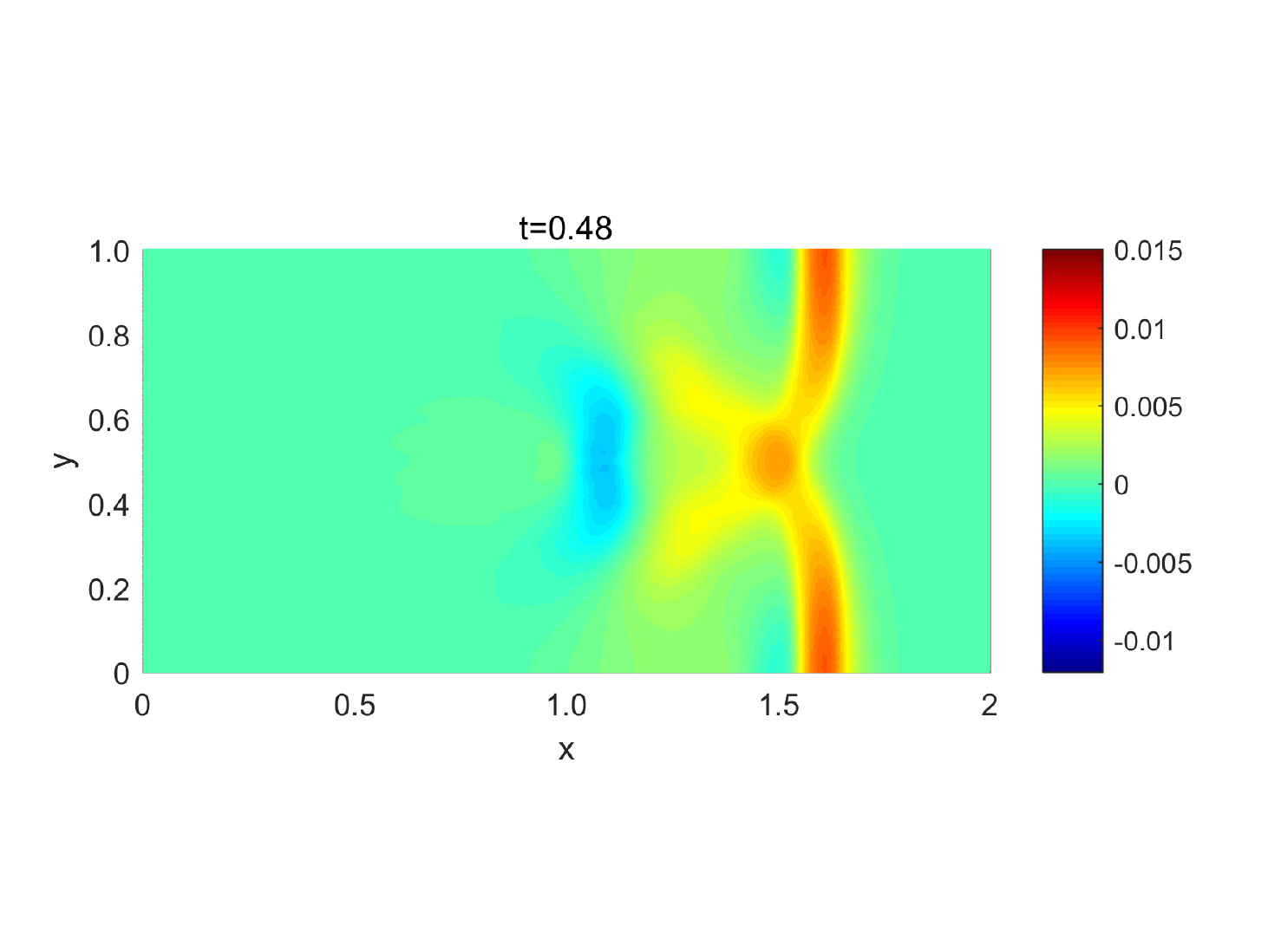}}
\subfigure[$\mathcal{E}$ and $h$: $hv$ at $t=0.48$]{
\includegraphics[width=0.45\textwidth, trim=15 60 15 60, clip]
{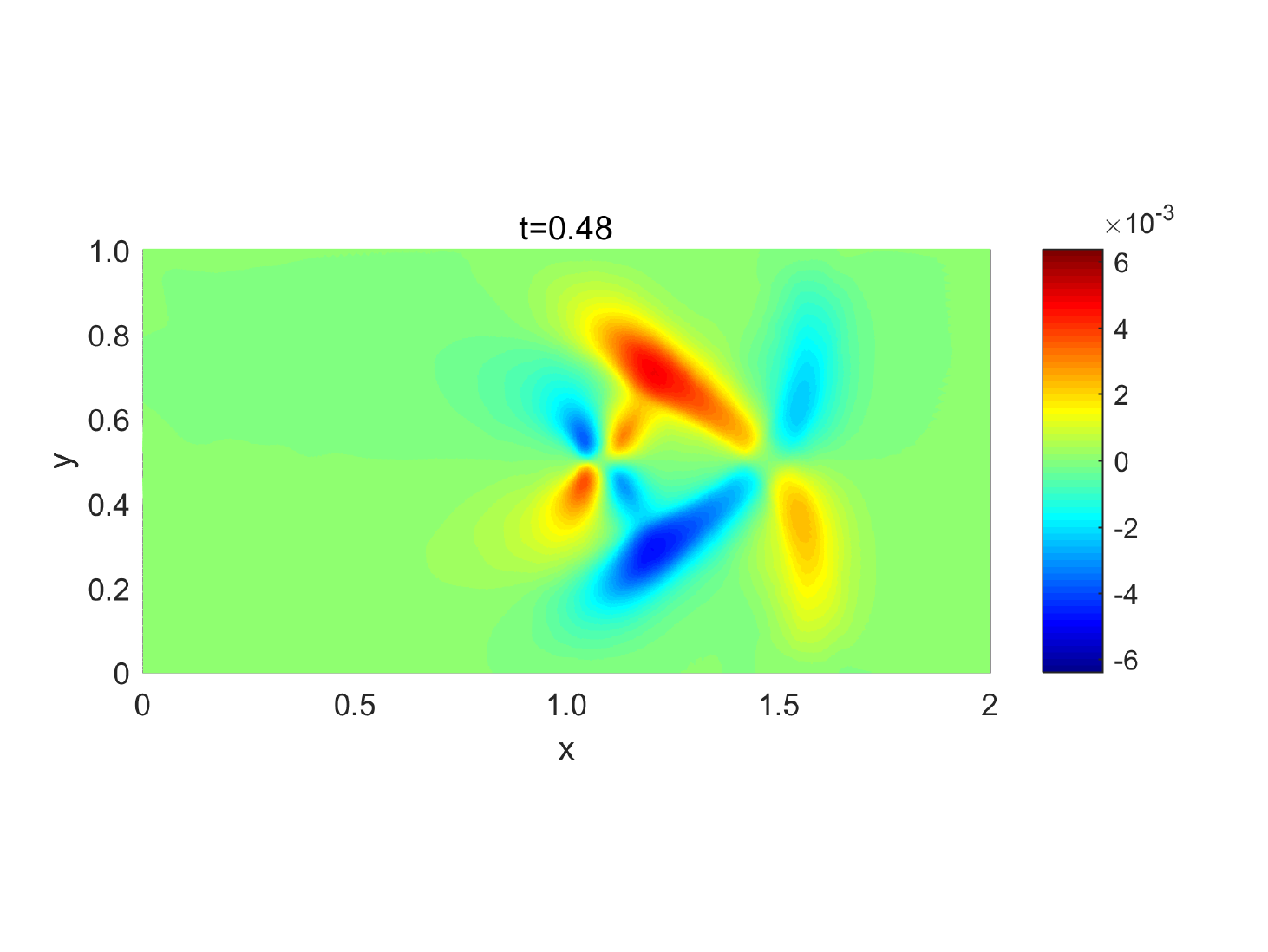}}
\subfigure[Entropy: $hv$ at $t=0.48$]{
\includegraphics[width=0.45\textwidth, trim=15 60 15 60, clip]
{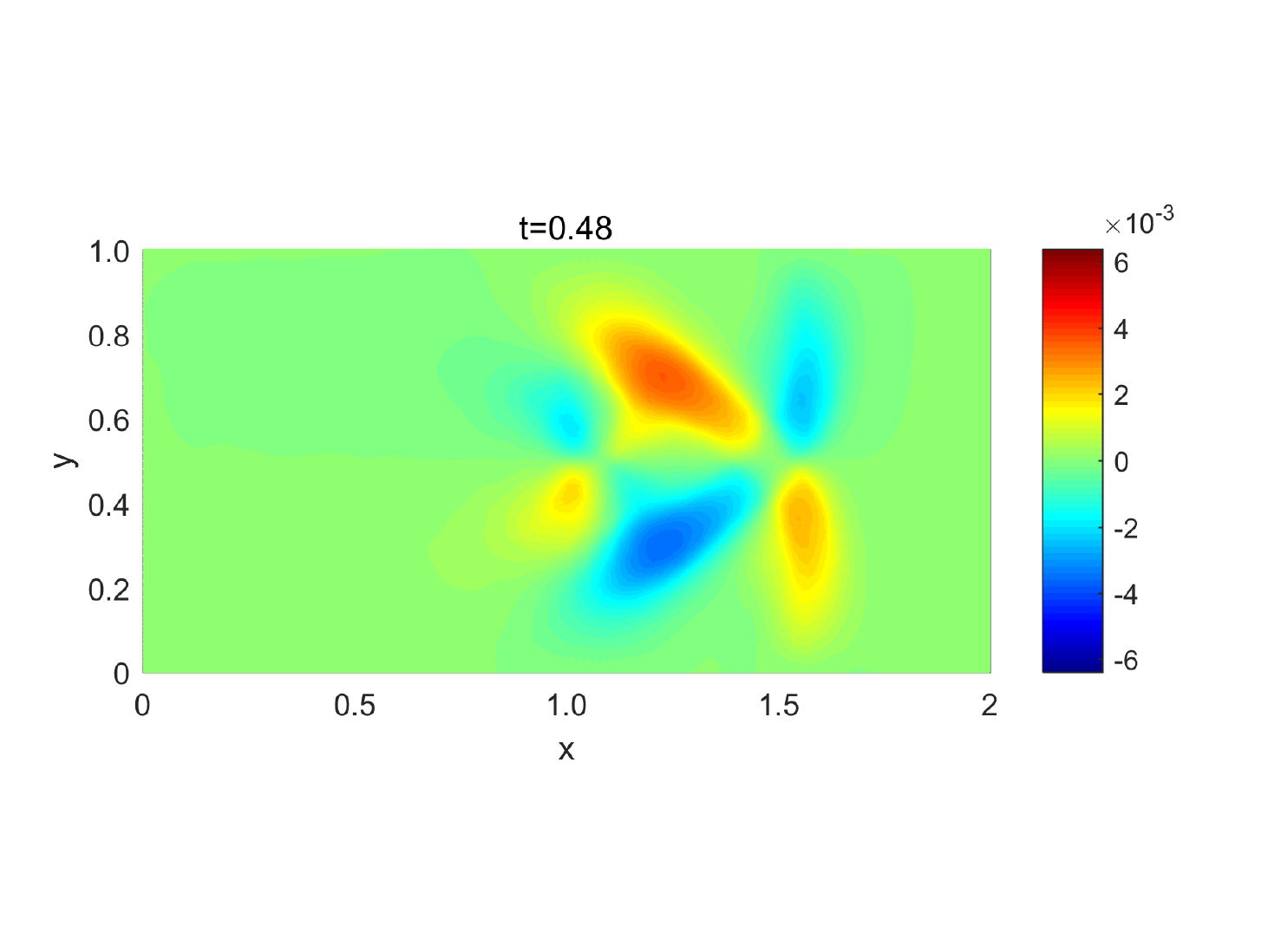}}
\caption{Continuation of Fig.~\ref{Fig:test4-2d-comparison-12}: $t = 0.48$.}
\label{Fig:test4-2d-comparison-48}
\end{figure}
%
%% comparison with different resolutions

\begin{figure}[H]
\centering
\subfigure[$h+B$: MM $N=150\times 50\times4$]{
\includegraphics[width=0.30\textwidth, trim=15 60 15 60, clip]{R_test4_2d_P2_Eh_Bph_M50_t12-eps-converted-to.pdf}}
%\vspace{5pt}
\subfigure[$hu$: MM $N=150\times 50\times4$]{
\includegraphics[width=0.30\textwidth, trim=15 60 15 60, clip]{R_test4_2d_P2_Eh_hu_M50_t12-eps-converted-to.pdf}}
%\vspace{5pt}
\subfigure[$hv$: MM $N=150\times 50\times4$]{
\includegraphics[width=0.30\textwidth, trim=15 60 15 60, clip]{R_test4_2d_P2_Eh_hv_M50_t12-eps-converted-to.pdf}}
\subfigure[$h+B$: FM $N=150\times 50\times4$]{
\includegraphics[width=0.30\textwidth, trim=15 60 15 60, clip]{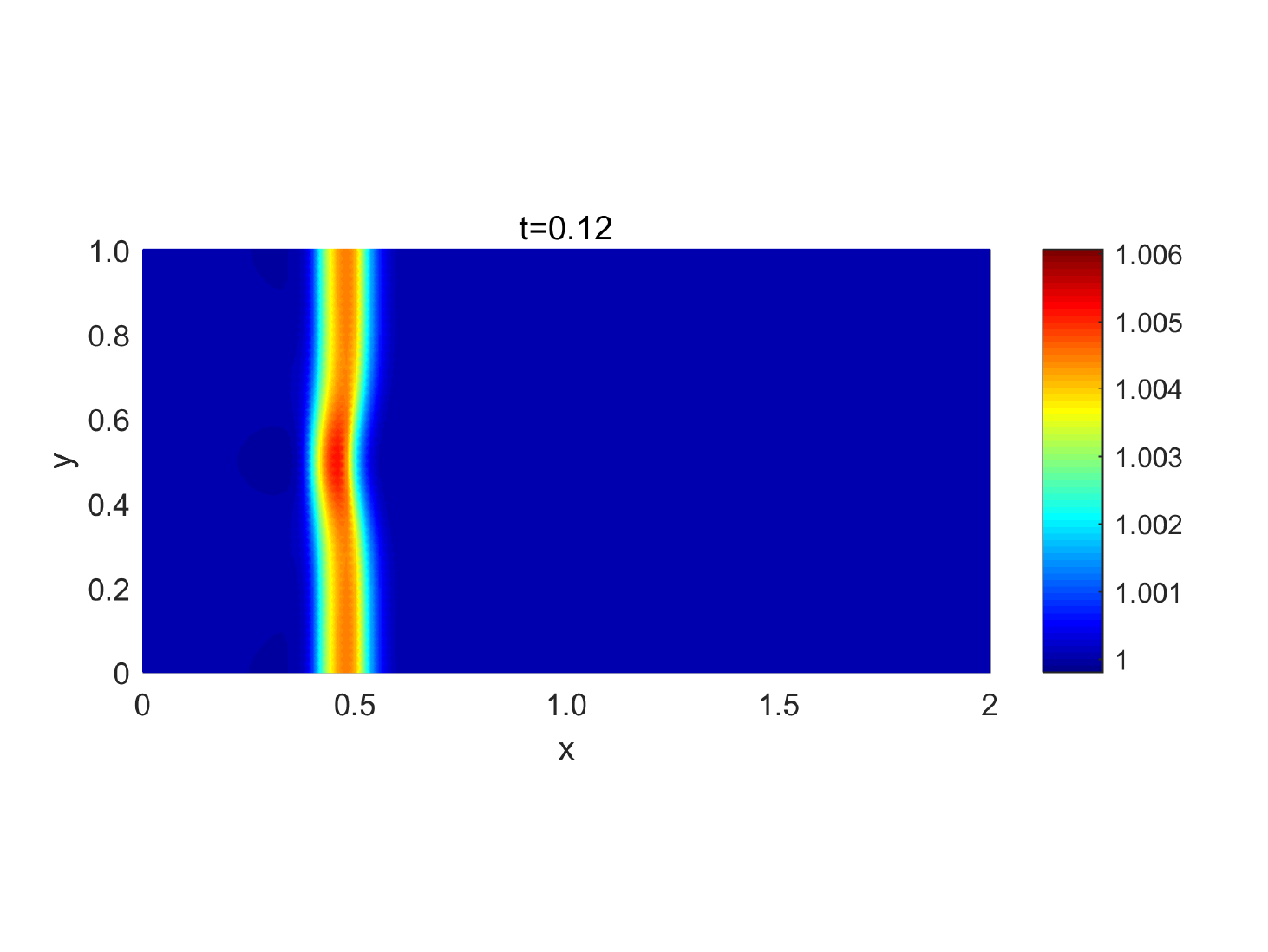}}
%\vspace{5pt}
\subfigure[$hu$: FM $N=150\times 50\times4$]{
\includegraphics[width=0.30\textwidth, trim=15 60 15 60,
clip]{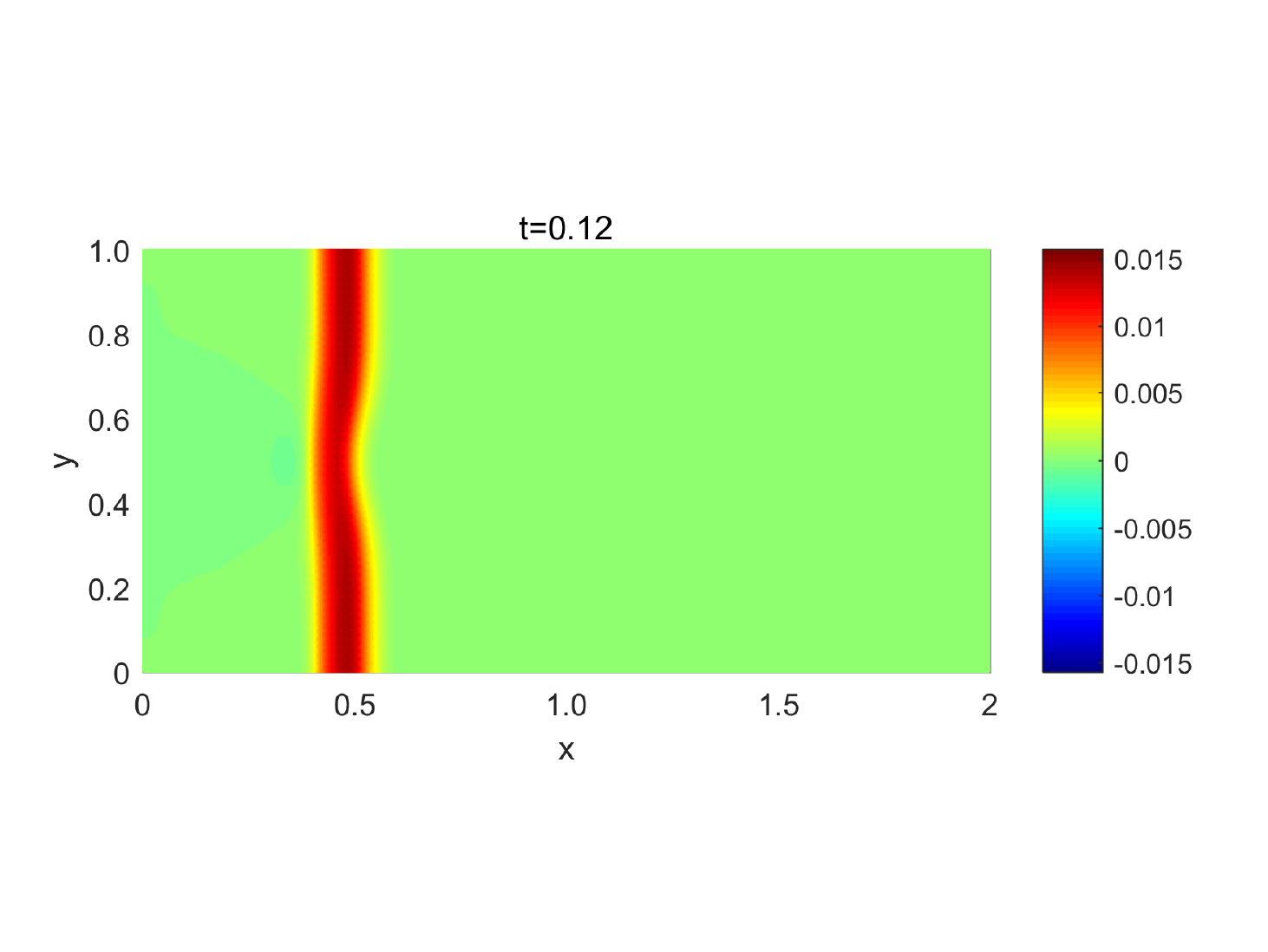}}
%\vspace{5pt}
\subfigure[$hv$: FM $N=150\times 50\times4$]{
\includegraphics[width=0.30\textwidth, trim=15 60 15 60, clip]{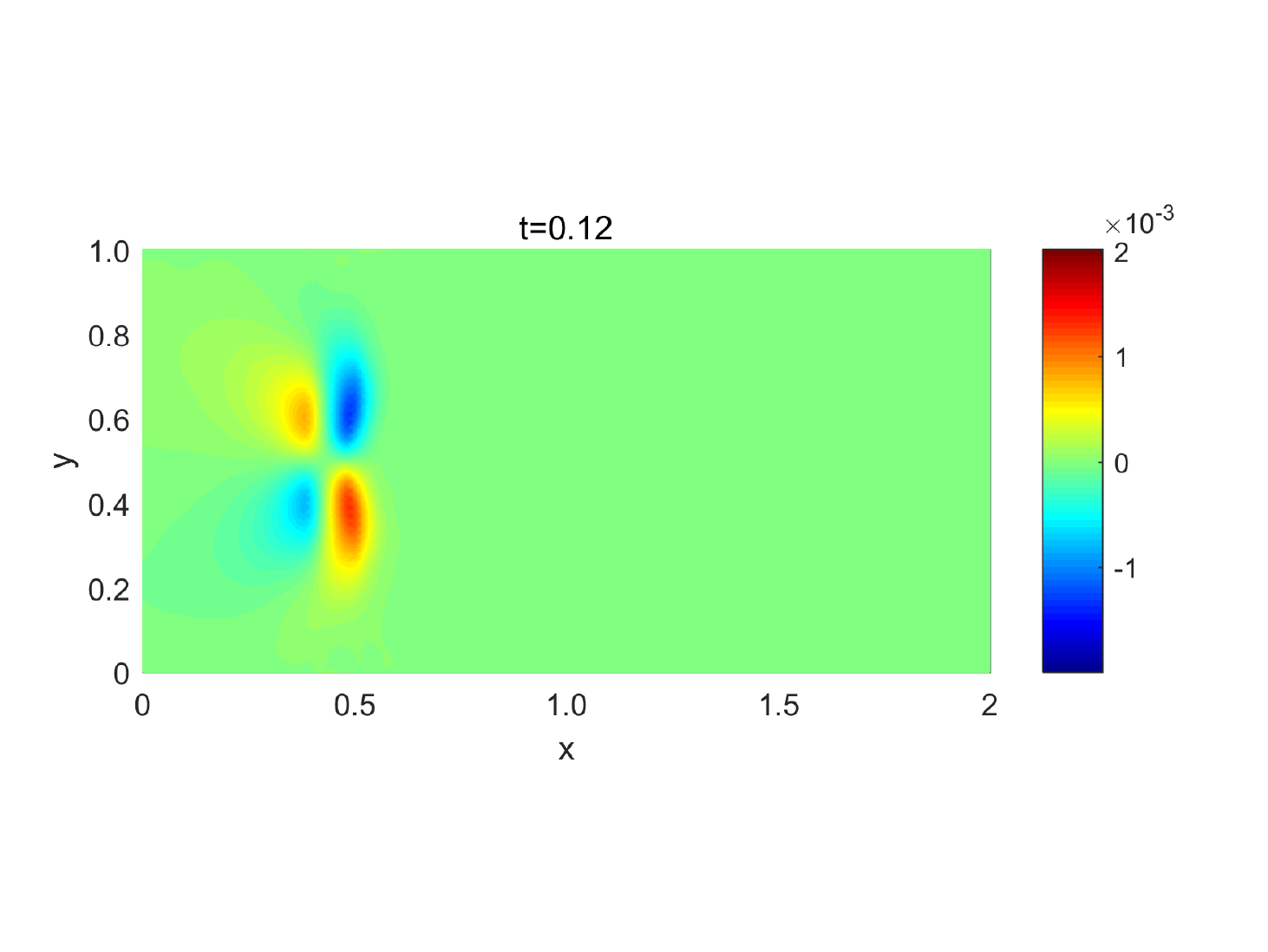}}
\subfigure[$h+B$: FM $600\times 200\times4$]{
\includegraphics[width=0.30\textwidth, trim=15 60 15 60, clip]{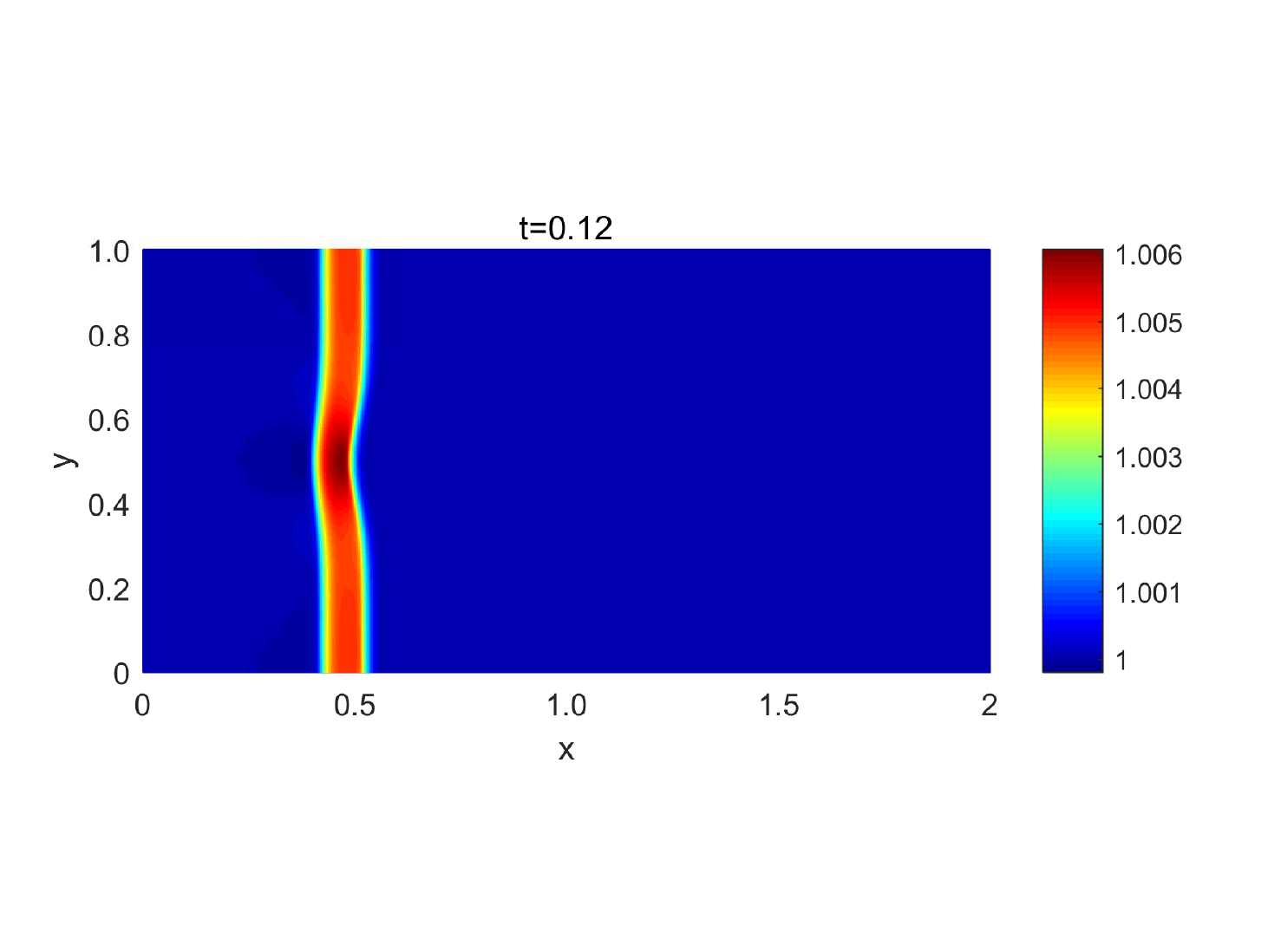}}
\subfigure[$hu$: FM $N=600\times 200\times4$]{
\includegraphics[width=0.30\textwidth, trim=15 60 15 60, clip]{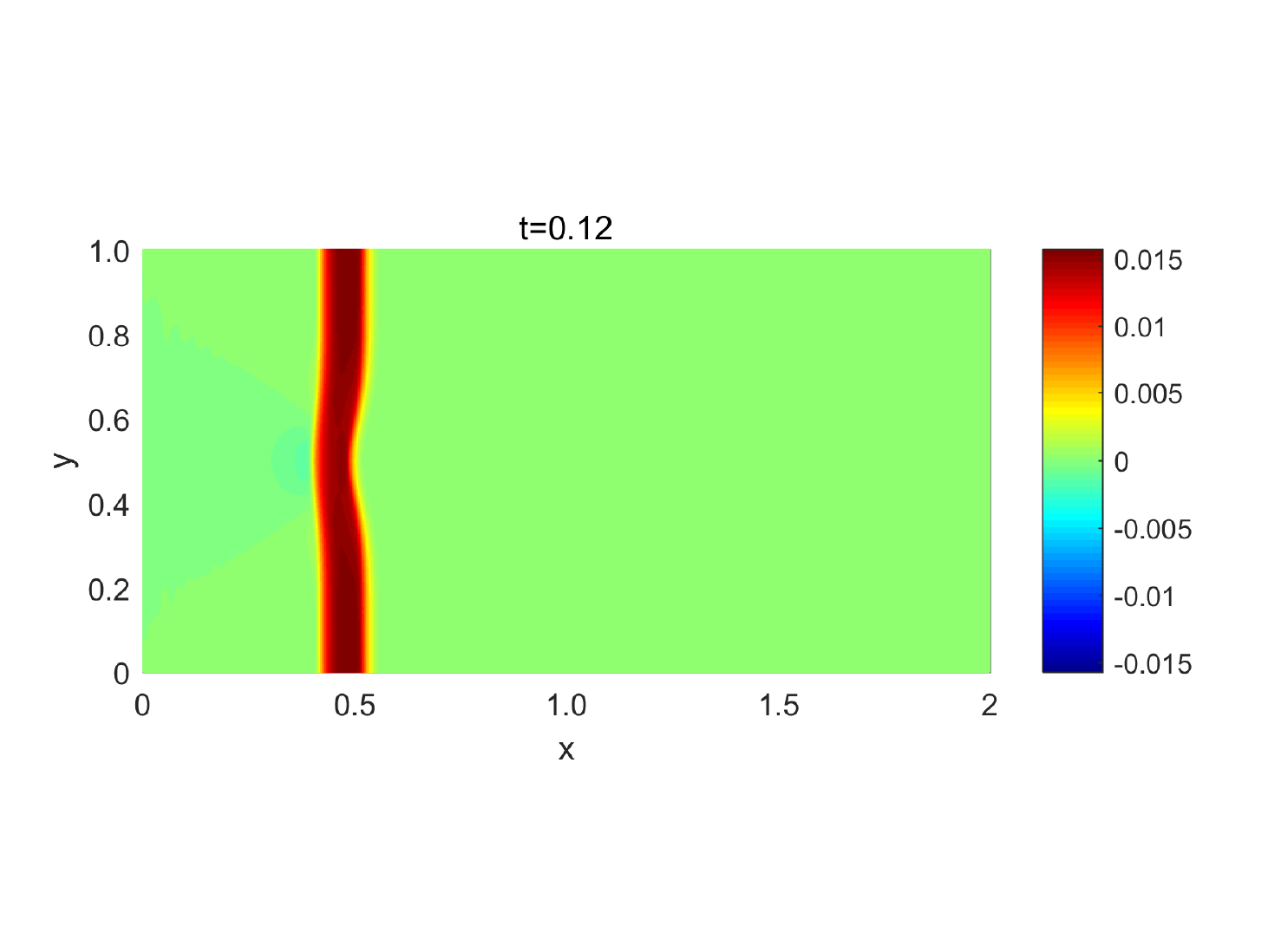}}
\subfigure[$hv$: FM $N=600\times 200\times4$]{
\includegraphics[width=0.30\textwidth, trim=15 60 15 60, clip]{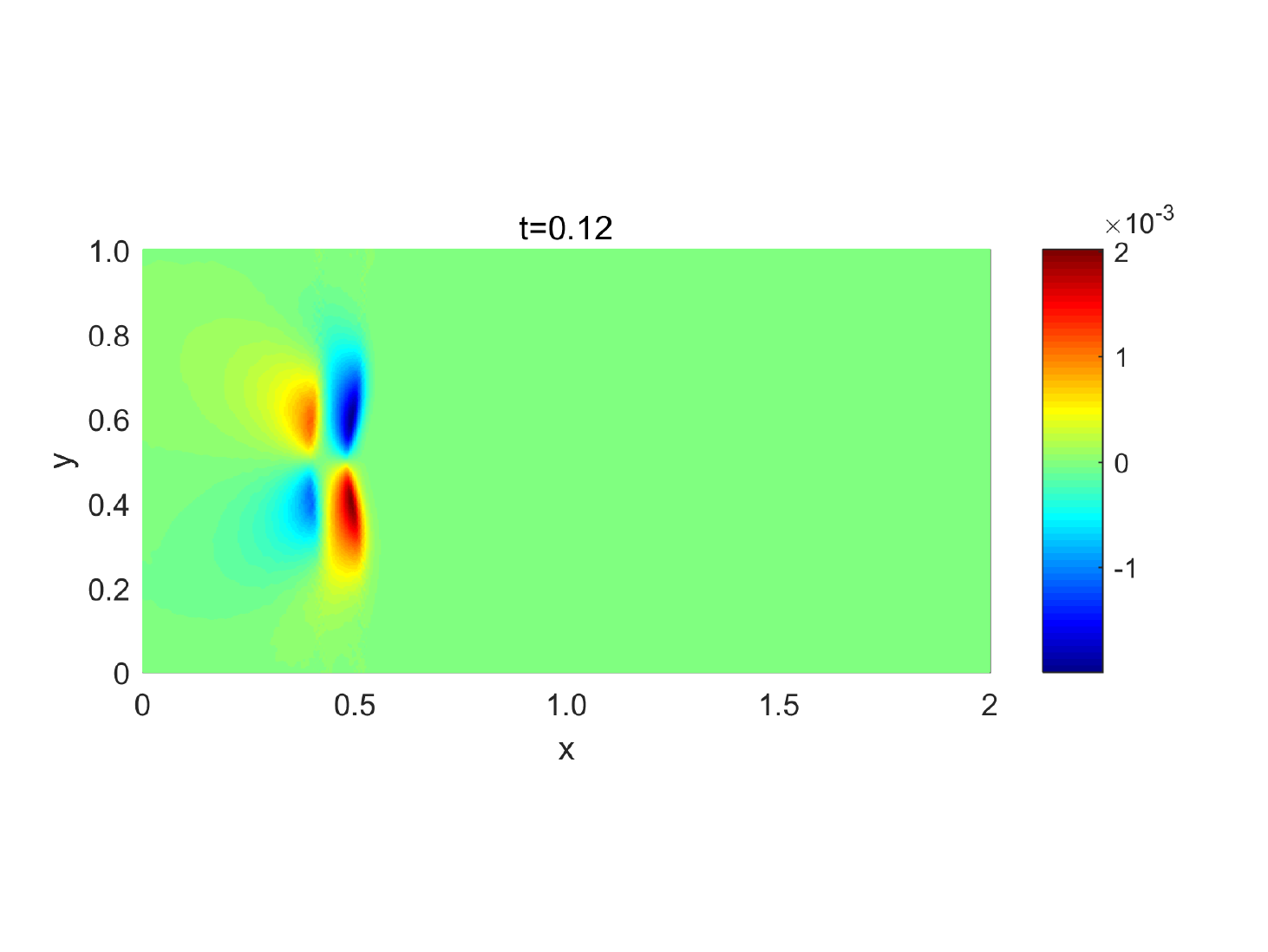}}
\caption{Example \ref{test4-2d}. The contours of $h+B$, $hu$, and $hv$ at $t=0.12$ are obtained with the $P^2$ MM-DG method and a moving mesh of $N=150\times 50\times4$ and fixed meshes of $N=150\times 50\times4$ and $N=600\times 200\times4$.}
\label{Fig:test4-2d-h-hu-hv-t12}
\end{figure}

\begin{figure}[H]
\centering
\subfigure[$h+B$: MM $N=150\times 50\times4$]{
\includegraphics[width=0.30\textwidth, trim=15 60 15 60, clip]{R_test4_2d_P2_Eh_Bph_M50_t24-eps-converted-to.pdf}}
%\vspace{5pt}
\subfigure[$hu$: MM $N=150\times 50\times4$]{
\includegraphics[width=0.30\textwidth, trim=15 60 15 60, clip]{R_test4_2d_P2_Eh_hu_M50_t24-eps-converted-to.pdf}}
%\vspace{5pt}
\subfigure[$hv$: MM $N=150\times 50\times4$]{
\includegraphics[width=0.30\textwidth, trim=15 60 15 60, clip]{R_test4_2d_P2_Eh_hv_M50_t24-eps-converted-to.pdf}}
\subfigure[$h+B$: FM $N=150\times 50\times4$]{
\includegraphics[width=0.30\textwidth, trim=15 60 15 60, clip]{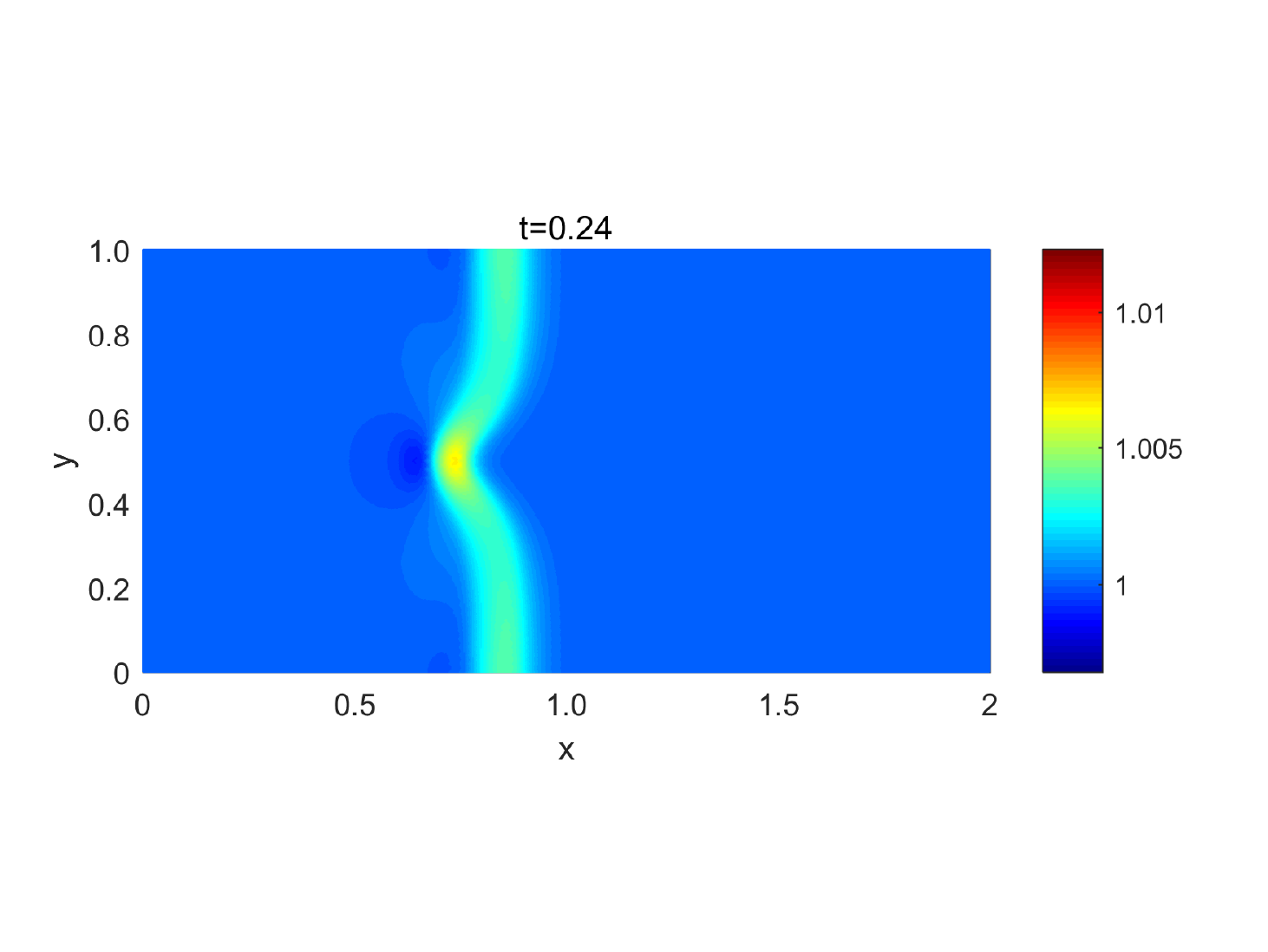}}
%\vspace{5pt}
\subfigure[$hu$: FM $N=150\times 50\times4$]{
\includegraphics[width=0.30\textwidth, trim=15 60 15 60, clip]{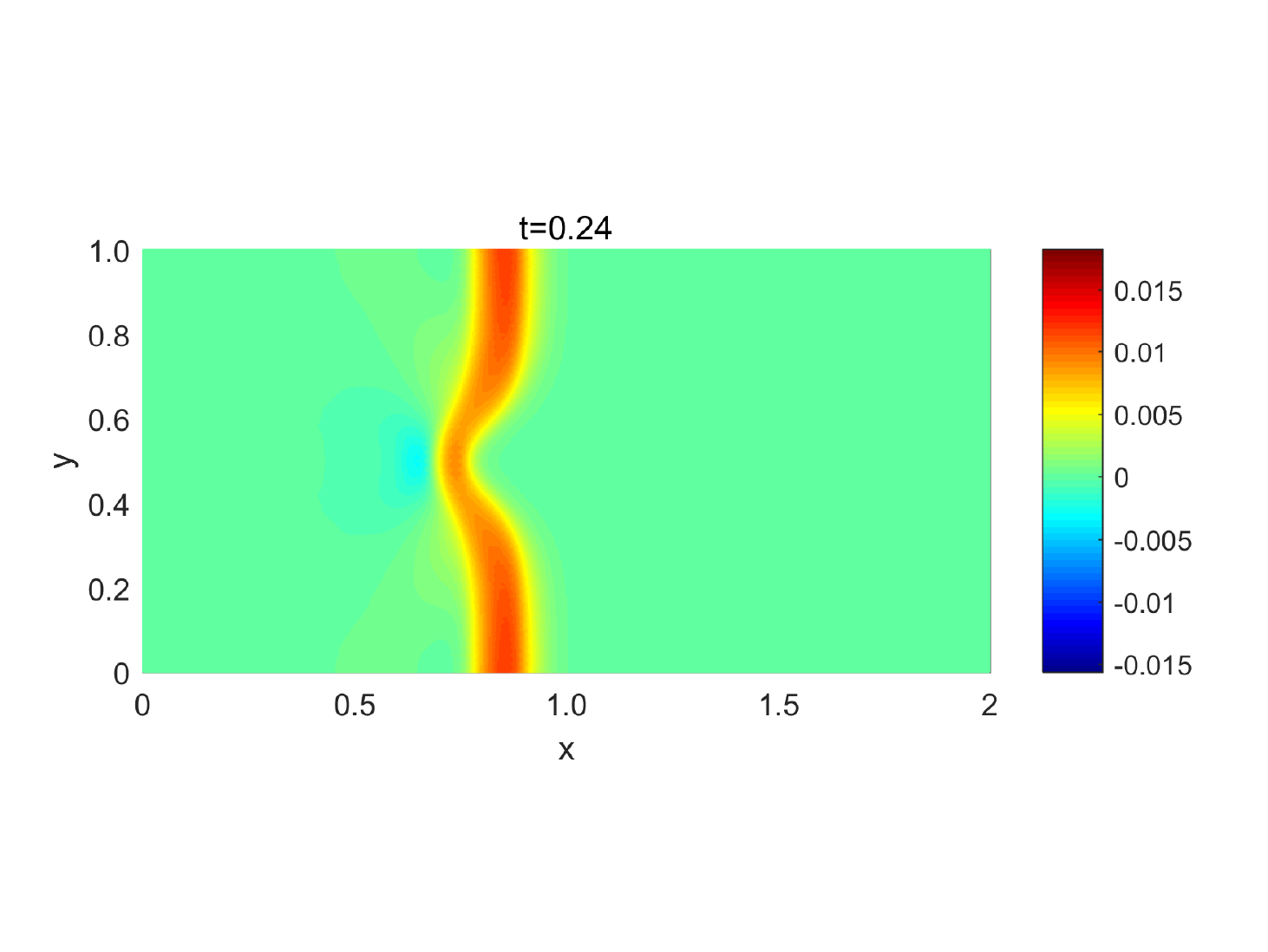}}
%\vspace{5pt}
\subfigure[$hv$: FM $N=150\times 50\times4$]{
\includegraphics[width=0.30\textwidth, trim=15 60 15 60, clip]{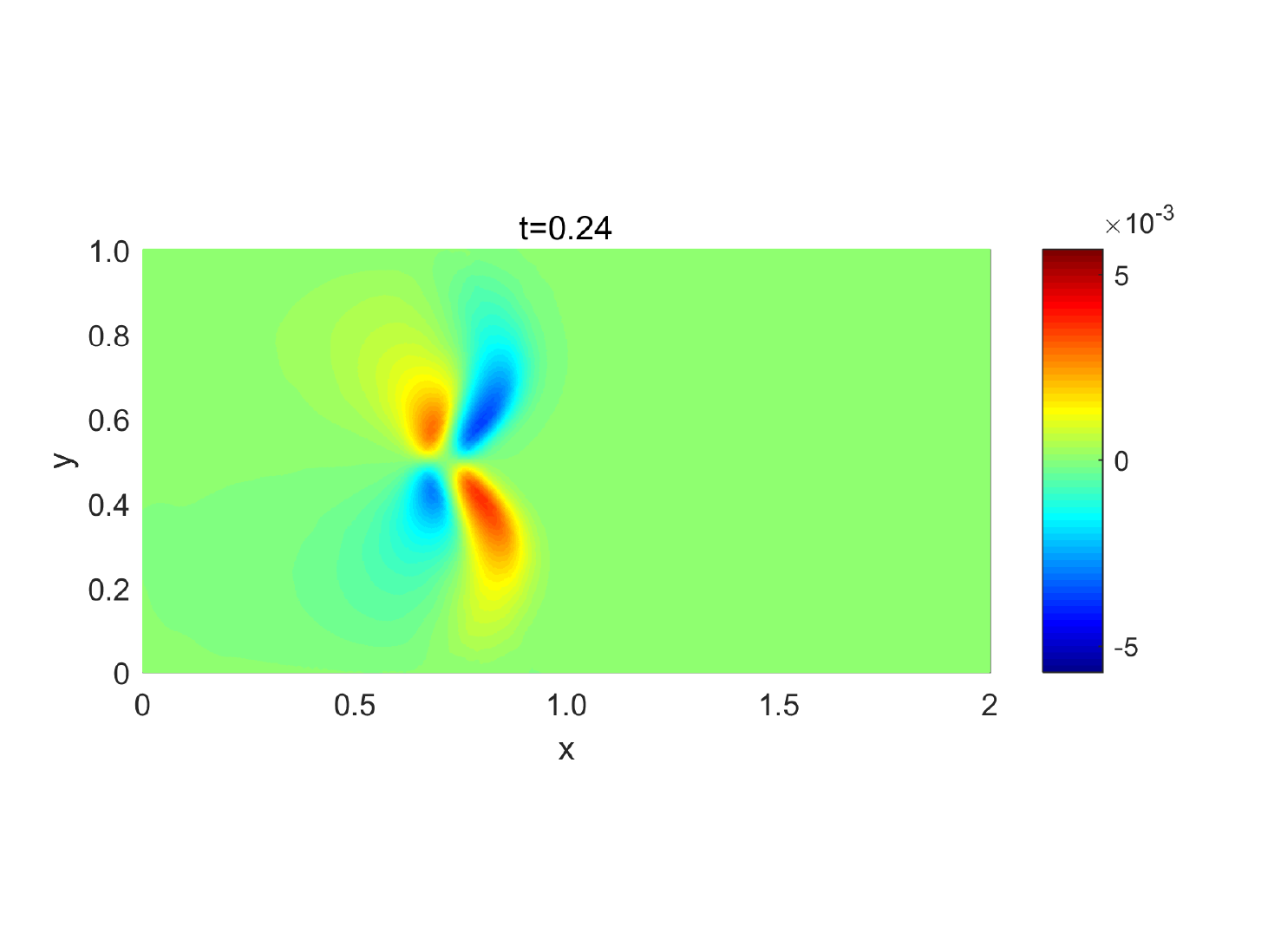}}
\subfigure[$h+B$: FM $600\times 200\times4$]{
\includegraphics[width=0.30\textwidth, trim=15 60 15 60, clip]{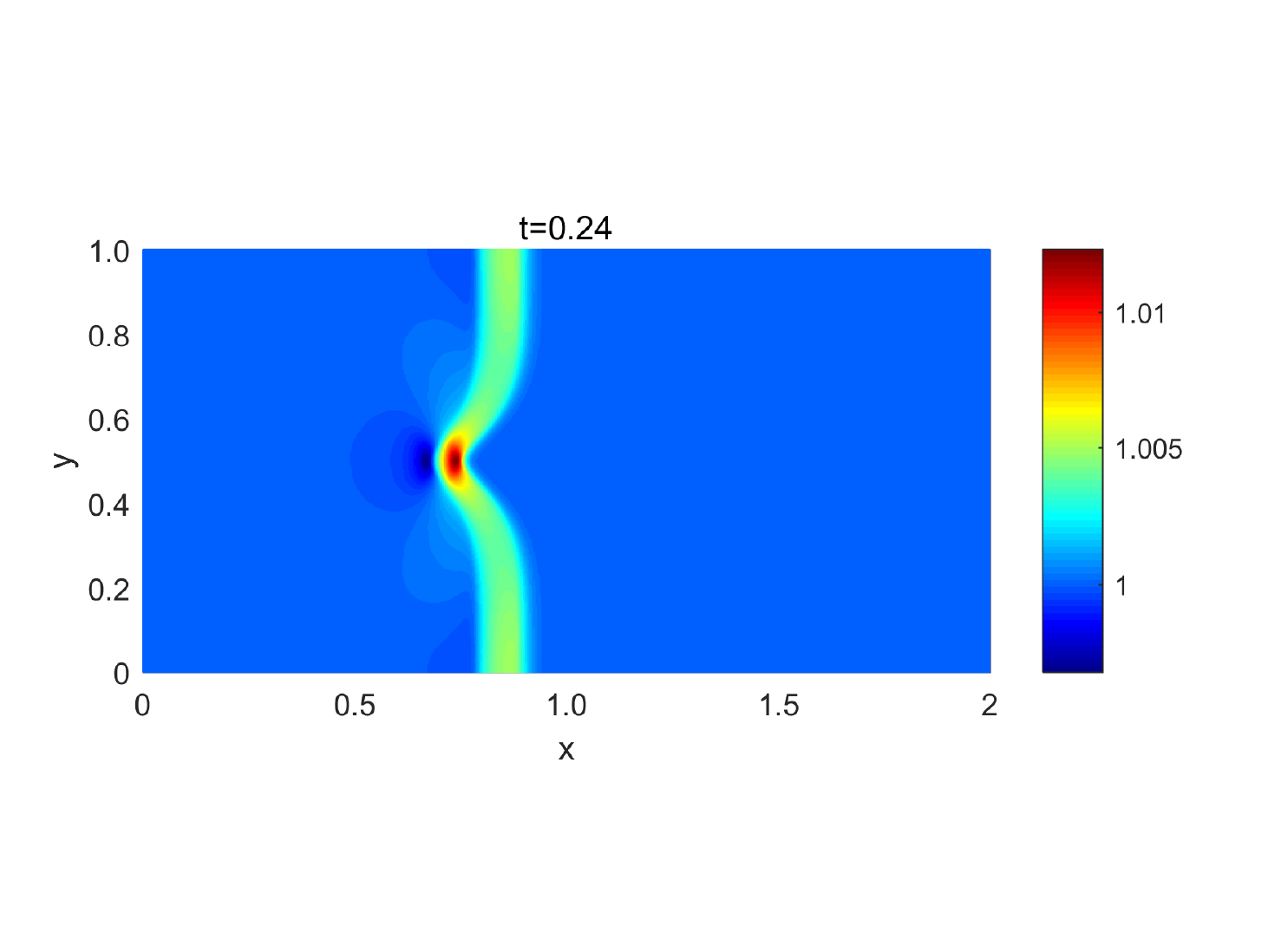}}
\subfigure[$hu$: FM $N=600\times 200\times4$]{
\includegraphics[width=0.30\textwidth, trim=15 60 15 60, clip]{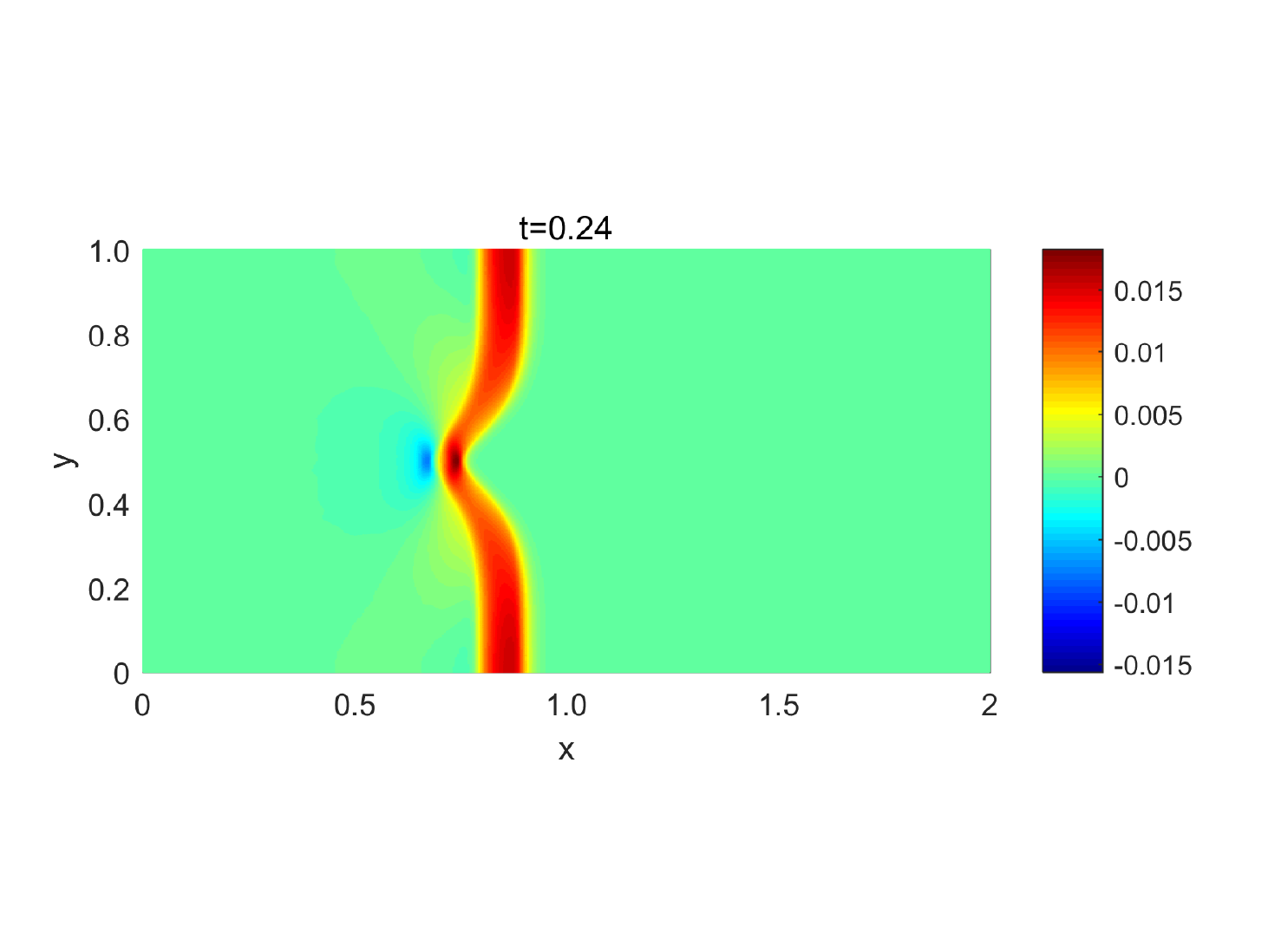}}
\subfigure[$hv$: FM $N=600\times 200\times4$]{
\includegraphics[width=0.30\textwidth, trim=15 60 15 60, clip]{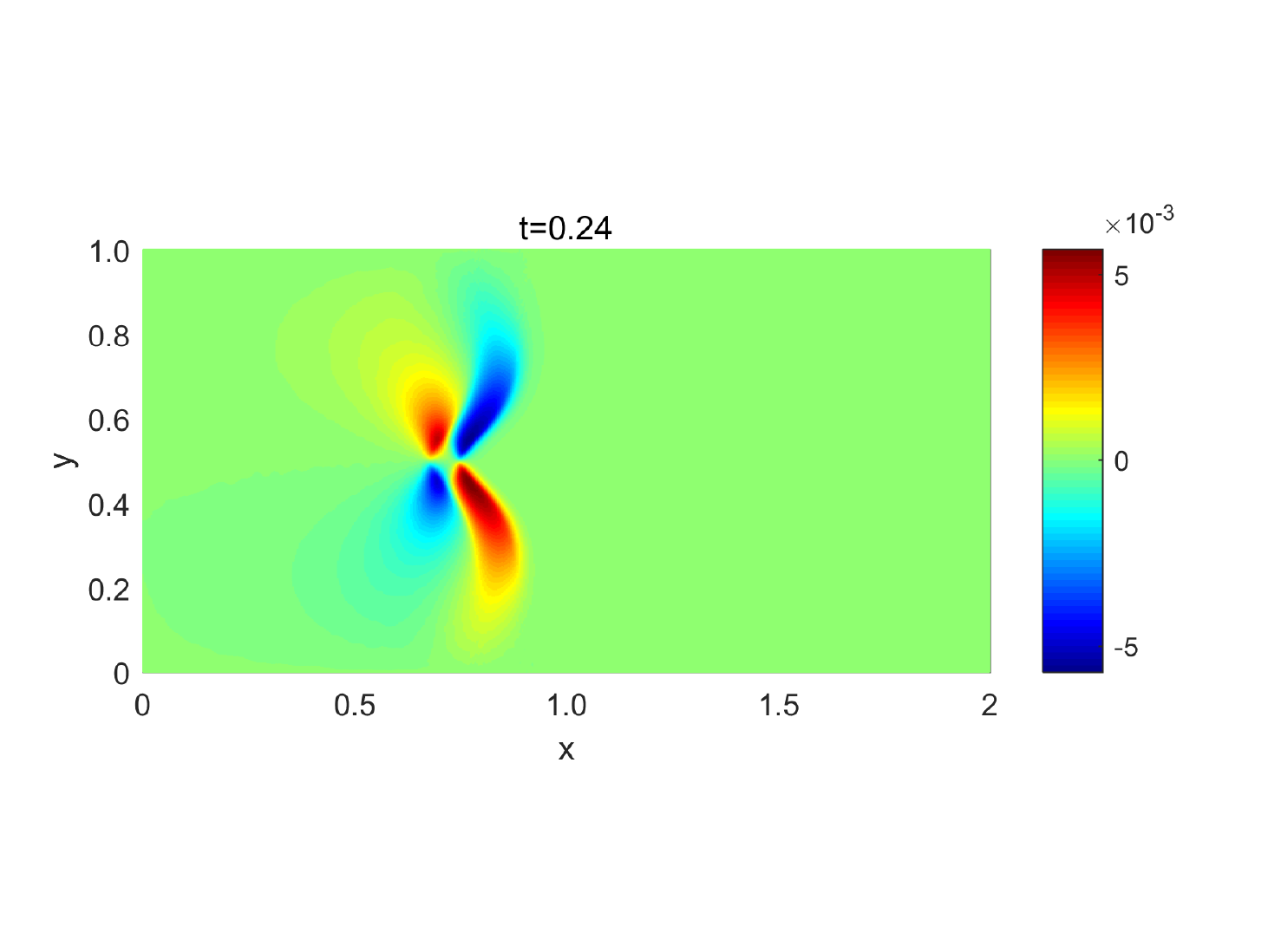}}
\caption{Continuation of Fig.~\ref{Fig:test4-2d-h-hu-hv-t12}: $t = 0.24$.}
\label{Fig:test4-2d-h-hu-hv-t24}
\end{figure}

\begin{figure}[H]
\centering
\subfigure[$h+B$: MM $N=150\times 50\times4$]{
\includegraphics[width=0.30\textwidth, trim=15 60 15 60, clip]{R_test4_2d_P2_Eh_Bph_M50_t36-eps-converted-to.pdf}}
%\vspace{5pt}
\subfigure[$hu$: MM $N=150\times 50\times4$]{
\includegraphics[width=0.30\textwidth, trim=15 60 15 60, clip]{R_test4_2d_P2_Eh_hu_M50_t36-eps-converted-to.pdf}}
%\vspace{5pt}
\subfigure[$hv$: MM $N=150\times 50\times4$]{
\includegraphics[width=0.30\textwidth, trim=15 60 15 60, clip]{R_test4_2d_P2_Eh_hv_M50_t36-eps-converted-to.pdf}}
\subfigure[$h+B$: FM $N=150\times 50\times4$]{
\includegraphics[width=0.30\textwidth, trim=15 60 15 60, clip]{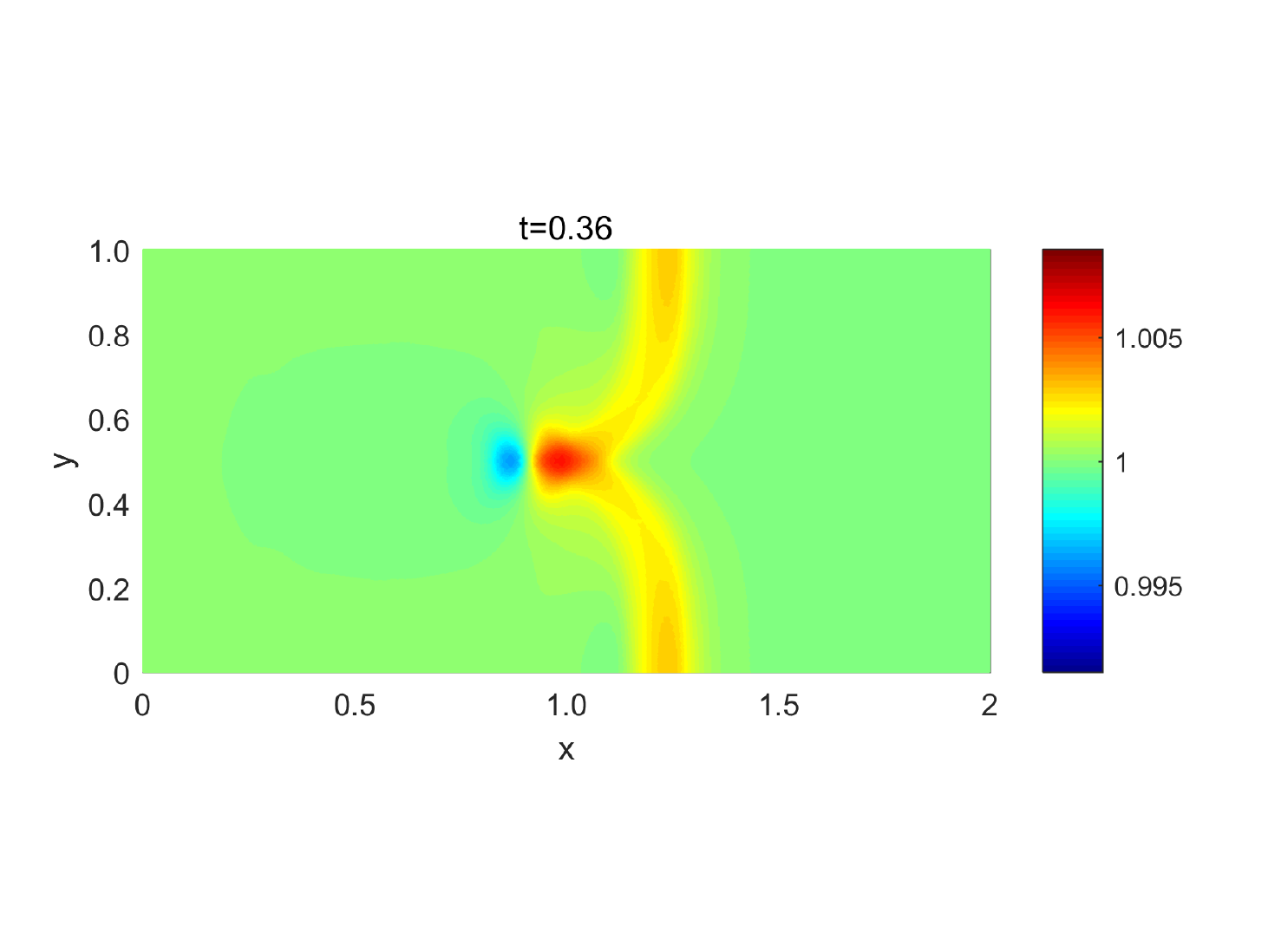}}
%\vspace{5pt}
\subfigure[$hu$: FM $N=150\times 50\times4$]{
\includegraphics[width=0.30\textwidth, trim=15 60 15 60, clip]{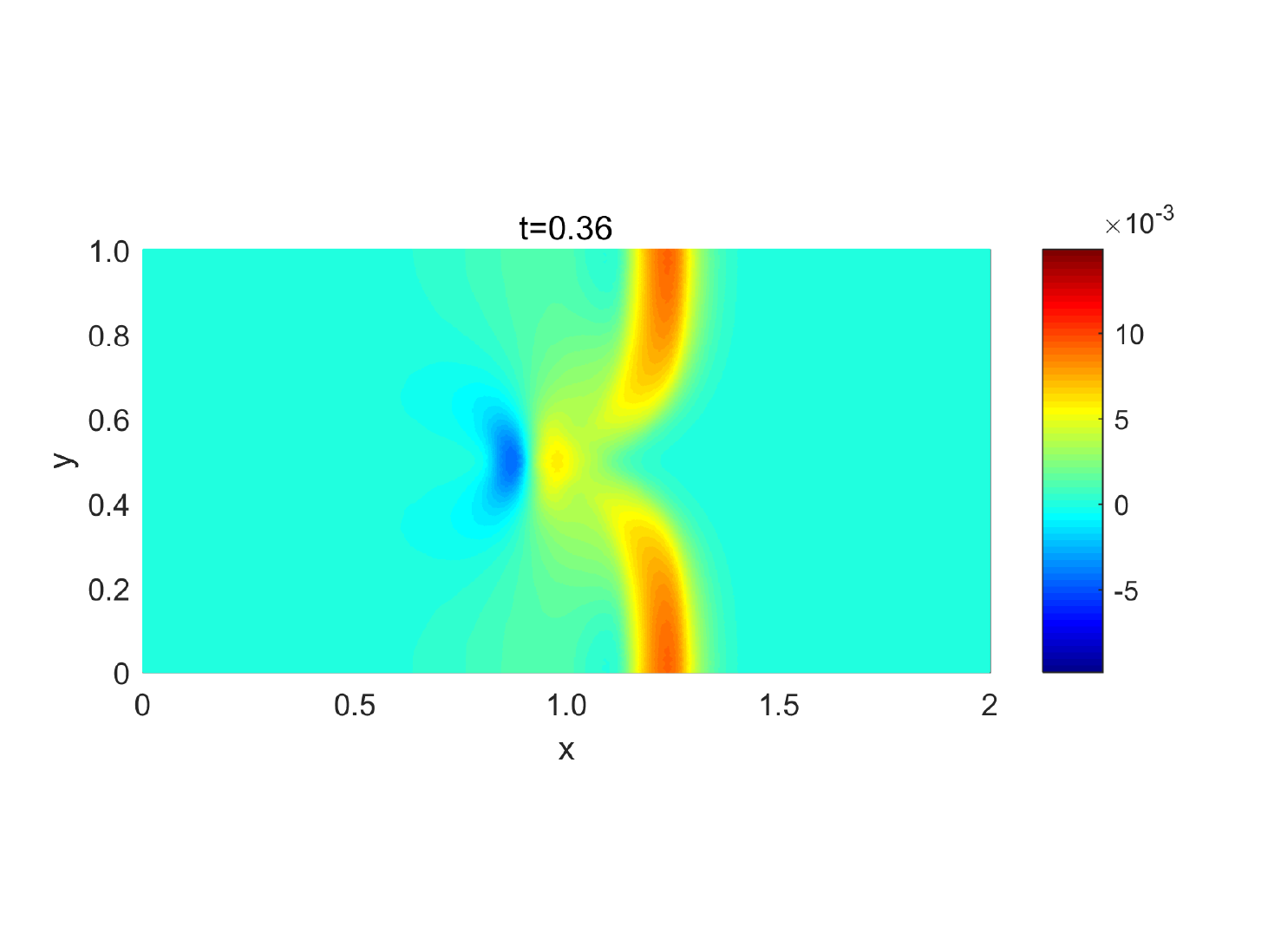}}
%\vspace{5pt}
\subfigure[$hv$: FM $N=150\times 50\times4$]{
\includegraphics[width=0.30\textwidth, trim=15 60 15 60, clip]{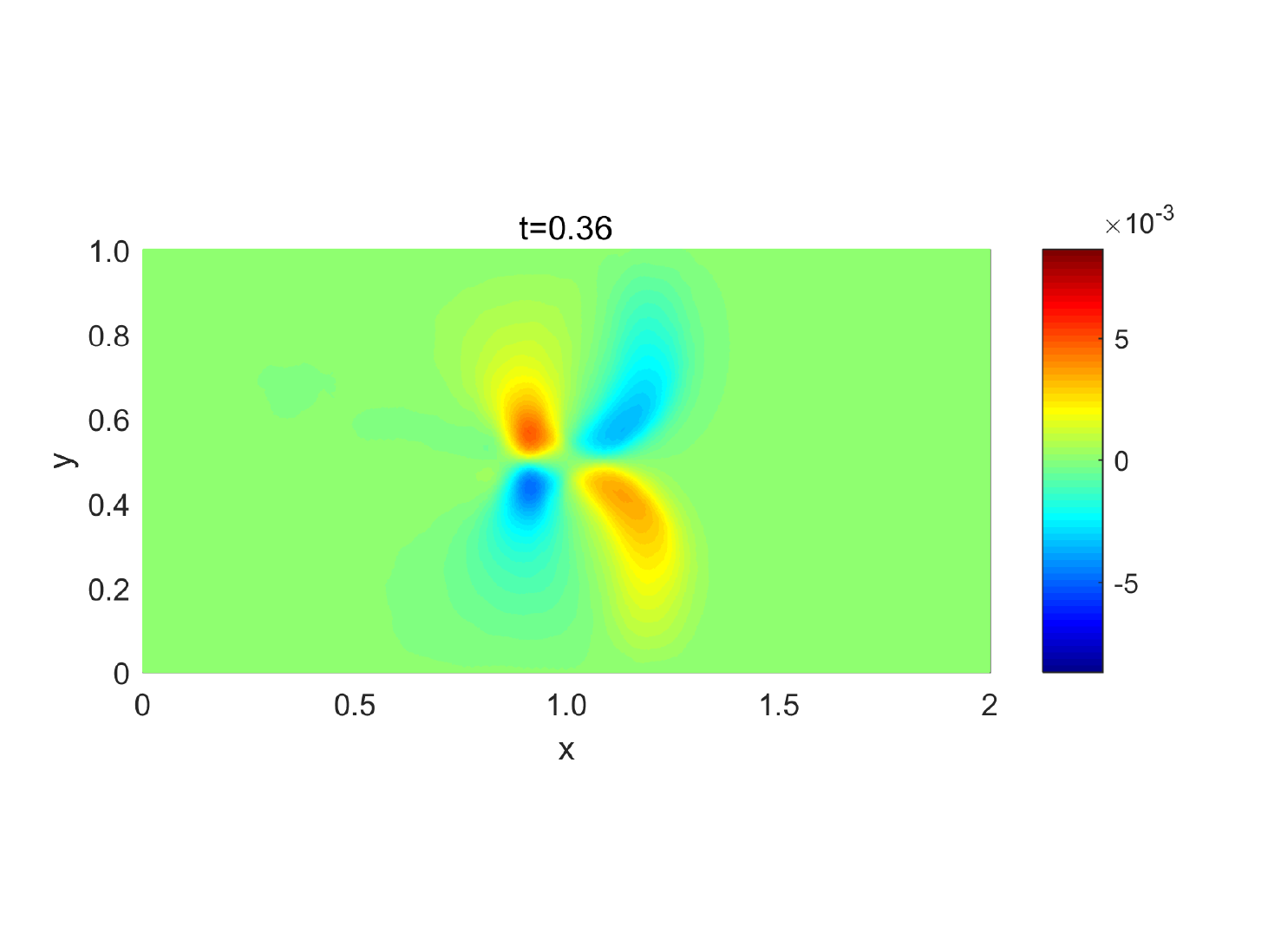}}
\subfigure[$h+B$: FM $600\times 200\times4$]{
\includegraphics[width=0.30\textwidth, trim=15 60 15 60, clip]{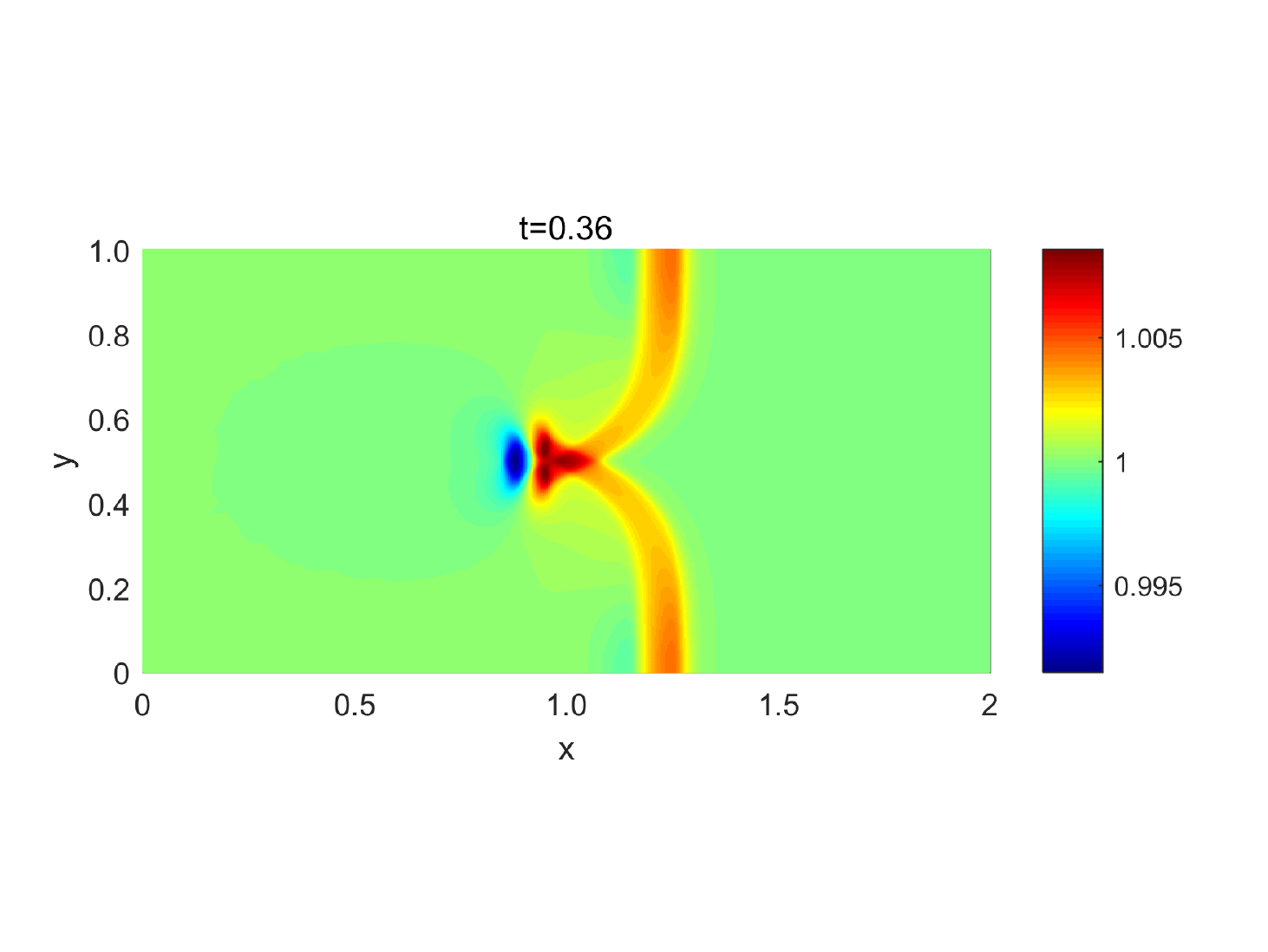}}
\subfigure[$hu$: FM $N=600\times 200\times4$]{
\includegraphics[width=0.30\textwidth, trim=15 60 15 60, clip]{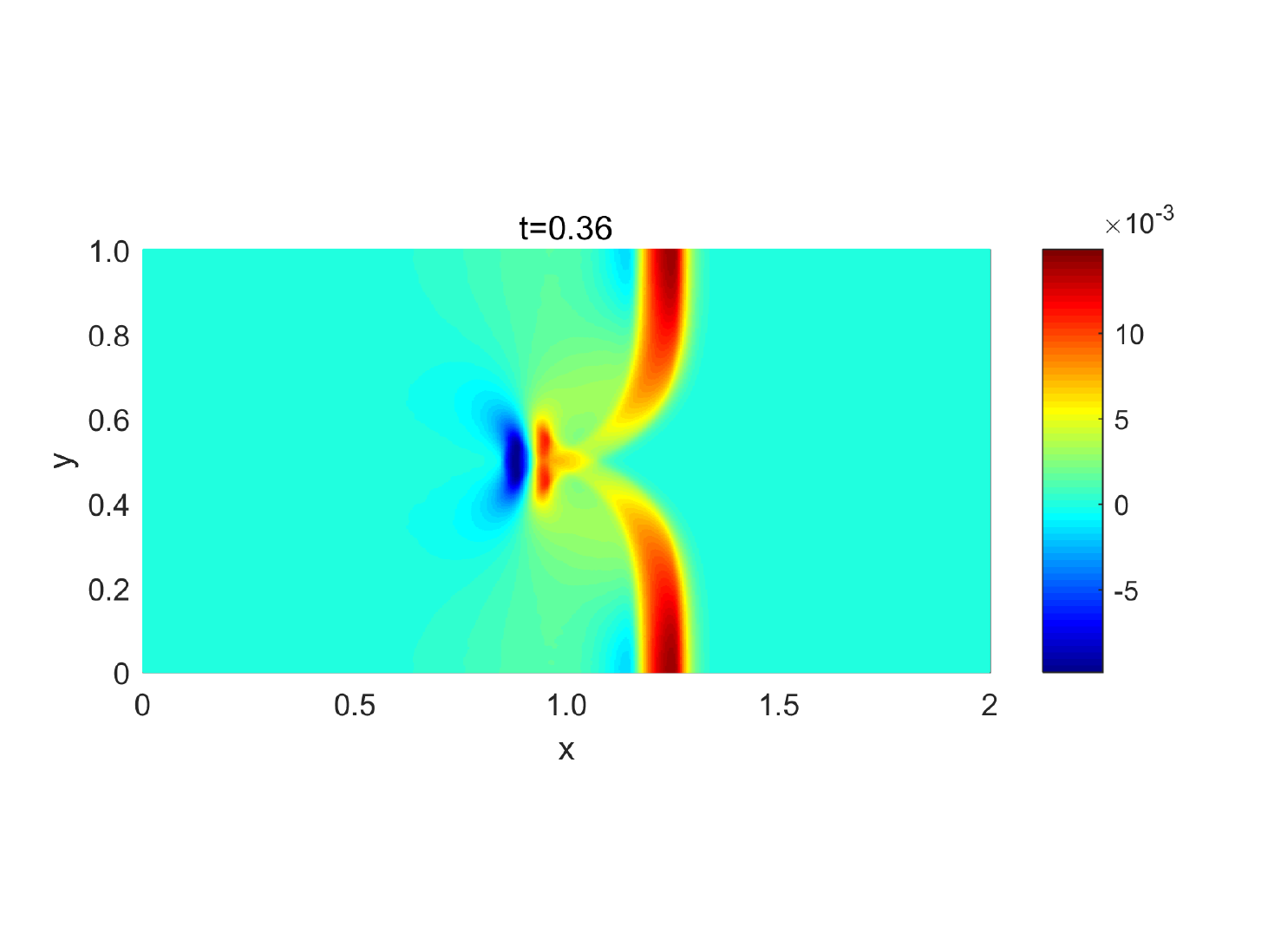}}
\subfigure[$hv$: FM $N=600\times 200\times4$]{
\includegraphics[width=0.30\textwidth, trim=15 60 15 60, clip]{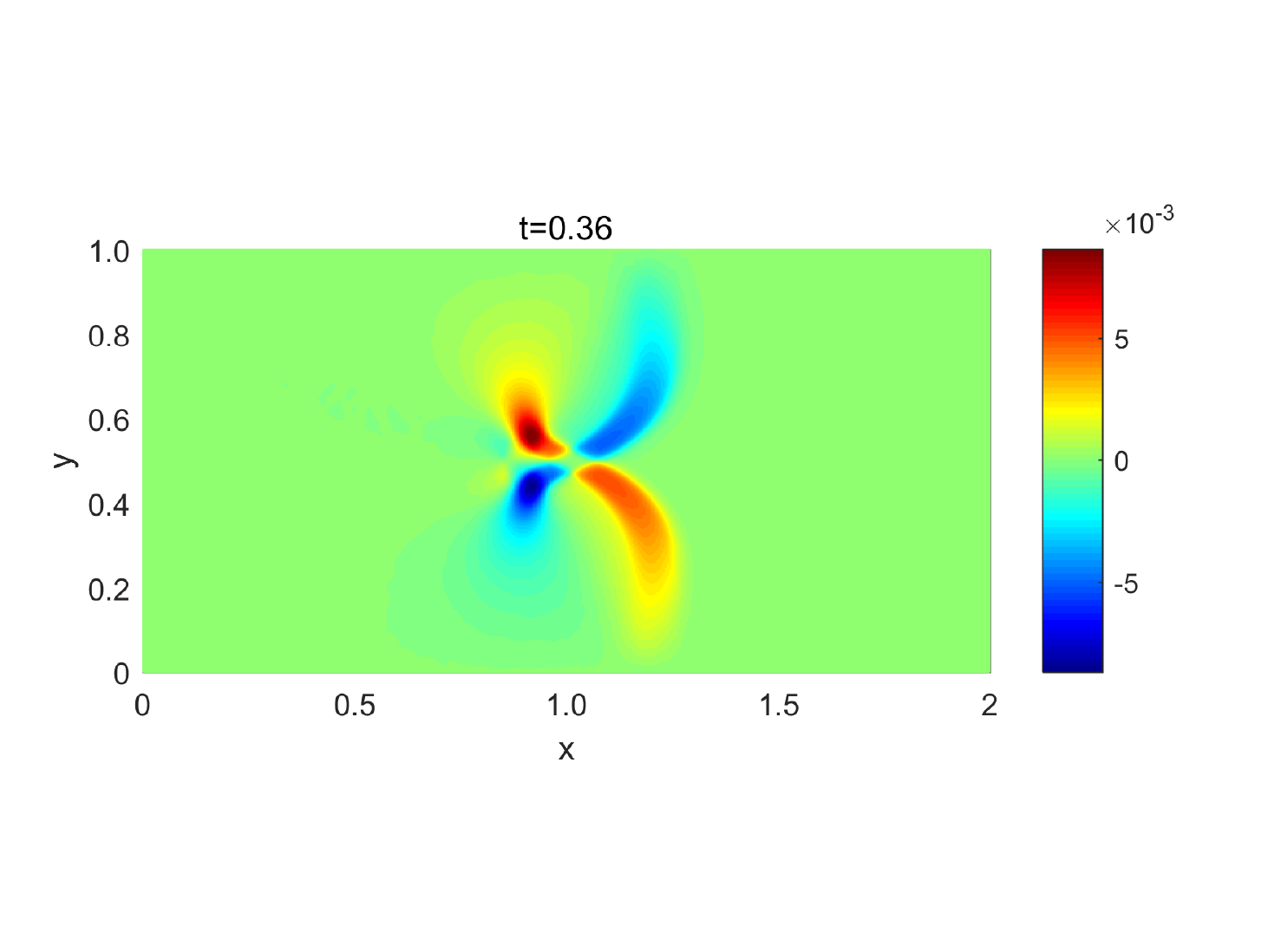}}
\caption{Continuation of Fig.~\ref{Fig:test4-2d-h-hu-hv-t12}: $t = 0.36$.}
\label{Fig:test4-2d-h-hu-hv-t36}
\end{figure}

\begin{figure}[H]
\centering
\subfigure[$h+B$: MM $N=150\times 50\times4$]{
\includegraphics[width=0.30\textwidth, trim=15 60 15 60, clip]{R_test4_2d_P2_Eh_Bph_M50_t48-eps-converted-to.pdf}}
%\vspace{5pt}
\subfigure[$hu$: MM $N=150\times 50\times4$]{
\includegraphics[width=0.30\textwidth, trim=15 60 15 60, clip]{R_test4_2d_P2_Eh_hu_M50_t48-eps-converted-to.pdf}}
%\vspace{5pt}
\subfigure[$hv$: MM $N=150\times 50\times4$]{
\includegraphics[width=0.30\textwidth, trim=15 60 15 60, clip]{R_test4_2d_P2_Eh_hv_M50_t48-eps-converted-to.pdf}}
\subfigure[$h+B$: FM $N=150\times 50\times4$]{
\includegraphics[width=0.30\textwidth, trim=15 60 15 60, clip]{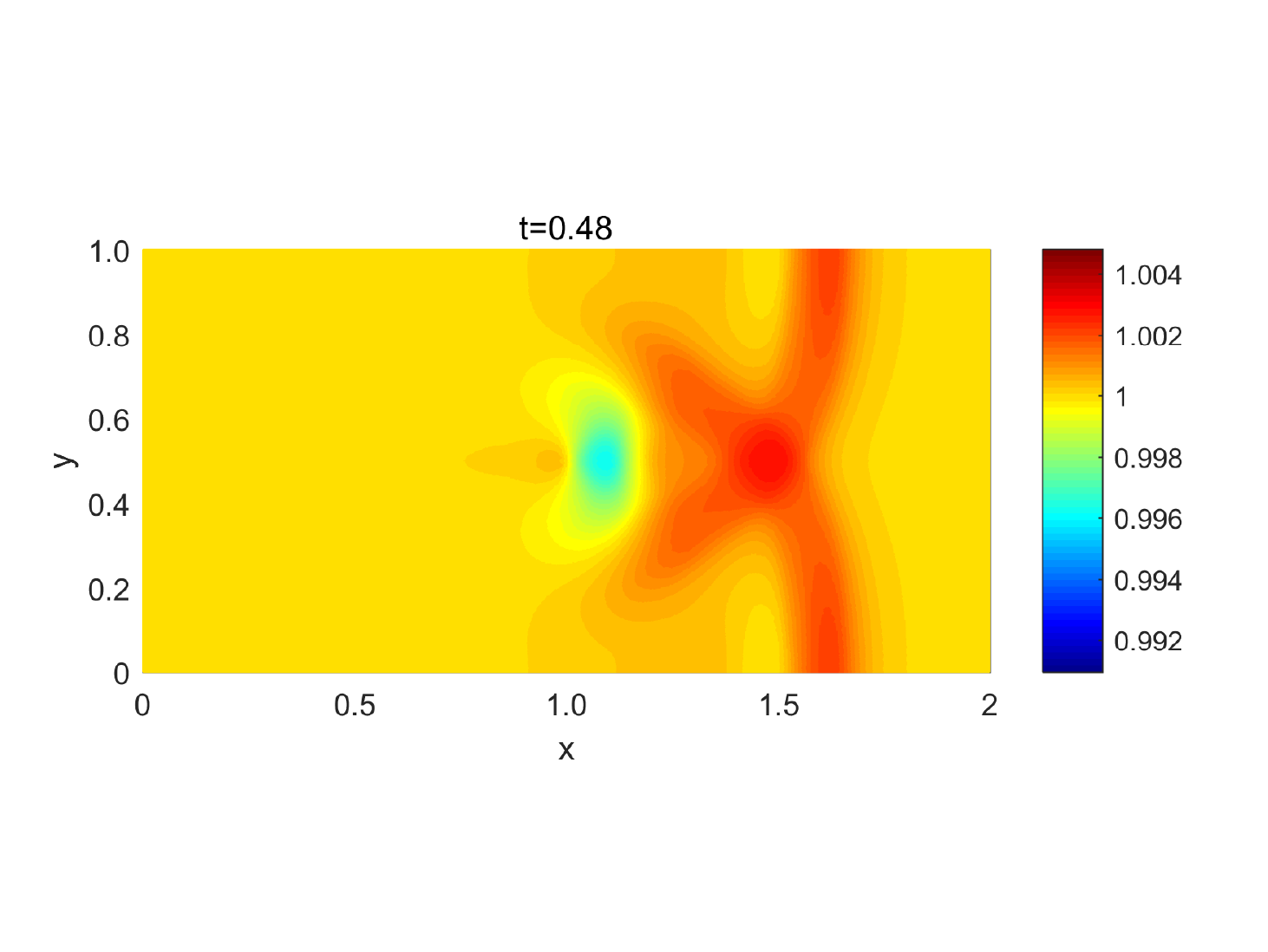}}
%\vspace{5pt}
\subfigure[$hu$: FM $N=150\times 50\times4$]{
\includegraphics[width=0.30\textwidth, trim=15 60 15 60, clip]{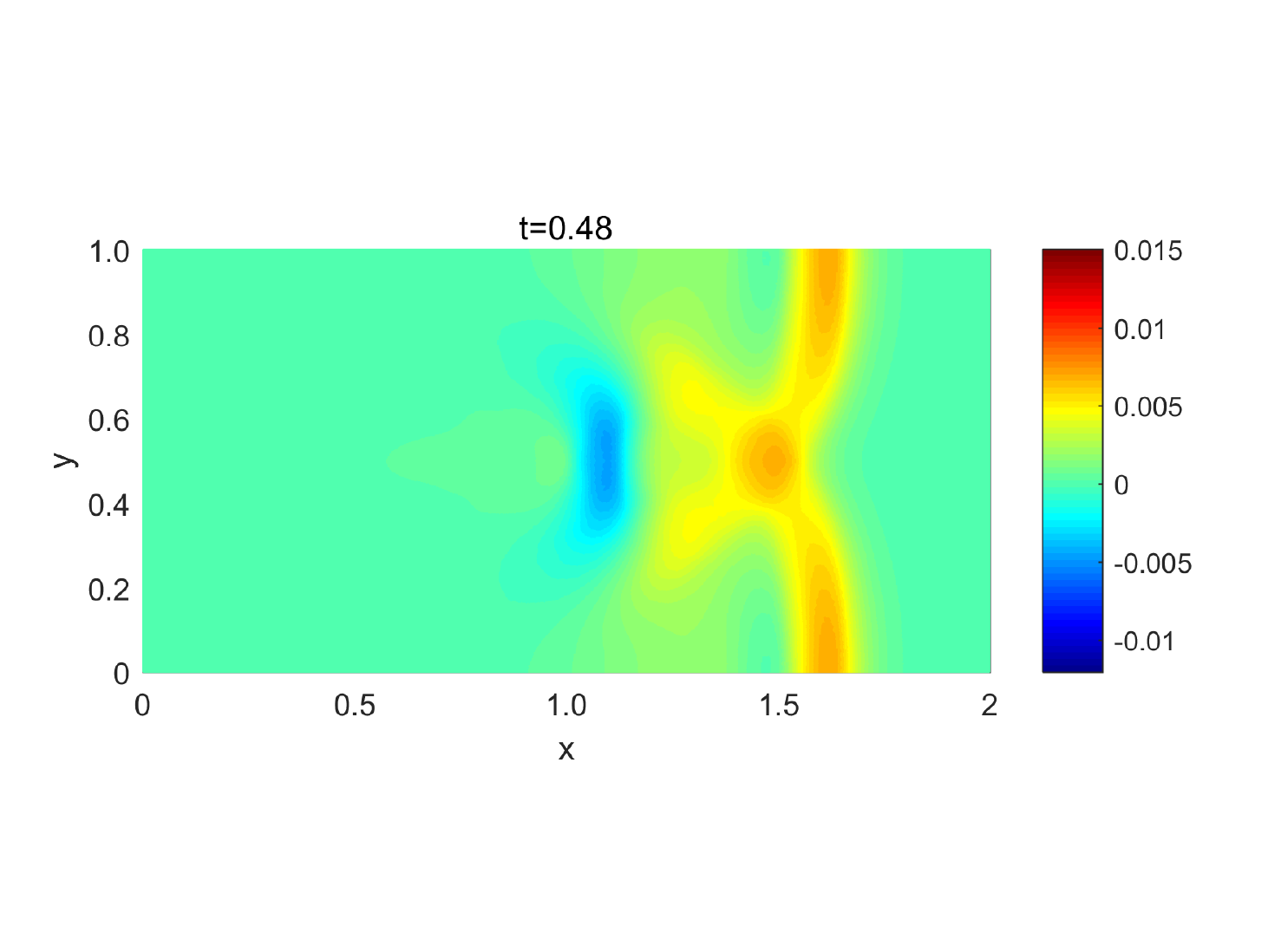}}
%\vspace{5pt}
\subfigure[$hv$: FM $N=150\times 50\times4$]{
\includegraphics[width=0.30\textwidth, trim=15 60 15 60, clip]{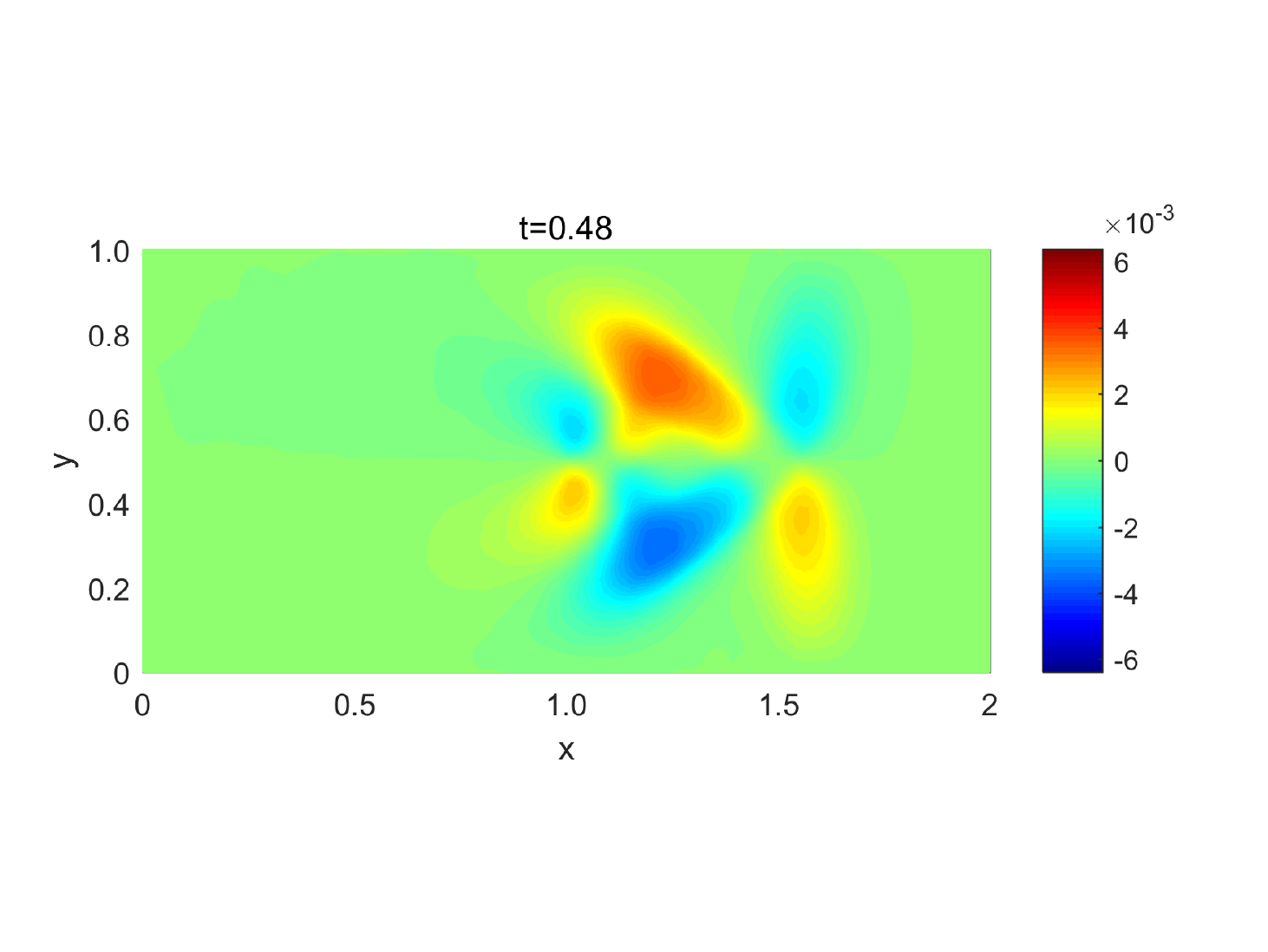}}
\subfigure[$h+B$: FM $600\times 200\times4$]{
\includegraphics[width=0.30\textwidth, trim=15 60 15 60, clip]{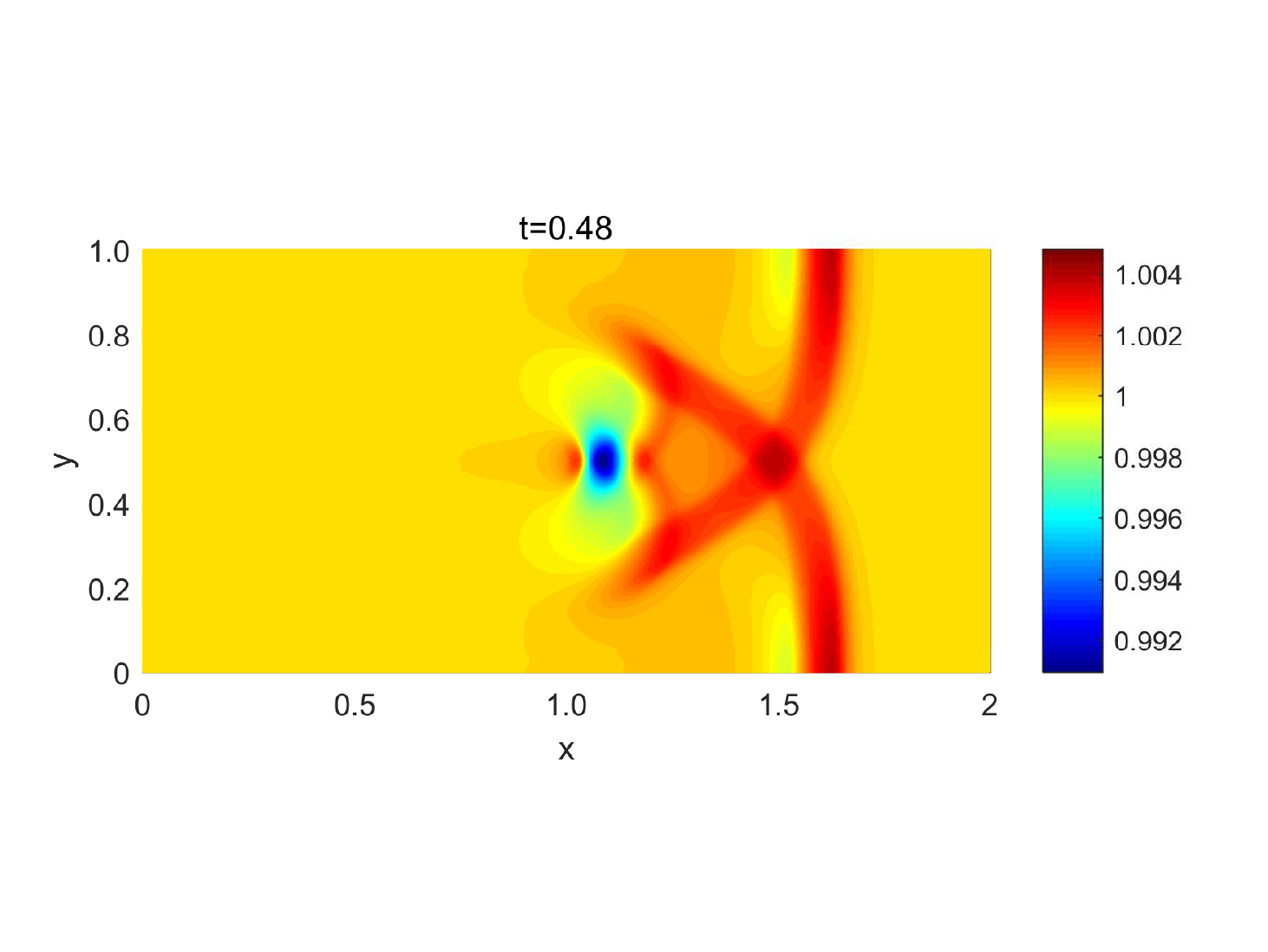}}
\subfigure[$hu$: FM $N=600\times 200\times4$]{
\includegraphics[width=0.30\textwidth, trim=15 60 15 60, clip]{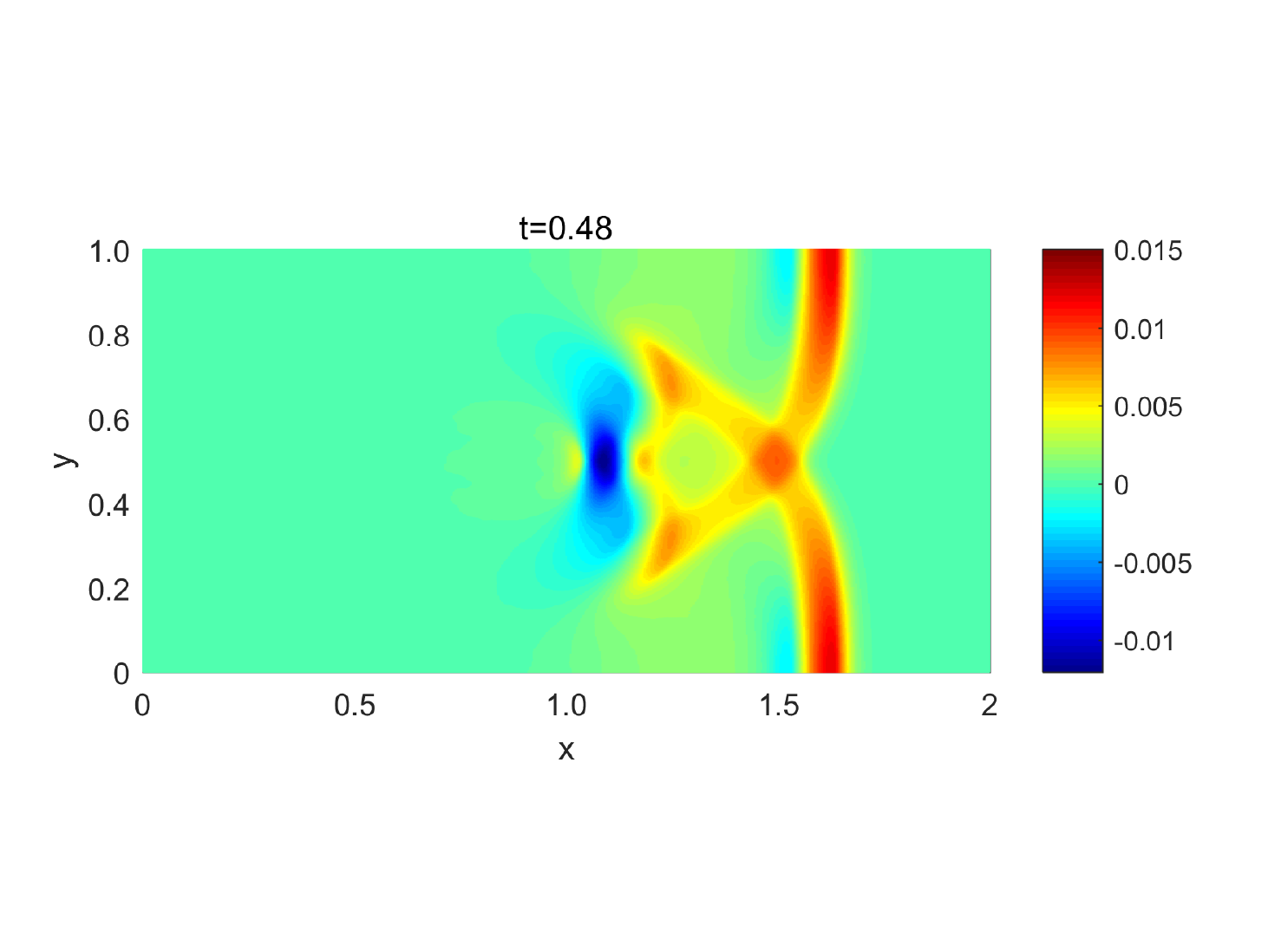}}
\subfigure[$hv$: FM $N=600\times 200\times4$]{
\includegraphics[width=0.30\textwidth, trim=15 60 15 60, clip]{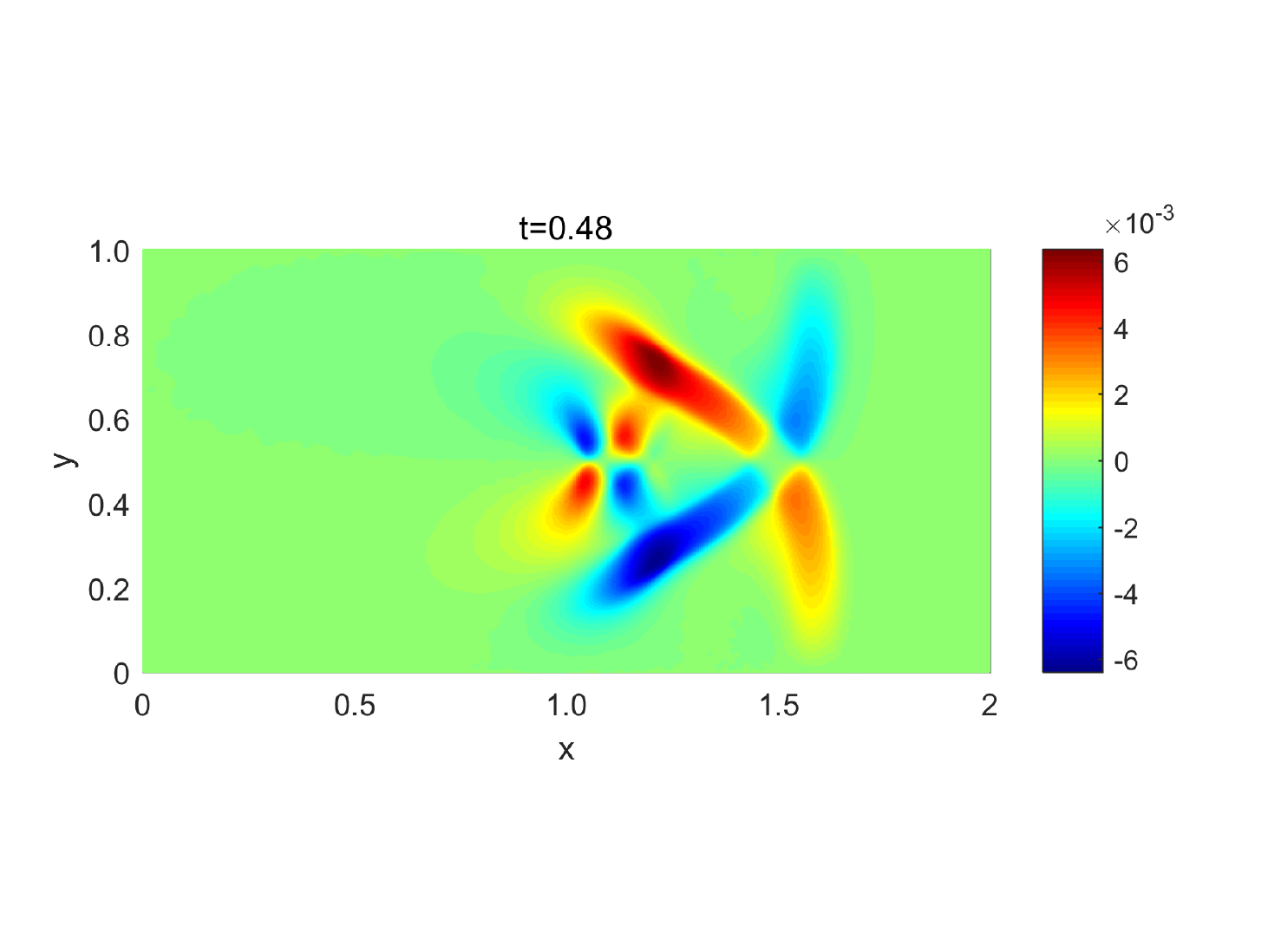}}
\caption{Continuation of Fig.~\ref{Fig:test4-2d-h-hu-hv-t12}: $t = 0.48$.}
\label{Fig:test4-2d-h-hu-hv-t48}
\end{figure}

\section{Conclusions and further comments}
\label{sec:conclusions}
We have presented a high-order well-balanced positivity-preserving adaptive moving mesh DG method
for the numerical solution of the SWEs with non-flat bottom topography.
The method is of rezoning type and contains three main components at each time step,
the generation of the new mesh by node redistribution/movement,
the interpolation of the physical variables from the old mesh to the new one,
and the numerical solution of the SWEs on the new mesh that is fixed
over the time step. A focus of the study is the well-balance and positivity-preserving properties of the method
that are crucial to its ability to simulate perturbation waves to the lake-at-rest steady state
such as waves on a lake or tsunami waves in deep ocean.

We have employed the MMPDE moving mesh scheme to generate the new mesh
at each time step. A key of the MMPDE scheme is to define the metric tensor
that provides the information needed for controlling the size, shape, and orientation
of mesh elements over the whole spatial domain. The numerical examples have shown
that the metric tensor based on the entropy/total energy
$E =\frac{1}{2}(hu^2+hv^2)+\frac{1}{2}gh^2+ghB$, a common choice in the adaptive
mesh simulation of shock waves, does not lead to fully correct mesh concentration
for waves of small magnitude.
Instead, we have proposed to use the equilibrium variable $\mathcal{E}
=\frac{1}{2}(u^2+v^2)+g(h+B)$ combined with the water depth $h$
in the computation of the metric tensor
and demonstrated numerically that they are well suited for the simulation of
the lake-at-rest steady state and its perturbations.

We have used a fixed mesh well-balanced DG scheme for solving the SWEs on the new mesh.
However, to ensure the well-balance property of the overall MM-DG method, we need to pay special
attention to the interpolation of the flow variables and bottom topography,
the slope limiting, and the positivity-preserving (PP) limiting.
We have proposed to use a DG-interpolation scheme (cf. \S\ref{sec:DG-interp})
for the purpose. The scheme has high-order accuracy, preserves constant solutions, and
is conservative while being economic to implement: it requires $\mathcal{O}(N_v)$ operations in each use,
where $N_v$ is the number of vertices of the mesh.
We note that although the bottom topography is a given time-independent function,
it needs to be updated at each time step from the old mesh to the new one
due to the movement of the mesh. Moreover, the same scheme should be used
for updating both the bottom topography and flow variables
in order to attain the well-balance property of the MM-DG method. We have used
the DG-interpolation scheme for both.

To ensure the nonnegativity of the water depth, we use the PP-DG-interpolation (DG-interpolation with PP limiter) to interpolate the water depth from the old mesh to the new one, and apply a scaling PP limiter to the water depth every time after we use the TVB limiter.
However, the PP limiter will destroy the well-balance property.
To restore the property, we have proposed
to make a high-order correction to the approximation of the bottom topography
according to the modifications in the water depth due to the PP limiting;
see \eqref{B-update-1} and \ref{B-update-2}.

A selection of numerical examples in one and two dimensions have been presented
to demonstrate the well-balance property, positivity-preserving, and high-order accuracy of the MM-DG method.
They have also shown that the method is well suited for the numerical simulation of the lake-at-rest
steady state and its perturbations. Particularly, the mesh concentration correctly reflects
structures in the flow variables and bottom topography and leads to more accurate numerical
solutions than a fixed mesh with the same number of elements.

\vspace{20pt}

\section*{Acknowledgments}
M. Zhang and J. Qiu were supported partly by Science Challenge Project (China), No. TZ 2016002 and
National Natural Science Foundation--Joint Fund (China) grant U1630247.
This work was carried out while M. Zhang was visiting the Department of Mathematics, the University of Kansas
under the support by the China Scholarship Council (CSC: 201806310065).

% References

 %\bibliographystyle{abbrv}
%%\bibliographystyle{plain}
%\bibliography{Re_SWE}

\begin{thebibliography}{10}

% A
\bibitem{Arpaia-Ricchiuto-2018}
L.~Arpaia and M.~Ricchiuto,
r-adaptation for shallow water flows: conservation, well balancedness, efficiency,
{\em Computers \& Fluids}, 160 (2018), 175--203.

\bibitem{Alcrudo-Benkhaldoun-2001}
 F. Alcrudo and F. Benkhaldoun,
Exact solutions to the Riemann problem of the shallow water equations with a bottom step,
{\em Comput. \& Fluids}, 30 (2001), 643--671.

\bibitem{Audusse-etal-2004Siam}
E.~Audusse, F.~Bouchut, M.-O. Bristeau, R.~Klein, and B.~Perthame,
A fast and stable well-balanced scheme with hydrostatic
  reconstruction for shallow water flows,
{\em SIAM J. Sci. Comput.}, 25 (2004), 2050--2065.


% B
\bibitem{Bermudez-Vazquez-1994}
A.~Bermudez and M.~E. Vazquez,
Upwind methods for hyperbolic conservation laws with source terms,
{\em Comput. \& Fluids}, 23 (1994), 1049--1071.


% C
\bibitem{DG-series2}
B. Cockburn and C.-W. Shu,
TVB Runge-Kutta local projection discontinuous Galerkin finite element method for conservation laws II: General framework,
{\em Math. Comp.}, 52 (1989), 411-435.


\bibitem{DG-series3}  B. Cockburn, S.-Y. Lin, and C.-W. Shu,
TVB Runge-Kutta local projection discontinuous Galerkin finite element method for
conservation laws III: one dimensional systems,
{\em J. Comput. Phys.}, 84 (1989), 90--113.

\bibitem{DG-series5}
B. Cockburn and C.-W. Shu,
 The Runge-Kutta discontinuous Galerkin method for conservation laws V: multidimensional systems,
     {\em J. Comput. Phys.}, 141 (1998,) 199--224.

\bibitem{DG-review}
B. Cockburn and C.-W. Shu,
Runge-Kutta discontinuous Galerkin methods for convection dominated problems,
{\em J. Sci. Comput.}, 16 (2001), 173--261.

\bibitem{Chinnayya-etal-2004}
A.~Chinnayya, A.~LeRoux, and N.~Seguin,
A well-balanced numerical scheme for the approximation of the shallow-water equations with topography: the resonance phenomenon,
{\em Int. J. Finite}, 1 (2004), 1--33.

% D
\bibitem{Donat-etal-2014JCP}
R.~Donat, M.~C. Mart{\'\i}, A.~Mart{\'\i}nez-Gavara, and P.~Mulet,
 Well-balanced adaptive mesh refinement for shallow water flows,
{\em J. Comput. Phys.}, 257 (2014), 937--953.

% E
\bibitem{Eskilsson-Sherwin-2004}
C.~Eskilsson and S.~J. Sherwin,
A triangular spectral/hp discontinuous Galerkin method for modelling 2D shallow water equations,
{\em Int. J. Numer. Meth. Fluids}, 45 (2004), 605--623.

\bibitem{Ern-etal-2008}
A.~Ern, S.~Piperno, and K.~Djadel,
A well-balanced Runge-Kutta discontinuous Galerkin method for the shallow-water equations with flooding and drying,
{\em Int. J. Numer. Meth. Fluids}, 58 (2008), 1--25.


% F
%\bibitem{Fjordholm-etal-2011JCP}
%U.~Fjordholm, S.~Mishra, and E.~Tadmor,
%Well-balanced and energy stable schemes for the shallow water equations with discontinuous topography,
%\newblock {\em J. Comput. Phys.}, 230 (2001), 5587--5609.
%% G
%\bibitem{Giraldo-etal-2002JCP}
%F.~X. Giraldo, J.~S. Hesthaven, and T.~Warburton,
%Nodal high-order discontinuous Galerkin methods for the spherical shallow water equations,
%{\em J. Comput. Phys.}, 181 (2002), 499--525.

\bibitem{Goodman-LeVeque-1985MC}
J.B. Goodman and R.J LeVeque,
On the accuracy of stable schemes for 2D scalar conservation laws,
{\em Math. Comp.}, 45 (1985), 15--21.

% H
\bibitem{Huang-etal-1994JCP}
W.~Huang, Y.~Ren, and R.~Russell,
Moving mesh methods based on moving mesh partial differential equations,
{\em J. Comput. Phys.}, 113 (1994), 279--290.

\bibitem{Huang-etal-1994Siam}
W.~Huang, Y.~Ren, and R.~Russell,
Moving mesh partial differential equations (MMPDEs) based upon the equidistribution principle,
{\em SIAM J. Numer. Anal.}, 31 (1994), 709--730.

\bibitem{Huang-2001JCP}
W.~Huang,
Variational mesh adaptation: isotropy and equidistribution,
{\em J. Comput. Phys.}, 174 (2001), 903--924.

\bibitem{Huang-Sun-2003JCP}
W.~Huang and W.~Sun,
Variational mesh adaptation II: error estimates and monitor functions,
{\em J. Comput. Phys.}, 184 (2003), 619--648.

\bibitem{Huang-2006CiCP}
W.~Huang,
Mathematical principles of anisotropic mesh adaptation,
{\em Comm. Comput. Phys.}, 1 (2006), 276--310.

\bibitem{Huang-Russell-2011}
W.~Huang and R.~Russell,
Adaptive Moving Mesh Methods,
Springer, New York, Applied Mathematical Sciences Series, Vol. 174 (2011).

\bibitem{Huang-Kamenski-2015JCP}
W.~Huang and L.~Kamenski,
A geometric discretization and a simple implementation for variational mesh generation and adaptation,
{\em J. Comput. Phys.}, 301 (2015), 322--337.

\bibitem{Huang-Kamenski-2018MC}
W. Huang and L. Kamenski,
On the mesh nonsingularity of the moving mesh PDE method,
{\em Math. Comp.}, 87 (2018), 1887--1911.

% I
% J
%\bibitem{Jimack-Wathen-1991Siam}
%P.~Jimack and A.~Wathen,
%Temporal derivatives in the finite-element method on continuously deforming grids,
% {\em SIAM J. Numer. Anal.}, 28 (1991), 990--1003.

% K
%\bibitem{Kesserwani-Liang-2010CF}
%G.~Kesserwani and Q.~Liang,
%Well-balanced RKDG2 solutions to the shallow water equations over
%  irregular domains with wetting and drying,
%{\em Comput. $\&$ Fluids}, 39 (2010), 2040--2050.

% L
\bibitem{Liu-Osher1996}
X.-D. Liu and S. Osher,
Non-oscillatory high order accurate self similar maximum principle satisfying shock capturing schemes,
{\em SIAM J. Numer. Anal.}, 33 (1996), 760--779.

\bibitem{Lamby-etal-2005}
P.~Lamby, S.~M{\"u}ller, and Y.~Stiriba,
Solution of shallow water equations using fully adaptive multiscale schemes,
{\em Int. J. Numer. Meth. Fluids}, 49 (2005), 417--437.

\bibitem{LeVeque-1998JCP}
R.~LeVeque,
Balancing source terms and flux gradients in high-resolution Godunov methods: the quasi-steady wave-propagation algorithm,
{\em J. Comput. Phys.}, 146 (1998), 346--365.

\bibitem{Li-Tang-2006JSC}
R.~Li and T.~Tang,
Moving mesh discontinuous Galerkin method for hyperbolic conservation laws,
{\em J. Sci. Comput.}, 27 (2006), 347-363.

\bibitem{Li-etal-2018JCAM}
G.~Li, L.~Song, and J.~Gao,
High order well-balanced discontinuous Galerkin methods based on hydrostatic reconstruction for shallow water equations,
{\em J. Comput. App. Math.}, 340 (2018), 546--560.

\bibitem{Luo-Huang-Qiu-2019JCP}
D.~Luo, W.~Huang, and J.~Qiu,
A quasi-Lagrangian moving mesh discontinuous Galerkin method for hyperbolic conservation laws,
{\em J. Comput. Phys.}, 396 (2019), 544--578.

\bibitem{Li-Lu-Qiu-2012JSC}
G. Li, C. Lu, and J. Qiu,
Hybrid well-balanced WENO schemes with different indicators for shallow water equations,
{\em J. Sci. Comput.}, 51 (2012),  527--559.

% M
%\bibitem{Mingham-Causon-1998}
%C.~Mingham and D.~Causon,
%High-resolution finite-volume method for shallow water flows,
%{\em J. Hydraulic Eng.}, 124 (1998), 605--614.

% N
\bibitem{Noelle-etal-2007-movingwater}
S. Noelle, Y. Xing, and Chi-W. Shu,
High-order well-balanced finite volume WENO schemes for shallow water equation with moving water,
{\em J. Comput. Phys.}, 226 (2007), 29--58.

% O
% P
% Q
%\bibitem{Qian-etal-2018AWR}
%S.~Qian, G.~Li, F.~Shao, and Y.~Xing.
%Positivity-preserving well-balanced discontinuous Galerkin methods for the shallow water flows in open channels,
%{\em Adv. Water Resour.}, 115 (2018), 172--184.

% R
\bibitem{Remacle-etal-2006}
J.-F. Remacle, S.~S. Frazao, X.~Li, and M.~Shephard,
Adaptive discontinuous Galerkin method for the shallow water equations,
{\em Int. J. Numer. Meth. Fluids}, 52 (2006), 903--923.

%\bibitem{Rhebergen-etal-2008JCP}
%S.~Rhebergen, O.~Bokhove, and J. van der Vegt,
%Discontinuous Galerkin finite element methods for hyperbolic nonconservative partial differential equations,
% \newblock {\em J. Comput. Phys.}, 227 (2008), 1887--1922.

\bibitem{Ricchiuto-Bollermann-2009JCP},
M.~Ricchiuto and A.~Bollermann,
Stabilized residual distribution for shallow water simulations,
{\em J. Comput. Phys.}, 228 (2009), 1071--1115.

% S
% T
\bibitem{TangTang-2003Siam}
H. Tang and T. Tang,
 Adaptive mesh methods for one- and two-dimensional hyperbolic
conservation laws,
{\em SIAM J. Numer. Anal.}, 41 (2003), 487-515.

\bibitem{Tang-2004}
H.~Tang,
Solution of the shallow-water equations using an adaptive moving mesh
  method,
{\em Int. J. Numer. Meth. Fluids}, 44 (2004), 789--810.

\bibitem{Tumolo-2013JCP}
G.~Tumolo, L.~Bonaventura, and M.~Restelli,
A semi-implicit, semi-lagrangian, p-adaptive discontinuous Galerkin method for the shallow water equations,
{\em J. Comput. Phys.}, 232 (2013), 46--67.

% U
% V
% W
% X

\bibitem{Xing-Shu-2005JCP}
Y.~Xing and C.-W. Shu,
High order finite difference WENO schemes with the exact conservation
  property for the shallow water equations,
{\em J. Comput. Phys.}, 208 (2005), 206--227.

\bibitem{Xing-Shu-2006JCP}
Y.~Xing and C.-W. Shu,
High order well-balanced finite volume WENO schemes and discontinuous
  Galerkin methods for a class of hyperbolic systems with source terms,
{\em J. Comput. Phys.}, 214 (2006), 567--598.

\bibitem{Xing-Shu-2006CiCP}
Y.~Xing and C.-W. Shu,
A new approach of high order well-balanced finite volume WENO schemes and discontinuous Galerkin methods for a class of hyperbolic systems with source terms,
{\em Comm. Comput. Phys.}, 1 (2006), 100--134.

\bibitem{Xing-Zhang-Shu-2010}
Y.~Xing, X.~Zhang, and C.-W. Shu,
Positivity-preserving high order well-balanced discontinuous Galerkin methods
for the shallow water equations,
{\em Adv. Water Resourc.}, 33 (2010), 1476--1493.

\bibitem{Xing-Zhang-2013JSC}
Y.~Xing and X.~Zhang.
Positivity-preserving well-balanced discontinuous Galerkin methods for the shallow water equations on unstructured triangular meshes,
{\em J. Sci. Comput.}, 57 (2013), 19--41.

\bibitem{Xing-2014JCP-movingwater}
Y.~Xing,
Exactly well-balanced discontinuous Galerkin methods for the shallow water equations with moving water equilibrium,
{\em J. Comput. Phys.}, 257 (2014), 536--553.

% Y
% Z
\bibitem{Zhang-Cheng-Huang-Qiu-2020CiCP}
M.~Zhang, J.~Cheng, W.~Huang, and J.~Qiu,
An adaptive moving mesh discontinuous Galerkin method for the radiative transfer equation,
{\em Comm. Comput. Phys.}, 27 (2020), 1140--1173.

\bibitem{Zhang-Huang-Qiu-2019arXiv}
M.~Zhang, W.~Huang, and J.~Qiu,
High-order conservative positivity-preserving DG-interpolation for deforming meshes and application to moving mesh DG simulation of radiative transfer,
{ \em arXiv:1910.11931}.

\bibitem{Zhang-Naga-2005Siam}
Z.~Zhang and A.~Naga,
A new finite element gradient recovery method: Superconvergence property,
{\em SIAM J. Sci. Comput.}, 26 (2005), 1192--1213.

%\bibitem{Zhou-etal-2013-1}
%F.~Zhou, G.~Chen, Y.~Huang, J. Z. Yang, and H.~Feng.
%An adaptive moving finite volume scheme for modeling flood inundation over dry and complex topography,
%{\em Water Resour. Res.}, 49 (2013), 1914--1928.

\bibitem{Zhou-etal-2013}
F.~Zhou, G.~Chen, S.~Noelle, and H.~Guo,
\newblock A well-balanced stable generalized Riemann problem scheme for shallow
  water equations using adaptive moving unstructured triangular meshes.
\newblock {\em Int. J. Numer. Meth. Fluids}, 73 (2013), 266--283.

\bibitem{Zhou-etal-2001JCP}
J.~G. Zhou, D.~M. Causon, C.~G. Mingham, and D.~M. Ingram,
\newblock The surface gradient method for the treatment of source terms in the
  shallow-water equations,
\newblock {\em J. Comput. Phys.}, 168 (2001), 1--25.

\bibitem{ZhangShu2010}
X. Zhang and C.-W. Shu,
On positivity preserving high order discontinuous Galerkin methods for compressible Euler equations
on rectangular meshes,
 {\em J. Comput. Phys.}, 229 (2010), 8918--8934.

\bibitem{ZhangXiaShu2012}
X. Zhang, Y. Xia, and C.-W. Shu,
Maximum-principle-satisfying and positivity-preserving High Order discontinuous Galerkin schemes
for conservation Laws on Triangular Meshes,
{\em J. Sci. Comput.}, 50 (2012), 29--62.



\end{thebibliography}

\end{document}